\documentclass[a4paper,10pt]{article}

\usepackage{a4wide}
\usepackage{amsfonts,amssymb,amsmath,amsthm,bm}
\usepackage{mathrsfs, yfonts}
\usepackage{srcltx,tabularx}
\usepackage[utf8]{inputenc} 
\usepackage[alphabetic, nobysame]{amsrefs}
\usepackage{enumerate}
\usepackage[english]{babel}
\usepackage{authblk}
\usepackage[colorlinks,pagebackref=false]{hyperref}
\usepackage{cleveref}
\usepackage{graphicx}
\usepackage{pstricks,pst-plot,pst-node}
\usepackage[bottom]{footmisc}
\usepackage[all]{hypcap}
\usepackage[toc,page]{appendix}

\usepackage{mathtools}

\allowdisplaybreaks

\usepackage{breqn}

\usepackage{lmodern}

\usepackage{tikz}
\usepackage{tikz-cd}
\usepackage[hypcap=true]{caption}
\usetikzlibrary{matrix, positioning, calc}
\usetikzlibrary{arrows}

\newtheorem{theorem}{Theorem}[section]
\newtheorem*{theorem*}{Theorem}
\newtheorem{lemma}[theorem]{Lemma}

\newtheorem*{proposition*}{Proposition}
\newtheorem{proposition}[theorem]{Proposition}
\newtheorem{corollary}[theorem]{Corollary}
\newtheorem*{corollary*}{Corollary}
\newtheorem*{conjecture*}{Conjecture}
\newtheorem*{question*}{Question}
\newtheorem{claim}{Claim}

\newtheorem{maintheorem}{Theorem}


  \newenvironment{claimproof}[1]{\par\noindent\textit{Proof of the claim:}\space#1}{\hfill $\blacksquare$}

\newcommand{\boundellipse}[3]
{(#1) ellipse (#2 and #3)
}

\theoremstyle{definition}
\newtheorem{definition}[theorem]{Definition}
\newtheorem*{definition*}{Definition}

\newtheorem{remark}[theorem]{Remark}

\newcommand{\suchthat}{\;\ifnum\currentgrouptype=16 \middle\fi|\;}

\DeclareMathOperator{\Aut}{\mathrm{Aut}}
\DeclareMathOperator{\N}{\mathbf{N}}
\DeclareMathOperator{\Z}{\mathbf{Z}}

\DeclareMathOperator{\PGL}{\mathrm{PGL}}

\DeclareMathOperator{\SL}{\mathrm{SL}}

\DeclareMathOperator{\GL}{\mathrm{GL}}

\DeclareMathOperator{\SO}{\mathrm{SO}}
\DeclareMathOperator{\PGU}{\mathrm{PGU}}
\DeclareMathOperator{\Spec}{\mathrm{Spec}}

\DeclareMathOperator{\id}{\mathrm{id}}

\DeclareMathOperator{\Out}{\mathrm{Out}}

\DeclareMathOperator{\Gal}{\mathrm{Gal}}
\DeclareMathOperator{\Hom}{\mathrm{Hom}}

\DeclareMathOperator{\Lie}{\mathrm{Lie}}
\DeclareMathOperator{\Spin}{\mathrm{Spin}}

\DeclareMathOperator{\Id}{\mathrm{Id}}
\DeclareMathOperator{\Alg}{\mathrm{alg}}

\DeclareMathOperator{\Mor}{\mathrm{Mor}}
\DeclareMathOperator{\Ad}{\mathrm{Ad}}

\DeclareMathOperator{\Nrd}{\mathrm{Nrd}}
\DeclareMathOperator{\diag}{\mathrm{Diag}}
\DeclareMathOperator{\ev}{\mathrm{ev}}
\DeclareMathOperator{\intaut}{\mathrm{int}}
\DeclareMathOperator{\Isom}{\mathrm{Isom}}
\DeclareMathOperator{\Extisom}{\mathrm{Extisom}}
\DeclareMathOperator{\inv}{\mathrm{inv}}
\DeclareMathOperator{\BRD}{\mathrm{BRD}}
\DeclareMathOperator{\RedExt}{\mathrm{RedExt}}
\DeclareMathOperator{\QsPin}{\mathrm{QsPin}}

\begin{document}
\title{Semilinear automorphisms of reductive algebraic groups}
\author[1]{Thierry Stulemeijer\thanks{Postdoctoral fellow at Justus-Liebig Universität Giessen}}

\affil[1]{\small{Justus-Liebig Universität Giessen, 35392 Giessen, Germany}}

\date{March 01, 2019}

\maketitle

\begin{abstract}
Let $ G $ be a connected reductive algebraic group over a field $ k $. We study the group of semilinear automorphisms $ \Aut (G\to \Spec k) $ consisting of algebraic automorphisms of $ G $ over automorphisms of $ k $. We focus on the exact sequence $ 1\to \Aut G\to \Aut (G\to \Spec k)\to \Aut_{G}(k)\to 1 $. When $G$ is quasi-split, we show that $\Aut_{G}(k)$ is isomorphic to $\Aut_{\mathcal{R}(G)}(k)$, where $\mathcal{R}(G)$ denotes the scheme of based root datum of $G$. Furthermore, the exact sequence $ 1\to \Aut G\to \Aut (G\to \Spec k)\to \Aut_{G}(k)\to 1 $ splits if and only if the exact sequence $ 1\to \Aut \mathcal{R}(G) \to \Aut (\mathcal{R}(G) \to \Spec k)\to \Aut_{\mathcal{R}(G)}(k)\to 1 $ splits. As a corollary, we get many examples of algebraic groups $ G $ over $ k $ whose group of abstract automorphisms does not decompose as the semidirect product of $ \Aut G $ with $ \Aut_G(k) $. We also study the same questions for inner forms of $ \SL_n $ over a local field.
\end{abstract}

\tableofcontents

\section{Introduction}\label{Sec:introchapsemilinear}
The so-called abstract automorphisms of (the rational points of) a reductive algebraic group have been studied extensively in the literature. In 1955, J.\ Dieudonn\'e wrote a comprehensive treatise (see \cite{Di71} for the third edition) covering the case of classical groups, and even going beyond the world of algebraic groups (since he also considers classical groups over division algebras that are infinite dimensional over their center). Many results in this area have been subsumed in the famous article \cite{BT73} by A.\ Borel and J.\ Tits. To wit, here is one of their results:

\begin{theorem*}[{\cite{BT73}*{excerpt of Corollaire~8.13}}]\label{Thm:abstract homo.}
Let $ k $ be an infinite field, and let $ G $ be an absolutely simple algebraic group over $ k $, which is assumed to be isotropic and adjoint. Furthermore, if $ k $ is of characteristic $ 2 $ or $ 3 $, assume that $ k $ is not perfect. Let $ \alpha $ be an automorphism of $ G(k) $, considered as an abstract group. Then there exists a unique automorphism $ \varphi \colon k\to k $ and a unique semilinear automorphism $ f_{\varphi} \colon G\to G $ over $ \varphi $ such that for $ g\in G(k) = \Hom_{\text{k-schemes}}(\Spec k,G) $, we have $ \alpha (g) = f_{\varphi}\circ g\circ (\Spec \varphi)^{-1}  $.
\end{theorem*}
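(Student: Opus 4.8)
The statement is Corollaire~8.13 of \cite{BT73}, and the plan is to derive it from the general Borel--Tits theorem on abstract homomorphisms of isotropic absolutely simple groups, applied once to $\alpha$ and once to $\alpha^{-1}$. Write $\Gamma = G(k)^{+}$ for the normal subgroup of $G(k)$ generated by the $k$-points of unipotent radicals of parabolic $k$-subgroups. Since $G$ is isotropic and $k$ is infinite, $\Gamma$ is nontrivial and Zariski-dense in $G$; and for any abstract automorphism $\alpha$ of $G(k)$ the subgroup $\alpha(\Gamma)$ is normalized by $\alpha(G(k)) = G(k)$, so its Zariski closure is normalized by $\overline{G(k)} = G$ (recall $G$ is unirational, $k$ infinite), hence is a nontrivial closed normal subgroup of the absolutely simple adjoint group $G$ and therefore equals $G$. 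Thus $\alpha|_{\Gamma}$ is an abstract homomorphism $\Gamma\to G(k)$ with Zariski-dense image, which is the situation the general theorem addresses.

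The core input is then: \emph{every abstract homomorphism $\beta\colon \Gamma\to H(k')$ with Zariski-dense image in a connected algebraic $k'$-group $H$ is standard}, i.e.\ there exist a field homomorphism $\varphi\colon k\to k'$ and a $k'$-morphism $b\colon G\times_{k,\varphi}k'\to H$ --- an isomorphism onto its image up to a special isogeny --- with $\beta(g) = b(g\otimes 1)$ for all $g\in\Gamma$. I would prove this along Borel--Tits's lines. Fix a maximal $k$-split torus $S\subseteq G$ and, for each relative root $a$, the root group $U_{(a)}$. The heart of the argument is to show that $\beta\bigl(U_{(a)}(k)\bigr)$ lies in the unipotent radical of a parabolic $k'$-subgroup of $H$; this is carried out by a rank-one reduction inside $\langle U_{(a)},U_{(-a)}\rangle$ --- a group close to $\SL_{2}(k)$ or $\PGL_{2}(k)$ --- using that $\Gamma$ is perfect and that unipotence can be detected group-theoretically once the split rank is at least $1$. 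Once the root groups sit inside unipotent subgroups, the Chevalley commutation relations together with the action of $S(k)$ through the root characters let one extract from $\beta$ an additive and then a multiplicative map $k\to k'$, i.e.\ a field homomorphism $\varphi$, and then reassemble $\beta$ into the algebraic datum $b$. This is the long and delicate step and the main obstacle: the passage from the naked group structure to a field homomorphism. It is also where the hypotheses are used. That $k$ is infinite is what makes $\Gamma$ Zariski-dense and supplies infinitely many scalars in the root-group parametrizations. In characteristic $2$ or $3$, for the types admitting a special isogeny ($B_{n}=C_{n}$, $F_{4}$, $G_{2}$) the morphism $b$ can genuinely be a nontrivial special isogeny; requiring $k$ to be imperfect excludes the nonstandard automorphisms this would produce --- a special isogeny induces a bijection on $k$-points exactly when $k$ is perfect, its kernel being an infinitesimal group scheme with vanishing degree-one cohomology over $k$ iff $k$ is perfect --- and forces $b$ to be an honest isomorphism in the next step.

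Granting the core input, here is how I would finish. Apply it to $\alpha|_{\Gamma}$ with $H = G$, $k'=k$, obtaining $\varphi\in\Hom(k,k)$ and $b\colon {}^{\varphi}G\to G$ with $\alpha(g) = b(g\otimes 1)$ on $\Gamma$. This formula extends to all of $G(k)$: for $g\in G(k)$ and $h\in\Gamma$ one has $ghg^{-1}\in\Gamma$, so $\alpha(g)\,b(h\otimes 1)\,\alpha(g)^{-1} = \alpha(ghg^{-1}) = b\bigl((ghg^{-1})\otimes 1\bigr) = b(g\otimes 1)\,b(h\otimes 1)\,b(g\otimes 1)^{-1}$, whence $b(g\otimes 1)^{-1}\alpha(g)$ centralizes the Zariski-dense subgroup $\alpha(\Gamma) = \{\,b(h\otimes 1):h\in\Gamma\,\}$ of $G$; as $G$ is adjoint this centralizer is trivial, so $\alpha(g) = b(g\otimes 1)$ for every $g\in G(k)$. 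Applying the core input to $\alpha^{-1}$ as well and composing the two standardizations yields an isogeny ${}^{\eta}G\to G$ (with $\eta$ a composite of the two field homomorphisms) that induces the identity on the Zariski-dense set of $k$-points; this forces $\eta = \id$ --- equivalently, $\varphi$ is an automorphism of $k$ and the second field map is $\varphi^{-1}$ --- and, using the imperfectness hypothesis in characteristics $2$ and $3$ to exclude a special isogeny, forces $b$ to be an isomorphism of algebraic groups. Since $G$ is adjoint, $b\colon {}^{\varphi}G\xrightarrow{\ \sim\ }G$ is precisely a $\varphi$-semilinear automorphism of $G$; unwinding the base-change identification it is an element $f_{\varphi}\in\Aut(G\to\Spec k)$ lying over $\varphi$ with $\alpha(g) = f_{\varphi}\circ g\circ(\Spec\varphi)^{-1}$ for all $g\in G(k)$.

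For uniqueness, suppose $(f_{\varphi},\varphi)$ and $(f'_{\psi},\psi)$ both satisfy the conclusion. Then $F := (f'_{\psi})^{-1}\circ f_{\varphi}$, a semilinear automorphism of $G$, satisfies $F\circ g = g\circ\Spec\bigl(\varphi\circ\psi^{-1}\bigr)$ for every $g\in G(k)$. Evaluating on the $k$-points $u_{a}(t)$ of a root group and on $S(k)$ and comparing the semilinear structures of the two sides forces $\varphi\circ\psi^{-1} = \id$, i.e.\ $\psi = \varphi$; then $F$ is an algebraic automorphism of $G$ restricting to the identity on the Zariski-dense subgroup $G(k)$, hence $F = \id$ and $f_{\varphi} = f'_{\psi}$. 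This completes the plan; as indicated, the one genuinely hard ingredient is the standardness of abstract homomorphisms in the core input, everything else reducing to Zariski density together with the centralizer argument made available by the adjointness of $G$.
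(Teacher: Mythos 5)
You should first note that the paper does not prove this statement at all: it is quoted as an excerpt of \cite{BT73}*{Corollaire~8.13} (modulo the change of convention discussed in Remark~\ref{Rem:difference between B-T and me}), and the citation is the proof. So the only question is whether your outline is itself a proof. As a \emph{derivation of the Corollaire from the main standardness theorem of Borel--Tits} your route is essentially the right one, and it matches how Corollaire~8.13 is deduced in \cite{BT73} from their Th\'eor\`eme~8.1: Zariski-density of $\alpha(G(k)^{+})$, the centralizer/adjointness argument extending $\alpha(g)=b(g\otimes 1)$ from $G(k)^{+}$ to $G(k)$, the application of standardness to $\alpha^{-1}$ as well to force the field homomorphism to be an automorphism and to exclude a nontrivial special isogeny (this is exactly where the imperfectness hypothesis in characteristics $2$ and $3$ is used), and uniqueness via Zariski density together with triviality of the center of an adjoint group. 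These reductions are sound.

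The genuine gap is the one you flag yourself: the ``core input'' --- every abstract homomorphism of $G(k)^{+}$ with Zariski-dense image in $H(k')$ is standard --- \emph{is} the main theorem of \cite{BT73}, occupying the bulk of that article, and your one-paragraph sketch (root groups land in unipotent radicals of parabolics via a rank-one reduction, then additivity and multiplicativity are extracted to produce $\varphi$, then $\beta$ is reassembled into an algebraic morphism) does not establish it; the group-theoretic detection of unipotence and the reconstruction of the field homomorphism from the bare group structure are precisely the delicate steps, and no argument is given for them. So either you cite the standardness theorem explicitly as input --- in which case your reduction is acceptable, though you should still track the convention carefully: in this paper a semilinear automorphism $f_{\varphi}$ over $\varphi$ corresponds to a $k$-isomorphism $G\to{}^{\varphi}G$ and the map $\Aut (G\to \Spec k)\to \Aut (k)$ sends $f_{\varphi}\mapsto\varphi^{-1}$, so the passage from the Borel--Tits datum $b\colon{}^{\varphi}G\to G$ to the pair $(f_{\varphi},\varphi)$ in the statement involves the inversion explained in Remark~\ref{Rem:difference between B-T and me}, and your last paragraph glosses over this bookkeeping --- or you must actually prove the standardness theorem, which the proposal does not do.
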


By a \textbf{semilinear automorphism} $ f_{\varphi} $ of a $ k $-group scheme $ G $ over an automorphism $ \varphi \colon k\to k $, we mean that we have the following commutative diagram in the category of group schemes
 \begin{center}
 \begin{tikzpicture}[->]
  \node (1) [anchor=east] {$ G $};
  \node (2) [right=1.5cm of 1] {$ G $};
  \node (3) [below=0.5cm of 1] {$ \Spec k $};
  \node (4) [below=0.5cm of 2] {$ \Spec k $};
  
  \path [every node/.style={font=\sffamily\small}]
  (1) edge node [above]  {$ f_{\varphi} $} (2)
        edge node  {} (3)
  (2) edge node  {} (4)
  (3) edge node [above] {\footnotesize{$ \Spec \varphi $}} (4);
  \end{tikzpicture}
  \end{center}
where the vertical arrows are the structural morphisms of the $ k $-scheme $ G $. Note that if $G$ is realised as a matrix group, and if $g=(g_{ij})\in G(k)$, then $ f_{\varphi}\circ g\circ (\Spec \varphi)^{-1} $ is given by the matrix whose $ij$-th coefficient is $\varphi^{-1}((f_{\varphi})_{ij}(g))$, so that our convention differs from \cite{BT73} (see Remark~\ref{Rem:difference between B-T and me} for a discussion of this difference).

Another way to phrase the Borel--Tits theorem is to say that the group $ \Aut_{\text{abstract}}(G(k)) $ of abstract automorphisms of $ G(k) $ fits in the exact sequence $ 1\to \Aut G\to \Aut_{\text{abstract}}(G(k))\to \Aut (k) $. Letting $ \Aut_G(k) $ denote the image of $ \Aut_{\text{abstract}}(G(k)) $ in $ \Aut(k) $, it is natural to wonder what $ \Aut_G(k) $ is, and whether the group of abstract automorphisms of $ G(k) $ splits as the semi-direct product $ (\Aut G)\rtimes \Aut_G(k) $. While this semi-direct product decomposition does hold when G is $ k $-split, it turns out that it may fail in general, even if G is quasi-split. One of the aims of the present paper is to address this issue. 

To illustrate some of our results in the most concrete way, here is a corollary of our results (we refer the reader to Corollary~\ref{Cor:very explicit non-splitting} for a proof of this statement and to Remark~\ref{Rem:an explicit description of quasi-split PGU} for an explicit realisation of the algebraic group appearing in this corollary) :
\begin{corollary*}[of Theorem~\ref{Thm: MainThm2}]
Let $ G $ be the quasi-split, absolutely simple, adjoint algebraic group of type $ ^2A_{n-1} $ over a field $ k $ with corresponding quadratic separable extension $ l $. Let $ k_0 $ be the field of rational numbers $ ( $respectively the field of $ p $-adic numbers for some prime $ p) $ and assume that $ k $ and $ l $ are possibly infinite $ ( $respectively finite$ ) $ Galois extensions of $ k_0 $. Then $ \Aut_G(k) = \Gal (k/k_0) $ and the short exact sequence $ 1\to \Aut G\to \Aut_{\text{abstract}}(G(k))\to \Aut_G(k)\to 1 $ splits if and only if the short exact sequence of abstract groups $ 1\to \Gal (l/k)\to \Gal (l/k_0)\to \Gal (k/k_0)\to 1 $ splits.
\end{corollary*}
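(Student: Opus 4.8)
The plan is to deduce the statement from Theorem~\ref{Thm: MainThm2} by working out what the scheme of based root data $\mathcal{R}(G)$ is for a quasi-split group of type $^2A_{n-1}$, and then rewriting the resulting short exact sequence in purely Galois-theoretic terms.

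First I would replace abstract automorphisms by semilinear ones. Since $G$ is absolutely simple, adjoint and isotropic (being quasi-split but not split, and of the twisted type $^2A_{n-1}$, which already forces $n\geq 3$), and since $k$, being a Galois extension of $\mathbf{Q}$ or of $\mathbf{Q}_p$, is infinite of characteristic $0$, the Borel--Tits theorem recalled in the introduction applies verbatim. Thus the natural map $\Aut(G\to\Spec k)\to\Aut_{\text{abstract}}(G(k))$ is an isomorphism inducing the identity on $\Aut G$ and on $\Aut_G(k)$; in particular the sequence $1\to\Aut G\to\Aut_{\text{abstract}}(G(k))\to\Aut_G(k)\to 1$ is identified with $1\to\Aut G\to\Aut(G\to\Spec k)\to\Aut_G(k)\to 1$, so the two sequences have the same term $\Aut_G(k)$ and one splits precisely when the other does.

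Next I would invoke Theorem~\ref{Thm: MainThm2}: as $G$ is quasi-split, $\Aut_G(k)=\Aut_{\mathcal{R}(G)}(k)$ as subgroups of $\Aut(k)$, and the sequence for $G$ splits if and only if $1\to\Aut\mathcal{R}(G)\to\Aut(\mathcal{R}(G)\to\Spec k)\to\Aut_{\mathcal{R}(G)}(k)\to 1$ splits. So it suffices to identify this last sequence with $1\to\Gal(l/k)\to\Gal(l/k_0)\to\Gal(k/k_0)\to 1$ and to check $\Aut_{\mathcal{R}(G)}(k)=\Gal(k/k_0)$. For the adjoint root datum of type $A_{n-1}$ ($n\geq 3$) the automorphism group is $\mathbf{Z}/2$, generated by the diagram flip, and the quasi-split $^2A_{n-1}$ form is exactly the twist of the constant based root datum scheme classified by the quadratic étale $k$-algebra $l$; hence $\mathcal{R}(G)$ is governed by the extension $l/k$, and from its explicit description one reads off that a semilinear automorphism of $\mathcal{R}(G)$ amounts to a field automorphism $\psi$ of $l$, lying over the field automorphism $\psi|_k$ of $k$, so that $\Aut(\mathcal{R}(G)\to\Spec k)\cong\Aut(l)$ with kernel $\Aut\mathcal{R}(G)\cong\{\psi\in\Aut(l):\psi|_k=\id\}$. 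The remaining, genuinely arithmetic, point is that every abstract field automorphism of $k$, and of $l$, fixes $k_0$ pointwise: for $k_0=\mathbf{Q}$ because $\mathbf{Q}$ is the prime field, and for $k_0=\mathbf{Q}_p$ by the rigidity of $p$-adic fields, i.e. every field automorphism of a finite extension of $\mathbf{Q}_p$ is continuous and therefore fixes the closure $\mathbf{Q}_p$ of the prime field. Granting this we get $\Aut(l)=\Gal(l/k_0)$ and $\Aut(k)=\Gal(k/k_0)$; the restriction $\Gal(l/k_0)\to\Gal(k/k_0)$ is defined because $k/k_0$ is Galois and surjective because $l/k_0$ is Galois, with kernel $\Gal(l/k)$; and since every $\varphi\in\Gal(k/k_0)=\Aut(k)$ lifts to some $\psi\in\Gal(l/k_0)=\Aut(l)$, hence to a semilinear automorphism of $\mathcal{R}(G)$, we get $\Aut_{\mathcal{R}(G)}(k)=\Gal(k/k_0)$. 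Thus the sequence for $\mathcal{R}(G)$ is $1\to\Gal(l/k)\to\Gal(l/k_0)\to\Gal(k/k_0)\to 1$, and combining with the two previous reductions gives both $\Aut_G(k)=\Gal(k/k_0)$ and the claimed splitting equivalence.

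The step I expect to be the main obstacle is this last identification, in two respects: first, showing that in type $^2A_{n-1}$ the scheme $\mathcal{R}(G)$, and hence the whole sequence $1\to\Aut\mathcal{R}(G)\to\Aut(\mathcal{R}(G)\to\Spec k)\to\Aut_{\mathcal{R}(G)}(k)\to 1$, depends on no more than the quadratic extension $l/k$ (so that the value of $n$ and all other data of $G$ drop out); and second, the rigidity statement that abstract automorphisms of $k$ and of $l$ fix $k_0$ — trivial over $\mathbf{Q}$, but over $\mathbf{Q}_p$ this is exactly where the finiteness of $k$ and $l$ over $\mathbf{Q}_p$ is used. Everything else is a formal consequence of Theorem~\ref{Thm: MainThm2} and the Borel--Tits theorem.
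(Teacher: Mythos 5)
Your proposal is correct and follows essentially the same route as the paper: Borel--Tits to replace abstract automorphisms of $G(k)$ by $\Aut (G\to \Spec k)$, Theorem~\ref{Thm: MainThm2} to pass to the sequence for $\mathcal{R}(G)$, the identification of that sequence in type $^2A_{n-1}$ with $1\to \Gal (l/k)\to \Aut (l\geq k)\to \Aut_l(k)\to 1$ (this is the paper's Lemma~\ref{Lem:descriptionofabstractautomoprhismsofDyn} and Proposition~\ref{Prop:epplicit SES for Dyn}, case $g=2$), and then rigidity of $\mathbf{Q}_p$ (Lemma~\ref{Lem:Qp is rigid}) or primality of $\mathbf{Q}$, together with normality of $k/k_0$ and $l/k_0$ Galois, to turn it into $1\to \Gal (l/k)\to \Gal (l/k_0)\to \Gal (k/k_0)\to 1$, exactly as in Corollary~\ref{Cor:very explicit non-splitting} and Remark~\ref{Rem:an explicit description of quasi-split PGU}. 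The only small imprecision is that the middle group is $\Aut (l\geq k)$, the automorphisms of $l$ preserving $k$, rather than all of $\Aut (l)$; in your setting the two coincide precisely because every automorphism of $l$ fixes $k_0$ pointwise and $k/k_0$ is normal, which is the point the paper's proof makes explicitly.
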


We now recast the problem in a more useful way for us. Let $ \Aut (G\to \Spec k) $ denotes the group of semilinear automorphisms of $ G $. We can then rephrase the Borel--Tits theorem as saying that under the assumptions of the theorem, the natural homomorphism $ \Aut (G\to \Spec k)\to \Aut_{\text{abstract}} (G(k)) $ is an isomorphism. The rest of the paper is concerned with the study of $ \Aut (G\to \Spec k) $.

Given a $ k $-group scheme $ G $ (in this paper the letter $ k $ is exclusively used to designate a field), we have a homomorphism $ \Aut (G\to \Spec k)\to \Aut (k)\colon f_{\varphi}\mapsto \varphi^{-1} $. Let $ \Aut_{G}(k) $ denotes the image of this homomorphism, and let $ \Aut G $ denotes the group of $ k $-algebraic automorphisms of $ G $, or in other words the kernel of $ \Aut (G\to \Spec k)\to \Aut (k) $. In summary, we defined the exact sequence $ 1\to \Aut G\to \Aut (G\to \Spec k)\to \Aut_G (k)\to 1 $.

It seems that not much is known about the subgroup $\Aut_G(k)\leq \Aut (k)$ in general (in \cite{Di71}*{p. 18, last paragraph}, J.\ Dieudonn\'e makes a related comment, though not exactly about $ \Aut_G(k) $). Actually, for $G = \textbf{SL}_n(D)$ with $D$ a non-commutative finite dimensional central division algebra, $\Aut_G(k)$ is isomorphic to the group of outer automorphisms of $D$ (or to a degree $ 2 $ extension of this group). But computing the latter is probably hard (for example, in \cite{Ha05} T.\ Hanke computes explicitly an outer automorphism of a division algebra of degree $3$ over a number field, and then uses it to construct a non-crossed product division algebra over $\mathbf{Q}(\!(t)\!)$).

Nonetheless, in some special cases, $\Aut_G(k)$ is easily understood. For example, when $G$ is a split connected reductive group, then $\Aut_G(k) = \Aut (k)$ (this follows directly from the fact that split connected reductive groups are defined over the prime field of $k$). The next easiest case should be to compute $ \Aut_G(k) $ for quasi-split groups, and this is indeed feasible.
\begin{theorem}\label{Thm: MainThm1}
Let $G$ be a connected reductive $k$-group and let $\mathcal{R}(G)$ be its $k$-scheme of based root datum. There exists a homomorphism $\Aut (G\to \Spec k)\to \Aut (\mathcal{R}(G)\to \Spec k) $ preserving the underlying automorphism of $k$ and whose kernel is $ (\Ad G)(k) $. Hence $ \Aut_G(k)\leq \Aut_{\mathcal{R}(G)}(k) $. If $G$ is quasi-split, the corresponding exact sequence $1\to (\Ad G)(k)\to \Aut (G\to \Spec k)\to \Aut (\mathcal{R}(G)\to \Spec k)$ splits. Hence if $ G $ is quasi-split $\Aut_G(k) = \Aut_{\mathcal{R}(G)}(k)$.
\end{theorem}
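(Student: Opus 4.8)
The plan is to build the map $\Aut(G\to\Spec k)\to\Aut(\mathcal R(G)\to\Spec k)$ from functoriality of the construction $G\mapsto\mathcal R(G)$, to identify its kernel via the structure theory of reductive group schemes, and then, when $G$ is quasi-split, to construct an explicit section by fixing a pinning of $G$ defined over $k$ and invoking the isomorphism/existence theorem for pinned reductive groups.

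First I would note that $\mathcal R$ is functorial for semilinear isomorphisms: a semilinear automorphism $f_\varphi$ of $G$ over $\varphi\in\Aut(k)$ carries the Borel/torus/root data of $G$ to those of the $\varphi$-twist of $G$, hence induces a semilinear automorphism of $\mathcal R(G)$ over the same $\varphi$. This yields a homomorphism $\Aut(G\to\Spec k)\to\Aut(\mathcal R(G)\to\Spec k)$ commuting with the projections to $\Aut(k)$, whence $\Aut_G(k)\le\Aut_{\mathcal R(G)}(k)$ after passing to images. For the kernel: an element there has trivial image in $\Aut(k)$, so it lies in $\Aut G$; and inside $\Aut G$, the kernel of $\Aut G\to\Aut\mathcal R(G)$ is the group of scheme-theoretically inner automorphisms, namely $(\Ad G)(k)$. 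This is the $k$-point statement of the exact sequence of group schemes $1\to\Ad G\to\underline\Aut(G)\to\underline\Out(G)\to1$ from SGA3, together with the identification of $\underline\Out(G)$ with the automorphism scheme of $\mathcal R(G)$ built into the construction of $\mathcal R$; one uses that $\underline\Aut(G)(k)=\Aut G$ and that $\Ad G$ acts faithfully on $G$ (its centre being trivial), so $(\Ad G)(k)$ embeds in $\Aut G$ even when $G(k)\to(\Ad G)(k)$ is not onto.

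Now suppose $G$ is quasi-split. I would use the characterization of quasi-split groups as outer twists of pinned split groups to fix a pinning $\epsilon=(B,T,(X_\alpha)_{\alpha\in\Delta})$ defined over $k$ and compatible with $\mathcal R(G)$: $B$ a Borel and $T\subseteq B$ a maximal torus over $k$, with root vectors $X_\alpha$ chosen so that the semilinear action permutes them according to its action on $\mathcal R(G)$, with no inner contribution. Given any $\theta\in\Aut(\mathcal R(G)\to\Spec k)$ over $\varphi$, I would produce the unique semilinear automorphism $s(\theta)$ of $G$ over $\varphi$ preserving $\epsilon$ and inducing $\theta$: regarding a semilinear automorphism over $\varphi$ as a $k$-isomorphism from $G$ to its $\varphi$-twist (a pinned reductive $k$-group whose pinning and root datum are the $\varphi$-twists of $\epsilon$ and $\mathcal R(G)$), the datum $\theta$ prescribes the induced isomorphism of root data, and the isomorphism/existence theorem for pinned reductive groups supplies a unique $k$-isomorphism of pinned groups realizing it. Uniqueness forces $s$ to be a homomorphism: $s(\theta_1)s(\theta_2)$ and $s(\theta_1\theta_2)$ are both semilinear automorphisms over $\varphi_1\varphi_2$ that preserve $\epsilon$ and induce $\theta_1\theta_2$, hence coincide. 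By construction $s$ is a section of $\Aut(G\to\Spec k)\to\Aut(\mathcal R(G)\to\Spec k)$, which is therefore surjective for quasi-split $G$; combining with the inclusion already obtained gives $\Aut_G(k)=\Aut_{\mathcal R(G)}(k)$.

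I expect the main obstacle to be this last step, and specifically two points inside it. The first is producing the $k$-pinning and verifying its compatibility with the scheme $\mathcal R(G)$: individual root vectors may be defined only over separable extensions, so one must argue that they can be chosen with the prescribed semilinear transformation law, where quasi-splitness is exactly what rules out any inner part. The second is importing the existence-and-uniqueness (``rigidity'') statement for pinned reductive groups and checking that it survives passage to the $\varphi$-twist, so that it applies at the level of semilinear rather than merely $k$-linear automorphisms. By comparison, the kernel computation in the first part is routine once the functoriality of $\mathcal R$ and the SGA3 description of $\underline\Aut(G)$ are available.
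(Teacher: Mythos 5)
Your proposal is correct and follows essentially the same route as the paper: the homomorphism comes from functoriality of $G\mapsto\mathcal{R}(G)$ (outer/exterior isomorphisms) together with compatibility of $\mathcal{R}$ with base change along $\varphi$, the kernel is $(\Ad G)(k)$ by the inner/outer exact sequence, and in the quasi-split case the section is the set of semilinear automorphisms preserving a chosen quasi-pinning, justified by the SGA3-type equivalence between quasi-pinned quasi-split groups and based root data. The two points you flag as obstacles are exactly the ingredients the paper supplies (its Lemma on base change commuting with taking the scheme of based root datum, and its version of Expos\'e~24, Th\'eor\`eme~3.11), so your plan matches the paper's proof.
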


The $k$-scheme of based root datum $\mathcal{R}(G)$ appearing in this result is an object (see Definition~\ref{Def:scheme of based root datum}) which can be described as a variation on the scheme of Dynkin diagram as defined in \cite{Gil-Pol11}*{Exposé 24, Section 3}. In the end, the construction of the $k$-scheme of based root datum is a restatement of \cite{Gil-Pol11}*{Exposé 24, Théorème 3.11}. Let us directly give a brief description of what $\mathcal{R}(G)$ is in concrete terms: for $G_0$ a split connected reductive $k$-group, $\mathcal{R}(G_0)$ is defined to be the constant object on $\Spec k$ associated to the based root datum $R = (M,M^{\ast},R,R^{\ast},\Delta )$ of $G_0$. Now given a $k_s/k$-form $G$ of $G_0$, one can choose a cocycle $c\colon \Gal (k_s/k)\to \Aut((G_0)_{k_s})$ defining $G$ and compose it with the projection $\Aut((G_0)_{k_s})\to \Aut R$ to obtain a twist of $\mathcal{R}(G_0)$ which we define to be the $k$-scheme of based root datum of $G$. In light of this description, it is clear that $\mathcal{R}(G) \cong \mathcal{R}(G')$ (in the category of based root datum over $k$) if and only if $G$ and $G'$ are inner forms of each other.

The proof of Theorem~\ref{Thm: MainThm1} consists mainly of a straightforward adaptation of \cite{Gil-Pol11}*{Exposé 24, Théorème 3.11} to the semilinear situation. One crucial step is to show that taking the scheme of based root datum of $ G $ commutes with base change (see Lemma~\ref{Lem:base change commute with taking root datum}). We chose to prove this in concrete terms, by using cocycle computations adapted to the semilinear situation. Those cocycle computations also lead to the following Galois cohomological formulation.
\begin{theorem}\label{Thm: MainThm3}
Let $ G $ be a connected reductive $k$-group and let $ \mathcal{R}(G) $ be its $ k $-scheme of based root datum. We have two exact sequences
\begin{align*}
1\to (\Ad G)&(k)\to \Aut (G\to \Spec k)\to \Aut (\mathcal{R}(G)\to \Spec k)\to H^1(k,(\Ad G)(k_s))\\
&1\to \Aut G\to \Aut (G\to \Spec k)\to \Aut (k)\to H^1(k,\Aut G_{k_s}).
\end{align*}
\end{theorem}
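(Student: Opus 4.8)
The plan is to obtain both sequences by extending, to the semilinear setting, the relevant long exact sequences of non-abelian Galois cohomology, reading off every map from the cocycle description of $\Aut(G\to\Spec k)$ and $\Aut(\mathcal{R}(G)\to\Spec k)$ that is already set up in the proof of Theorem~\ref{Thm: MainThm1}. Fix a separable closure $k_{s}/k$, write $\Gamma=\Gal(k_{s}/k)$, and fix a $1$-cocycle $c\colon\Gamma\to\Aut(G_{k_{s}})$ presenting $G$ as a Galois descent of the split group $G_{k_{s}}$ (split, because $k_{s}$ is separably closed), with induced cocycle $\bar{c}\colon\Gamma\to\Aut(\mathcal{R}(G)_{k_{s}})$; then $\underline{\Aut}(G)(k_{s})$ is $\Aut(G_{k_{s}})$ with the $c$-twisted Galois action, and likewise $\underline{\Ad}(G)(k_{s})=(\Ad G)(k_{s})$. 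A pinning of $G_{k_{s}}$ over $k_{s}$ provides a section of $\Aut(G_{k_{s}})\to\Aut(\mathcal{R}(G)_{k_{s}})$ (so $(\Ad G)(k_{s})$ is the kernel of this map) and a section of $\Aut(G_{k_{s}}\to\Spec k_{s})\to\Aut(k_{s})$ (the pinned split model descends to the prime field, so every automorphism of $k_{s}$ lifts). Note finally that any $\varphi\in\Aut(k)$ extends to some $\tilde{\varphi}\in\Aut(k_{s})$ with $\tilde{\varphi}(k)=k$, and that such a $\tilde{\varphi}$ normalises $\Gamma$ in $\Aut(k_{s})$.

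First sequence. Theorem~\ref{Thm: MainThm1} already gives exactness at $(\Ad G)(k)$ and at $\Aut(G\to\Spec k)$, so it remains to build a map $\partial\colon\Aut(\mathcal{R}(G)\to\Spec k)\to H^{1}(k,(\Ad G)(k_{s}))$ whose fibre over the base point is the image of $\Aut(G\to\Spec k)$. Given $\psi$ over $\varphi$, I pick $\tilde{\varphi}$ as above, base change $\psi$ to a $\tilde{\varphi}$-semilinear automorphism $\tilde{\psi}$ of $\mathcal{R}(G)_{k_{s}}$, and lift $\tilde{\psi}$ along the (pinning-split) surjection $\Aut(G_{k_{s}}\to\Spec k_{s})\twoheadrightarrow\Aut(\mathcal{R}(G)_{k_{s}}\to\Spec k_{s})$ to a $\tilde{\varphi}$-semilinear automorphism $\tilde{f}$ of $G_{k_{s}}$. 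The obstruction to $\tilde{f}$ being compatible with the descent datum $c$ — equivalently, to $\tilde{f}$ descending to a semilinear automorphism of $G$ over $\varphi$ — is measured, for each $\sigma\in\Gamma$, by an element $e(\sigma)\in\Aut(G_{k_{s}})$; since $\tilde{\psi}$ does descend (to $\psi$), $e(\sigma)$ lies in $(\Ad G)(k_{s})$, and one checks that $\sigma\mapsto e(\sigma)$ is a $1$-cocycle for the natural Galois action on $\underline{\Ad}(G)(k_{s})$. Put $\partial(\psi)=[e]$. Altering $\tilde{f}$ within $(\Ad G)(k_{s})$ changes $e$ by a coboundary, altering $\tilde{\varphi}$ within $\Gamma$ changes $e$ to a cohomologous cocycle, so $\partial$ is well defined; and $\partial(\psi)$ is trivial precisely when $\tilde{f}$ can be corrected inside $(\Ad G)(k_{s})$ so as to descend, i.e. precisely when $\psi$ lifts to $\Aut(G\to\Spec k)$. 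For $\varphi=\id$ this recovers the standard exact sequence $1\to(\Ad G)(k)\to\Aut G\to\Aut\mathcal{R}(G)\to H^{1}(k,(\Ad G)(k_{s}))$.

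Second sequence. Here the left part is formal ($\Aut G$ is by definition the kernel of $\Aut(G\to\Spec k)\to\Aut(k)$). The obstruction map $\beta\colon\Aut(k)\to H^{1}(k,\Aut G_{k_{s}})$ is built exactly as $\partial$, but using, in place of $\tilde{f}$, the canonical $\tilde{\varphi}$-semilinear automorphism $\theta_{\tilde{\varphi}}$ of $G_{k_{s}}=G_{0}\otimes_{\mathrm{prime}}k_{s}$ induced by $\tilde{\varphi}$ on the $k_{s}$-factor: the failure of $\theta_{\tilde{\varphi}}$ to be $c$-compatible is a $1$-cocycle $\sigma\mapsto e'(\sigma)\in\Aut(G_{k_{s}})$ for the natural Galois action on $\underline{\Aut}(G)(k_{s})$, with class $\beta(\varphi)\in H^{1}(k,\Aut G_{k_{s}})$ independent of $\tilde{\varphi}$. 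Equivalently, $\beta(\varphi)$ is the class of the $k$-form ${}^{\varphi}G:=G\otimes_{k,\varphi}k$ under the twisting bijection between $k_{s}/k$-forms of $G$ and $H^{1}(k,\Aut G_{k_{s}})$ which sends $G$ to the base point; since, by the universal property of the fibre product $G\otimes_{k,\varphi}k$, a semilinear automorphism of $G$ over $\varphi$ is the same datum as a $k$-isomorphism $G\xrightarrow{\sim}{}^{\varphi}G$, the class $\beta(\varphi)$ is trivial iff $\varphi$ is realised by a semilinear automorphism of $G$, which is exactness at $\Aut(k)$. One can also see the second sequence as a consequence of the first: by Theorem~\ref{Thm: MainThm1} and the compatibility of $\mathcal{R}(-)$ with base change (Lemma~\ref{Lem:base change commute with taking root datum}), $\varphi$ lifts to $\Aut(\mathcal{R}(G)\to\Spec k)$ iff $\mathcal{R}(G)\cong\mathcal{R}({}^{\varphi}G)$, i.e. iff $G$ and ${}^{\varphi}G$ are inner forms of each other, and splicing $\partial$ with this obstruction reproduces $\beta$.

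Both sequences are to be read in the category of pointed sets — the two $H^{1}$'s are non-abelian — with ``exact'' meaning that the image of the last group homomorphism equals the preimage of the base point. The one step that genuinely has to be carried out, rather than quoted, is the cocycle bookkeeping in the semilinear as opposed to purely Galois setting: checking that the cochains $e$ and $e'$ above are honest $1$-cocycles for the \emph{correct} $\Gamma$-action — the natural action on the $k$-group $\underline{\Ad}(G)$, resp. $\underline{\Aut}(G)$, which is the $c$-twist of the action on the split model, and into which the extra ``shift by $\tilde{\varphi}$'' must be absorbed using that $\tilde{\varphi}$ normalises $\Gamma$ — and checking that changing the auxiliary data ($\tilde{\varphi}$, the set-theoretic lift $\tilde{f}$, the pinning of $G_{k_{s}}$) alters them only within their cohomology class. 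This is a routine continuation of the cocycle formalism already developed for Theorem~\ref{Thm: MainThm1}, which is why the present theorem is in effect a corollary of it.
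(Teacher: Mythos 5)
Your proposal is correct and follows essentially the same route as the paper: it recasts semilinear automorphisms in the cocycle formalism already set up for Theorem~\ref{Thm: MainThm1}, defines each coboundary as the twisted ``difference'' between a chosen lift over $k_s$ and the descent datum $c$, and proves exactness via the standard criterion for such a lift to be correctable by an element of $(\Ad G)(k_s)$ (resp.\ $\Aut G_{k_s}$). The only difference is packaging: the paper defines the coboundaries on the fixed-point group $\Aut(G_0\to\Spec k_s\geq k)^{\Gamma}$ and then quotients by $\tilde{\Gamma}$ (Proposition~\ref{Prop:Galois cohomology for semilinear auto} and Lemma~\ref{Lem:triviality of Gamma under cocoundary}), whereas you define them directly on $\Aut(\mathcal{R}(G)\to\Spec k)$ and $\Aut(k)$ by choosing extensions $\tilde{\varphi}$ and lifts $\tilde f$ and checking independence of these choices — the same computations in equivalent form, with your identification of the second coboundary with $\varphi\mapsto[{}^{\varphi}G]$ being a consistent reformulation.
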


We refer the reader to Section~\ref{Sec:Semilinear Galois cohomology} for the definition of the coboundary maps involved in those exact sequences. After proving Theorem~\ref{Thm: MainThm3}, we illustrate how it can be used by computing $ \Aut_G(k) $ when $ G\cong \textbf{SL}_1(D) $ and $ D $ is a division algebra of degree $ 3 $ over $ k $. In doing so, we recover some of the results in \cite{Ha07} without needing to introduce bycyclic algebras (see Lemma~\ref{Lem:Hanke result part 1}, Lemma~\ref{Lem:Hanke result part 2} and Remark~\ref{Rem:applicability of these computations}).

Once we have some control over $\Aut_G(k)$, it is natural to wonder whether the exact sequence $ 1\to \Aut G \to \Aut (G\to \nolinebreak \Spec k)\to \Aut_G (k)\to 1 $ splits. Again, for $G$ a split connected reductive algebraic group, this exact sequence does split (a statement already made in \cite{Tits74}*{Corollary~5.10}) because such a group is defined over its prime field. But somewhat surprisingly, this is not any more the case for a general quasi-split group.

\begin{theorem}[The bowtie theorem]\label{Thm: MainThm2}
Let $ G $ be a connected reductive $k$-group which is quasi-split, and let $\mathcal{R}(G)$ be its $k$-scheme of based root datum. Then the short exact sequence $$ 1\to \Aut G \to \Aut (G\to \Spec k)\to \Aut_G (k)\to 1 $$ splits if and only if the short exact sequence $$ 1\to \Aut \mathcal{R}(G)\to \Aut (\mathcal{R}(G)\to \Spec k)\to \Aut_{\mathcal{R}(G)} (k)\to 1 $$ splits.
\end{theorem}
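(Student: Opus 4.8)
The plan is to deduce the statement from Theorem~\ref{Thm: MainThm1} by a short diagram chase. Write $Q := \Aut_G(k) = \Aut_{\mathcal{R}(G)}(k)$ (these agree because $G$ is quasi-split), and let $p\colon \Aut(G\to\Spec k)\to Q$ and $\bar p\colon \Aut(\mathcal{R}(G)\to\Spec k)\to Q$ denote the canonical surjections, whose kernels are $\Aut G$ and $\Aut\mathcal{R}(G)$ respectively; thus the two sequences in the statement are precisely the short exact sequences attached to $p$ and $\bar p$, and ``splits'' means ``admits a group-theoretic section''. By Theorem~\ref{Thm: MainThm1}, quasi-splitness of $G$ provides a surjective homomorphism $\pi\colon \Aut(G\to\Spec k)\to\Aut(\mathcal{R}(G)\to\Spec k)$ together with a section $s$, so that $\pi\circ s = \id$. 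Since both $\pi$ and $s$ preserve the underlying automorphism of $k$, they intertwine the projections to $Q$: that is, $\bar p\circ\pi = p$ and $p\circ s = \bar p$.

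Granting this, each implication is one line. If $\rho\colon Q\to\Aut(G\to\Spec k)$ is a section of $p$, then $\pi\circ\rho$ is a section of $\bar p$, because $\bar p\circ\pi\circ\rho = p\circ\rho = \id_Q$; hence the second sequence splits. Conversely, if $\sigma\colon Q\to\Aut(\mathcal{R}(G)\to\Spec k)$ is a section of $\bar p$, then $s\circ\sigma$ is a section of $p$, because $p\circ s\circ\sigma = \bar p\circ\sigma = \id_Q$; hence the first sequence splits.

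The substance of the argument is therefore entirely contained in Theorem~\ref{Thm: MainThm1}: once $\Aut(\mathcal{R}(G)\to\Spec k)$ is realised as a retract of $\Aut(G\to\Spec k)$ compatibly with the maps to $\Aut(k)$, a splitting of either extension of $Q$ transfers to the other, since a retract of a split group extension is again split. I do not expect a genuine obstacle here. The two points that need care are that the quasi-split hypothesis is used twice --- to produce the section $s$ of $\pi$, and to identify the two subgroups $\Aut_G(k)$ and $\Aut_{\mathcal{R}(G)}(k)$ of $\Aut(k)$ --- and that $\pi$ and $s$ genuinely respect the projections to $\Aut(k)$, which is precisely the content of the clause ``preserving the underlying automorphism of $k$'' in Theorem~\ref{Thm: MainThm1}. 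The name ``bowtie'' presumably refers to the shape of the diagram formed by the two short exact sequences glued along the pair $(\pi,s)$.
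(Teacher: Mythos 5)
Your proposal is correct and is essentially the paper's own argument: the paper also deduces the theorem from Theorem~\ref{Thm: MainThm1} by assembling the homomorphism $\pi$ and its section $\iota$ (your $s$), noting both are compatible with the projections to $\Aut_G(k)=\Aut_{\mathcal{R}(G)}(k)$, and transferring sections through them exactly as you do; the bowtie diagram in the paper is just this chase drawn out.
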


As it turns out, the bowtie theorem (whose name is due to the diagram appearing in its proof) is a direct corollary of Theorem~\ref{Thm: MainThm1} (see the end of Section~\ref{Sec:RD's scheme} for the proof).

When $G$ is absolutely simple, we can identify the short exact sequence $ 1\to \Aut \mathcal{R}(G)\to \Aut (\mathcal{R}(G)\to \Spec k)\to \Aut_{\mathcal{R}(G)} (k)\to 1 $ as a sequence involving various automorphism groups of fields naturally associated with $ G $ (see Proposition~\ref{Prop:epplicit SES for Dyn}). Using this description, we can then give many explicit examples of absolutely simple, quasi-split algebraic $ k $-groups $G$ for which $ \Aut (G\to \Spec k) $ is not a split extension of $ \Aut_G(k) $. The corollary given at the beginning of this introduction illustrates this in the most concrete way.

In the last section of the paper, we also explore the same questions when $ G $ is an inner form of $ \SL_n $ over a local field $ K $ (see the beginning of Section~\ref{Sec:SL_n(D)} for a precise definition of what we mean by a local field). The first step is to get some control over $ \Aut_G(K) $. Actually, a direct corollary of the results of T.\ Hanke in \cite{Ha07} is that in this case, $ \Aut_G(K) = \Aut (K) $.

\begin{theorem}[corollary of \cite{Ha07}]\label{Thm:Main Thm 2.1}
Let $ K $ be a local field, let $ D $ be a central division algebra over $K$ of degree $ d $, and consider the algebraic group $ G = \textbf{SL}_n(D) $. Then $ \Aut_{G}(K) = \Aut (K) $.
\end{theorem}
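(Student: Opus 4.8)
The plan is to realise $G=\textbf{SL}_n(D)$ as the norm-one group $\textbf{SL}_1(A)$ of the central simple $K$-algebra $A=M_n(D)$ (of degree $nd$) and to exploit that the formation of $\textbf{SL}_1(-)$ is functorial in the algebra and commutes with base change. For $\varphi\in\Aut(K)$ write $(-)^{(\varphi)}$ for base change along $\Spec\varphi\colon\Spec K\to\Spec K$, and let $A^{(\varphi)}:=A\otimes_{K,\varphi}K$ be the central simple $K$-algebra obtained by transporting the $K$-structure through $\varphi$; then $G^{(\varphi)}=\textbf{SL}_1(A)^{(\varphi)}\cong\textbf{SL}_1(A^{(\varphi)})$ as $K$-groups. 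Unwinding the commutative square defining a semilinear automorphism (see the introduction), a $\varphi$-semilinear automorphism of $G$ is exactly a $K$-group isomorphism $G\xrightarrow{\sim}G^{(\varphi)}$; since $A$ and $A^{(\varphi)}$ have the same degree, Wedderburn's theorem produces such an isomorphism as soon as $A$ and $A^{(\varphi)}$ define the same class in $\mathrm{Br}(K)$. Thus the theorem reduces to the assertion that $\Aut(K)$ acts trivially on $\mathrm{Br}(K)$ whenever $K$ is local.

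This last fact is the input imported from \cite{Ha07}. Conceptually it holds because the local invariant map $\mathrm{inv}_K\colon\mathrm{Br}(K)\xrightarrow{\sim}\mathbf{Q}/\mathbf{Z}$ is canonical: for a non-archimedean local field it is built solely from the valuation of $K$, its residue field, and the residue Frobenius, all of which are preserved by any abstract field automorphism of $K$ (in the archimedean case $\Aut(K)$ is trivial for $K=\mathbf{R}$ and acts on the trivial group $\mathrm{Br}(\mathbf{C})=0$ for $K=\mathbf{C}$). Hence $\mathrm{inv}_K(\varphi_{*}[A])=\mathrm{inv}_K([A])$, so $\varphi_{*}[A]=[A]$, where $\varphi_{*}$ is the automorphism of $\mathrm{Br}(K)$ induced by $\varphi$. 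Granting this, every $\varphi\in\Aut(K)$ is realised by a $\varphi$-semilinear automorphism of $G$, so the image of $\Aut(G\to\Spec K)\to\Aut(K)$ contains $\varphi^{-1}$ for every $\varphi$, i.e. equals $\Aut(K)$; the reverse inclusion $\Aut_G(K)\subseteq\Aut(K)$ holds by definition. One could equally route the argument through the first exact sequence of Theorem~\ref{Thm: MainThm3}: for $G=\textbf{SL}_n(D)$ the scheme of based root datum is the constant one of type $A_{nd-1}$, the obstruction group $H^1(K,(\Ad G)(K_s))$ embeds via Hilbert~90 into $\mathrm{Br}(K)$, and the relevant coboundary classes are controlled by $\mathrm{Br}(K)$ and vanish because $\varphi_{*}[A]=[A]$.

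The one step that requires genuine care — and the reason the statement is attributed to Hanke rather than being immediate — is the claim that an arbitrary abstract field automorphism of a local field respects the data defining $\mathrm{inv}_K$: a priori such an automorphism need not be continuous. The point is that for a non-archimedean local field the valuation ring admits a purely field-theoretic characterisation, so every automorphism of $K$ is automatically continuous and induces an automorphism of the residue field commuting with Frobenius. Once this is in place the rest of the proof is formal: the passage from the algebra automorphism of $A=M_n(D)$ to the group automorphism of $\textbf{SL}_n(D)$ uses only the functoriality of $\textbf{SL}_1(-)$ and its compatibility with base change, and neither the semisimplicity nor the simple-connectedness of $G$ is needed.
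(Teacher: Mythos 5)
Your argument is correct, but it takes a genuinely different route from the paper. The paper's proof (Corollary~\ref{Cor:etxending auto to D}) is constructive: it writes $D$ as the cyclic algebra $A(d,r)=(K_d/K,\sigma,\pi^r)$ with $K_d/K$ unramified, extends a given $\alpha\in\Aut(K)$ to $K_d$ using uniqueness of unramified extensions (Lemma~\ref{Lem:extending to unramfiied extensions}), uses surjectivity of the norm on units to find $x$ with $N_{K_d/K}(x)=\alpha(\pi^r)/\pi^r$, and then writes down the explicit ring automorphism $\phi(\alpha,x)$ of the cyclic algebra (Lemma~\ref{Lem:automorphisms of A(d,r)}), passing to $\mathbf{SL}_n(D)$ via preservation of the reduced norm (Lemma~\ref{Lem:Automatic preservation of the reducednorm}). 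You instead argue abstractly: a $\varphi$-semilinear automorphism of $G$ is a $K$-isomorphism $G\to{}^{\varphi}G\cong\mathbf{SL}_1({}^{\varphi}A)$, and ${}^{\varphi}A\cong A$ because $\mathrm{inv}_K$ is canonical, field automorphisms of local fields are automatically continuous (the Schmidt-type uniqueness of the complete valuation, which the paper also invokes), and Wedderburn then identifies algebras of equal degree and equal Brauer class. Both proofs hinge on the same continuity input, but yours is shorter and more conceptual --- it is essentially the Eilenberg--MacLane-style argument that the paper notes settles characteristic $0$, here made uniform in all characteristics --- while the paper's explicitness is not an accident: the automorphisms $\tilde\phi(\alpha,x)$ it produces are exactly the building blocks reused in Section~\ref{Sec:SL_n(D)} to construct the global splitting, which your existence argument would not supply. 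One small caveat: your parenthetical alternative via Theorem~\ref{Thm: MainThm3} is stated imprecisely --- the obstruction for lifting an element of $\Aut(K)$ lives in $H^1(K,\Aut G_{K_s})$ with $\Aut G_{K_s}\cong\PGL_{nd}\rtimes\Z/2\Z$, not merely in $H^1(K,\PGL_{nd})\hookrightarrow\mathrm{Br}(K)$, so the reduction to the Brauer group there needs the extra bookkeeping carried out in Lemmas~\ref{Lem:Hanke result part 1} and~\ref{Lem:Hanke result part 2}; since this is only an aside, it does not affect your main argument.
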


In \cite{Ha07}, T.\ Hanke does not mention local fields, but he gives an algorithm over an arbitrary field to compute outer automorphisms of cyclic division algebras, and this implies the result for local fields. In fact, we only need to use the simplest version of his algorithm and for the ease of the reader we give a self contained proof of Theorem~\ref{Thm:Main Thm 2.1} in Corollary~\ref{Cor:etxending auto to D}. Also note that in characteristic $0$, this result has probably been known for a long time since it is a direct consequence of \cite{EML48}*{Corollary~7.3}.

Finally, we obtain an explicit characterisation for the splitting of the exact sequence $ 1\to \Aut G\to \Aut (G\to \Spec K)\to \Aut _G(K)\to 1 $ for $ G $ an inner form of $ \SL_n $ over a local field.

\begin{theorem}\label{Thm:Main Thm 2.2}
Let $ K $ be a local field, let $ D $ be a central division algebra over $K$ of degree $ d $, and consider the algebraic $K$-group $ G = \textbf{SL}_n(D) $. The short exact sequence $ 1\to \Aut G\to \Aut (G\to \Spec K)\to \Aut _G(K)\to 1 $ splits if and only if $ \gcd (nd,[K:K']) $ divides $ n $ for all subfields $ K'\leq K $ such that $ K/K' $ is finite Galois.
\end{theorem}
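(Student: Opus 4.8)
The plan is to combine Theorem~\ref{Thm:Main Thm 2.1} with the description of the automorphisms of $G=\mathbf{SL}_n(D)$ in terms of the central simple algebra $M_n(D)$, and then to translate the splitting question into a Brauer-group statement over local fields by Galois descent. By Theorem~\ref{Thm:Main Thm 2.1} we have $\Aut_G(K)=\Aut(K)$, so what must be decided is whether $1\to\Aut G\to\Aut(G\to\Spec K)\to\Aut(K)\to 1$ splits. First I would record that every $\varphi\in\Aut(K)$ fixes $[D]\in\mathrm{Br}(K)$: this is clear when $K$ is archimedean, and when $K$ is non-archimedean it follows from the functoriality of the invariant isomorphism $\mathrm{inv}_K\colon\mathrm{Br}(K)\xrightarrow{\sim}\mathbf{Q}/\mathbf{Z}$ under field isomorphisms. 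Hence, by Skolem--Noether, the ``restrict to the centre'' homomorphism $\Aut_{\mathrm{ring}}(M_n(D))\to\Aut(K)$ has kernel $\PGL_n(D)(K)=(\Ad G)(K)$ and is surjective. Since $G$ is an inner form of $\SL_{nd}$, $\mathcal{R}(G)$ is the constant based root datum of type $A_{nd-1}$, so $\Aut(\mathcal{R}(G)\to\Spec K)$ contains $\Aut(K)$ as the obvious subgroup; feeding this into the first exact sequence of Theorem~\ref{Thm: MainThm3} and using that its connecting map vanishes on the diagram automorphism exactly when $[D]=-[D]$, one identifies $\Aut(G\to\Spec K)$ with $\Aut_{\mathrm{ring}}(M_n(D))$ when $D\not\cong D^{\mathrm{op}}$ (that is, $d\ge 3$), and with an extension of it by an order-$2$ contragredient factor when $d\le 2$. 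The archimedean cases are then immediate: $\Aut(\mathbf{R})=1$, so the sequence splits and $\gcd(nd,1)\mid n$; over $\mathbf{C}$ the algebra $D$ is trivial and $G$ is split, so the sequence splits and the numerical condition holds. It therefore remains to decide when $\Aut_{\mathrm{ring}}(M_n(D))\to\Aut(K)$ splits, carrying the contragredient factor along when $d\le 2$.

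Next I would reformulate splitting over a finite Galois subextension by descent. Fix a subfield $K'\le K$ with $K/K'$ finite Galois. A section of $\Aut_{\mathrm{ring}}(M_n(D))\to\Aut(K)$ over $\Gal(K/K')$ is the same datum as a semilinear action of $\Gal(K/K')$ on the $K$-algebra $M_n(D)$; by finite Galois descent its ring of invariants $R$ is then a central simple $K'$-algebra with $R\otimes_{K'}K\cong M_n(D)$, hence of degree $nd$ and with $\mathrm{res}_{K/K'}[R]=[D]$ in $\mathrm{Br}(K)$. Conversely, any such $R$ produces the desired section via the action of $\Gal(K/K')$ on the second tensor factor of $R\otimes_{K'}K$. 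So the sequence splits over $\Gal(K/K')$ if and only if there is a class $\alpha\in\mathrm{Br}(K')$ with $\mathrm{res}_{K/K'}\alpha=[D]$ whose index --- equivalently, since $K'$ is local, whose order --- divides $nd$.

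The gcd condition then comes out of an elementary Brauer-group computation. Over the local field $K'$ the map $\mathrm{inv}_{K'}$ is an isomorphism onto $\mathbf{Q}/\mathbf{Z}$ and $\mathrm{inv}_K\circ\mathrm{res}_{K/K'}=[K:K']\cdot\mathrm{inv}_{K'}$, so one must solve $[K:K']\,\alpha=\beta$ in $\mathbf{Q}/\mathbf{Z}$ with $\mathrm{ord}(\alpha)\mid nd$, where $\beta:=\mathrm{inv}_K[D]$ has order $d$. Working one prime at a time, for every $p\mid d$ the numerator of any solution stays coprime to $p$, so every solution $\alpha$ has $v_p(\mathrm{ord}\,\alpha)=v_p(d)+v_p([K:K'])$, while for $p\nmid d$ one may take the $p$-part of $\alpha$ to be $0$; hence a valid $\alpha$ exists if and only if $v_p([K:K'])\le v_p(n)$ for every $p\mid d$, which is precisely $\gcd(nd,[K:K'])\mid n$. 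Putting the three steps together over all finite Galois $K/K'$, the ``only if'' direction of the theorem is now immediate, since a global section restricts to a section over each $\Gal(K/K')$. When $K$ is a $p$-adic field the ``if'' direction is also immediate: then $\Aut(K)$ is finite, $K_0:=K^{\Aut(K)}$ satisfies $\Gal(K/K_0)=\Aut(K)$ and occurs among the $K'$, so the hypothesis gives $\gcd(nd,[K:K_0])\mid n$, hence a central simple $K_0$-algebra of degree $nd$ descending $M_n(D)$, hence a global section.

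The remaining, and main, obstacle is the ``if'' direction when $\Aut(K)$ is infinite, that is when $K\cong\mathbf{F}_q(\!(t)\!)$: there is no largest finite Galois subextension and the family of such subextensions is not directed, so the descent data cannot be globalised by a limit. Here I would instead construct a global section by hand from a cyclic presentation $D=(L/K,\mathrm{Frob},t)$ with $L=\mathbf{F}_{q^d}(\!(t)\!)$ the unramified extension of degree $d$: every $\varphi\in\Aut(K)$ is continuous, preserves the valuation, extends canonically to $L$ through the canonical lift to $\mathbf{F}_{q^d}$ of the automorphism it induces on the residue field $\mathbf{F}_q$ (a lift that commutes with $\mathrm{Frob}$), and, since $\varphi(t)/t$ is a unit and units are norms from $L^\times$ in an unramified extension, $\varphi$ lifts to a semilinear automorphism of $D$. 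The real work is to choose these norm elements coherently, so that $\varphi\mapsto f_\varphi$ is a homomorphism, and to show that the obstruction to doing so is controlled exactly by the gcd condition on the finite Galois subextensions. Carrying out this coherent construction --- and propagating the order-$2$ contragredient factor through the earlier steps when $d\le 2$, where $[D]=-[D]$ leaves the Brauer-theoretic condition unchanged --- is where the genuine effort lies; the Brauer computation above, by contrast, is routine once the setup is in place.
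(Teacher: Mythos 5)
Your outline correctly handles the ``only if'' direction (your Brauer-group computation via descent of the algebra $M_n(D)$ is a legitimate alternative to the paper's route through Tits indices and base change of cyclic algebras, at least when $d\ge 3$) and the characteristic-zero ``if'' direction (rigidity of $\mathbf{Q}_p$, descent to $K^{\Aut(K)}$), which matches Theorem~\ref{Thm:splitting in char. 0 for SL_n(D)}. But there are two genuine gaps. The smaller one: for $d\le 2$ your identification ``section over $\Gal(K/K')$ $=$ semilinear action on the $K$-algebra $M_n(D)$'' is not automatic, because a section may involve the anti-automorphism component; Galois descent then a priori produces an \emph{outer} form of $\SL_{nd}$ over $K'$ (a unitary group), not some $\textbf{SL}_{n'}(D')$, and your Brauer argument does not apply to it. One must rule this out — the paper does so in Theorem~\ref{Thm:local non-splitting for SLn(D)} by comparing Tits indices (distinguished versus undistinguished extremal orbits); ``propagating the order-$2$ contragredient factor'' is a placeholder, not an argument.

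The main gap is that the hard half of the theorem is exactly the part you defer as ``where the genuine effort lies'': when $K=\mathbf{F}_{p^i}(\!(T)\!)$ and the gcd condition holds, no descent argument is available and a global section must be built explicitly, and your sketched strategy would not succeed as stated. Choosing the norm elements ``coherently, so that $\varphi\mapsto f_\varphi$ is a homomorphism'' at the level of $D$ (acting entrywise on $M_n(D)$) would amount to descending $D$ itself to the fixed fields of finite subgroups of $\Aut(K)$, which is impossible whenever $\gcd(d,p^i-1)>1$ or $\gcd(d,i)>1$ — precisely the cases the theorem still allows provided $n$ absorbs those factors. This is why the paper's construction (Propositions~\ref{Prop:constructing fJK}, \ref{Prop:constructing fCl(d)}, \ref{Prop:constructing fCk(d)}, \ref{Prop:constructing fCa'}, \ref{Prop:constructing fCb'} and Theorem~\ref{Thm:splitting for SLn(D) in char. p}) twists each piece of the decomposition $\Aut(K)\cong (J(K)\rtimes\mathbf{F}_{p^i}^{\times})\rtimes\Gal(K/\mathbf{F}_p(\!(T)\!))$ by conjugation with explicit non-scalar block matrices in $\GL_n(D)$ (the matrices $X_\alpha$, $Z$, $Y$, $W$), exploiting the divisibility $bb'\mid n$ extracted from the hypothesis via Proposition~\ref{Prop:existence of Galois subfield of some degree}, and then verifies all the commutation relations needed to glue the partial sections. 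None of that is present or replaced in your proposal, so the theorem is not proved.
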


As we prove in Proposition~\ref{Prop:existence of Galois subfield of some degree}, for $ K = \mathbf{F}_{p^i}(\!(T)\!) $, the condition ``$ \gcd (nd,[K:K']) $ divides $ n $ for all subfields $ K'\leq K $ such that $ K/K' $ is finite Galois'' is equivalent to requiring that $ \gcd (d,p) = 1 $ and that $ \gcd (nd,i(p^i-1)) $ divides $ n $, so that this criterion is very explicit in characteristic $p$. On the other hand, in characteristic $0$ note that $\mathbf{Q}_p$ is rigid (see Definition~\ref{Def:strongly rigid field} and Lemma~\ref{Lem:Qp is rigid}), so that the condition is a finite one. See also Remark~\ref{Rem:Vivid example of splitting for divison algebras} for a vivid illustration of Theorem~\ref{Thm:Main Thm 2.2} in characteristic $ 0 $.

The necessity of the condition is proved in Corollary~\ref{Cor:non-splitting condition for SLn(D)}. The hard part of Theorem~\ref{Thm:Main Thm 2.2} is to show that our explicit criterion is sufficient. Whilst in characteristic $0$ this follows from Galois descent (see Theorem~\ref{Thm:splitting in char. 0 for SL_n(D)}), no descent technique can be used in characteristic $p>0$ since $\textbf{SL}_n(D)$ is not defined over the fixed field of $\Aut (K)$ (which is just $\mathbf{F}_p$) when $D$ is non commutative. Hence we have to work by hand and give the splitting explicitly. In order to achieve this, for $ K = \mathbf{F}_{p^i}(\!(T)\!) $, we decompose $ \Aut (K) $ as $ (J(K)\rtimes \mathbf{F}_{p^i}^{\times})\rtimes \Gal (K/\mathbf{F}_{p}(\!(T)\!)) $ (see Definition~\ref{Def:J(K) and C(K)} for the definition of $ J(K) $). Fortunately, it is easy enough to find an explicit section of $ \Aut (G\to \Spec K)\to \Aut _G(K) $ for the $ J(K) $ components, and the theory of Galois descent predicts when a section to $ \Aut (G\to \Spec K)\to \Aut _G(K) $ exists for the component $ \mathbf{F}_{p^i}^{\times} \rtimes \Gal (K/\mathbf{F}_{p}(\!(T)\!)) $. It thus suffices to compute explicitly those sections predicted by Galois descent, and to check that the formulas we found on each component can be glued together.

\section*{Acknowledgements}
This paper grew out from the idea that the scheme of Dynkin diagrams provides an obstruction for a semisimple algebraic group to be defined over a given field. This idea was given in a comment to my MathOverflow question \cite{St15} by user grghxy (probably a close relative to nfdc23), and I gratefully thank him.

Jean-Pierre Tignol suggested at an early stage of this project that it should be possible to give an explicit example in some specific cases, which gave an early form of Corollary~\ref{Cor:very explicit non-splitting}. He also pointed out many innacuracies and made various comments that were very helpful. In particular, he mentioned to us the work of T.\ Hanke and the fact that outer automorphisms of division algebras are related to $H^3 (k, \mathbf{G}_m)$, as explained in \cite{EML48}. He also provided a much cleaner version and proof of Lemma~\ref{Lem:Automatic preservation of the reducednorm}. I thank him for the very interesting discussions we had on this subject.

A preliminary version of this paper was focusing entirely on the case of local fields. I thank Richard Weiss for pointing out that one could consider just as easily the general case.

I am also grateful to Philippe Gille, who kindly indicated that an earlier version of this paper was reproving a special case of \cite{Gil-Pol11}*{Expos\'e~24, Th\'eor\`eme~3.11}. This pointer to the literature was very useful, since in the end this result is the central one around which our paper is organised.

Finally, I warmly thank Pierre-Emmanuel Caprace for asking the question that got this paper started, for encouraging me to investigate the $ \SL_n(D) $ case and for his patient teaching on mathematical exposition.

The research concerning this project was done in part whilst the author was a F.R.S.-FNRS research fellow (in between 2013 and 2017), and then a postdoctoral fellow at the Max Planck Institute for Mathematics in Bonn (in between 2017 and 2018). The final version of this paper was then written whilst the author was a postdoctoral fellow at the Justus-Liebig Universität Giessen. We thank all those institutions for their support.

\section{Semilinear automorphisms and Galois descent}\label{Sec:Semiauto and Galois descent}
For the rest of the paper, the letter $ k $ stands for an arbitrary field. By a $ k $-group scheme, we mean an affine group scheme of finite type over $ k $. A smooth $ k $-group scheme is called an algebraic group. Given an object $ X $ in a category, we write $ \Aut X $ for the automorphisms of $ X $ in that category. Also given a $ k $-scheme $ X $, we denote by \underline{$  \Aut $} $ X $ its $ k $-group functor of automorphisms (i.e.\ for any $ k $-algebra $ R $, (\underline{$  \Aut $} $ X)(R) = \Aut X_R $). With these conventions, for $ G $ a $k$-group scheme, $ \Aut G $ is the automorphism functor of $ G $ evaluated at $ k $, i.e.\ $ \Aut G = $ (\underline{$ \Aut $} $G)(k) $.

Let $ G $ be a $ k $-group scheme.  We gave in the introduction the definition of a semilinear automorphism of $ G $. The vocabulary ``semilinear automorphism'' is already used in the literature (see for example \cite{FSS98}*{Section~1.2}). It has the same meaning than our usage, except that in those references, the underlying automorphisms of the base field are assumed to fix a subfield $ k_0 $ such that $ k/k_0 $ is Galois. We do not make this assumption, and for example in Section~\ref{Sec:SL_n(D)}, we consider the case of arbitrary automorphisms of $ k = \mathbf{F}_p(\!(T)\!) $, which is a more general situation.

In the literature, the notation $ \text{SAut}(G_{k_s}) $ is used for the group of semilinear automorphisms (see for examples \cite{BKLR12}*{Section~3.2} and also the references therein). We prefer to use the notation $ \Aut (G\to \Spec k) $ so that the ground field explicitly appears in the notation.
\begin{remark}
It is tempting to define a ``semilinear automorphism sheaf of $ k $-algebras'' such that $ \Aut (G\to \Spec k) $ would be its $k$-rational points. Unfortunately, this is not possible, because we do not know how to extend functorially automorphisms of $ k $ to automorphisms of an arbitrary $ k $-algebra $ R $.
\end{remark}

Let us continue by recalling some standard vocabulary.
\begin{definition}\label{Def:Notation Aut(l/k) and base change}
Let $ \varphi \colon  k\to l $ be a field homomorphism (if $ l $ is a field containing $ k $, we take $ \varphi $ to be the identity), let $ G, G' $ be $ k $-group schemes and let $ H $ be an $l$-group scheme.
\begin{enumerate}
\item The group of automorphisms of $ l $ whose restriction to $ \varphi (k) $ is trivial is denoted $ \Aut (l/k) $.
\item We set $ \varphi^{\ast} = \Spec \varphi $. We denote the base change of $ G $ along $ \Spec l\xrightarrow{\varphi^{\ast}} \Spec k $ either by $ G_{l} $ or by $ ^{\varphi}G $. If $ G_{l} $ is isomorphic to $ H $ (as an $l$-group scheme), we say that $ G $ is an $ l/k$-\textbf{form of $ H $} (or just a form of $H$ if the field extension is understood from the context). If there exists an $ l/k $-form of $ H $, we say that $ H $ is \textbf{defined over $ k $}.

\item For $ f\colon G\to G' $ a homomorphism of $ k $-group schemes, we denote by $ ^{\varphi}f\colon\,^{\varphi}G\to\,^{\varphi}G' $ the base change of $ f $ along $ \varphi^{\ast} $.
\end{enumerate}
\end{definition}

\begin{remark}\label{Rem:difference between B-T and me}
Having set up our notations, let us elucidate the difference between our conventions and the conventions in \cite{BT73}. Given a $ k $-group scheme $ G $, a $ k' $-group scheme $ G' $ and given an abstract homomorphism $ \alpha \colon G(k)\to G'(k') $, A.\ Borel and J.\ Tits aim to obtain a field homomorphism $ \varphi \colon k\to k' $ and an isogeny $ \beta \colon \,^{\varphi}G\to G' $ such that for $ g\in G(k) = \Hom_{\text{k-schemes}}(\Spec k,G) $, $ \alpha (g) = \beta \circ \,^{\varphi}g $. The following commutative diagram summarises the situation:
\begin{center}
\begin{tikzcd}[row sep=2.5em, column sep=3em] 
G' \arrow{dr}  &^{\varphi}G \arrow[l,swap,"\beta"]\arrow{r}\arrow[d,shift right=1.5ex] & G \arrow[d,shift right=1.5ex]\\ 
& \Spec k' \arrow[r,"\Spec \varphi "] \arrow[u,swap ,"^{\varphi}g"] & \Spec k \arrow[u,swap ,"g"]
\end{tikzcd}
\end{center}
On the other hand, the present paper focuses entirely on the group of semilinear automorphisms of $ G $. To keep the Borel--Tits convention, one should define this group as $ \lbrace \Isom_{k\text{-grp schemes}}(~^{\varphi}G, G)~\vert~ \linebreak \varphi \in \Aut (k)\rbrace $. We prefer to use the more natural definition that a semilinear automorphism over $ \varphi \in \Aut (k) $ is a commutative diagram of the following kind (note that either one of the red arrows determines the other):
\[
\begin{tikzcd}[row sep=2.5em]
G \arrow[drr,red,"f_{\varphi}"] \arrow[dr,red] \arrow[ddr] & & \\
&^{\varphi}G \arrow[r] \arrow[d] & G \arrow[d]\\
&\Spec k \arrow[r,"\Spec \varphi"] & \Spec k
\end{tikzcd}
\]
where $ f_{\varphi} $ and $ \Spec \varphi $ are both automorphisms of group schemes (but they are not automorphisms of $ k $-group schemes when $ \varphi $ is not the identity). In this setting, there are two ways (admittedly not as natural as in the Borel--Tits setting) to obtain an abstract automorphism of $ G(k) $. Either we define this abstract automorphism proceeding ``from right to left'', in which case we would obtain the map $ G(k)\to G(k)\colon g\mapsto f_{\varphi}^{-1}\circ g\circ \Spec \varphi $. Or we proceed ``from left to right'', in which case we obtain the map $ G(k)\to G(k)\colon g\mapsto f_{\varphi}\circ g\circ (\Spec \varphi)^{-1} $. We chose the latter option.
\end{remark}

The following elementary observation plays a fundamental role in this work.
\begin{lemma}\label{Lem:field of definition are in AutG(k)}
Let $ k\leq  l $ be a field extension of $ k $, let $ G $ be an $ l $-group scheme and assume that $ G $ is defined over $k$. Then there exists a homomorphism $ \Aut (l/k)\to \Aut (G \to \Spec l) $ whose composition with $ \Aut (G\to \Spec l)\to \Aut_G(l) $ is the identity on $ \Aut (l/k) $. In particular, $ \Aut_G(l) $ contains $ \Aut (l/k) $.
\end{lemma}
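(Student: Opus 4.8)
The plan is to use the hypothesis that $G$ is defined over $k$ to fix, once and for all, a $k$-group scheme $G_0$ together with an isomorphism $\psi \colon (G_0)_l \xrightarrow{\sim} G$ of $l$-group schemes. The point is that base change of $G_0$ along $\Spec l \to \Spec k$ carries a canonical semilinear action of $\Aut(l/k)$: for $\varphi \in \Aut(l/k)$, the restriction of $\varphi$ to $k$ is trivial, so the square
\[
\begin{tikzcd}[row sep=2.5em]
\Spec l \arrow[r,"\Spec \varphi"] \arrow[d] & \Spec l \arrow[d] \\
\Spec k \arrow[r,equal] & \Spec k
\end{tikzcd}
\]
commutes, and hence there is a canonical isomorphism $u_\varphi \colon (G_0)_l \xrightarrow{\sim} {}^{\varphi}\!\big((G_0)_l\big)$ coming from the universal property of the fibre product (equivalently, $(G_0)_l$ is built from the $k$-object $G_0$, so $\varphi$ acts on it by transport along $\Spec\varphi$). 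Composing, I set $f_\varphi := \psi \circ u_\varphi^{-1}\circ\,^{\varphi}(\psi^{-1}) \colon G \to G$, or more precisely I check that the obvious composite fits into the commutative diagram defining a semilinear automorphism of $G$ over $\varphi$: the structural map $G \to \Spec l$ is intertwined with $\Spec\varphi$ because both $\psi$ and $u_\varphi$ are compatible with the structural maps (for $u_\varphi$ this is exactly the statement that base change along $\Spec\varphi$ sends $(G_0)_l \to \Spec l$ to $^{\varphi}(G_0)_l \to \Spec l$).

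Next I would verify that $\varphi \mapsto f_\varphi$ is a homomorphism $\Aut(l/k) \to \Aut(G \to \Spec l)$. This reduces to the cocycle identity $u_{\varphi\circ\varphi'} = {}^{\varphi'}(u_\varphi)\circ u_{\varphi'}$ (or its mirror image, depending on the composition convention in Remark~\ref{Rem:difference between B-T and me}) for the canonical trivialising isomorphisms on the base-changed object $(G_0)_l$; this is a standard functoriality statement about iterated base change, namely that ${}^{\varphi'}({}^{\varphi}X)$ is canonically ${}^{\varphi\circ\varphi'}X$ and the $u$'s are compatible with this identification. I should be slightly careful to get the order of composition to match the paper's convention that the map to $\Aut(k)$ sends $f_\varphi$ to $\varphi^{-1}$; concretely this means the association $\varphi\mapsto f_\varphi$ I build should land over $\varphi$, so that it is a genuine homomorphic section after passing to $\Aut_G(l)$.

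Finally, composition with $\Aut(G \to \Spec l) \to \Aut_G(l) \leq \Aut(l)$ sends $f_\varphi$ to $\varphi$ by construction (the underlying field automorphism in the defining square is $\Spec\varphi$), so the composite $\Aut(l/k)\to \Aut(G\to\Spec l)\to \Aut_G(l)$ is the inclusion $\Aut(l/k)\hookrightarrow\Aut(l)$, which is in particular injective and the identity on $\Aut(l/k)$; hence $\Aut(l/k)\leq \Aut_G(l)$, which is the last assertion.

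The only genuinely delicate point, and the one I expect to be the main obstacle, is bookkeeping rather than mathematics: pinning down the canonical isomorphism $u_\varphi$ precisely enough that the cocycle identity and the ``lands over $\varphi$, not $\varphi^{-1}$'' compatibility are manifestly correct, given that the paper's semilinear-automorphism convention (a commutative square with $\Spec\varphi$, where $f_\varphi$ determines and is determined by the map $^{\varphi}G\to G$) differs from the Borel--Tits one. I would handle this by working entirely with the ``inner'' square $^{\varphi}G \to G$ over $\Spec\varphi \colon \Spec l \to \Spec l$ — where $^{\varphi}G$ for $G=(G_0)_l$ has the tautological description as $(G_0)_l$ again — so that $u_\varphi$ is literally an identity-type morphism and the homomorphism property becomes the transitivity of base change; all the remaining content is then routine diagram chasing.
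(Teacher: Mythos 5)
Your proposal is correct and is essentially the paper's own argument: the paper takes a $k$-form $H$ with $G\cong H\times_{\Spec k}\Spec l$ and uses the canonical semilinear automorphisms $\Id_H\times\varphi^{*}$, which is exactly your $u_\varphi$ transported through the chosen isomorphism $\psi$, with the homomorphism property coming from transitivity of base change. The one bookkeeping point you flagged is resolved precisely as you anticipated: since the projection $\Aut(G\to\Spec l)\to\Aut(l)$ sends a semilinear automorphism over $\varphi$ to $\varphi^{-1}$ and $\varphi\mapsto \Id_H\times\varphi^{*}$ is an anti-homomorphism, the paper's section is $\varphi\mapsto f_{\varphi^{-1}}$, which makes the composite the identity on $\Aut(l/k)$.
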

\begin{proof}
Let $ H $ be an $ l/k $-form of $ G $. For $ \varphi \in \Aut (l/k) $, we define $$ f_{\varphi}=\Id_H \times \varphi^*\colon H\times_{\Spec k} \Spec l\to H\times_{\Spec k} \Spec l.  $$ The map $ \Aut (l/k)\to \Aut (G\to \Spec l)\colon \varphi \mapsto f_{\varphi^{-1}} $ is a homomorphism. Furthermore, its composition with $ \Aut (G\to \Spec l)\to \Aut_G(l) $ is the identity on $ \Aut (l/k) $, as wanted.
\end{proof}

In fact, if the field extension $ l/k $ appearing in Lemma~\ref{Lem:field of definition are in AutG(k)} is finite Galois, then we have a converse to Lemma~\ref{Lem:field of definition are in AutG(k)} by the theory of Galois descent.

\begin{theorem}[Galois descent]\label{Thm:Galois descent}
Let $ k\leq  l $ be a field extension of $ k $ such that $ l/k $ is a finite Galois extension and let $ G $ be an $ l $-group scheme. If there exists a homomorphism $ \Gal (l/k)\to \Aut (G\to \Spec l) $ whose composition with $ \Aut (G\to \nolinebreak \Spec l)\to \Aut_G(l) $ is the identity on $ \Gal (l/k) $, then $ G $ is defined over $ k $.
\end{theorem}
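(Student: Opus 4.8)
The plan is to reconstruct a $k$-form of $G$ directly from the given section, by running the standard machine of finite Galois descent for affine schemes. Write $\Gamma=\Gal(l/k)$, and let $A$ be the coordinate Hopf algebra of $G$, so $G=\Spec A$ with comultiplication $\Delta\colon A\to A\otimes_l A$, counit $\varepsilon\colon A\to l$, and antipode $S\colon A\to A$. The theorem is exactly the converse of Lemma~\ref{Lem:field of definition are in AutG(k)}, so producing such a $G_0$ with $(G_0)_l\cong G$ is all that is needed.

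First I would unwind the hypothesis at the level of $A$. A homomorphism $\Gamma\to\Aut(G\to\Spec l)$, $\sigma\mapsto h_\sigma$, whose composition with $\Aut(G\to\Spec l)\to\Aut_G(l)\subseteq\Aut(l)$ is the identity means that $h_\sigma$ is a semilinear automorphism of $G$ over $\sigma^{-1}\colon l\to l$ (recall the projection was defined as $f_\varphi\mapsto\varphi^{-1}$). Passing to coordinate rings and setting $\tilde\rho_\sigma:=(h_{\sigma^{-1}})^\ast\colon A\to A$, one checks that $\tilde\rho_\sigma$ is $\sigma$-semilinear, i.e.\ $\tilde\rho_\sigma(\lambda a)=\sigma(\lambda)\,\tilde\rho_\sigma(a)$ for $\lambda\in l$, and that $\sigma\mapsto\tilde\rho_\sigma$ is a genuine left action of $\Gamma$ on $A$ by ring automorphisms: the inversion $\sigma\mapsto\sigma^{-1}$ is precisely what compensates for the contravariance of $\Spec$, turning what would be an anti-action into an action. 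Moreover, since each $h_\sigma$ is an automorphism of group schemes, each $\tilde\rho_\sigma$ is compatible with $\Delta$, $\varepsilon$, and $S$ in the evident $\sigma$-semilinear sense.

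Next I would invoke Galois descent for $l$-vector spaces (Speiser's theorem): setting $A_0:=A^\Gamma$, the natural map $A_0\otimes_k l\to A$ is an isomorphism of $l$-algebras, and $A_0$ is finitely generated over $k$ by faithfully flat descent of finite generation. Then the descended structure maps exist: since the $\tilde\rho_\sigma$ commute with $\Delta$, $\varepsilon$, $S$, since $l^\Gamma=k$, and since the iso $A_0\otimes_k l\cong A$ yields $(A\otimes_l A)^\Gamma=A_0\otimes_k A_0$, the maps $\Delta$, $\varepsilon$, $S$ restrict to $k$-algebra maps $A_0\to A_0\otimes_k A_0$, $A_0\to k$, $A_0\to A_0$. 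The Hopf algebra axioms for $A_0$ are inherited from those of $A$, since they may be checked after the faithfully flat base change to $l$, where they hold by assumption. Hence $G_0:=\Spec A_0$ is an affine $k$-group scheme of finite type with $(G_0)_l\cong G$ as $l$-group schemes, so $G$ is defined over $k$.

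The only genuine work is the bookkeeping in the second step: getting the variance and semilinearity conventions to line up so that $\sigma\mapsto\tilde\rho_\sigma$ really is an action of $\Gamma$, and verifying the $\Gamma$-equivariance of $\Delta$, $\varepsilon$, $S$ from the fact that the $h_\sigma$ are group-scheme automorphisms; once this is in place, the rest is the formal, well-documented mechanism of faithfully flat descent. One could equally phrase the whole argument geometrically, assembling the $h_\sigma$ into a descent datum for $G$ along the faithfully flat quasi-compact morphism $\Spec l\to\Spec k$ via the identification $\Spec l\times_{\Spec k}\Spec l\cong\coprod_{\sigma\in\Gamma}\Spec l$, invoking effectivity of descent for affine schemes, and noting that the group operations descend because the datum is compatible with them.
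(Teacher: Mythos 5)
Your argument is correct, and it is essentially the same classical Galois descent that the paper invokes: the paper simply cites Poonen (a section as in the hypothesis is a descent datum, and descent is effective for affine schemes), whereas you carry out that effectivity explicitly by taking $\Gamma$-invariants of the Hopf algebra via Speiser's lemma and descending $\Delta$, $\varepsilon$, $S$. Your bookkeeping with the inversion ($h_\sigma$ lies over $\sigma^{-1}$ because the projection is $f_\varphi\mapsto\varphi^{-1}$, so $\tilde\rho_\sigma=(h_{\sigma^{-1}})^\ast$ gives a genuine $\sigma$-semilinear action) is exactly right.
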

\begin{proof}
This is a classical result from descent theory, see \cite{Poo17}*{Section~4.4}. Note that giving such a homomorphism is the same as giving a descent datum on $ G $ by \cite{Poo17}*{Proposition~4.4.2}, so that the result holds by \cite{Poo17}*{Corollary~4.4.6}.
\end{proof}
\begin{remark}
One could also treat the case of infinite Galois extensions by adding a continuity assumption as in \cite{FSS98}*{Remark~1.15}, but we do not need it in our work. See also \cite{Poo17}*{Remark~4.4.8} for how to deal with infinite Galois extensions.
\end{remark}

In view of the strong link between Galois descent and semilinear automorphisms, it seems natural that there should be a cocycle interpretation of semilinear automorphisms. We now take some time to set up this formalism in detail.
\begin{definition}\label{Def:gammatilde and Idgamma}
\begin{enumerate}
\item Let $ k\leq l $ be a field extension of $ k $, let $ G_0 $ be an $l$-group scheme and let $G$ be an $l/k$-form of $G_0$. Choose an isomorphism $  G_0\cong G_l $, or in other words choose an exact diagram
\begin{center}
 \begin{tikzpicture}[->]
  \node (1) [anchor=east] {$ G_0 $};
  \node (2) [right=1.5cm of 1] {$ G $};
  \node (3) [below=0.5cm of 1] {$ \Spec l $};
  \node (4) [below=0.52cm of 2] {$ \Spec k $};
  
  \path [every node/.style={font=\sffamily\small}]
  (1) edge node [above]  {$ \pi_1 $} (2)
        edge node [left] {$ t $} (3)
  (2) edge node  [right] {$ s $} (4)
  (3) edge node [above] {\footnotesize{$ \pi_0 $}} (4);
  \end{tikzpicture}
  \end{center}
For any $\gamma \in \Aut (l/k)$, by the definition of base change there exists a unique isomorphism of $G_0$ above $\gamma $ such that the following diagram commutes:
\[
\begin{tikzcd}[row sep=2.5em]
 &&
  G_0 \arrow[dd,swap,"t" near start] \arrow[dr,"\pi_1"] \\
& G_0 \arrow[rr,crossing over,swap,"\pi_1" near start] \arrow[ur,crossing over,"\tilde{\gamma}_{G}"] &&
  G \arrow[dd,"s"] \\
&& \Spec l \arrow[dr,"\pi_0"] \\
& \Spec l \arrow[rr,"\pi_0"] \arrow[ur,"\gamma^*"] \arrow[uu,<-,crossing over,"t"]&& \Spec k
\end{tikzcd}
\]
We denote this isomorphism $\tilde{\gamma}_{G}$.

\item For $G_0$ a split connected reductive $l$-group we assume that an isomorphism with $ H_l $ has been chosen, where $ H $ is a split algebraic group over the prime field of $ l $. Now in this special situation, for $\gamma \in \Aut (l)$, instead of 
$\tilde{\gamma}_{H}$
 we use the more suggestive notation $\Id_{\gamma}$.
\end{enumerate}
\end{definition}

\begin{remark}
\begin{enumerate}[(i)]
\item Note that when $l/k$ is a finite Galois extension, the collection $\lbrace $
$\tilde{\gamma}_{G}$
$\rbrace_{\gamma \in \Gal(l/k)}$ is nothing but a descent datum on $G_0$ (as defined in \cite{Poo17}*{Proposition~4.4.2~(i)}) which descends to $G$.
\item Note that for $G_0$ a split connected reductive $ l $-group and $\gamma \in \Aut (l)$, if we choose a realisation of $G_0$ as a matrix group such that the realisation is defined over the prime field of $l$, then for $g=(g_{ij})\in G_0(l)$ and $\gamma \in \Aut (l)$, $ \Id_{\gamma} \circ \, g \circ (\gamma^*)^{-1} \in G_0(l) $ is given by the matrix whose $ij$-th coefficient is $\gamma^{-1}(g_{ij})$. This explains why we prefer to use the notation $\Id_{\gamma}$ in this situation.
\end{enumerate}
\end{remark}

We now study the behaviour of $\tilde{\gamma}_{G}$ under base change.
\begin{lemma}\label{Lem:choosing a good base change}
Let $k \leq l$ be a field extension of $ k $, let $ G_0 $ be an $ l $-group scheme and let $G$ be a $l/k$-form of $G_0$. Fix an isomorphism $ G_l \cong G_0 $, or in other words fix an exact diagram
 \begin{center}
 \begin{tikzpicture}[->]
  \node (1) [anchor=east] {$ G_0 $};
  \node (2) [right=1.5cm of 1] {$ G $};
  \node (3) [below=0.5cm of 1] {$ \Spec l $};
  \node (4) [below=0.52cm of 2] {$ \Spec k $};
  
  \path [every node/.style={font=\sffamily\small}]
  (1) edge node [above]  {$ \pi_1 $} (2)
        edge node [left] {$ t $} (3)
  (2) edge node  [right] {$ s $} (4)
  (3) edge node [above] {\footnotesize{$ \pi_0 $}} (4);
  \end{tikzpicture}
  \end{center}

Let $\alpha \in \Aut (k)$ and let $\beta \in \Aut (l)$ be such that $\beta |_{k} = \alpha $. Further assume that $G_0$ is split connected reductive. Then there exists a unique map $\pi_{\beta}\colon G_0\to G $ such that the following diagram commutes
\[
\begin{tikzcd}[row sep=2.5em]
G_0 \arrow[rr,"\Id_{\beta}"] \arrow[dr,swap,"\pi_{\beta}"] \arrow[dd,swap,"t"] &&
  G_0 \arrow[dd,swap,"t" near start] \arrow[dr,"\pi_1"] \\
& G \arrow[rr,crossing over,"\Id_{G}" near start] &&
  G \arrow[dd,"s"] \\
\Spec l \arrow[rr,"\beta^*" near end] \arrow[dr,swap,"\pi_0"] && \Spec l \arrow[dr,swap,"\pi_0"] \\
& \Spec k \arrow[rr,"\alpha^*"] \arrow[uu,<-,crossing over,"(\alpha^*)^{-1}s" near end]&& \Spec k
\end{tikzcd}
\]

Furthermore, all squares appearing in this diagram are exact.
\end{lemma}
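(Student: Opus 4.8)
The plan is to produce $\pi_\beta$ by an explicit formula and then check, one face of the cube at a time, that it commutes and that each face is a fibre-product square. Since the top face of the cube reads $\Id_G\circ\pi_\beta=\pi_1\circ\Id_\beta$ and $\Id_G$ is the identity morphism of $G$, there is no freedom: I would simply set $\pi_\beta:=\pi_1\circ\Id_\beta$, a morphism $G_0\to G$ obtained as the composite of the semilinear automorphism $\Id_\beta$ of $G_0$ with the base-change morphism $\pi_1$. It makes the top face commute by construction, and it is the \emph{unique} morphism with that property precisely because the top face equation determines it.

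Next I would dispose of the other five faces. The right face is exactly the chosen exact diagram $G_l\cong G_0$, hence commutes and is Cartesian by hypothesis. The back face is the square $t\circ\Id_\beta=\beta^*\circ t$ recording that $\Id_\beta$ lies over $\beta^*$, which is built into Definition~\ref{Def:gammatilde and Idgamma}; it is Cartesian because $\Id_\beta$ and $\beta^*$ are isomorphisms, and a commuting square with a parallel pair of isomorphisms is automatically a fibre product (so $A\xrightarrow{\;\sim\;}B\times_D C$ once one of the parallel sides is inverted). The front face commutes trivially, $s\circ\Id_G=s=\alpha^*\circ(\alpha^*)^{-1}\circ s$, and is Cartesian by the same principle applied to $\Id_G$ and $\alpha^*$. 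The bottom face commutes precisely because $\beta|_k=\alpha$: unwinding the contravariance of $\Spec$, the two composites $\pi_0\circ\beta^*$ and $\alpha^*\circ\pi_0$ correspond to the ring maps $k\hookrightarrow l\xrightarrow{\beta}l$ and $k\xrightarrow{\alpha}k\hookrightarrow l$, which agree iff $\beta$ restricts to $\alpha$; and it is again Cartesian by the parallel-isomorphisms principle. Finally the left face commutes by chaining the identities just listed:
\[
(\alpha^*)^{-1}\,s\,\pi_\beta=(\alpha^*)^{-1}\,s\,\pi_1\,\Id_\beta=(\alpha^*)^{-1}\,\pi_0\,t\,\Id_\beta=(\alpha^*)^{-1}\,\pi_0\,\beta^*\,t=(\alpha^*)^{-1}\,\alpha^*\,\pi_0\,t=\pi_0\,t,
\]
using the right face, then the back face, then the bottom face.

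It remains only to see that the left face is Cartesian, which is the single nonformal point, and I would argue it with the pasting lemma. Pasting the back face and the right face along their common edge $G_0\xrightarrow{t}\Spec l$ produces a rectangle with corners $G_0,G,\Spec l,\Spec k$, top edge $\pi_1\circ\Id_\beta=\pi_\beta$ and bottom edge $\pi_0\circ\beta^*=\alpha^*\circ\pi_0$, and this rectangle is Cartesian since both constituent squares are. But the same rectangle is equally the pasting of the left face and the front face along their common edge $G\xrightarrow{(\alpha^*)^{-1}s}\Spec k$; since the front face and the pasted rectangle are both Cartesian, the pasting lemma in the form ``outer rectangle plus right-hand square Cartesian $\Rightarrow$ left-hand square Cartesian'' gives that the left face is Cartesian. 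The whole argument is formal; I expect the only real obstacle to be the bookkeeping — tracking arrow directions and the contravariance of $\Spec$, in particular the placement of $\alpha^*$ versus $(\alpha^*)^{-1}$ in the front and left faces, and invoking the pasting lemma in the correct direction rather than its false converse.
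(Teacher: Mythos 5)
Your proposal is correct and follows essentially the same formal diagram argument as the paper: the paper extracts $\pi_{\beta}$ from the universal property of the front Cartesian square, whereas you note it is forced to equal $\pi_1\circ \Id_{\beta}$ by the top face (an equivalent observation, since $\Id_G$ is the identity), and your parallel-isomorphism and pasting-lemma checks are precisely the ``straightforward verification using that $\alpha^*$ and $\beta^*$ are isomorphisms'' that the paper alludes to. The only face you do not explicitly certify as exact is the top one, but it commutes and has the isomorphisms $\Id_{\beta}$ and $\Id_G$ as a parallel pair, so your own criterion disposes of it immediately.
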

\begin{proof}
The existence and uniqueness of $ \pi_{\beta} $ follows from the fact that the front square of the diagram is a base change. The fact that all squares are exact is a straightforward verification, using the fact that $ \alpha^* $ and $ \beta^* $ are isomorphisms.
\end{proof}

\begin{lemma}\label{Lem:the descent datum of a base change}
Keep the notations of Lemma~\ref{Lem:choosing a good base change}, so that in particular we chose an isomorphism $G_0\cong(\,^{\alpha }G)_l$ via the exact diagram
 \begin{center}
 \begin{tikzpicture}[->]
  \node (1) [anchor=east] {$ G_0 $};
  \node (2) [right=1.5cm of 1] {$ ^{\alpha}G=G $};
  \node (3) [below=0.5cm of 1] {$ \Spec l $};
  \node (4) [below=0.515cm of 2] {$ \Spec k $};
  
  \path [every node/.style={font=\sffamily\small}]
  (1) edge node [above]  {$ \pi_{\beta} $} (2)
        edge node [left] {$ t $} (3)
  (2) edge node  [right] {$ (\alpha^{\ast})^{-1} s $} (4)
  (3) edge node [above] {\footnotesize{$ \pi_0 $}} (4);
  \end{tikzpicture}
  \end{center}
With these identifications of base change, for all $\gamma \in \Aut (l/k)$ we have $ \tilde{\gamma}_{\,^{\alpha}G} = \Id_{\beta}^{-1}(\widetilde{\beta^{-1}\gamma \beta})_{G}\Id_{\beta} $.
\end{lemma}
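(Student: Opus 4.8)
The plan is to reduce the identity to the uniqueness clause built into Definition~\ref{Def:gammatilde and Idgamma}. Unwinding that definition for the $l/k$-form $\,^{\alpha}G$ together with the chosen identification $\pi_{\beta}\colon G_0 \cong (\,^{\alpha}G)_l$, the isomorphism $\tilde{\gamma}_{\,^{\alpha}G}\colon G_0 \to G_0$ is the unique automorphism of the scheme $G_0$ such that (i) $t\circ \tilde{\gamma}_{\,^{\alpha}G} = \gamma^{\ast}\circ t$ (it lies above $\gamma$) and (ii) $\pi_{\beta}\circ \tilde{\gamma}_{\,^{\alpha}G} = \pi_{\beta}$. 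So it suffices to set $\Phi := \Id_{\beta}^{-1}(\widetilde{\beta^{-1}\gamma\beta})_{G}\Id_{\beta}$ and check that this automorphism of $G_0$ satisfies (i) and (ii).

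First I would collect the inputs. Writing $\psi := \beta^{-1}\gamma\beta$, note that $\psi\in\Aut(l/k)$: indeed $\beta|_k = \alpha$ maps $k$ onto $k$, $\gamma$ fixes $k$ pointwise, and $\beta^{-1}|_k = \alpha^{-1}$, so $\psi|_k = \id_k$; hence $(\widetilde{\psi})_{G}$ is defined and satisfies $\pi_{1}\circ (\widetilde{\psi})_{G} = \pi_{1}$ and $t\circ (\widetilde{\psi})_{G} = \psi^{\ast}\circ t$, while by contravariance of $\Spec$ one has $\psi^{\ast} = \beta^{\ast}\gamma^{\ast}(\beta^{\ast})^{-1}$. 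From Definition~\ref{Def:gammatilde and Idgamma}, $\Id_{\beta}$ lies above $\beta$, i.e.\ $t\circ\Id_{\beta} = \beta^{\ast}\circ t$, whence also $t\circ\Id_{\beta}^{-1} = (\beta^{\ast})^{-1}\circ t$. Finally, the top face of the cube in Lemma~\ref{Lem:choosing a good base change} records that $\pi_{\beta} = \pi_{1}\circ\Id_{\beta}$ (there $\Id_G$ is literally the identity of the scheme $G$).

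Then the two checks are purely formal. For (ii): $\pi_{\beta}\circ\Phi = \pi_{1}\Id_{\beta}\Id_{\beta}^{-1}(\widetilde{\psi})_{G}\Id_{\beta} = \pi_{1}(\widetilde{\psi})_{G}\Id_{\beta} = \pi_{1}\Id_{\beta} = \pi_{\beta}$, using $\pi_{1}(\widetilde{\psi})_{G} = \pi_{1}$. For (i): $t\circ\Phi = t\,\Id_{\beta}^{-1}(\widetilde{\psi})_{G}\Id_{\beta} = (\beta^{\ast})^{-1} t\,(\widetilde{\psi})_{G}\Id_{\beta} = (\beta^{\ast})^{-1}\psi^{\ast}\, t\,\Id_{\beta} = (\beta^{\ast})^{-1}\psi^{\ast}\beta^{\ast}\circ t = \gamma^{\ast}\circ t$, where the last step uses $\psi^{\ast} = \beta^{\ast}\gamma^{\ast}(\beta^{\ast})^{-1}$. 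Since $\Phi$ is an automorphism of $G_0$ satisfying (i) and (ii), uniqueness yields $\Phi = \tilde{\gamma}_{\,^{\alpha}G}$, which is exactly the asserted identity.

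There is no genuinely hard step here: the content is entirely bookkeeping, and the main obstacle is simply not to make sign errors. The two points that need care are, first, correctly identifying the structure maps in the diagram attached to $\,^{\alpha}G$ — so that condition (i) reads $t\circ(-)=\gamma^{\ast}\circ t$ with the \emph{same} $t$ as before, which is precisely why the cube of Lemma~\ref{Lem:choosing a good base change} was arranged with $\Id_G$ on the front face — and second, getting the variance right, i.e.\ $\psi^{\ast}=\beta^{\ast}\gamma^{\ast}(\beta^{\ast})^{-1}$ rather than its inverse, since an error there propagates through the whole computation. Once these are pinned down, the uniqueness clause closes the argument at once.
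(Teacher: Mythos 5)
Your proof is correct and follows essentially the same route as the paper: both reduce the identity to the uniqueness clause defining $\tilde{\gamma}_{\,^{\alpha}G}$ via the identification $\pi_{\beta}$, and then verify that $\Id_{\beta}^{-1}(\widetilde{\beta^{-1}\gamma\beta})_{G}\Id_{\beta}$ satisfies the two required commutation conditions, using $\pi_{\beta}=\pi_{1}\circ\Id_{\beta}$ and $(\beta^{-1}\gamma\beta)^{\ast}=\beta^{\ast}\gamma^{\ast}(\beta^{-1})^{\ast}$. The only difference is presentational: the paper carries this out as a diagram chase in one large cube, while you write the same checks equationally.
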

\begin{proof}
The proof follows from the commutativity of the following diagram
\[
\begin{tikzcd}[row sep=2.5em]
& G_0 \arrow[rrr,swap,"\Id_{\beta}"] \arrow[dr,swap,"\pi_{\beta}"] \arrow[ddd,swap,"t" near start] &&&
  G_0 \arrow[ddd,"t" near start] \arrow[dl,"\pi_1"] & \\
&& G \arrow[r,crossing over,"\Id_{G}"] \arrow[ddd,swap,"(\alpha^*)^{-1}s"] &
  G \arrow[ddd,"s"] & & \\
G_0 \arrow[rrrrr,crossing over, "\Id_{\beta}"] \arrow[uur,"\tilde{\gamma}_{\alpha_{G}}"] \arrow[urr,crossing over,"\pi_{\beta}" near end] \arrow[ddd,swap,"t"] & & & & & G_0 \arrow[uul,swap,"(\widetilde{\beta^{-1}\gamma \beta})_{G}"] \arrow[ull,crossing over,swap,"\pi_{1}" near end] \arrow[ddd,"t"] \\
 & \Spec l \arrow[rrr,swap,"\beta^*"] \arrow[dr,swap,"\pi_0"] & & & \Spec l \arrow[dl,"\pi_0"] &  \\
 & & \Spec k \arrow[r,"\alpha^*"] & \Spec k & & \\
\Spec l \arrow[uur,swap,"\gamma^*"] \arrow[rrrrr,"\beta^*"] \arrow[urr,swap,"\pi_0" near end]  & & & & & \Spec l \arrow[uul,"(\beta^{-1}\gamma\beta)^*"] \arrow[ull,"\pi_0" near end]
\end{tikzcd}
\]
Indeed, $ \tilde{\gamma}_{\alpha_G} $ is defined to be the unique map such that the left hand side of the diagram commutes. But the front side and the back side of the diagram commutes by the definition of $ \pi_{\beta} $ (see Lemma~\ref{Lem:choosing a good base change}), whilst the right hand side of the diagram commutes by definition of $ (\widetilde{\beta^{-1}\gamma \beta})_{G} $. Also note that $ (\beta^{-1}\gamma\beta)^* = \beta^*\gamma^*(\beta^{-1})^* $, so that the bottom square of the diagram commutes as well. A diagram chase then shows that $ \Id_{\beta}^{-1}(\widetilde{\beta^{-1}\gamma \beta})_{G}\Id_{\beta} $ satisfies the property uniquely defining $ \tilde{\gamma}_{\alpha_G} $, as was to be shown.
\end{proof}

We can now state a clean descent formula for semilinear automorphisms in terms of cocycles. In this formula, we use the fact that for $G_0$ a split connected reductive $l$-group, $\Aut (G_0\to \Spec l)\cong \Aut G_0\rtimes \Aut (l)$, where the splitting of the exact sequence $ 1\to \Aut G_0\to \Aut (G_0\to \Spec l)\to \Aut (l) $ is realised by $\gamma \mapsto \Id_{\gamma}^{-1}$ (see Definition~\ref{Def:gammatilde and Idgamma} for the notation $\Id_{\gamma}$). This thus defines a (left) action of $\Aut (l)$ on $\Aut G_0$ that we denote $^{\gamma}f$ (for $\gamma \in \Aut (l)$ and $f\in \Aut G_0$). Explicitly, we have $^{\gamma}f = \Id_{\gamma}^{-1}f\Id_{\gamma} $.

\begin{lemma}\label{Lem:Condition for a semilinear auto. to descend}
Let $G_0$ be a split connected reductive $ l $-group, let $ k $ be a subfield of $ l $ such that $ l/k $ is a $ ( $possibly infinite$ ) $ Galois extension, and let $ G $ be a $ l/k $-form of $ G_0 $. Let $\beta \in \Aut (l) $ be such that $\beta(k)=k $, and let $\alpha = \beta_{\vert_{k}}\in \Aut (k) $. Fix an isomorphism $ G_l\cong G_0 $ and let $ c\colon \Gal (l/k)\to \Aut G_0 $ be the corresponding cocycle. Finally, let $b\in \Aut G_0 $. Then $b\Id_{\beta}\in \Aut (G_0\to \Spec l)$ descends to a semilinear automorphism of $ G $ over $\alpha$ if and only if $c_{\beta^{-1}\gamma \beta}\,^{\beta^{-1}\gamma \beta}b\,^{\beta^{-1}}c_{\gamma}^{-1} = b $ for all $\gamma \in \Gal (l/k)$.
\end{lemma}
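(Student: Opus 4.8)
The plan is to identify the descent of $b\,\Id_\beta$ with an intertwining condition between descent data, and then to read off the stated formula by separating the two factors of the semidirect product $\Aut(G_0\to\Spec l)\cong\Aut G_0\rtimes\Aut(l)$ recalled just before the lemma.

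First I would unwind what ``$b\,\Id_\beta$ descends to a semilinear automorphism of $G$ over $\alpha$'' means. A semilinear automorphism $f$ of $G$ over $\alpha$ is a commutative square with top arrow $f\colon G\to G$ and bottom arrow over $\alpha$; since $\beta|_k=\alpha$ we have $\pi_0\circ\beta^\ast=\alpha^\ast\circ\pi_0$, so base changing this square along $\pi_0\colon\Spec l\to\Spec k$ (lifting $\alpha^\ast$ to $\beta^\ast$) produces a semilinear automorphism of $G_l\cong G_0$ over $\beta$, and the assertion is that $b\,\Id_\beta$ arises in this way. By Galois descent (Theorem~\ref{Thm:Galois descent}, applied over a finite Galois subextension of $l/k$ over which the form $G$ is defined, with the usual passage to the infinite case causing no extra difficulty), this holds precisely when $b\,\Id_\beta$ is compatible with the descent data: $G_l\cong G_0$ carries the descent datum $\tilde\gamma_G$ attached to the fixed isomorphism, while by Lemma~\ref{Lem:the descent datum of a base change} the base change $\pi_\beta$ of Lemma~\ref{Lem:choosing a good base change} identifies $G_0$ with a base change of $\,^\alpha G$ carrying the descent datum $\Id_\beta^{-1}(\widetilde{\beta^{-1}\gamma\beta})_G\,\Id_\beta$ (here $\beta$ normalises $\Gal(l/k)$ since $\beta(k)=k$), and the descent condition is exactly that $b\,\Id_\beta$, viewed as the $l$-isomorphism it encodes between $G_l$ and $(\,^\alpha G)_l$, carries one of these descent data to the other for all $\gamma\in\Gal(l/k)$.

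Second I would substitute the cocycle. By definition of the cocycle $c$ attached to the chosen isomorphism $G_l\cong G_0$ one has $\tilde\delta_G=c_\delta\,\Id_\delta$ for every $\delta\in\Gal(l/k)$; inserting this (with $\delta=\gamma$ and $\delta=\beta^{-1}\gamma\beta$) into the intertwining relation and moving every $\Id_{(-)}$ to one side by means of $\Id_\mu\,h=({}^{\mu^{-1}}h)\,\Id_\mu$ and $\Id_\mu\,\Id_\nu=\Id_{\nu\mu}$ expresses both sides of the resulting equation in $\Aut(G_0\to\Spec l)$ as an element of $\Aut G_0$ followed by a single $\Id_{(-)}$. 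The $\Aut(l)$-components then agree automatically (both sides lie over the same automorphism of $l$, which is the consistency check), and equating the $\Aut G_0$-components gives precisely $c_{\beta^{-1}\gamma\beta}\cdot{}^{\beta^{-1}\gamma\beta}b\cdot{}^{\beta^{-1}}c_\gamma^{-1}=b$.

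The main obstacle is the first step: being precise about what ``descends'' means and, above all, tracking carefully which automorphism of $k$ (resp.\ of $l$) each scheme morphism lies over, so that the conjugating index comes out as $\beta^{-1}\gamma\beta$ (rather than $\beta\gamma\beta^{-1}$ or an inverse thereof) and the $\Id_{(-)}$ factors cancel on the nose; this is exactly what Lemmas~\ref{Lem:choosing a good base change} and~\ref{Lem:the descent datum of a base change} were prepared for, and once they are in hand the rest is routine bookkeeping in the semidirect product.
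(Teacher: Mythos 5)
Your plan coincides with the paper's own proof: pass from $b\Id_{\beta}$ to the $l$-morphism it encodes between $G_0$ equipped with the descent datum $\tilde{\gamma}_{G}$ and $G_0$ equipped with the descent datum $\Id_{\beta}^{-1}(\widetilde{\beta^{-1}\gamma\beta})_{G}\Id_{\beta}$ of Lemma~\ref{Lem:the descent datum of a base change}, invoke the descent criterion for morphisms (over a finite Galois subextension), and then translate into cocycles and separate the two factors of $\Aut G_0\rtimes \Aut(l)$. The gap is in the translation step: you assert that ``by definition'' $\tilde{\delta}_{G}=c_{\delta}\,\Id_{\delta}$, whereas in the paper's conventions the dictionary is $\tilde{\delta}_{G}=c_{\delta^{-1}}\,\Id_{\delta}$ (this is stated in the proof and repeated in the remark after Definition~\ref{Def:embedding Gamma in semilinear automorphisms}). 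This is not a harmless relabelling: descent data compose as $\tilde{\gamma}_{G}\tilde{\delta}_{G}=\widetilde{(\delta\gamma)}_{G}$, while $c$ is required to satisfy $c_{\gamma\delta}=c_{\gamma}\,^{\gamma}c_{\delta}$ for the action $^{\gamma}f=\Id_{\gamma}^{-1}f\Id_{\gamma}$; these two facts force $\tilde{\gamma}_{G}=c_{\gamma^{-1}}\Id_{\gamma}$, and the relation $\tilde{\gamma}_{G}=c_{\gamma}\Id_{\gamma}$ would instead force the opposite cocycle identity on $c$, which is not the one used throughout the paper.

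Concretely, if you run your own bookkeeping (with your correct rules $\Id_{\mu}h=(\,^{\mu^{-1}}h)\Id_{\mu}$ and $\Id_{\mu}\Id_{\nu}=\Id_{\nu\mu}$) on the intertwining condition $(b\Id_{\beta})\tilde{\gamma}_{G}=(\widetilde{\beta^{-1}\gamma\beta})_{G}(b\Id_{\beta})$ using your dictionary, you obtain $c_{\beta^{-1}\gamma\beta}\,^{\beta^{-1}\gamma^{-1}\beta}b\,^{\beta^{-1}}c_{\gamma}^{-1}=b$: the exponent of $b$ comes out as $\beta^{-1}\gamma^{-1}\beta$, not $\beta^{-1}\gamma\beta$. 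This is genuinely a different condition; already for $\beta=\id$ it reads $c_{\gamma}\,^{\gamma^{-1}}b\,c_{\gamma}^{-1}=b$, which for a non-abelian cocycle is not equivalent to the usual descent condition $c_{\gamma}\,^{\gamma}b\,c_{\gamma}^{-1}=b$, since $c_{\gamma^{-1}}$ and $c_{\gamma}^{-1}$ need not induce the same conjugation. With the correct dictionary $\tilde{\gamma}_{G}=c_{\gamma^{-1}}\Id_{\gamma}$ the same computation gives $b\,^{\beta^{-1}}c_{\gamma^{-1}}=c_{\beta^{-1}\gamma^{-1}\beta}\,^{\beta^{-1}\gamma^{-1}\beta}b$, and re-indexing $\gamma\mapsto\gamma^{-1}$ yields exactly the stated formula. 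So the strategy is right and matches the paper; what is missing is pinning down (and then actually using) the correct cocycle--descent-datum correspondence, which is precisely where the indices in the stated formula come from.
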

\begin{proof}
Recall that a morphism of $G_0$ over $\beta$ is equivalent to an $ l $-morphism from $G_0$ to $^{\beta}G_0$. In this correspondence, $b\Id_{\beta}$ corresponds to $^{\beta}b\in \Aut G_0$, as can be seen directly from the diagram
\[
\begin{tikzcd}[row sep=2.5em]
G_0 \arrow[dr,"b\Id_{\beta}"] \arrow[d,swap," \Id_{\beta}^{-1}b\Id_{\beta} = \,^{\beta}b"] & \\
^{\beta}G_0 = G_0 \arrow[r,swap,"\Id_{\beta}"] \arrow[d,swap,"t"] & G_0 \arrow[d,"t"]\\
\Spec k \arrow[r,"\beta^*"] & \Spec k
\end{tikzcd}
\] Now by the general theory for morphisms between schemes with a descent datum, the $ l $-morphism $^{\beta}b\colon G_0\to \,^{\beta}G_0$ descends to a $ k $-morphism $ G\to \,^{\alpha}G $ if and only if $(\tilde{\gamma}_{^{\alpha}G})^{-1}(\,^{\beta}b)\tilde{\gamma}_{G} = \,^{\beta}b $ for all $\gamma \in \Gal (l/k) $ (technically speaking, this is only true for finite Galois extensions, but our schemes are of finite type. Hence $ ^{\beta}b $ is defined over a finite Galois extension of $ k $ and we are just trying to descend from there). Using Lemma~\ref{Lem:the descent datum of a base change}, we get that $^{\beta}b$ descends if and only $\Id_{\beta}^{-1}(\widetilde{\beta^{-1}\gamma^{-1} \beta})_{G} \Id_{\beta}(\,^{\beta}b)\tilde{\gamma}_{G} = \,^{\beta}b$. Finally, to transfer this to a cocycle condition, recall that in the correspondence between descent datum and cocycle, we have (in our notations) $\tilde{\gamma}_{G} = c_{\gamma^{-1}}\Id_{\gamma}$ (this of course relies on the fact that we used the same isomorphism $G_0\cong G_l$ to define $c_{\gamma}$ and $\tilde{\gamma}$). Furthermore, by definition $c\colon \Gal(l/k)\to \Aut G_0 $ is a cocycle for the Galois action introduced before the statement of the theorem, i.e.\ for $\gamma , \delta \in \Gal(l/k)$ we have $c_{\gamma \delta} = c_{\gamma }\,^{\gamma}c_{\delta} = c_{\gamma}\Id_{\gamma}^{-1}c_{\delta}\Id_{\gamma}$. Hence, the conclusion of the theorem readily follows.
\end{proof}
\begin{remark}
In our conventions, if $ \gamma \in \Gal(l/k) $ appears in exponent, then it acts on the element appearing below it on the right. So if one wishes to put more parenthesis in the formula $ c_{\beta^{-1}\gamma \beta}\,^{\beta^{-1}\gamma \beta}b\,^{\beta^{-1}}c_{\gamma}^{-1} $, the unique way to do so respecting this convention is by writing $ c_{\beta^{-1}\gamma \beta}(\,^{\beta^{-1}\gamma \beta}b)(\,^{\beta^{-1}}c_{\gamma}^{-1}) $. Note also that $ \beta \in \Aut (k) $ acts by group automorphisms on $ \Aut G $, so that $ ^{\beta^{-1}}(c_{\gamma}^{-1}) = (\,^{\beta^{-1}}c_{\gamma})^{-1} $, i.e.\ there is no need for any parenthesis to distinguish the two.
\end{remark}
\begin{remark}
If $\beta $ is the identity, our formula specialises to the usual condition for $b$ to descend to a $ k $-automorphism of $ G $, namely $c_{\gamma}\,^{\gamma}bc_{\gamma}^{-1}=b $ for all $\gamma \in \Gal(l/k)$. Also note that for $\gamma \in \Gal(l/k)$, the automorphism $\tilde{\gamma}_{G} =  c_{\gamma^{-1}} \Id_{\gamma} \in \Aut (G_0\to \Spec l)$ satisfies the condition to descend (of course, it descends to the trivial automorphism of $ G $).
\end{remark}

\section{Schemes of based root datum}\label{Sec:RD's scheme}
In \cite{Gil-Pol11}*{Expos\'e~$ 24 $, section~$ 3 $}, the authors define what they call a ``Dynkin's scheme'' of a reductive group $ G $. The strategy is to first define this Dynkin's scheme for split reductive groups, and then to use descent. The Dynkin's scheme is well suited to describe quasi-split semisimple groups that are adjoint or simply connected. Since there is not much more work to define a scheme of based root datum and since this allows us to treat the more general case of quasi-split reductive groups, we decided it was worth doing it.

In order to define a scheme of based root datum, we need the notion of a $ \Z $-module scheme and of perfect duality between two $ \Z $-module schemes. Recall that throughout the paper, the letter $ k $ stands for a field.
\begin{definition}
Let $ R $ be a $ k $-algebra and let $ M $ be a $ R $-scheme. $ M $ is called a $ \Z $-\textbf{module }$R$-\textbf{scheme} if $ M $ is a (non necessarily affine) commutative $ R $-group scheme.
\end{definition}

Recall that given any set $ E $ and a $ k $-algebra $ R $, we can consider the \textbf{constant object on} $E$ which is defined to be the $ R $-scheme $ E_R = \coprod \limits_{e\in E}\Spec R $. This defines a fully faithfull functor from the category of Sets to the category of $ R $-schemes, called the \textbf{constant object functor}. The constant object functor commutes with forming finite products (see \cite{Gil-Pol11a}*{Exposé~1, Section~1.8}). Hence given a $ \Z $-module $ M $, the constant scheme $ M_R $ acquires the structure of a $ \Z $-module $ R $-scheme. We can now define the notion of perfect pairing for $ \Z $-module $ k $-schemes.

\begin{definition}
Let $M,M'$ be two $\Z$-module $k$-schemes.
\begin{enumerate}
\item The \textbf{dual of} $M$, denoted $M^t$, is defined to be the functor from the category of $k$-algebras to the category of sets sending a $ k $-algebra $R$ to $\Hom_{R}(M_R,\Z_R)$.
\item We say that $M$ and $M'$ are in \textbf{duality} if there exists $f\in \Hom_k (M\times M'\to \Z_k)$. We say that the duality $f$ is \textbf{perfect} if for all $ k $-algebra $R$, the map $M'(R)\to M^t(R)\colon m\mapsto (f(.\,,m)\colon M_R\to \Z_R) $ is an isomorphism (here, for any $R$-algebra $R'$ and $n\in M_R(R')$, $f(.\,,m)(n) = f(n,m_{R'})$, with $m_{R'}$ being the image of $m$ under $M'(R)\to M'(R')$).
\end{enumerate}
\end{definition}

\begin{remark}
As usual in this situation, one should restrict the categories under considerations to avoid set theoretic problems. One way to do so is by using universe.
\end{remark}

Note that $M^t$ is a commutative group functor, and hence $M^t$ is a $\Z$-module $ k $-scheme if $M^t$ is representable. Also note that given $f\in \Hom_k(M,M')$ we can define $f^t\in \Hom_{k\text{-functors}}({M'}^t,M^t)$ by mimicking the definition for $\Z$-modules. Namely for all $ k $-algebras $ R $, $ f^t(R)\colon {M'}^t(R)\to M^t(R)\colon \alpha \mapsto \alpha \circ f_R $, where $ f_R\colon M_R\to M'_R $ denotes the base change of $ f $ to $ R $. We call $ f^t $ the \textbf{dual of} $f$.

By a \textbf{reduced based root datum} $R$, we mean a reduced root datum $ (M,M^*,\Phi ,\Phi^*)$ (as defined in \cite{Gil-Pol11}*{Exposé~21, Définition~1.1.1 and Definition~2.1.3}) together with a choice of simple roots $\Delta \subset \Phi $. We can finally give the definition of a $k$-scheme of root datum.

\begin{definition}\label{Def:scheme of based root datum}
\begin{enumerate}
\item A $k$-\textbf{scheme of based root datum} is a $5$-tuple $\mathcal{R} = (\mathcal{M},\mathcal{M}^*, \Psi ,\Psi^*, \Gamma )$ where:
\begin{enumerate}
\item $\mathcal{M}$ and $ \mathcal{M^*} $ are $ \Z $-module $ k $-schemes in perfect duality.
\item $ \Psi ,\Psi^* $ and $ \Gamma $ are finite $k$-schemes, and there are closed immersions $\Gamma \hookrightarrow \Psi \hookrightarrow \mathcal{M}$ and $ \Psi^*\hookrightarrow \mathcal{M}^* $.
\item There is an isomorphism of $ k $-schemes $ \Psi \cong \Psi^* $.
\item There exists a reduced based root datum $R = (M,M^*,\Phi ,\Phi^*, \Delta)$ and a finite Galois extension $l/k$ together with an isomorphism of $\Z$-module $l$-schemes $f\colon M_l\to \mathcal{M}_l$ such that $f$ induces an isomorphism of $ l $-schemes $\Phi_l\cong \Psi_l $, $f^t$ induces an isomorphism of $ l $-schemes $ {\Psi }_l^* \cong \Phi_l^*$, $f(\Delta_l) = \Gamma_l $ and the composition $ \Phi_l\cong \Psi_l\cong \Psi^*_l\cong \Phi_l^* $ induces the bijection $ \Phi \to \Phi^*\colon \alpha \mapsto \alpha^* $ given in the definition of $ R $. In this case, we say that $ \mathcal{R} $ is \textbf{of type} $ R $.
\end{enumerate}

\item Let $R = (M,M^*,\Phi ,\Phi^*, \Delta)$ be a reduced based root datum. Using the constant object functor, the $5$-tuple $R_k = (M_k,M^*_k,\Phi_k ,\Phi^*_k, \Delta_k)$ has a natural structure of a $ k $-scheme of based root datum. We call it the \textbf{split} $k$\textbf{-scheme of based root datum of type} $R$. A \textbf{split} $k$-\textbf{scheme of based root datum} is a split $k$-scheme of based root datum of type $R$ for some reduced based root datum $R$.

\item Given $\mathcal{R} = (\mathcal{M},\mathcal{M}^*, \Psi ,\Psi^*, \Gamma )$ and $\mathcal{R}' = (\mathcal{M}',{\mathcal{M}'}^*, \Psi ',{\Psi '}^*, \Gamma ')$ two $k$-schemes of based root datum, a $ k $-morphism $\mathcal{R}\to \mathcal{R}'$ is a $ k $-homomorphism $ f\colon \mathcal{M}\to \mathcal{M}' $ such that $f$ induces two $ k $-isomorphisms $\Psi\cong \Psi ' $ and $ \Gamma \cong \Gamma ' $, and such that $f^t$ induces a $ k $-isomorphism $ {\Psi '}^* \cong \Psi^*$.
\end{enumerate}
\end{definition}
\begin{remark}
Let us stress that with this definition, if a scheme of based root datum is of type $ R $, then $ R $ is a \textit{reduced} based root datum. It would be safer (but more tedious) to call these objects ``$ k $-schemes of reduced based root datum''.
\end{remark}

\begin{remark}\label{Rem:Root datum and Galois descent}
Let $ k_s $ be a separable closure of $ k $, let $R_k = (M_k,M^*_k,\Phi_k ,\Phi^*_k, \Delta_k)$ be a split $k$-scheme of based root datum of type $R$ and let $E = \Aut R$ (see \cite{Gil-Pol11}*{Exposé~21, Définition~6.1.1} for the definition of the morphisms in the category of based root datum). Then $ \Aut R_{k_s} = E$, and the action of $\Gal (k_s/k)$ on $E$ is trivial. Hence elements of $H^1(k,E)$ are continuous homomorphisms $\Gal(k_s/k)\to E$ up to conjugation. Also recall that $H^1(k,E)$ classifies $k$-schemes of based root datum of type $R$. Indeed, Galois descent for $M_k$ and $M_k^*$ is effective because they can be covered by quasi-affine open that are stable under $ \Gal(k_s/k) $ (hence effectivenes is ensured by \cite{Poo17}*{Theorem~4.3.5}), and the other structures (the duality $ M_k\times M^*_k\to \Z_k $, the closed immersions $ \Delta_k\hookrightarrow \Phi_k\hookrightarrow M_k $, $ \Phi^*_k\hookrightarrow M^*_k $ and the $ k $-isomorphism $ \Phi_k\cong \Phi_k^* $) will descend as well.
\end{remark}
\begin{remark}
If $R = (M,M^*,\Phi ,\Phi^*, \Delta)$ is a based root datum such that $\Phi$ is empty, then $\Aut R \cong GL_n(\Z )$ where $n$ is the rank of $M$. Hence in this case, $k$-schemes of based root datum of type $R$ classify $k$-tori of rank $n$. 
\end{remark}

Note that given $ \alpha \in \Aut (k) $ and a $ k $-scheme of based root datum $ \mathcal{R} $, we have an obvious notion of base change of $ \mathcal{R} $ along $ \alpha $, and we denote this base change by $ ^{\alpha}\mathcal{R} $.
\begin{definition}\label{Def:semi-auto of root datum}
Let $ \mathcal{R} $ be a $k$-scheme of based root datum, and let $ \alpha \in \Aut (k) $. A \textbf{semilinear automorphism of} $ \mathcal{R} $\textbf{ over} $ \alpha $ is an isomorphism of $k$-schemes of based root datum $ f_{\alpha}\colon \mathcal{R}\to \,^{\alpha}\mathcal{R} $. Given a semilinear automorphism $ f_{\alpha} $ (respectively $f'_{\beta}$) over $ \alpha $ (respectively $ \beta $) in $ \Aut (k) $, their composition is $ (\, ^{\alpha}f'_{\beta})f_{\alpha} $, which is a semilinear automorphism over $ \alpha \beta $. We denote the group of semilinear automorphisms of $ \mathcal{R} $ by $ \Aut (\mathcal{R}\to \Spec k) $.
\end{definition}

As in the case of $ k $-group schemes, for $ \mathcal{R} $ a $ k $-scheme of based root datum, we have a homomorphism $ \Aut (\mathcal{R} \to \Spec k)\to \Aut (k)\colon f_{\alpha}\mapsto \alpha ^{-1}  $. We let $ \Aut_{\mathcal{R}}(k) $ be the image of this homomorphism. Furthermore denoting the $ k $-automorphisms of the based root datum $ \mathcal{R} $ by $ \Aut \mathcal{R} $ (or also (\underline{$ \Aut $} $ \mathcal{R})(k) $, following the conventions discussed at the start of Section~\ref{Sec:Semiauto and Galois descent}), we get a short exact sequence $ 1\to \Aut \mathcal{R}\to \Aut (\mathcal{R} \to \Spec k)\to \Aut_{\mathcal{R}}(k)\to 1 $.

We now discuss how to associate functorially a $ k $-scheme of based root datum to a connected reductive $ k $-group. One possible approach would be to take an inductive limit of based root datum in the split case, and then descend this canonical object to any form. This would lead to the same construction as the one we now explain.

Actually, it suffices to incorporate our definition of the $k$-scheme of based root datum in \cite{Gil-Pol11}*{Exposé~24, Théorème~3.11}, by replacing principal Galois cover of group $E = \Aut R$ with the objects over $k$ that they classify (i.e.\ $k$-schemes of based root datum). As a corollary, we will get the definition of the $ k $-scheme of based root datum of a connected reductive $ k $-group. First, we recall the definition of the group scheme of exterior isomorphisms.
\begin{definition}[\cite{Gil-Pol11}*{Exposé~24, Corollaire~1.10}]
Let $G,G'$ be two connected reductive $ k $-group of type $R$, for some reduced based root datum $R$. Then  $ \Ad G $ acts freely (on the right) on the $k$-group functor $ \underline{\Isom}_{k\text{-gr.}}(G,G') $. We define the $k$-\textbf{group functor of exterior isomorphisms between} $G$ \textbf{and }$G'$ to be the quotient sheaf $ \underline{\Extisom} (G,G') = \underline{\Isom}_{k\text{-gr.}}(G,G')/\Ad G $. 
\end{definition}
\begin{remark}\label{Rem:computing Extisom}
Actually, \cite{Gil-Pol11}*{Exposé~24, Corollaire~1.10} asserts that this quotient is representable. Since it will be useful later, here is an explicit description of $ \underline{\Extisom} (G,G') $ using cocycles: let $k_s$ be a separable closure of $k$, let $ E = \Aut R $, let $ G_0 $ be the (split) connected reductive $ k_s $-group of type $ R $ and identify $ \underline{\Aut}~G_0 $ with $ \Ad G_0\rtimes E_{k_s} $ (after a choice of pinning for $ G_0 $, see \cite{Gil-Pol11}*{Exposé~24, Théorème~1.3}). Fix an isomorphism $G_0\cong G_{k_s} $ (respectively $ G_0\cong G'_{k_s}$), denote by $c$ (respectively $ c' $) the corresponding cocycle $ \Gal (k_s/k)\to \Aut G_0 $ and let $ \tilde{c} $ (respectively $ \tilde{c}' $) be the composition of this cocycle with $ \Aut G_0 \to E $. Further assume that the pinning of $ G_0 $ is defined over $ k $, so that the Galois action on $ \Aut R_{k_s} = E $ is trivial. Then $ \underline{\Extisom} (G,G') $ is a $ k_s/k $-form of $ E_{k_s} $ given by the Galois action $ \gamma .f = \tilde{c}'_{\gamma}f\tilde{c}_{\gamma}^{-1} $ (for all $ f\in E_{k_s} $ and for all $ \gamma \in \Gal (k_s/k) $). This follows directly from the Galois condition for an automorphism of $ G_0 $ to descend to an isomorphism $ G\to G' $, together with the fact that we are moding out by adjoint automorphisms.
\end{remark}

Let us also recall the notion of a quasi-pinning.
\begin{definition}\label{Def:quasi-split groups}
Let $ G $ be a connected reductive $ k $-group. If it exists, a \textbf{quasi-pinning} of $ G $ is:
\begin{enumerate}
\item A choice of a Borel subgroup $ B $ containing a maximal torus $ T $ of $ G $. Once this is chosen, let $ k_s $ be a separable closure of $ k $, let $ \Delta $ be the fundamental roots of $ G_{k_s} $ corresponding to the pair $ (T_{k_s},B_{k_s}) $ and for $ \alpha \in \Delta $, let $ \mathfrak{g}_{\alpha} $ be the corresponding one dimensional subspace of $ \Lie (G_{k_s}) $.
\item A choice of a nontrivial element $ X_{\alpha}\in \mathfrak{g}_{\alpha} $ for all $ \alpha \in \Delta $ such that for all $ \gamma \in \Gal(k_s/k) $, $ \gamma .X_{\alpha} = X_{\gamma (\alpha)} $.
\end{enumerate}
If $ G $ has a quasi-pinning, we say that $ G $ is \textbf{quasi-split}.
\end{definition}

The more classical definition for a connected reductive $ k $-group to be quasi-split is that it possesses a Borel subgroup. It is well-known that this definition agrees with Definition~\ref{Def:quasi-split groups} (and the equivalence is proved in a more general setting in \cite{Gil-Pol11}*{Exposé~24, Proposition~3.9.1}). For the convenience of the reader, let us reprove this fact.
\begin{lemma}
Let $ G $ be a connected reductive $ k $-group. If $ G $ has a borel subgroup, then $ G $ has a quasi-pinning.
\end{lemma}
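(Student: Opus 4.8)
The plan is to build a quasi-pinning in two stages: first fix a pair $(B,T)$ defined over $k$, then produce the root vectors $X_\alpha$ by treating each $\Gal(k_s/k)$-orbit in $\Delta$ separately. Note that a quasi-pinning only requires \emph{some} Borel subgroup containing a $k$-rational maximal torus, so there is no obligation to keep the given Borel. For the torus I would invoke the classical theorem of Grothendieck that any smooth affine group scheme over a field admits a maximal torus defined over that field, applied to the $k$-group $B$: this produces a maximal torus $T$ of $B$ defined over $k$, and since $B$ is a Borel subgroup of $G$ the dimension of $T$ equals the rank of $G$, so $T$ is a maximal torus of $G$ with $T\subseteq B$, giving datum (1). (Alternatively one could take $S$ a maximal split $k$-torus, $T=Z_G(S)$ --- a torus precisely because $G$ is quasi-split --- and $B$ a minimal parabolic $k$-subgroup containing $S$.)

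Next I would record the Galois data. Because $T$ and $B$ are defined over $k$, the subgroups $T_{k_s}$ and $B_{k_s}$ are $\Gal(k_s/k)$-stable, so $\Gal(k_s/k)$ acts on the character group of $T_{k_s}$, preserving the root system $\Phi=\Phi(G_{k_s},T_{k_s})$, the system of positive roots determined by $B_{k_s}$, and therefore the set $\Delta$ of simple roots; this is the action $\gamma\mapsto(\alpha\mapsto\gamma(\alpha))$ appearing in Definition~\ref{Def:quasi-split groups}. Moreover $\Gal(k_s/k)$ acts $\gamma$-semilinearly on $\Lie(G_{k_s})=\Lie(G)\otimes_k k_s$, and a one-line computation with the $T_{k_s}$-action shows $\gamma\cdot\mathfrak{g}_\alpha=\mathfrak{g}_{\gamma(\alpha)}$ for every $\alpha\in\Phi$. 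What remains is to choose nonzero $X_\alpha\in\mathfrak{g}_\alpha$, for $\alpha\in\Delta$, satisfying $\gamma\cdot X_\alpha=X_{\gamma(\alpha)}$.

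For this I would decompose $\Delta$ into $\Gal(k_s/k)$-orbits, fix a representative $\alpha_0$ in each, and let $k_{\alpha_0}$ be the finite separable subextension of $k_s/k$ fixed by the stabilizer $\Stab(\alpha_0)$. By Galois descent for the character group, $\alpha_0$ is the base change of a character $T_{k_{\alpha_0}}\to\mathbf{G}_m$ over $k_{\alpha_0}$; since weight spaces for a diagonalizable group action are compatible with field extension, the corresponding weight space $V_{\alpha_0}\subseteq\Lie(G)\otimes_k k_{\alpha_0}$ is one-dimensional over $k_{\alpha_0}$ and satisfies $V_{\alpha_0}\otimes_{k_{\alpha_0}}k_s=\mathfrak{g}_{\alpha_0}$. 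Picking any nonzero $X_{\alpha_0}\in V_{\alpha_0}$ then gives a nonzero element of $\mathfrak{g}_{\alpha_0}$ fixed by $\Stab(\alpha_0)$. For an arbitrary $\beta$ in the orbit of $\alpha_0$ I would write $\beta=\gamma(\alpha_0)$ and set $X_\beta:=\gamma\cdot X_{\alpha_0}$; this is independent of the choice of $\gamma$ because any two such differ by an element of $\Stab(\alpha_0)$, which fixes $X_{\alpha_0}$, and then $\delta\cdot X_\beta=(\delta\gamma)\cdot X_{\alpha_0}=X_{\delta\gamma(\alpha_0)}=X_{\delta(\beta)}$ for all $\delta\in\Gal(k_s/k)$, which is exactly condition (2). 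Doing this over every orbit yields the family $(X_\alpha)_{\alpha\in\Delta}$ and hence a quasi-pinning.

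The one substantive point --- and the step I expect to be the main obstacle --- is the descent in the third paragraph: one must know that a simple root $\alpha_0$, a priori defined only over $k_s$, already descends to the field $k_{\alpha_0}$ cut out by its Galois stabilizer, so that the root space $\mathfrak{g}_{\alpha_0}$ is defined over $k_{\alpha_0}$ and contains a nonzero $\Stab(\alpha_0)$-fixed vector. This rests on Galois descent for the character group of $T_{k_s}$ as a Galois module together with the compatibility of weight-space decompositions with base change. By contrast, Step 1 (existence of a $k$-rational maximal torus inside $B$) is a standard input rather than a genuine difficulty.
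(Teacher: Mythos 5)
Your proof is correct, and its overall architecture is the same as the paper's: fix a maximal $k$-torus $T\subseteq B$ (you via Grothendieck's theorem applied to $B$, the paper via the citation that a Borel subgroup contains a maximal torus of $G$ over $k$ -- interchangeable inputs), then work one $\Gal(k_s/k)$-orbit of $\Delta$ at a time, pass to the fixed field $k_{\alpha_0}$ of the stabiliser, produce a nonzero stabiliser-fixed vector in $\mathfrak{g}_{\alpha_0}$, and propagate it along the orbit, with well-definedness and equivariance coming from the fact that the stabiliser fixes $X_{\alpha_0}$. The one step where you genuinely diverge is the descent step you correctly identify as the crux: the paper views $\mathfrak{g}_{\alpha_0}$ with its semilinear $\Stab(\alpha_0)$-action as a $k_s/k_{\alpha_0}$-form of the one-dimensional vector space and trivialises it by Hilbert's Theorem~90, whereas you first descend the character $\alpha_0$ to a character of $T_{k_{\alpha_0}}$ (Galois descent of morphisms) and then exhibit the $k_{\alpha_0}$-form of the line concretely as the weight space inside $\Lie(G)\otimes_k k_{\alpha_0}$, using that the eigenspace of a rational character is a kernel of a linear map and hence commutes with (flat) base change. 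Both mechanisms are sound: yours avoids cohomology at the cost of invoking two standard descent compatibilities, while the paper's is a one-line appeal to $H^1(k_{\alpha_0},\mathbf{G}_m)=1$; the resulting quasi-pinning is the same.
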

\begin{proof}
Let $ B $ be a borel subgroup of $ G $. Then it contains a maximal torus $ T $ of $ G $ (see for example \cite{Gil-Pol11}*{Exposé~22, Corollaire~5.9.7}). Let $ k_s, \Delta, \alpha \in \Delta $ and $ \mathfrak{g}_{\alpha} $ be as in Definition~\ref{Def:quasi-split groups} for the pair ($ T,B $). Let $ H\leq \Gal (k_s/k) $ be the stabiliser of $ \alpha $, and let $ k_{\alpha} $ be the subfield of $ k_s $ fixed by $ H $. Then there exists an $ H $-equivariant isomorphism $ k_{\alpha}\otimes_{k_{\alpha}}k_s\cong \mathfrak{g}_{\alpha} $ (this holds because all $ k_s/k_{\alpha} $-forms of the vector space $ k_s $ are equivalent by Hilbert's 90). Set $ X_\alpha = 1\in k_{\alpha}\subset \mathfrak{g}_{\alpha} $. Now, for $ \beta $ in the $ \Gal (k_s/k) $-orbit of $ \alpha $, we set $ X_{\beta} = \gamma .X_{\alpha} $ where $ \gamma \in \Gal (k_s/k) $ is any element such that $ \gamma (\alpha) = \beta $. The point is that $ X_{\beta} $ does not depend on a choice of $ \gamma \in \Gal(k_s/k) $ such that $ \gamma (\alpha )=\beta $ because $ H $ acts trivially on $ X_{\alpha} $. Doing so for each orbit of $ \Gal (k_s/k) $ on $ \Delta $ concludes the proof. 
\end{proof}

\begin{theorem}[\cite{Gil-Pol11}*{Exposé~24, Théorème~3.11}]\label{Thm:Quasi-split groups and BRD}
Let $ R = (M,M^*,\Phi ,\Phi^* ,\Delta) $ be a reduced based root datum. Consider the following categories:
\begin{enumerate}
\item The category $ \underline{\BRD} $ of $k$-schemes of based root datum of type $ R $. The morphisms are the isomorphisms of $ k $-schemes of based root datum.
\item The category $ \underline{\RedExt} $ of connected reductive $k$-groups of type $R$. The morphisms between $G$ and $G'$ are elements of the group $\underline{\Extisom} (G,G')(k)$.
\item The category $ \underline{\QsPin} $ of connected reductive quasi-split $k$-groups of type $R$, together with a choice of quasi-pinning. The morphisms are the isomorphisms preserving the quasi-pinning.
\end{enumerate}
These three categories are equivalent. More precisely, we have a diagram of functors between categories
\[
\begin{tikzcd}[row sep=2.5em]
 \underline{\BRD} \arrow[rr,"\text{qspin}"] &&
  \underline{\QsPin} \arrow[dl,"\iota"] \\
 & \underline{\RedExt} \arrow[ul,"\text{brd}"] & 
\end{tikzcd}
\]
such that the composition of those three functors $ ( $starting with anyone of them$ ) $ is naturally isomorphic to the identity.
\end{theorem}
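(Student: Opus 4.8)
The plan is to deduce the statement from \cite{Gil-Pol11}*{Exposé~24, Théorème~3.11}, where the analogous equivalence is proved with ``principal Galois covers of group $E = \Aut R$'' (i.e.\ $E$-torsors over $k$) in place of the category $\underline{\BRD}$; by Remark~\ref{Rem:Root datum and Galois descent} the groupoid of such torsors is equivalent to $\underline{\BRD}$, so the statement becomes a change of vocabulary. Concretely, the steps I would carry out are: (1) fix a canonical ``split'' object in each of the three categories and compute its automorphism group scheme; (2) observe that, after a choice of separable trivialisation for every object, all three categories are one and the same groupoid of cocycles into $E$; (3) define the three functors and check that each induces the identity on that cocycle groupoid; (4) conclude that the cyclic composites are naturally isomorphic to the identity.

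For step (1) let $k_0$ be the prime field of $k$, let $H$ be the Chevalley split connected reductive $k_0$-group of type $R$ with a pinning defined over $k_0$, and write $H_0 = H_{k_s}$. Then $H_k$ with its pinning lies in $\underline{\QsPin}$, the group $H_k$ (with $\underline{\Extisom}$-morphisms) lies in $\underline{\RedExt}$, and the split scheme of based root datum $R_k$ lies in $\underline{\BRD}$. Each of the three automorphism group schemes is canonically the constant group scheme $E_k$: for $\underline{\QsPin}$ this is the pinning theorem \cite{Gil-Pol11}*{Exposé~24, Théorème~1.3}, which gives $\underline{\Aut}\,H_k = \Ad H_k \rtimes E_k$ and hence identifies the pinning-preserving automorphisms with $E_k$; for $\underline{\RedExt}$ it follows since $\underline{\Extisom}(H_k, H_k) = \underline{\Aut}\,H_k / \Ad H_k = E_k$; for $\underline{\BRD}$ it is Remark~\ref{Rem:Root datum and Galois descent}. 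Since a connected reductive group over the separably closed field $k_s$ is split, every object of each category becomes isomorphic to the relevant base object over $k_s$; choosing such an isomorphism for every object, and using that Galois descent is effective in each setting (for $\underline{\BRD}$ by Remark~\ref{Rem:Root datum and Galois descent}, for the other two because reductive group schemes are affine), identifies each of the three categories with the groupoid $\mathrm{Coc}$ of continuous cocycles $\Gal(k_s/k) \to E$ with coboundaries as morphisms; in particular isomorphism classes in each category are in bijection with $H^1(k, E)$.

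For step (3): the functor $\iota \colon \underline{\QsPin} \to \underline{\RedExt}$ forgets the quasi-pinning and sends a pinning-preserving isomorphism to its class in $\underline{\Extisom}$; since $E_k \hookrightarrow \underline{\Aut}\,H_k$ followed by the quotient to $\underline{\Extisom}(H_k, H_k) = E_k$ is the identity, $\iota$ is the identity on $\mathrm{Coc}$. The functor $\mathrm{brd} \colon \underline{\RedExt} \to \underline{\BRD}$ sends $G$, trivialised by a cocycle $c \colon \Gal(k_s/k) \to \Aut H_0$, to the twist of $R_k$ by the composite $\Gal(k_s/k) \xrightarrow{c} \Aut H_0 \to E$ — which is, by definition, the scheme of based root datum $\mathcal{R}(G)$, and which exists because descent for schemes of based root datum of type $R$ is effective — and sends a class in $\underline{\Extisom}(G, G')(k)$ to the morphism of based root data it induces; this is well defined because an isomorphism of reductive groups induces an isomorphism of based root data while inner automorphisms act trivially on the root datum, and it too is the identity on $\mathrm{Coc}$. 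Finally $\mathrm{qspin} \colon \underline{\BRD} \to \underline{\QsPin}$ sends a scheme of based root datum, trivialised by $\bar c \colon \Gal(k_s/k) \to E$, to the group obtained by twisting $H_k$ by $\bar c$ (viewed in $\Aut H_0$ via the pinning splitting) equipped with the transported pinning, and is once more the identity on $\mathrm{Coc}$.

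For step (4), since all three functors are $\id_{\mathrm{Coc}}$ under the chosen trivialisations, each cyclic composite $\mathrm{brd}\circ\iota\circ\mathrm{qspin}$, $\mathrm{qspin}\circ\mathrm{brd}\circ\iota$, $\iota\circ\mathrm{qspin}\circ\mathrm{brd}$ induces the identity on $\mathrm{Coc}$, hence is naturally isomorphic to the identity on the respective category — the natural transformation being assembled object by object from the chosen separable trivialisations, the cocycle condition being exactly what makes each component a morphism there. I expect the only genuine difficulty to be the bookkeeping, concentrated in two places: checking that $\mathrm{brd}$ is genuinely a functor into $\underline{\BRD}$, i.e.\ that $\mathcal{R}(G)$ exists over $k$ for every form, is functorial in $\underline{\Extisom}$-morphisms, and commutes with base change (this is where Remark~\ref{Rem:Root datum and Galois descent} is indispensable); and checking that $\mathrm{qspin}$ lands in $\underline{\QsPin}$, i.e.\ that twisting by an $E$-valued cocycle yields a group with a Borel subgroup (hence quasi-split) carrying a canonical transported quasi-pinning — the substantive content of \cite{Gil-Pol11}*{Exposé~24, Théorème~3.11}, which I would simply quote once the torsor-to-$\underline{\BRD}$ dictionary is in place.
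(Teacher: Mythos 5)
Your proposal is correct and follows essentially the same route as the paper: the paper's proof is likewise a Galois-descent/cocycle adaptation of \cite{Gil-Pol11}*{Exposé~24, Théorème~3.11}, using a pinning of $G_0$ defined over $k$ to split $\underline{\Aut}\,G_0\cong \Ad G_0\rtimes E_{k_s}$, the cocycle description of $\underline{\Extisom}$, effectivity of descent for schemes of based root datum, and the fact that a quasi-pinning forces the descent cocycle into $E$ up to an $\Ad$-coboundary. The only difference is organisational: you factor everything through a common groupoid of $E$-valued cocycles, whereas the paper defines the three functors and verifies the three cyclic composites directly by the same cocycle computations.
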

\begin{proof}
We follow the proof given in \cite{Gil-Pol11}*{Exposé~24, Section~3.11}, with the advantage that we can work with the more concrete Galois descent, and that the notion of pinning is simpler over fields.

Set $ E = \Aut R $. Let $ k_s $ be a separable closure of $ k $ and let $ G_0 $ be the (split) connected reductive $ k_s $-group of type $ R $. For the proof, we choose a pinning for $G_0$ which is defined over $ k $, i.e. we choose a pinning for the split connected reductive $ k $-group of type $ R $ and we base change it to a pinning of $ G_0 $. In particular we choose a torus $ T_0 $ contained in a Borel subgroup $ B_0 $ (both defined over $ k $), and we get an identification $ \underline{\Aut}~G_0 \cong \Ad G_0\rtimes E_{k_s} $ (where the $ \Gal (k_s/k) $-action on $ E_{k_s} $ is trivial), and in particular an embedding $E=  E_{k_s}(k_s)\to (\underline{\Aut}~G_0)(k_s)=\Aut G_0 $.
\begin{enumerate}

\item The functor $\iota$. On objects, $\iota (G) $ is the natural inclusion whilst for $f\in \Mor_{\QsPin}(G,G')$, $\iota (f)$ is the projection of $f$ in $ \underline{\Extisom} (G,G')(k) = (\underline{\Isom}_{k\text{-gr.}}(G,G')/\Ad G)(k) $.
\item The functor qspin. Let $\mathcal{R}$ be a $k$-scheme of based root datum of type $R$. Choose an isomorphism $R_l\cong \mathcal{R}_l$ for some finite Galois extension $ l/k $ and let $c\colon \Gal (k_s/k)\to E $ be the corresponding cocycle. The quasi-split group qspin($ \mathcal{R} $) is the $ k_s/k $-form of $G_0$ defined by the cocycle $ c\colon \Gal (k_s/k)\to E \to \Aut G_0 $. Note that this cocycle preserves $T_0$ and $B_0$, so that qspin($ \mathcal{R} $) is indeed quasi-split. We choose for quasi-pinning on qspin($ \mathcal{R} $) the pair $ (T_0,B_0) $ descended to $ k $, and for $ \alpha \in \Delta $, we choose the element $ X_{\alpha}\in \Lie (G_0) $ to be the same as the one appearing in the pinning of $ G_0 $. Since the pinning of $ G_0 $ is defined over $ k $ by assumption, this indeed constitutes a quasi-pinning of qspin($ \mathcal{R} $). Finally, for a morphism $ f\in \Mor_{\BRD}(\mathcal{R},\mathcal{R}') $, qspin($ f $) is defined to be the descent of $f_{k_s}\in \Mor (\mathcal{R}_{k_s},\mathcal{R}'_{k_s})\cong E\leq \Aut G_0$ to an isomorphism qspin($ \mathcal{R} $)$ \to $qspin($ \mathcal{R}' $).
\item The functor brd. For $G$ a connected reductive group of type $R$, choose an isomorphism $ G_0\cong G_{k_s} $ and let $c\colon \Gal (k_s/k)\to \Aut G_0 $ be the corresponding cocycle. Consider $\tilde{c}$, the composition of $c$ with the projection $ \Aut G_0\to E $. Now brd($G$) is defined to be the $k_s/k$-form of the split $k_s$-scheme of root datum $R_{k_s}$ obtained by Galois descend using the cocycle $ \tilde{c} $. Whilst for a morphism $ f\in \Mor_{\RedExt}(G,G') $, $ f_{k_s}\in \underline{\Extisom}(G_{k_s},G_{k_s}')(k_s)\cong \Aut R_{k_s} $, and brd($ f $) is defined to be the descent of $ f_{k_s} $ to an isomorphism brd($ G $)$ \to $brd($ G' $).
\end{enumerate}

We now check that the composition of those three functors (starting with anyone of them) is naturally isomorphic to the identity.
\begin{enumerate}
\item brd$\, \circ \, i \, \circ \, $qspin $ \cong \Id_{\BRD} $. Let $c'\colon \Gal (k_s/k)\to E $ be the cocycle arising from a choice of $ R_l\cong \mathcal{R}_l $. By definition of qspin, a choice of isomorphism $ G_0\cong \text{qspin}(\mathcal{R})_{k_s} $ gives rise to a cocycle $c$ which is cohomologous to $c'$ (as cocycles with values in $ \Aut G_0 $), hence the $\tilde{c}$ appearing in the definition of brd is cohomologous to $ c' $ as well (as cocycles with values in $ E $).

\item qspin $ \circ $ brd$\, \circ \, i \cong \Id_{\QsPin} $. We need to check that given a quasi-split group $ G $ together with a choice of isomorphism $ G_0\cong G_{k_s} $ and corresponding cocycle $ c\colon \Gal(k_s/k)\to \Aut G_0 $, then $ c $ is cohomologous to $ c $ composed with $ \Aut G_0\to E\to \Aut G_0 $. The quasi-pinning on $ G $ gives a pinning of $ G_{k_s} $, which is sent by $ G_0\cong G_{k_s} $ to a pinning of $ G_0 $. Up to conjugation by $g\in G_0(k_s)$, which has the effect of replacing $c$ by a cohomologous cocycle, we can assume that this pinning of $ G_0 $ is the one we chose from the outset. Because the pinning of $ G_0 $ is defined over $ k $, it is invariant under the action of $ \Gal (k_s/k) $. Hence, the cocycle $ c $ has values in $ E $, as wanted.

\item $ i\, \circ $ qspin $ \circ $ brd $ \cong \Id_{\RedExt} $. Let $ G $ be a connected reductive group of type $ R $. We want to check that $ \underline{\Extisom} (G,G')(k)\neq \emptyset $, where $ G' = (i\, \circ $ qspin $ \circ $ brd)$ (G) $. Let $ G_0\cong G_{k_s} $ be the chosen isomorphism to define brd$(G)$, with corresponding cocycle $ c $, and let $ \tilde{c} $ be the projection of $ c $ under $ \Aut G_0\to E $. By definition, a cocycle defining $ G' $ is cohomologous to $ \tilde{c} $, so we can assume that $ G' $ is defined by $ \tilde{c} $. Now, by Remark~\ref{Rem:computing Extisom}, the identity on $ G_0 $ descends to an element of $ \underline{\Extisom} (G,G')(k) $, concluding the proof. \qedhere
\end{enumerate}
\end{proof}

In view of Theorem~\ref{Thm:Quasi-split groups and BRD}, one can attach in a functorial way a $k$-scheme of based root datum to any connected reductive $k$-group.
\begin{definition}
Let $ G $ be a connected reductive $k$-group. The $k$-\textbf{scheme of based root datum associated to} $ G $ is brd$ (G) $, where brd is the functor appearing in Theorem~\ref{Thm:Quasi-split groups and BRD}. We denote it $ \mathcal{R}(G) $.
\end{definition}

The crucial input is that taking the scheme of based root datum commutes with base change.
\begin{lemma}\label{Lem:base change commute with taking root datum}
Let $ G $ be a connected reductive $k$-group and let $ \alpha $ be an automorphism of $ k $. Then $ \mathcal{R}(\,^{\alpha}G) \cong \,^{\alpha}\mathcal{R}(G) $, naturally in $ G $.
\end{lemma}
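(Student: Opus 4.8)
The plan is to reduce everything to cocycle computations, mirroring the construction of the functor $\brd$ in Theorem~\ref{Thm:Quasi-split groups and BRD}. Fix a separable closure $k_s$ of $k$, let $G_0$ be the split connected reductive $k_s$-group of type $R$ equipped with a pinning defined over the prime field of $k$, and identify $\underline{\Aut}\,G_0 \cong \Ad G_0 \rtimes E_{k_s}$ with $E = \Aut R$. Choose an isomorphism $G_0 \cong G_{k_s}$ with associated cocycle $c\colon \Gal(k_s/k)\to \Aut G_0$, and let $\tilde c$ be its projection to $E$, so that $\mathcal{R}(G) = \brd(G)$ is the twist of $R_{k_s}$ by $\tilde c$. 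The automorphism $\alpha$ of $k$ extends to an automorphism $\beta$ of $k_s$ (any extension will do), and I would use Lemma~\ref{Lem:choosing a good base change} and especially Lemma~\ref{Lem:the descent datum of a base change} to read off a cocycle for $\,^\alpha G$: with the identification of base change provided there, the cocycle for $\,^{\alpha}G$ is $\gamma \mapsto c_{\beta^{-1}\gamma\beta}$ conjugated appropriately by $\Id_\beta$, i.e.\ $\gamma \mapsto \Id_\beta^{-1} c_{\beta^{-1}\gamma\beta}\Id_\beta$ up to the usual bookkeeping (inverses, left-vs-right action) already fixed in Section~\ref{Sec:Semiauto and Galois descent}.

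The key point is then that the projection $\Aut G_0 \to E$ intertwines this operation with the analogous one on $E$-valued cocycles. Since the pinning of $G_0$ is defined over the prime field, $\Id_\beta$ acts trivially on the copy of $E$ inside $\Aut G_0$ (this is exactly the statement that the Galois action, and more generally the $\Aut(k_s)$-action coming from $\Id_\bullet$, is trivial on $E_{k_s}$). Hence the $E$-valued cocycle attached to $\,^\alpha G$ is simply $\gamma \mapsto \tilde c_{\beta^{-1}\gamma\beta}$. On the other hand, unwinding Definition~\ref{Def:semi-auto of root datum} and Remark~\ref{Rem:Root datum and Galois descent}, the twist $\,^\alpha \mathcal{R}(G)$ of the split $k$-scheme of based root datum is classified by precisely the same recipe applied to $\tilde c$: base change of a twisted object along $\alpha$ conjugates the descent datum by $\beta^*$, which on the level of the (constant, hence trivially acted-upon) group $E$ amounts to precomposing the cocycle with $\gamma \mapsto \beta^{-1}\gamma\beta$. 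Therefore $\mathcal{R}(\,^\alpha G)$ and $\,^\alpha\mathcal{R}(G)$ are classified by the same class in $H^1(k,E)$, and in fact the identity on $R_{k_s}$ descends to a canonical isomorphism between them, exactly as in part~(3) of the proof of Theorem~\ref{Thm:Quasi-split groups and BRD}.

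For naturality in $G$, I would check that a morphism $f\in \Mor_{\RedExt}(G,G')$, i.e.\ an element of $\underline{\Extisom}(G,G')(k)$, base changes to an element of $\underline{\Extisom}(\,^\alpha G,\,^\alpha G')(k)$ compatibly: by Remark~\ref{Rem:computing Extisom}, $f$ corresponds to an element of $E_{k_s}$ satisfying a Galois-equivariance condition relative to $c,c'$, and applying the substitution $\gamma \mapsto \beta^{-1}\gamma\beta$ (together with triviality of the $\Id_\beta$-action on $E$) shows the same element still satisfies the transformed condition relative to the cocycles of $\,^\alpha G,\,^\alpha G'$. Chasing this through the definitions of $\brd$ on morphisms gives the required commuting square.

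I expect the main obstacle to be purely bookkeeping: keeping straight the direction conventions (the $f_\varphi \mapsto \varphi^{-1}$ convention, left vs.\ right actions, and the placement of inverses in $\tilde\gamma_G = c_{\gamma^{-1}}\Id_\gamma$), and making sure that the choice of extension $\beta$ of $\alpha$ is genuinely irrelevant — two choices of $\beta$ differ by an element of $\Gal(k_s/k)$, which changes the cocycle by a coboundary, hence does not affect the isomorphism class. A clean way to sidestep most of this is to invoke Lemma~\ref{Lem:the descent datum of a base change} verbatim (it already packages the base-change-of-descent-datum computation) and then simply push everything through the projection $\Aut G_0 \to E$, observing that this projection is $\Aut(k_s)$-equivariant for the trivial action on $E$ because the pinning is defined over the prime field.
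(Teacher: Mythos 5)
Your proposal is correct and follows essentially the same route as the paper's proof: extend $\alpha$ to $\beta\in\Aut(k_s)$, use Lemmas~\ref{Lem:choosing a good base change} and~\ref{Lem:the descent datum of a base change} to identify the cocycle of $\,^{\alpha}G$ as $\gamma\mapsto \Id_\beta^{-1}c_{\beta^{-1}\gamma\beta}\Id_\beta$, project to $E=\Aut R$ where the $\Id_\beta$-conjugation is trivial because the pinning is defined over the prime field, and observe that the same recipe computes the cocycle of $\,^{\alpha}\mathcal{R}(G)$, so the two chosen cocycles coincide. The naturality check via $\underline{\Extisom}$ and Remark~\ref{Rem:computing Extisom} likewise matches the paper's (brief) treatment.
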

\begin{proof}
Let $R$ be the type of $ G $, let $ k_s $ be a separable closure of $ k $, and let $ G_0 $ be the (split) connected $ k_s $-group of type $ R $. Let $ \beta $ be an extension of $ \alpha $ to $ k_s $, choose an isomorphism $ G_0\cong G_{k_s} $, and let $ G_0\cong \,(^{\alpha}G)_{k_s} $ be the corresponding isomorphism defined in Lemma~\ref{Lem:choosing a good base change}. Now by Lemma~\ref{Lem:the descent datum of a base change}, if $ c $ denotes the cocycle defining $ G $, then the corresponding cocycle $ ^{\alpha}c $ defining $ ^{\alpha}G $ is given by $ (\, ^{\alpha}c)_{\gamma} = \Id_{\beta }^{-1}c_{\beta^{-1}\gamma \beta}\Id_{\beta} $, for all $ \gamma \in \Gal (k_s/k) $. Finally, we choose a pinning of $G_0$ defined over the prime field of $k$ (so that we can identify $\Aut G_0\cong (\Ad G_0)(k_s)\rtimes \Aut R $), and we let $ \tilde{c} $ (respectively $^{\alpha }\tilde{c}$) be the projection of $ c $ (respectively $ ^{\alpha}c $) under $ \Aut G_0\to \Aut R $. Note that since the Galois action on $ \Aut R $ is trivial, $ (^{\alpha}\tilde{c})_{\gamma} $ is just the projection of $ c_{\beta^{-1}\gamma \beta} $ onto $ \Aut R $.

On the other side, let $ R_{k_s} $ be the split $ k $-scheme of based root datum of type $ R $. The (choice of) cocycle defining $ \mathcal{R}(G) $ is $ \tilde{c}\colon \Gal(k_s/k)\to \Aut R $. Now exactly the same computation as for algebraic groups (i.e.\ repeating Lemma~\ref{Lem:choosing a good base change} and Lemma~\ref{Lem:the descent datum of a base change} in the category of schemes of based root datum) shows that a cocycle defining $ ^{\alpha}\mathcal{R}(G) $ is given by $\gamma \mapsto \Id_{\beta }^{-1}\tilde{c}_{\beta^{-1}\gamma \beta}\Id_{\beta} = \tilde{c}_{\beta^{-1}\gamma \beta} $. But this a also the chosen cocycle defining $ \mathcal{R}(^{\alpha}G) $, as was to be shown. The naturality in $ G $ of this isomorphism is straightforward.
\end{proof}
\begin{remark}
Of course, for this whole section, we did not need the fact that the base scheme is the spectrum of a field, and for example, Lemma~\ref{Lem:base change commute with taking root datum} should be true over any base scheme, and under any base change. The advantage of working over a field is that the notion of pinning is simpler, and that Galois descent is more concrete than fppf descent.
\end{remark}

The proof of Theorem~\ref{Thm: MainThm1} now follows easily from Theorem~\ref{Thm:Quasi-split groups and BRD} and Lemma~\ref{Lem:base change commute with taking root datum}.
\begin{proof}[Proof of Theorem~\ref{Thm: MainThm1}]
Recall that to give an automorphism $ f_{\alpha} $ of $ G $ over $\alpha \in \Aut (k)$ is equivalent to give an isomorphism of $ k $-group schemes $ f\colon G\to \,^{\alpha}G $. Hence, projecting $ f $ to an element $ \bar{f}\in \underline{\Extisom}(G,\,^{\alpha}G)(k) $ and using the functor brd defined in Theorem~\ref{Thm:Quasi-split groups and BRD}, we get an isomorphism brd$ (\bar{f})\colon \mathcal{R}(G)\to \mathcal{R}(\,^{\alpha}G)\cong \,^{\alpha}\mathcal{R}(G) $ (where we used Lemma~\ref{Lem:base change commute with taking root datum} for the last isomorphism). Now since brd is a functor, and because the isomorphism $ \mathcal{R}(\,^{\alpha}G)\cong \,^{\alpha}\mathcal{R}(G) $ is natural in $ G $, the map $ f_{\alpha}\mapsto $brd$ (\bar{f}) $ is a group homomorphism which is natural in $ G $. Furthermore, the underlying automorphism of the field is preserved by this homomorphism. To conclude the first part of the proof, note that $ f_{\alpha} $ is in the kernel of this homomorphism if and only if $ \alpha $ is trivial and $ \bar{f} $ is trivial in $ \underline{\Extisom} (G, G)(k) $, which is to say that $ f\in (\Ad G)(k) $.

For the last assertion, assume that $ G $ is quasi-split and choose a quasi-pinning of it. Define the subgroup $ H=\lbrace f_{\alpha}\in \Aut (G\to \Spec k)~\vert~ f_{\alpha } \text{ preserves the quasi-pinning of } G\rbrace $. Seeing $ f_{\alpha} $ as an isomorphism $ f $ from $ G $ to $ ^{\alpha}G $, the condition for $ f $ to belong to $ H $ is that it preserves the quasi-pinnings (where $ ^{\alpha}G $ is endowed with the quasi-pinning on $G$ based changed to $ ^{\alpha}G $). Now the fact that $ H $ maps isomorphically onto $ \Aut(\mathcal{R}(G)\to \Spec k) $ under $ \Aut (G\to \Spec k)\to \Aut (\mathcal{R}(G)\to \Spec k) $ is a direct consequence of Lemma~\ref{Lem:base change commute with taking root datum} and of the equivalence of categories $ \underline{\BRD} $ and $ \underline{\QsPin} $ in Theorem~\ref{Thm:Quasi-split groups and BRD}.
\end{proof}

\begin{remark}
For $ G $ a connected reductive $ k $-group which is not quasi-split, the decomposition $ \Aut G\cong (\Ad G)(k)\rtimes \Out G $ as a semidirect product is usually destroyed. Similarly, one should not expect to obtain a semidirect decomposition of $ \Aut (G \to \Spec k) $ for a general connected reductive $ k $-group. Investigating a possible semidirect decomposition of the group of semilinear automorphisms of simple algebraic groups is an entirely different matter when $ G $ is not quasi-split, as is illustrated by our treatment of the $ \SL_n(D) $ case in Section~\ref{Sec:SL_n(D)}.
\end{remark}

As a corollary of Theorem~\ref{Thm: MainThm1}, we obtain a proof of Theorem~\ref{Thm: MainThm2}.
\begin{proof}[Proof of Theorem~\ref{Thm: MainThm2}]
By Theorem~\ref{Thm: MainThm1}, $ \Aut_{\mathcal{R}(G)} (k) = \Aut_G(k) $. We thus obtain the following commutative diagram:

\begin{center}
\begin{tikzcd}[row sep=2.5em, column sep=3em] 1 \arrow{dr} &1 \arrow[d] & &1&1\\ & (\Ad G)(k) \arrow{dd} \arrow{dr} &  & \Aut_{\mathcal{R}(G)} (k) \arrow{u}  \arrow{ur}  & \\ & & \Aut (\! G \to \Spec k\! ) \arrow[dr,swap, "\pi"']  \arrow[ur,"p_1"] &  & \\ & \Aut G \arrow[d]  \arrow{ur} &  & \Aut (\mathcal{R}(G)\to \Spec k) \arrow[uu,"p_2"]\arrow[ul, shift left=1.5ex,swap, "\iota"'] \arrow{dr} & \\ 1 \arrow{ur} & Out G \arrow[d] & & \Aut \mathcal{R}(G) \arrow{u} & 1 \\  &1 & &1 \arrow{u} & 
\end{tikzcd}
\end{center}
where all diagonal lines and vertical lines are exact. Here, $ \pi $ denotes the homomorphism provided by Theorem~\ref{Thm: MainThm1}, and $ \iota $ is a section of $ \pi $ (which exists, again by Theorem~\ref{Thm: MainThm1}). Note that in particular, $ \iota $ preserves the underlying field automorphism, i.e.\ $ p_1\circ \iota = p_2 $.

We thus conclude that the short exact sequence $ 1\to \Aut G\to \Aut (G \to \nolinebreak \Spec k)\to \Aut_{G}(k)\to 1 $ splits if and only if the short exact sequence involving $ k $-schemes of based root datum $ 1\to \Aut \mathcal{R}(G) \to \Aut (\mathcal{R}(G) \to \Spec k)\to \Aut_{\mathcal{R}(G)}(k)\to 1 $ does, as was to be shown.
\end{proof}

\section{Semilinear automorphisms and Galois cohomology}\label{Sec:Semilinear Galois cohomology}
We have just proved that for any connected reductive algebraic $ k $-group $ G $, we have a natural exact sequence $ 1\to (\Ad G)(k)\to \Aut (G\to \Spec k)\to \Aut (\mathcal{R}(G)\to \Spec k) $. It would be nice to be able to express the failure of surjectivity on the right using Galois cohomology. We explain in this section how to do so.

In this section, $ k_s $ denotes a separable closure of $ k $ with Galois group $ \Gamma = \Gal (k_s/k) $, $ R $ is a reduced based root datum, $ G_0 $ is a (split) connected reductive $ k_s $-group of type $ R $ with a choice of pinning defined over the base field of $ k $, and $ R_{k_s} $ is the split $ k_s $-scheme of based root datum of type $ R $. We furthermore set $ E = \Aut R $ and we let $ E_{k_s} $ be the corresponding constant object over $ k_s $. Also, we again use the convention that $ G_0 $ comes together with a preferred split form of it over the prime field of $ k $. In particular, we get a decomposition $ \Aut (G_0\to \Spec k_s)\cong \Aut G_0\rtimes \Aut (k_s) $, where the splitting $ \Aut (k_s)\to \Aut (G_0\to \Spec k_s) $ is given by $ \beta^{-1} \mapsto \Id_{\beta} $ (see Definition~\ref{Def:gammatilde and Idgamma}).

\begin{definition}\label{Def:semilinear auto. preserving k}
\begin{enumerate}
\item Given a field extension $l\geq k$, we set $ \Aut (l\geq k) = \lbrace \alpha \in \Aut (l)~\vert~\alpha (k) = k\rbrace $
\item We set $ \Aut (G_0\to \Spec k_s\geq k) = $  $$ \lbrace f_{\alpha}\in \Aut (G_0\to \Spec k_s)~\vert~\text{the underlying } \alpha \in \Aut (k_s) \text{ belongs to } \Aut (k_s\geq k)\rbrace. $$
\item We denote an element of $ \Aut (G_0\to \Spec k_s\geq k)\cong \Aut G_0\rtimes \Aut (k_s\geq k) $ by $ b\Id_{\beta} $ (where $ b \in \Aut G_0 $ and $ \beta \in \Aut (k_s\geq k) $).
\end{enumerate}
\end{definition}
\begin{remark}
In Definition~\ref{Def:semilinear auto. preserving k}, $ \alpha $ is required to globally preserves $ k $, but its restriction to $ k $ can be non-trivial. Also, we will use the fact that $ \Aut (l/k) $ (see Definition~\ref{Def:Notation Aut(l/k) and base change}) is a normal subgroup of $ \Aut (l\geq k) $.
\end{remark}

\begin{definition}\label{Def:semilinear Galois action}
Let $ G $ be a connected reductive $ k $-group of type $ R $, choose an isomorphism $ G_0\cong G_{k_s} $ and let $ c\colon \Gamma \to \Aut G_0 $ be the corresponding Galois cocycle. We define the \textbf{semilinear Galois action corresponding to }$ c $ (we also say corresponding to $ G_0\cong G_{k_s} $) on $ \Aut (G_0\to \Spec k_s\geq k) \cong \Aut G_0\rtimes \Aut (k_s) $ as follows: 
\begin{equation*}
\text{ For all } b\in \Aut G_0,~\beta \in \Aut (k_s\geq k) \text{ and } \gamma \in \Gamma,~~~\gamma .(b\Id_{\beta}) = c_{\beta^{-1}\gamma \beta}\,^{\beta^{-1}\gamma \beta}b\,^{\beta^{-1}}c_{\gamma}^{-1}\Id _{\beta}.
\end{equation*}
\end{definition}
\begin{remark}
In view of Lemma~\ref{Lem:Condition for a semilinear auto. to descend}, an element of  $ \Aut (G_0\to \Spec k_s\geq k) $ descends to an element of $ \Aut (G\to \Spec k) $ if and only if it is Galois invariant. This is the origin of Definition~\ref{Def:semilinear Galois action}.
\end{remark}

It is important to notice that in general, the $ \Gamma $-action on $ \Aut (G_0\to \Spec k_s\geq k) $ does not preserve the group structure. Let us prove some elementary properties of this action.
\begin{lemma}\label{Lem:computing with the semilinear action}
Keep the notations of Definition~\ref{Def:semilinear Galois action}. Let $ \gamma ,\gamma_1,\gamma_2 \in \Gamma $ and let $ b\Id_{\beta}$, $b_1\Id_{\beta_1}$, $ b_2\Id_{\beta_2} \in \Aut (G_0\to \Spec k_s\geq k)\cong \Aut G_0\rtimes \Aut (k_s\geq k) $.
\begin{enumerate}
\item $\gamma_1\gamma_2 . (b\Id_{\beta}) = \gamma_1.(\gamma_2.(b\Id_{\beta})) $
\item $ \gamma .(b_1\Id_{\beta_1} b_2\Id_{\beta_2}) = \Big( \beta_2^{-1}\gamma \beta_2.(b_1\Id_{\beta_1})\Big) \Big( \gamma.(b_2\Id_{\beta_2})\Big) $
\end{enumerate}
\begin{proof}
\begin{enumerate}
\item $\begin{aligned}[t]
\gamma_1\gamma_2 . (b\Id_{\beta}) 
&= c_{\beta^{-1}\gamma_1\gamma_2 \beta}\,^{\beta^{-1}\gamma_1\gamma_2 \beta}b\,^{\beta^{-1}}c_{\gamma_1\gamma_2}^{-1}\Id _{\beta}\\
&= c_{\beta^{-1}\gamma_1\beta \beta^{-1} \gamma_2 \beta}\,^{\beta^{-1}\gamma_1\gamma_2 \beta}b\,^{\beta^{-1}}(c_{\gamma_1}\,^{\gamma_1}c_{\gamma_2})^{-1}\Id _{\beta}\\
&= c_{\beta^{-1}\gamma_1\beta}\,^{\beta^{-1} \gamma_1 \beta}(c_{\beta^{-1} \gamma_2 \beta}\,^{\beta^{-1} \gamma_2 \beta}b\,^{\beta^{-1}}c_{\gamma_2}^{-1})\,^{\beta^{-1}}c_{\gamma_1}^{-1}\Id _{\beta}\\
&= \gamma_1.(c_{\beta^{-1} \gamma_2 \beta}\,^{\beta^{-1} \gamma_2 \beta}b\,^{\beta^{-1}}c_{\gamma_2}^{-1}\Id_{\beta})\\
&= \gamma_1.(\gamma_2.(b\Id_{\beta}))
\end{aligned}$
\item $\begin{aligned}[t]
&\gamma .(b_1\Id_{\beta_1} b_2\Id_{\beta_2}) = \gamma .(b_1 \,^{\beta_1^{-1}}b_2 \Id_{\beta_2\beta_1})\\
&= c_{(\beta_2\beta_1)^{-1}\gamma \beta_2\beta_1}\,^{(\beta_2\beta_1)^{-1}\gamma \beta_2\beta_1}(b_1 \,^{\beta_1^{-1}}b_2)\,^{(\beta_2\beta_1)^{-1}}c_{\gamma}^{-1}\Id _{\beta_2\beta_1}\\
&= c_{\beta_1^{-1}\beta_2^{-1}\gamma \beta_2\beta_1}\,^{\beta_1^{-1}\beta_2^{-1}\gamma \beta_2\beta_1}b_1\,^{\beta_1^{-1}}c_{\beta_2^{-1}\gamma \beta_2}^{-1}\,^{\beta_1^{-1}}c_{\beta_2^{-1}\gamma \beta_2}       \,^{(\beta_2\beta_1)^{-1}\gamma \beta_2}b_2\,^{(\beta_2\beta_1)^{-1}}c_{\gamma}^{-1}\Id _{\beta_2 \beta_1}\\
&=\Big( \beta_2^{-1}\gamma \beta_2 . (b_1\Id_{\beta_1})\Big) \Id_{\beta_1}^{-1}\,^{\beta_1^{-1}}c_{\beta_2^{-1}\gamma \beta_2}       \,^{\beta_1^{-1}\beta_2^{-1}\gamma \beta_2}b_2\,^{\beta_1^{-1}\beta_2^{-1}}c_{\gamma}^{-1}\Id _{\beta_1}\Id_{\beta_2}\\
&= \Big( \beta_2^{-1}\gamma \beta_2.(b_1\Id_{\beta_1})\Big) \Big( \gamma.(b_2\Id_{\beta_2})\Big)
\end{aligned}$
\end{enumerate}
\end{proof}
\end{lemma}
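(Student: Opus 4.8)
The plan is to derive both identities by direct computation from Definition~\ref{Def:semilinear Galois action}, using only two kinds of input. First, the structural identities inside $\Aut(G_0\to \Spec k_s)\cong \Aut G_0\rtimes \Aut(k_s)$: namely $\Id_{\beta_1}\Id_{\beta_2}=\Id_{\beta_2\beta_1}$ and $\,^{\gamma}f=\Id_{\gamma}^{-1}f\Id_{\gamma}$ (recalled just before Lemma~\ref{Lem:Condition for a semilinear auto. to descend}), which together imply that $\,^{(-)}$ is a genuine left action of $\Aut(k_s)$ by group automorphisms of $\Aut G_0$, that $\Id_{\beta}b=\,^{\beta^{-1}}b\,\Id_{\beta}$, that $\Id_{\beta}^{-1}=\Id_{\beta^{-1}}$, and hence the product rule $(b_1\Id_{\beta_1})(b_2\Id_{\beta_2})=b_1\,{}^{\beta_1^{-1}}b_2\,\Id_{\beta_2\beta_1}$. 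Second, the cocycle relation $c_{\gamma\delta}=c_{\gamma}\,{}^{\gamma}c_{\delta}$, which after conjugating both arguments by $\beta$ also reads $c_{\beta^{-1}\gamma\delta\beta}=c_{\beta^{-1}\gamma\beta}\,{}^{\beta^{-1}\gamma\beta}c_{\beta^{-1}\delta\beta}$. Because the $\Gamma$-action does not respect the group law, the whole difficulty is bookkeeping: keeping track of which field automorphism acts on which factor.

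For statement~(1), I would write $\gamma_2.(b\Id_{\beta})=b'\Id_{\beta}$ with $b'=c_{\beta^{-1}\gamma_2\beta}\,{}^{\beta^{-1}\gamma_2\beta}b\,{}^{\beta^{-1}}c_{\gamma_2}^{-1}$, and then expand $\gamma_1.(b'\Id_{\beta})$ from the definition. Distributing the exponent $\beta^{-1}\gamma_1\beta$ over the three factors of $b'$ and using that $\,^{(-)}$ is a left action collapses the middle term to $\,^{\beta^{-1}\gamma_1\gamma_2\beta}b$, produces $c_{\beta^{-1}\gamma_1\beta}\,{}^{\beta^{-1}\gamma_1\beta}c_{\beta^{-1}\gamma_2\beta}=c_{\beta^{-1}\gamma_1\gamma_2\beta}$ on the left by the conjugated cocycle identity, and $\,^{\beta^{-1}\gamma_1}c_{\gamma_2}^{-1}\,{}^{\beta^{-1}}c_{\gamma_1}^{-1}=\,^{\beta^{-1}}\!\big((c_{\gamma_1}\,{}^{\gamma_1}c_{\gamma_2})^{-1}\big)=\,^{\beta^{-1}}c_{\gamma_1\gamma_2}^{-1}$ on the right. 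Comparing with $\gamma_1\gamma_2.(b\Id_{\beta})$ read off directly from Definition~\ref{Def:semilinear Galois action} concludes this case.

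For statement~(2), I would first use the product rule to replace $b_1\Id_{\beta_1}b_2\Id_{\beta_2}$ by the single pair $(b_1\,{}^{\beta_1^{-1}}b_2)\Id_{\beta_2\beta_1}$, then apply the definition of the $\gamma$-action. Setting $\delta=\beta_2^{-1}\gamma\beta_2$, one has $(\beta_2\beta_1)^{-1}\gamma\beta_2\beta_1=\beta_1^{-1}\delta\beta_1$, and after distributing the exponent over $b_1\,{}^{\beta_1^{-1}}b_2$ the expression becomes $c_{\beta_1^{-1}\delta\beta_1}\,{}^{\beta_1^{-1}\delta\beta_1}b_1\,{}^{\beta_1^{-1}\delta}b_2\,{}^{\beta_1^{-1}\beta_2^{-1}}c_{\gamma}^{-1}\,\Id_{\beta_2\beta_1}$. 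The key manoeuvre is to insert $\id=\,^{\beta_1^{-1}}c_{\delta}^{-1}\,{}^{\beta_1^{-1}}c_{\delta}$ between the $b_1$-block and the $b_2$-block: the left block $c_{\beta_1^{-1}\delta\beta_1}\,{}^{\beta_1^{-1}\delta\beta_1}b_1\,{}^{\beta_1^{-1}}c_{\delta}^{-1}$ is then exactly $\delta.(b_1\Id_{\beta_1})$ with its trailing factor $\Id_{\beta_1}$ removed, while the right block equals $\,^{\beta_1^{-1}}\!\big(c_{\delta}\,{}^{\delta}b_2\,{}^{\beta_2^{-1}}c_{\gamma}^{-1}\big)=\,^{\beta_1^{-1}}x$, where $x\,\Id_{\beta_2}=\gamma.(b_2\Id_{\beta_2})$. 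Using $\,^{\beta_1^{-1}}x=\Id_{\beta_1}x\Id_{\beta_1}^{-1}$ and $\Id_{\beta_1}^{-1}\Id_{\beta_2\beta_1}=\Id_{\beta_2}$ gives $\Id_{\beta_1}^{-1}\,{}^{\beta_1^{-1}}x\,\Id_{\beta_2\beta_1}=x\,\Id_{\beta_2}$, so the whole expression equals $\big(\beta_2^{-1}\gamma\beta_2.(b_1\Id_{\beta_1})\big)\big(\gamma.(b_2\Id_{\beta_2})\big)$, as claimed.

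The main obstacle is not conceptual but organisational: since the $\Gamma$-action is twisted by the field automorphism sitting in the semilinear part, every rearrangement must be carried out in exactly the right order, and the only genuinely non-mechanical step is spotting the insertion of $\id=\,^{\beta_1^{-1}}c_{\delta}^{-1}\,{}^{\beta_1^{-1}}c_{\delta}$ that separates the computation into the two recognisable blocks $\delta.(b_1\Id_{\beta_1})$ and $\gamma.(b_2\Id_{\beta_2})$. Once that is in place, everything reduces to the cocycle relation and the associativity of the action $\,^{(-)}$.
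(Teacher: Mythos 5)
Your proposal is correct and follows essentially the same route as the paper: both parts are direct expansions from Definition~\ref{Def:semilinear Galois action} using the cocycle relation and the semidirect-product rules, and in part~(2) you use exactly the paper's key step of inserting $\,^{\beta_1^{-1}}c_{\beta_2^{-1}\gamma\beta_2}^{-1}\,^{\beta_1^{-1}}c_{\beta_2^{-1}\gamma\beta_2}$ to split the expression into the blocks $\beta_2^{-1}\gamma\beta_2.(b_1\Id_{\beta_1})$ and $\gamma.(b_2\Id_{\beta_2})$. The only cosmetic difference is that in part~(1) you expand the right-hand side $\gamma_1.(\gamma_2.(b\Id_\beta))$ and recombine, whereas the paper expands the left-hand side and splits; the manipulations are identical.
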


\begin{lemma}
Keep the notations of Definition~\ref{Def:semilinear Galois action}. The set of elements in $ \Aut (G_0\to \Spec k_s\geq k) $ that are fixed by the $ \Gamma $ action is a subgroup.
\end{lemma}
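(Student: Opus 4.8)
The plan is to verify the three subgroup axioms by hand, relying on the two identities established in Lemma~\ref{Lem:computing with the semilinear action}; a purely formal ``fixed points of a group action form a subgroup'' argument is \emph{not} available here, since, as already emphasised, the $\Gamma$-action on $\Aut(G_0\to\Spec k_s\geq k)$ does not preserve the group law. For nonemptiness, I would observe that the identity element is $\Id_{\id}$ (the case $b=\id$, $\beta=\id$), and that substituting this into Definition~\ref{Def:semilinear Galois action} gives $\gamma.\Id_{\id}=c_\gamma\,^{\gamma}(\id)\,c_\gamma^{-1}\Id_{\id}=\Id_{\id}$ for every $\gamma\in\Gamma$, so $\Id_{\id}$ is fixed.

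For closure under products, suppose $x=b_1\Id_{\beta_1}$ and $y=b_2\Id_{\beta_2}$ are both $\Gamma$-fixed. Lemma~\ref{Lem:computing with the semilinear action}(2) gives $\gamma.(xy)=\bigl(\beta_2^{-1}\gamma\beta_2.x\bigr)\bigl(\gamma.y\bigr)$ for every $\gamma\in\Gamma$. Here I would use that $\Gamma=\Aut(k_s/k)$ is normal in $\Aut(k_s\geq k)$ (as recorded after Definition~\ref{Def:semilinear auto. preserving k}), so that $\beta_2^{-1}\gamma\beta_2$ again lies in $\Gamma$; since $x$ is fixed by \emph{all} of $\Gamma$ we get $\beta_2^{-1}\gamma\beta_2.x=x$, while $\gamma.y=y$ by hypothesis. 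Hence $\gamma.(xy)=xy$.

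For closure under inverses, let $x=b\Id_\beta$ be $\Gamma$-fixed; its inverse necessarily has the form $x^{-1}=b'\Id_{\beta^{-1}}$, since the field components of $x$ and $x^{-1}$ must be inverse to each other. Applying Lemma~\ref{Lem:computing with the semilinear action}(2) to the relation $\Id_{\id}=x\cdot x^{-1}$ then yields $\gamma.\Id_{\id}=\bigl(\beta\gamma\beta^{-1}.x\bigr)\bigl(\gamma.x^{-1}\bigr)$. Again $\beta\gamma\beta^{-1}\in\Gamma$ by normality, so $\beta\gamma\beta^{-1}.x=x$; combined with $\gamma.\Id_{\id}=\Id_{\id}$ from the first step, this gives $x\cdot(\gamma.x^{-1})=\Id_{\id}$, whence $\gamma.x^{-1}=x^{-1}$.

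The one genuinely non-formal point — and the place where I expect to need the most care — is the appearance of the \emph{conjugate} $\beta_2^{-1}\gamma\beta_2$ (respectively $\beta\gamma\beta^{-1}$) rather than $\gamma$ itself in the twisted product rule of Lemma~\ref{Lem:computing with the semilinear action}(2): this is precisely why the relevant factor must be invariant under the whole group $\Gamma$ and not merely under the single element $\gamma$, and hence why the normality of $\Gal(k_s/k)$ inside $\Aut(k_s\geq k)$ is indispensable. Everything else is routine bookkeeping.
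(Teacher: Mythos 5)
Your proof is correct and takes essentially the same route as the paper: both closure under products and closure under inverses are deduced from the twisted product rule of Lemma~\ref{Lem:computing with the semilinear action}, together with the normality of $ \Gamma $ in $ \Aut (k_s\geq k) $, which guarantees that the conjugated Galois element still lies in $ \Gamma $. The only cosmetic differences are that you expand $ \Id_{G_0}=x\,x^{-1} $ rather than $ x^{-1}x $ (so the invariance of $ x^{-1} $ comes out directly for $ \gamma $ instead of for $ \beta^{-1}\gamma \beta $), and that you verify explicitly that the identity element is fixed, a fact the paper uses without separate comment.
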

\begin{proof}
Let $b_1\Id_{\beta_1}$, $ b_2\Id_{\beta_2} \in \Aut (G_0\to \Spec k_s\geq k)\cong \Aut G_0\rtimes \Aut (k_s\geq k) $ be elements that are fixed by the $ \Gamma $ action. For $ \gamma \in \Gamma $, we have $ \gamma . (b_1\Id_{\beta_1} b_2\Id_{\beta_2}) = \Big( \beta_2^{-1}\gamma \beta_2.(b_1\Id_{\beta_1})\Big) \Big( \gamma.(b_2\Id_{\beta_2})\Big) $ by Lemma~\ref{Lem:computing with the semilinear action}. Hence $ b_1\Id_{\beta_1} b_2\Id_{\beta_2} $ is $ \Gamma $ invariant as well.

Similarly, if $ b\Id_{\beta}$ is $\Gamma $ invariant, for all $ \gamma \in \Gamma $ we have $ \Id_{G_0} = \gamma . \Id_{G_0} = \gamma .((b\Id_{\beta})^{-1}b\Id_{\beta}) = \Big( \beta^{-1}\gamma \beta .((b\Id_{\beta})^{-1})\Big) \Big( \gamma.(b\Id_{\beta})\Big)  $. Hence, since $ b\Id_{\beta} $ is $ \Gamma $ invariant, we get $ \beta^{-1}\gamma \beta .((b\Id_{\beta})^{-1}) = (b\Id_{\beta})^{-1} $ for all $ \gamma \in \Gamma $, and hence $ (b\Id_{\beta})^{-1} $ is $ \Gamma $ invariant as well.
\end{proof}
\begin{definition}
In the notations of Definition~\ref{Def:semilinear Galois action}, the subgroup of elements of  $ \Aut (G_0\to \Spec k_s\geq k) $ that are fixed by $ \Gamma $ is denoted $ \Aut (G_0\to \Spec k_s\geq k)^{\Gamma} $.
\end{definition}

We now aim to state that the group $ \Aut (G\to \Spec k) $ is the group $ \Aut (G_0\to \Spec k_s\geq k)^{\Gamma} $ modulo the Galois group. So we need to embed the Galois group as a normal subgroup of $ \Aut (G_0\to \Spec k_s\geq k)^{\Gamma} $.
\begin{definition}\label{Def:embedding Gamma in semilinear automorphisms}
Consider the homomorphism $ \Gamma \to \Aut (G_0\to \Spec k_s\geq k)^{\Gamma}\colon \gamma \mapsto c_{\gamma}\Id_{\gamma}^{-1} $. We denote the image of $ \Gamma $ by $ \tilde{\Gamma} $.
\end{definition}
\begin{remark}
Note that for $ \gamma \in \Gamma $, $ c_{\gamma}\Id_{\gamma}^{-1} $ is an invariant element of $ \Aut (G_0\to \Spec k_s\geq k) $. Indeed, for $ \delta \in \Gamma $ we have $ \delta .(c_{\gamma}\Id_{\gamma}^{-1}) = c_{\gamma \delta \gamma^{-1}}\,^{\gamma \delta \gamma^{-1}}c_{\gamma}\,^{\gamma}c_{\delta}^{-1}\Id_{\gamma}^{-1} = c_{\gamma \delta }\,^{\gamma \delta}c_{\delta^{-1}}\Id_{\gamma}^{-1} = c_{\gamma}\Id_{\gamma}^{-1} $.
\end{remark}
\begin{remark}
If we denote $ \tilde{\gamma} = c_{\gamma}\Id_{\gamma}^{-1} $, it is unfortunate that $ \tilde{\gamma} $ is the inverse of the element $ \tilde{\gamma}_G = c_{\gamma^{-1}}\Id_{\gamma} $ appearing in Definition~\ref{Def:gammatilde and Idgamma}. In the language of descent datum, $ \gamma \mapsto \tilde{\gamma}_G $ is traditionally required to be an anti-homomorphism, whereas it felt more natural to use a homomorphism in Definition~\ref{Def:embedding Gamma in semilinear automorphisms}, so we indulge in this inconsistency.
\end{remark}
\begin{lemma}\label{Lem:Identification of Aut(G Spec k)}
Keeping the notations of Definition~\ref{Def:semilinear Galois action}, $ \Aut (G\to \Spec k) $ is naturally isomorphic to $ \Aut (G_0\to \Spec k_s\geq k)^{\Gamma}/\tilde{\Gamma} $.
\end{lemma}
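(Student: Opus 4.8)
The plan is to construct a surjective group homomorphism
$\Phi\colon \Aut(G_0\to\Spec k_s\geq k)^{\Gamma}\to\Aut(G\to\Spec k)$
with kernel exactly $\tilde{\Gamma}$; the induced isomorphism on the quotient $\Aut(G_0\to\Spec k_s\geq k)^{\Gamma}/\tilde{\Gamma}$ will then be natural in $G$ because every ingredient depends only on the already fixed isomorphism $G_0\cong G_{k_s}$. To define $\Phi$, take a $\Gamma$-invariant element $b\Id_{\beta}$ and put $\alpha=\beta|_k\in\Aut(k)$. By Lemma~\ref{Lem:Condition for a semilinear auto. to descend}, the $\Gamma$-invariance of $b\Id_{\beta}$ for the semilinear Galois action of Definition~\ref{Def:semilinear Galois action} is \emph{exactly} the condition that $b\Id_{\beta}$ descend to a semilinear automorphism of $G$ over $\alpha$. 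Since $G_0$ is of finite type, $b$ is defined over a finite Galois subextension of $k_s/k$, so Galois descent (Theorem~\ref{Thm:Galois descent}) produces a genuine $f_{\alpha}\in\Aut(G\to\Spec k)$, and it is the unique element of $\Aut(G\to\Spec k)$ whose base change to $k_s$ is $b\Id_{\beta}$ (faithfully flat descent for morphisms). Setting $\Phi(b\Id_{\beta})=f_{\alpha}$, this uniqueness together with functoriality of base change shows $\Phi$ is a homomorphism on the subgroup $\Aut(G_0\to\Spec k_s\geq k)^{\Gamma}$: if $b_i\Id_{\beta_i}$ descends to $\bar f_i$, then $\bar f_1\bar f_2$ base changes to $(b_1\Id_{\beta_1})(b_2\Id_{\beta_2})$, which hence descends to $\bar f_1\bar f_2$. (One may also verify this directly using the composition formula of Lemma~\ref{Lem:computing with the semilinear action}.)

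For the kernel, recall from the remarks following Definition~\ref{Def:embedding Gamma in semilinear automorphisms} that, for $\gamma\in\Gamma$, the generator $c_{\gamma}\Id_{\gamma}^{-1}$ of $\tilde{\Gamma}$ equals $(\tilde{\gamma}_G)^{-1}$, where $\tilde{\gamma}_G=c_{\gamma^{-1}}\Id_{\gamma}$ as in Definition~\ref{Def:gammatilde and Idgamma}. Since $\tilde{\gamma}_G$ descends to the trivial automorphism of $G$ (remark after Lemma~\ref{Lem:Condition for a semilinear auto. to descend}), so does its inverse, whence $\tilde{\Gamma}\subseteq\ker\Phi$. Conversely, suppose $b\Id_{\beta}\in\ker\Phi$, so $b\Id_{\beta}$ descends to $\id_G$; as $\id_G$ lies over $\id_k$, comparing underlying field automorphisms forces $\beta=\gamma$ for some $\gamma\in\Gamma$. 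Now $b\Id_{\gamma}$ and $\tilde{\gamma}_G=c_{\gamma^{-1}}\Id_{\gamma}$ both lie over $\gamma$ and both descend to $\id_G$, so by the uniqueness above $b\Id_{\gamma}=c_{\gamma^{-1}}\Id_{\gamma}$, which is the image of $\gamma^{-1}$ under the homomorphism $\Gamma\to\tilde{\Gamma}$ of Definition~\ref{Def:embedding Gamma in semilinear automorphisms}. Hence $\ker\Phi=\tilde{\Gamma}$; in particular $\tilde{\Gamma}$ is normal and the quotient makes sense.

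It remains to prove $\Phi$ surjective. Let $f_{\alpha}\in\Aut(G\to\Spec k)$ lie over $\alpha\in\Aut(k)$. Since $k_s$ is a separable closure of $k$, $\alpha$ extends to some $\beta\in\Aut(k_s)$, and then $\beta(k)=k$, i.e.\ $\beta\in\Aut(k_s\geq k)$. Base changing $f_{\alpha}$ along $\Spec k_s\to\Spec k$ and transporting through the fixed identification $G_0\cong G_{k_s}$ together with the identification $G_0\cong({}^{\alpha}G)_{k_s}$ of Lemma~\ref{Lem:choosing a good base change} built from $\beta$, one obtains an element of $\Aut(G_0\to\Spec k_s)$ lying over $\beta$, necessarily of the form $b\Id_{\beta}$ for a unique $b\in\Aut G_0$. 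As it is the base change of an element defined over $k$, it is $\Gamma$-invariant by Lemma~\ref{Lem:Condition for a semilinear auto. to descend}, and $\Phi(b\Id_{\beta})=f_{\alpha}$ by uniqueness of the descent. Therefore $\Phi$ induces an isomorphism $\Aut(G_0\to\Spec k_s\geq k)^{\Gamma}/\tilde{\Gamma}\cong\Aut(G\to\Spec k)$, and naturality in $G$ is immediate since nothing beyond the fixed isomorphism $G_0\cong G_{k_s}$ was used.

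I expect the main difficulty to be organisational rather than conceptual: keeping the orientation conventions consistent throughout ($\beta$ versus $\beta^{-1}$ and $\gamma$ versus $\gamma^{-1}$, across Definitions~\ref{Def:gammatilde and Idgamma} and \ref{Def:embedding Gamma in semilinear automorphisms} and the semilinear Galois action), and justifying the descent arguments — literally valid only for finite Galois extensions — by the fact that all group schemes in play are of finite type and hence already defined over a finite Galois subextension of $k_s/k$, so that descending along $k_s/k$ reduces to descending along such a subextension.
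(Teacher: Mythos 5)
Your proposal is correct and follows essentially the same route as the paper: it defines the homomorphism by sending a $\Gamma$-invariant element to its descent via Lemma~\ref{Lem:Condition for a semilinear auto. to descend}, identifies the kernel with $\tilde{\Gamma}$ using injectivity of base change on morphisms (the paper does this by multiplying by $c_{\gamma}\Id_{\gamma}^{-1}$ to reduce to an element of $\Aut G_0$ descending to the identity, you by uniqueness of the lift over a fixed $\beta$ — the same underlying fact), and gets surjectivity from extending automorphisms of $k$ to $k_s$. The extra care you take with the homomorphism property and with reducing the infinite extension $k_s/k$ to a finite Galois subextension matches what the paper leaves implicit.
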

\begin{proof}
We have a homomorphism $ \Aut (G_0\to \Spec k_s\geq k)^{\Gamma}\to \Aut (G\to \Spec k) $, which maps an invariant element of $ \Aut (G_0\to \Spec k_s\geq k) $ to its descent in $ \Aut (G\to \Spec k) $ (see Lemma~\ref{Lem:Condition for a semilinear auto. to descend}). Note that $ \tilde{\Gamma} $ is in the kernel of this map (since $ c_{\gamma^{-1}}\Id_{\gamma} = \tilde{\gamma}_{G} $ arises as a choice of base change for the identity). Now if $ b\Id_\beta $ is in the kernel of this homomorphism, then $ \beta $ acts trivially on $ k $, i.e.\ $ \beta = \gamma $ for some $ \gamma \in \Gamma $, and $ b\Id_{\beta}c_{\gamma}\Id_{\gamma}^{-1}\in \Aut G_0 $ descends to the identity in $ \Aut G $. But this holds if and only if $  b\Id_{\beta}c_{\gamma}\Id_{\gamma}^{-1} $ is already the identity on $ G_0 $. Since $ b\Id_{\beta}c_{\gamma}\Id_{\gamma}^{-1} = b\,^{\gamma^{-1}}c_{\gamma} = bc_{\gamma^{-1}}^{-1} $, we conclude that $ b\Id_{\beta} $ descends to the identity if and only if it is equal to $ c_{\gamma^{-1}}\Id_{\gamma} $ for some $ \gamma \in \Gamma $, i.e.\ if and only if it belongs to $ \tilde{\Gamma} $.

It remains to check that the homomorphism $ \Aut (G_0\to \Spec k_s\geq k)^{\Gamma}\to \Aut (G\to \Spec k) $ is surjective as well. But this follows from the fact that any automorphism of $ k $ can be extended to an automorphism of $ k_s $.
\end{proof}

Note that there was nothing special about the category of algebraic $k$-groups, and we could as well repeat this construction for other algebraic categories over $ k $ for which descent is effective. In particular, we can repeat everything we did so far for $ k $-schemes of based root datum. Also recall that in Theorem~\ref{Thm:Quasi-split groups and BRD}, still keeping the notations of Definition~\ref{Def:semilinear Galois action}, the cocycle defining $ \mathcal{R}(G) $ is obtained from $ c $ by projecting via $ \Aut G_0\to E $. Recalling that the Galois action on the split $ k_s $-scheme of based root datum is trivial, this gives the following result.
\begin{lemma}\label{Lem:semilinear Galois cohomology for root datum}
Keep the notations of Definition~\ref{Def:semilinear Galois action}. Let $ \tilde{c} $ be the projection of $ c $ under $ \Aut G_0\to E $. Define a semilinear Galois action on $ \Aut (R_{k_s}\to \Spec k_{s}\geq k)\cong E\times \Aut (k_s\geq k) $ as follows:
\begin{equation*}
\text{ For all } b\in E,~\beta \in \Aut (k_s\geq k) \text{ and } \gamma \in \Gamma,~~~\gamma .(b
\Id_{\beta}) = \tilde{c}_{\beta^{-1}\gamma \beta}b\tilde{c}_{\gamma}^{-1}\Id _{\beta}. 
\end{equation*}
Define $ \tilde{\Gamma}\leq \Aut (R_{k_s}\to \Spec k_{s}\geq k)^{\Gamma} $ to be the image of the 
homomorphism $ \Gamma \to \Aut (R_{k_s}\to \Spec k_{s}) \colon \gamma \mapsto \tilde{c}
_{\gamma}\Id_{\gamma}^{-1} $. Then $ \Aut (\mathcal{R}(G)\to \Spec k) $ is naturally isomorphic to 
$ \Aut (R_{k_s}\to \Spec k_{s}\geq k)^{\Gamma}/\tilde{\Gamma} $. Furthermore, the homomorphism 
$ \Aut (G_0\to \Spec k_s)\to \Aut (R_{k_s}\to \Spec k_s) $ induces a homomorphism $  \Aut 
(G_0\to \Spec k_s\geq k)^{\Gamma}/\tilde{\Gamma}\to  \Aut (R_{k_s}\to \Spec k_{s}\geq 
k)^{\Gamma}/\tilde{\Gamma} $.
\end{lemma}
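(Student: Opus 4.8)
The plan is to observe that every construction in Section~\ref{Sec:Semiauto and Galois descent} and in the first half of this section was made in the category of $k$-group schemes only because that is the case we ultimately care about, and that the same arguments go through verbatim in the category of $k$-schemes of based root datum. Indeed, Galois descent is effective there (Remark~\ref{Rem:Root datum and Galois descent}), and the proofs of Lemmas~\ref{Lem:choosing a good base change}, \ref{Lem:the descent datum of a base change} and \ref{Lem:Condition for a semilinear auto. to descend} use nothing beyond the universal property of base change and the dictionary between descent data and cocycles. The one simplification in the based root datum world is that $\Aut R_{k_s} = E$ with trivial $\Gamma$-action, and since the pinning of $G_0$ is defined over the prime field, the projection $\Aut G_0 \to E$ intertwines the $\Aut(k_s)$-action $^{\gamma}(-) = \Id_{\gamma}^{-1}(-)\Id_{\gamma}$ with the \emph{trivial} action on $E$. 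Consequently $\Aut(R_{k_s}\to\Spec k_s) \cong E\rtimes \Aut(k_s)$ has trivial $\Aut(k_s)$-action on $E$, and the semilinear Galois action inherited from Lemma~\ref{Lem:Condition for a semilinear auto. to descend}, namely $\gamma.(b\Id_{\beta}) = \tilde{c}_{\beta^{-1}\gamma\beta}\,^{\beta^{-1}\gamma\beta}b\,^{\beta^{-1}}\tilde{c}_{\gamma}^{-1}\Id_{\beta}$, collapses to the stated formula $\gamma.(b\Id_{\beta}) = \tilde{c}_{\beta^{-1}\gamma\beta}b\tilde{c}_{\gamma}^{-1}\Id_{\beta}$, because all the superscript actions on $E$ are trivial.

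With this setup, the identification of $\Aut(\mathcal{R}(G)\to\Spec k)$ with $\Aut(R_{k_s}\to\Spec k_s\geq k)^{\Gamma}/\tilde{\Gamma}$ is proved word for word as in Lemma~\ref{Lem:Identification of Aut(G Spec k)}: there is a homomorphism from the $\Gamma$-fixed subgroup to $\Aut(\mathcal{R}(G)\to\Spec k)$ sending a fixed element to its descent; it is surjective because every automorphism of $k$ extends to $k_s$; and an element $b\Id_{\beta}$ lies in its kernel iff $\beta = \gamma$ for some $\gamma\in\Gamma$ and $b\Id_{\gamma}\tilde{c}_{\gamma}\Id_{\gamma}^{-1} = b\,^{\gamma^{-1}}\tilde{c}_{\gamma} = b\tilde{c}_{\gamma^{-1}}^{-1}$ is the identity in $E$, i.e. $b\Id_{\beta} = \tilde{c}_{\gamma^{-1}}\Id_{\gamma}\in\tilde{\Gamma}$. (Here the membership of $\tilde{c}_{\gamma}\Id_{\gamma}^{-1}$ in the fixed subgroup, and the fact that $\gamma\mapsto\tilde{c}_{\gamma}\Id_{\gamma}^{-1}$ is a homomorphism, follow exactly as for algebraic groups, applying the projection $\Aut G_0 \to E$ to the computations already done.) Naturality in $G$ is inherited from that in Lemma~\ref{Lem:Identification of Aut(G Spec k)} together with the naturality in Lemma~\ref{Lem:base change commute with taking root datum} and Theorem~\ref{Thm:Quasi-split groups and BRD}.

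For the last assertion, let $\pi_0\colon \Aut(G_0\to\Spec k_s)\to\Aut(R_{k_s}\to\Spec k_s)$ be the homomorphism which is $\Aut G_0 \to E$ on the first factor of $\Aut G_0\rtimes\Aut(k_s)$ and the identity on $\Aut(k_s)$; it is a group homomorphism since $\Aut G_0 \to E$ is one and is $\Aut(k_s)$-equivariant, as noted above. It restricts to a map $\Aut(G_0\to\Spec k_s\geq k)\to\Aut(R_{k_s}\to\Spec k_s\geq k)$, and it is $\Gamma$-equivariant for the two semilinear Galois actions: applying $\Aut G_0 \to E$ to $c_{\beta^{-1}\gamma\beta}\,^{\beta^{-1}\gamma\beta}b\,^{\beta^{-1}}c_{\gamma}^{-1}$ gives $\tilde{c}_{\beta^{-1}\gamma\beta}b'\tilde{c}_{\gamma}^{-1}$ with $b'$ the image of $b$, using that $\tilde{c}$ is the image of $c$ and that the projection commutes with the superscript actions. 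Hence $\pi_0$ carries $\Gamma$-fixed elements to $\Gamma$-fixed elements, and since $c_{\gamma}\Id_{\gamma}^{-1}\mapsto\tilde{c}_{\gamma}\Id_{\gamma}^{-1}$ it carries $\tilde{\Gamma}$ into $\tilde{\Gamma}$; therefore it passes to the quotients, yielding the homomorphism $\Aut(G_0\to\Spec k_s\geq k)^{\Gamma}/\tilde{\Gamma}\to\Aut(R_{k_s}\to\Spec k_s\geq k)^{\Gamma}/\tilde{\Gamma}$, which under the identifications above is the map on semilinear automorphism groups already produced in Theorem~\ref{Thm: MainThm1}.

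The only genuinely nontrivial point — and the one I would be careful about — is the $\Gamma$-equivariance of $\pi_0$: since the semilinear Galois action does not respect the group law on $\Aut(G_0\to\Spec k_s\geq k)$, one cannot merely invoke functoriality of descent, and must verify equivariance directly from the defining formulas, which works precisely because the projection $\Aut G_0 \to E$ is compatible with base change and with conjugation by the $\Id_{\beta}$. Everything else is a transcription of arguments already carried out for algebraic groups.
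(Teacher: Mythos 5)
Your proposal is correct and follows essentially the same route as the paper, which justifies this lemma precisely by observing that all the constructions of Section~\ref{Sec:Semiauto and Galois descent} and Lemma~\ref{Lem:Identification of Aut(G Spec k)} transcribe verbatim to the category of $k$-schemes of based root datum, with the action formula collapsing because the $\Gal(k_s/k)$-action (indeed the $\Aut(k_s)$-conjugation by the $\Id_\beta$) on $E=\Aut R_{k_s}$ is trivial. Your explicit verification that the projection $\Aut G_0\to E$ is equivariant for the semilinear actions, hence descends to the quotients, is a detail the paper leaves implicit but is exactly the intended argument.
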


We can now formulate the failure of surjectivity of the map $ \Aut (G\to \Spec k)\to \Aut_G (k) $ and of the map $ \Aut (G\to \Spec k)\to \Aut (\mathcal{R}(G)\to \Spec k ) $ using a variant of Galois cohomology. We first give an approximation of this.
\begin{proposition}\label{Prop:Galois cohomology for semilinear auto}
Keep the notations of Lemma~\ref{Lem:semilinear Galois cohomology for root datum}. Endow $ \Aut G_0 $ and $ (\Ad G_0) (k_s) $ with the Galois action given by restricting the semilinear Galois action on $ \Aut (G_0\to \Spec k_s\geq k) $ $ ( $this is just the Galois action corresponding to the form $ G_0\cong G_{k_s}) $.
\begin{enumerate}
\item There exists a coboundary map $ \Aut (k_{s}\geq k)\xrightarrow{\partial }  H^1(\Gamma , \Aut G_0) $ such that the sequence
\begin{equation*}
1\to (\Aut G_0)^{\Gamma}\to \Aut (G_0\to \Spec k_s\geq k)^{\Gamma}\to \Aut (k_{s}\geq k)\xrightarrow{\partial }  H^1(\Gamma , \Aut G_0)
\end{equation*}
is exact.
\item There exists a coboundary map $ \Aut (R_{k_s}\to \Spec k_{s}\geq k)^{\Gamma}\xrightarrow{\partial }  H^1(\Gamma , (\Ad G_0) (k_s)) $ such that the sequence
\begin{align*}
1\to (\Ad G_0)(k_s)^{\Gamma}\to \Aut (G_0\to &\Spec k_s\geq k)^{\Gamma}\to \\
&\Aut (R_{k_s}\to \Spec k_{s}\geq k)^{\Gamma}\xrightarrow{\partial }  H^1(\Gamma , (\Ad G_0) (k_s))
\end{align*}
is exact.
\end{enumerate} 
\end{proposition}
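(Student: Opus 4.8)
The plan is to prove both parts in parallel, since each is an exactness statement around a group homomorphism $q$ out of $\Aut (G_0\to \Spec k_s\geq k)^{\Gamma}$: in part~(1), $q$ is the map $b\Id_{\beta}\mapsto \beta$ to $\Aut (k_s\geq k)$, and in part~(2), $q=p^{\Gamma}$ is the restriction to $\Gamma$-fixed points of the homomorphism $p\colon \Aut (G_0\to \Spec k_s)\to \Aut (R_{k_s}\to \Spec k_s)$ of Lemma~\ref{Lem:semilinear Galois cohomology for root datum}. In each case I must (a) identify $\ker q$ with the leftmost group in the sequence, and (b) produce a coboundary map $\partial$ on the target of $q$ whose fibre over the base point is exactly the image of $q$.

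First I would determine $\ker q$. In part~(1), $q(b\Id_{\beta})$ is trivial precisely when $\beta=\id_{k_s}$, and then $b\Id_{\beta}=b\in \Aut G_0$ and the invariance condition of Definition~\ref{Def:semilinear Galois action} reads $c_{\gamma}\,{}^{\gamma}b\,c_{\gamma}^{-1}=b$ for all $\gamma$, i.e.\ $b$ is fixed for the form action; this gives exactness at $(\Aut G_0)^{\Gamma}$ (which maps in by inclusion) and at $\Aut (G_0\to \Spec k_s\geq k)^{\Gamma}$. In part~(2) I would first check that $p$ is $\Gamma$-equivariant for the two semilinear actions of Definition~\ref{Def:semilinear Galois action} and Lemma~\ref{Lem:semilinear Galois cohomology for root datum}: this is a direct substitution, using that $\Aut G_0\to E$ sends $c_{\gamma}$ to $\tilde{c}_{\gamma}$ and that $\Gamma$ and $\Aut (k_s)$ act trivially on $E$, so the ${}^{\gamma}(-)$-twists disappear under projection. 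Hence $p$ restricts to $p^{\Gamma}$, and since $\ker (\Aut G_0\to E)=(\Ad G_0)(k_s)$ we get $\ker p^{\Gamma}=\bigl((\Ad G_0)(k_s)\bigr)^{\Gamma}$, which is exactness at the first two spots of the second sequence.

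Next I would construct $\partial$. Given $a$ in the target of $q$, choose any lift $x=b\Id_{\beta}$ of $a$ (in part~(1), the canonical lift $\Id_{\beta}$), and set $z_{\gamma}:=(\gamma.x)x^{-1}$, which lies in $\Aut G_0$ because $\gamma.x$ and $x$ have the same underlying field automorphism. Expanding $z_{\gamma_1\gamma_2}=(\gamma_1\gamma_2.x)x^{-1}$ with both parts of Lemma~\ref{Lem:computing with the semilinear action} yields $z_{\gamma_1\gamma_2}=\bigl((\beta^{-1}\gamma_1\beta).z_{\gamma_2}\bigr)z_{\gamma_1}$, the action being the form action on $\Aut G_0$; since $\gamma\mapsto \beta^{-1}\gamma\beta$ is an automorphism of $\Gamma$ (as $\beta$ normalises $\Gamma$ inside $\Aut k_s$), the reindexed family $\gamma\mapsto z_{\beta\gamma\beta^{-1}}^{-1}$ satisfies the ordinary $1$-cocycle identity, and I define $\partial(a)$ to be its class in $H^1(\Gamma ,\Aut G_0)$ --- respectively, in part~(2), in $H^1(\Gamma ,(\Ad G_0)(k_s))$, where it lands because $p(z_{\gamma})=(\gamma.p(x))p(x)^{-1}=1$ by $\Gamma$-equivariance and $\Gamma$-invariance of $a=p(x)$, so $z_{\gamma}\in \ker p$. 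Replacing $x$ by $gx$ with $g\in \Aut G_0$ (resp.\ $g\in (\Ad G_0)(k_s)$) changes $z$, and hence the reindexed cocycle, by a coboundary, so $\partial$ is well defined.

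Finally, for exactness at the target of $q$: if $\partial(a)$ is trivial then after replacing $x$ by $gx$ for a suitable $g$ we may assume $z\equiv 1$, i.e.\ $x$ is $\Gamma$-fixed, so $a=q(x)$ lies in the image of $q$; conversely, if $a=q(x)$ with $x$ a $\Gamma$-fixed lift then $z\equiv 1$ and $\partial(a)$ is trivial. The hard part will be the bookkeeping in these computations --- correctly propagating the conjugation-by-$\beta$ twist of the Galois action through the non-multiplicativity of the semilinear $\Gamma$-action (Lemma~\ref{Lem:computing with the semilinear action}(2)), getting the reindexing and inversion needed to turn the relation $z_{\gamma_1\gamma_2}=\bigl((\beta^{-1}\gamma_1\beta).z_{\gamma_2}\bigr)z_{\gamma_1}$ into a genuine $1$-cocycle and the change-of-lift relation into a genuine coboundary, and, throughout part~(2), checking that every intermediate quantity stays inside the normal subgroup $(\Ad G_0)(k_s)$, for which the $\Gamma$-equivariance of $p$ is used repeatedly.
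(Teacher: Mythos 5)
Your proposal is correct and takes essentially the same route as the paper: identify the kernels by taking $\Gamma$-invariants of the $\Gamma$-equivariant exact sequences, define $\partial$ by comparing a lift with its Galois translates, and use Lemma~\ref{Lem:computing with the semilinear action} both for the cocycle identity and for exactness at the target. The only difference is cosmetic: the paper takes the canonical lifts $\Id_{\beta}$ (resp.\ $b\Id_{\beta}$ with $b\in E$) and sets $\partial_{\gamma}=x^{-1}(\gamma .x)$, which is already a $1$-cocycle for the form action, so the reindexing $\gamma \mapsto z_{\beta\gamma\beta^{-1}}^{-1}$ and the independence-of-lift check in your version are avoided.
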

\begin{proof}
It is important not to confuse the two Galois actions we are considering on $ \Aut G_0 $. One arises from the fact that we chose a form of $ G_0 $ over the prime field of $ k $, and the other is the Galois action arising from $ G_0\cong G_{k_s} $. For $ \gamma \in \Gamma $ and $ b\in \Aut G_0 $, recall that the former is denoted $ ^{\gamma}b = \Id_{\gamma}^{-1}b\Id_{\gamma} $ whilst the latter is denoted $ \gamma .b = c_{\gamma}\,^{\gamma}bc_{\gamma}^{-1} $.

We have a similar remark for $ E = \Aut R_{k_s} $: for $ \gamma \in \Gamma $ and $ b\in E $, we denote $ ^{\gamma}b = \Id_{\gamma}^{-1}b\Id_{\gamma} = b $ (the latter equality holds because any automorphism of $ R_{k_s} $ is defined over $ k $) and $ \gamma .b = \tilde{c}_{\gamma}\,^{\gamma}b\tilde{c}_{\gamma}^{-1} = \tilde{c}_{\gamma}b\tilde{c}_{\gamma}^{-1} $.
\begin{enumerate}
\item Let $ \beta^{-1} \in \Aut (k_s\geq k) $. As usual in this situation, we set $ \partial (\beta^{-1} )\colon \Gamma \to \Aut G_0\colon \gamma \mapsto \partial (\beta^{-1} )_{\gamma} $ where $ \partial (\beta^{-1} )_{\gamma} $ is defined by the equality (in $\Aut (G_0\to \Spec k_s\geq k)$) $ \gamma .\Id_{\beta} = \Id_{\beta }\partial (\beta^{-1} )_{\gamma} $. In other words, $ \partial (\beta^{-1} )_{\gamma} = \Id_{\beta }^{-1}\gamma .\Id_{\beta} $. Hence $ \partial (\beta^{-1} )_{\gamma} $ clearly belongs to $ \Aut G_0 $. The fact that it is a cocycle follows directly from Lemma~\ref{Lem:computing with the semilinear action}. Indeed, $ \partial (\beta^{-1} )_{\gamma \gamma '} = \Id_{\beta }^{-1}\gamma \gamma ' .\Id_{\beta} = \Id_{\beta }^{-1}\gamma.(\Id_{\beta}\Id_{\beta}^{-1} \gamma ' .\Id_{\beta}) = \Id_{\beta }^{-1}\gamma.(\Id_{\beta})\gamma .(\Id_{\beta}^{-1} \gamma ' .\Id_{\beta}) = \partial (\beta^{-1} )_{\gamma} \gamma .\partial (\beta^{-1} )_{\gamma '} $.

The sequence $ 1\to \Aut G_0\to \Aut (G_0\to \Spec k_s\geq k)\to \Aut (k_{s}\geq k) $ is a $ \Gamma $-equivariant exact sequence (where we endow $ \Aut (k_{s}\geq k) $ with the trivial $ \Gamma $ action). Hence, taking $ \Gamma $-invariant elements, it remains exact. So we just need to check exactness at $ \Aut (k_{s}\geq k) $.

Let $ \beta^{-1} \in \Aut (k_s\geq k) $. Then $ \partial (\beta^{-1} ) $ is trivial in $  H^1(\Gamma , \Aut G_0) $ if and only if there exists $ b\in \Aut G_0 $ such that for all $ \gamma \in \Gamma $,  $ b^{-1}\Id_{\beta }^{-1}(\gamma .\Id_{\beta})(\gamma .b) = 1 $. By Lemma~\ref{Lem:computing with the semilinear action}, $ b^{-1}\Id_{\beta }^{-1}(\gamma .\Id_{\beta})(\gamma .b) = (\Id_{\beta }b)^{-1}\gamma .(\Id_{\beta }b) $, and hence $ \partial (\beta^{-1} ) $ is trivial if and only if there exists $b\in \Aut G_0 $ such that $ \Id_{\beta}b $ is a $ \Gamma $-invariant element of  $ \Aut (G_0\to \Spec k_s\geq k) $, which proves exactness at $ \Aut (k_s\geq k) $.

\item Let $ b\Id_{\beta} \in \Aut (R_{k_s}\to \Spec k_{s}\geq k)^{\Gamma}\cong (E\times \Aut (k_s\geq k))^{\Gamma} $. As usual in this situation, we set $ \partial (b\Id_{\beta} )\colon \Gamma \to \Aut G_0\colon \gamma \mapsto \partial (b\Id_{\beta} )_{\gamma} $ where $ \partial (b\Id_{\beta} )_{\gamma} $ is defined by the equality (in $\Aut (G_0\to \Spec k_s\geq k)$) $ \gamma .(b\Id_{\beta}) = b\Id_{\beta }\partial (b\Id_{\beta} )_{\gamma} $.  In other words, $ \partial (b\Id_{\beta} )_{\gamma} = (b\Id_{\beta })^{-1}\gamma .(b\Id_{\beta}) $. Let us check that $  \partial (b\Id_{\beta} )_{\gamma} $ belongs to $ (\Ad G_0) (k_s) $. For $ \gamma \in \Gamma $,  define $ c'_{\gamma} = c_{\gamma}\tilde{c}_{\gamma}^{-1} $. Now $ \partial (b\Id_{\beta} )_{\gamma} = (b\Id_{\beta })^{-1}c_{\beta^{-1}\gamma \beta}\,^{\beta^{-1}\gamma \beta}b\,^{\beta^{-1}}c_{\gamma}^{-1}\Id_{\beta} =$  $(b\Id_{\beta })^{-1}c'_{\beta^{-1}\gamma \beta}b\Id_{\beta}{c'}_{\gamma}^{-1} $ because $ b\Id_{\beta}\in \Aut (R_{k_s}\to \Spec k_{s}\geq k)^{\Gamma} $, hence we conclude that $  \partial (b\Id_{\beta} )_{\gamma} \in (\Ad G_0) (k_s) $ because $ c'_{\gamma }\in (\Ad G_0)(k_s) $ and $ (\Ad G_0)(k_s) $ is a normal subgroup of $ \Aut (G_0\to \Spec k_s\geq k) $.

The sequence $ 1\to (\Ad G_0)(k_s)\to \Aut (G_0\to \Spec k_s\geq k)\to \Aut (R_{k_s}\to \Spec k_{s}\geq k) $ is a $ \Gamma $-equivariant exact sequence. Hence, taking $ \Gamma $-invariant elements, it remains exact. So we just need to check exactness at $\Aut (R_{k_s}\to \Spec k_{s}\geq k)^{\Gamma} $.

Let $ b\id_{\beta} \in \Aut (R_{k_s}\to \Spec k_{s}\geq k)^{\Gamma}\cong (E\times \Aut (k_s\geq k))^{\Gamma} $. Then $ \partial ( b\id_{\beta}) $ is trivial in $  H^1(\Gamma , (\Ad G_0)(k_s)) $ if and only if there exists $ g\in (\Ad G_0)(k_s) $ such that for all $ \gamma \in \Gamma $,  $ g^{-1}(b\id_{\beta})^{-1}\gamma .(b\id_{\beta})\gamma .g = 1 $. But $ g^{-1}(b\id_{\beta})^{-1}\gamma .(b\id_{\beta})\gamma .g = (b\Id_{\beta }g)^{-1}\gamma .(b\Id_{\beta }g) $ by Lemma~\ref{Lem:computing with the semilinear action}, and hence $ \partial (b\id_{\beta}) $ is trivial if and only if there exists $g\in (\Ad G_0)(k_s) $ such that $ b\Id_{\beta }g $ is a $ \Gamma $-invariant element of  $ \Aut (G_0\to \Spec k_s\geq k) $, which proves exactness at $ \Aut (R_{k_s}\to \Spec k_{s}\geq k)^{\Gamma} $. \qedhere
\end{enumerate}
\end{proof}

In order to prove Theorem~\ref{Thm: MainThm3}, it suffices now to observe that in Proposition~\ref{Prop:Galois cohomology for semilinear auto}, the image of $ \tilde{\Gamma} $ under the coboundary operator is trivial.
\begin{lemma}\label{Lem:triviality of Gamma under cocoundary}
Keep the notation of Proposition~\ref{Prop:Galois cohomology for semilinear auto}.
\begin{enumerate}
\item The image of $ \Gamma \unlhd \Aut(k_s\geq k) $ under $ \Aut(k_s\geq k)\xrightarrow{\partial} H^1(\Gamma ,\Aut G_0) $ is trivial.
\item The image of $ \tilde{\Gamma}\unlhd \Aut (R_{k_s}\to \Spec k_s\geq k)^{\Gamma} $ under the coboundary map $ \Aut (R_{k_s}\to \Spec k_s\geq k)^{\Gamma}\xrightarrow{\partial} H^1(\Gamma ,(\Ad G_0)(k_s)) $ is trivial. 
\end{enumerate}
\end{lemma}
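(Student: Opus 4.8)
The plan is to deduce both assertions from the exactness already established in Proposition~\ref{Prop:Galois cohomology for semilinear auto}, together with the distinguished $\Gamma$-invariant lifts $c_\gamma\Id_\gamma^{-1}$ and $\tilde c_\gamma\Id_\gamma^{-1}$; no fresh cocycle computation is needed. The one point demanding attention throughout is keeping the two Galois actions on $\Aut G_0$ apart and tracking the direction of the field automorphisms, namely that $\Id_\beta$ is the lift of $\beta^{-1}$, so that $\Id_\gamma^{-1}$ projects to $\gamma$.

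For part (1), I would first recall (this is exactly the remark following Definition~\ref{Def:embedding Gamma in semilinear automorphisms}) that for every $\gamma\in\Gamma$ the element $c_\gamma\Id_\gamma^{-1}$ lies in $\Aut(G_0\to\Spec k_s\geq k)^{\Gamma}$, and that its image under $\Aut(G_0\to\Spec k_s\geq k)\to\Aut(k_s\geq k)$ is $\gamma$, since $c_\gamma\in\Aut G_0$ projects to the identity while $\Id_\gamma^{-1}$ projects to $\gamma$. Hence every $\gamma\in\Gamma\unlhd\Aut(k_s\geq k)$ lies in the image of the map $\Aut(G_0\to\Spec k_s\geq k)^{\Gamma}\to\Aut(k_s\geq k)$. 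By exactness of the sequence in Proposition~\ref{Prop:Galois cohomology for semilinear auto}(1) at $\Aut(k_s\geq k)$, this image equals $\ker\partial$, so $\partial(\gamma)=1$ in $H^1(\Gamma,\Aut G_0)$ for all $\gamma\in\Gamma$, which is the claim.

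For part (2), I would use that the natural homomorphism $\Aut(G_0\to\Spec k_s)\to\Aut(R_{k_s}\to\Spec k_s)$ appearing in Lemma~\ref{Lem:semilinear Galois cohomology for root datum} carries $c_\gamma\Id_\gamma^{-1}$ to $\tilde c_\gamma\Id_\gamma^{-1}$ (the projection $\Aut G_0\to E$ sends $c_\gamma$ to $\tilde c_\gamma$, and the $\Id_\gamma$ on both sides correspond), and that by Lemma~\ref{Lem:semilinear Galois cohomology for root datum} the elements $\tilde c_\gamma\Id_\gamma^{-1}$ generate $\tilde\Gamma\leq\Aut(R_{k_s}\to\Spec k_s\geq k)^{\Gamma}$. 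Since each $c_\gamma\Id_\gamma^{-1}$ is a $\Gamma$-invariant element of $\Aut(G_0\to\Spec k_s\geq k)$ (as recalled in part (1)), this exhibits $\tilde\Gamma$ as contained in the image of $\Aut(G_0\to\Spec k_s\geq k)^{\Gamma}\to\Aut(R_{k_s}\to\Spec k_s\geq k)^{\Gamma}$. By exactness of the sequence in Proposition~\ref{Prop:Galois cohomology for semilinear auto}(2) at $\Aut(R_{k_s}\to\Spec k_s\geq k)^{\Gamma}$, that image is $\ker\partial$, so $\partial$ vanishes on $\tilde\Gamma$.

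The whole argument is essentially bookkeeping, so there is no real obstacle beyond the compatibility of the two descriptions of $\tilde\Gamma$ under $\Aut G_0\to E$, all of which is already recorded in the statements cited. As a consistency check one can instead verify part (1) directly: writing $\gamma\in\Gamma$ as $\beta^{-1}$ with $\beta=\gamma^{-1}$ and unwinding the definition of $\partial$ from Proposition~\ref{Prop:Galois cohomology for semilinear auto}(1) via Lemma~\ref{Lem:computing with the semilinear action} and the cocycle identity for $c$, one obtains $\partial(\gamma)_\delta = c_{\gamma^{-1}}^{-1}\bigl(\delta.c_{\gamma^{-1}}\bigr)$, which is visibly a coboundary.
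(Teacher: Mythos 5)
Your proof is correct, but it follows a genuinely different route from the paper's. The paper proves the lemma by a direct computation: unwinding the semilinear $\Gamma$-action, it shows that $\partial(\gamma^{-1})_\sigma = c_{\gamma}^{-1}\,\sigma.c_{\gamma}$ and $\partial(\tilde c_\gamma\Id_\gamma^{-1})_\sigma = (c_{\gamma^{-1}}\tilde c_\gamma)^{-1}\,\sigma.(c_{\gamma^{-1}}\tilde c_\gamma)$, exhibiting in each case an explicit element of $\Aut G_0$, respectively $(\Ad G_0)(k_s)$, that cobounds the cocycle. You instead reuse the exactness already proved in Proposition~\ref{Prop:Galois cohomology for semilinear auto}: since $c_\gamma\Id_\gamma^{-1}$ is a $\Gamma$-invariant lift (the remark after Definition~\ref{Def:embedding Gamma in semilinear automorphisms}) projecting to $\gamma\in\Aut(k_s\geq k)$, respectively mapping to $\tilde c_\gamma\Id_\gamma^{-1}\in\tilde\Gamma$ under $\Aut G_0\to E$ with $\Id_\gamma$'s matching (which is exactly the compatibility underlying Lemma~\ref{Lem:semilinear Galois cohomology for root datum}), the elements in question lie in the image of $\Aut(G_0\to\Spec k_s\geq k)^\Gamma$ and hence in the kernel of $\partial$ in the pointed-set sense. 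All the ingredients you invoke are established before the lemma and do not depend on it, so there is no circularity; your orientation conventions ($\Id_\gamma^{-1}$ projecting to $\gamma$) are also the paper's. What your argument buys is brevity and a structural explanation (triviality of $\partial$ on $\Gamma$, resp.\ $\tilde\Gamma$, is just the existence of the tautological invariant lifts); what the paper's computation buys is an explicit cobounding element and a self-contained verification independent of the exactness statement — and indeed your closing consistency check $\partial(\gamma)_\delta = c_{\gamma^{-1}}^{-1}(\delta.c_{\gamma^{-1}})$ reproduces precisely the paper's formula.
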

\begin{proof}
The proof is just a straightforward computation, using directly the definition of the semilinear $ \Gamma $-action.
\begin{enumerate}
\item Let $ \gamma \in \Gamma $. We want to check that the cocycle $ \Gamma \to \Aut G_0\colon \sigma \mapsto \Id_{\gamma}^{-1}\sigma .\Id_{\gamma} $ is trivial. By definition, $ \Id_{\gamma}^{-1}\sigma .\Id_{\gamma} = \Id_{\gamma}^{-1} c_{\gamma^{-1}\sigma\gamma}\,^{\gamma^{-1}}c_{\sigma}^{-1}\Id_{\gamma} = \,^{\gamma}c_{\gamma^{-1}\sigma\gamma}c_{\sigma}^{-1} = \,^{\gamma}c_{\gamma^{-1}}c_{\sigma}\,^{\sigma}c_{\gamma}c_{\sigma}^{-1} = c_{\gamma}^{-1}\sigma.c_{\gamma} $. Since $ c_{\gamma} $ belongs to $ \Aut G_0 $, this indeed shows that $ \partial (\gamma^{-1}) $ is cohomologous to the trivial cocycle.

\item Let $ \tilde{c}_{\gamma}\Id_{\gamma}^{-1} \in \tilde{\Gamma} \unlhd \Aut (R_{k_s}\to \Spec k_{s}\geq k)^{\Gamma}\cong (E\times \Aut (k_{s}\geq k))^{\Gamma} $. By definition, $ \partial (\tilde{c}_{\gamma}\Id_{\gamma}^{-1})_{\sigma } = (\tilde{c}_{\gamma}\Id_{\gamma}^{-1})^{-1}\sigma .(\tilde{c}_{\gamma}\Id_{\gamma}^{-1}) $. Plugging the definition of the semilinear action, we get
\begin{align*}
(\tilde{c}_{\gamma}\Id_{\gamma}^{-1})^{-1}\sigma .(\tilde{c}_{\gamma}\Id_{\gamma}^{-1}) &= \Id_{\gamma}\tilde{c}_{\gamma}^{-1}c_{\gamma \sigma\gamma^{-1}}\,^{\gamma \sigma\gamma^{-1}}\tilde{c}_{\gamma}\,^{\gamma}c_{\sigma}^{-1}\Id_{\gamma}^{-1}\\
& = \,^{\gamma^{-1}}(\tilde{c}_{\gamma}^{-1}c_{\gamma \sigma\gamma^{-1}})\,^{\sigma\gamma^{-1}}\tilde{c}_{\gamma}c_{\sigma}^{-1}\\
& = \,^{\gamma^{-1}}(\tilde{c}_{\gamma}^{-1}c_{\gamma})c_{\sigma}\,^{\sigma}c_{\gamma^{-1}}\,^{\sigma\gamma^{-1}}\tilde{c}_{\gamma}c_{\sigma}^{-1}\\
& = \,^{\gamma^{-1}}\tilde{c}_{\gamma}^{-1}c_{\gamma^{-1}}^{-1}c_{\sigma}\,^{\sigma}(c_{\gamma^{-1}}\,^{\gamma^{-1}}\tilde{c}_{\gamma})c_{\sigma}^{-1}\\
& = (c_{\gamma^{-1}}\,^{\gamma^{-1}}\tilde{c}_{\gamma})^{-1} \sigma .(c_{\gamma^{-1}}\,^{\gamma^{-1}}\tilde{c}_{\gamma})\\
& = (c_{\gamma^{-1}} \tilde{c}_{\gamma})^{-1} \sigma .(c_{\gamma^{-1}} \tilde{c}_{\gamma})
\end{align*}
where the last equality holds because the Galois action on $ E = \Aut R_{k_s} $ is trivial. Since $ c_{\gamma^{-1}} \tilde{c}_{\gamma} $ belongs to $ (\Ad G_0)(k_s) $, this indeed shows that $ \partial (\tilde{c}_{\gamma}\Id_{\gamma}^{-1}) $ is cohomologous to the trivial cocycle. \qedhere
\end{enumerate}
\end{proof}

\begin{corollary}
Keep the notations of Proposition~\ref{Prop:Galois cohomology for semilinear auto}.
\begin{enumerate}
\item The exact sequence 
\begin{equation*}
1\to (\Aut G_0)^{\Gamma}\to \Aut (G_0\to \Spec k_s\geq k)^{\Gamma}\to \Aut (k_{s}\geq k)\xrightarrow{\partial }  H^1(\Gamma , \Aut G_0)
\end{equation*}
induces an exact sequence
\begin{equation*}
1\to (\Aut G_0)^{\Gamma}\to \Aut (G_0\to \Spec k_s\geq k)^{\Gamma}/\tilde{\Gamma} \to \Aut (k_{s}\geq k)/\Gamma \xrightarrow{\partial }  H^1(\Gamma , \Aut G_0)
\end{equation*}
\item The exact sequence 
\begin{align*}
1\to (\Ad G_0)(k_s)^{\Gamma}\to \Aut (G_0\to &\Spec k_s\geq k)^{\Gamma}\to \\
&\Aut (R_{k_s}\to \Spec k_{s}\geq k)^{\Gamma}\xrightarrow{\partial }  H^1(\Gamma , (\Ad G_0) (k_s))
\end{align*}
induces an exact sequence
\begin{align*}
1\to (\Ad G_0)(k_s)^{\Gamma}\to \Aut (G_0\to &\Spec k_s\geq k)^{\Gamma}/\tilde{\Gamma}\to \\ &\Aut (R_{k_s}\to \Spec k_{s}\geq k)^{\Gamma}/\tilde{\Gamma}\xrightarrow{\partial }  H^1(\Gamma , (\Ad G_0) (k_s))
\end{align*}
\end{enumerate}
\end{corollary}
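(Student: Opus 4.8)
The plan is to obtain both sequences by passing to quotients in the two exact sequences of Proposition~\ref{Prop:Galois cohomology for semilinear auto}. First I would recall, from Lemma~\ref{Lem:Identification of Aut(G Spec k)} (resp.\ Lemma~\ref{Lem:semilinear Galois cohomology for root datum}), that $\tilde{\Gamma}$ is the kernel of a homomorphism onto $\Aut (G\to\Spec k)$ (resp.\ onto $\Aut (\mathcal{R}(G)\to\Spec k)$), hence is a normal subgroup of $\Aut (G_0\to\Spec k_s\geq k)^{\Gamma}$ (resp.\ of $\Aut (R_{k_s}\to\Spec k_s\geq k)^{\Gamma}$), so the middle quotients make sense; and $\Gamma=\Gal (k_s/k)$ is normal in $\Aut (k_s\geq k)$ by the remark following Definition~\ref{Def:semilinear auto. preserving k}. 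Next I would record how the horizontal maps interact with these quotients. Reading off underlying field automorphisms, the projection $p\colon\Aut (G_0\to\Spec k_s\geq k)^{\Gamma}\to\Aut (k_s\geq k)$ maps the generator $c_{\gamma}\Id_{\gamma}^{-1}$ of $\tilde{\Gamma}$ into $\Gamma$, and in fact restricts to an isomorphism $\tilde{\Gamma}\xrightarrow{\sim}\Gamma$; in particular $p(\tilde{\Gamma})=\Gamma$, $\operatorname{im}(p)$ is a subgroup containing $\Gamma$, and $\tilde{\Gamma}\cap\Aut G_0=\{1\}$. Likewise the map $\Aut (G_0\to\Spec k_s\geq k)\to\Aut (R_{k_s}\to\Spec k_s\geq k)$ carries $c_{\gamma}\Id_{\gamma}^{-1}$ to $\tilde{c}_{\gamma}\Id_{\gamma}^{-1}$, hence sends the $\tilde{\Gamma}$ of Definition~\ref{Def:embedding Gamma in semilinear automorphisms} onto the $\tilde{\Gamma}$ of Lemma~\ref{Lem:semilinear Galois cohomology for root datum}. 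Since $\tilde{\Gamma}\cap\Aut G_0=\{1\}$, we obtain $(\Aut G_0)^{\Gamma}\cap\tilde{\Gamma}=\{1\}$ and $(\Ad G_0)(k_s)^{\Gamma}\cap\tilde{\Gamma}=\{1\}$, so in both sequences the leftmost map stays injective after quotienting.

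Exactness at the new middle terms is then a short diagram chase: if $x\in\Aut (G_0\to\Spec k_s\geq k)^{\Gamma}$ has trivial image in $\Aut (k_s\geq k)/\Gamma$, then $p(x)\in\Gamma=p(\tilde{\Gamma})$, so $x\tilde{\gamma}^{-1}\in\ker p=(\Aut G_0)^{\Gamma}$ for a suitable $\tilde{\gamma}\in\tilde{\Gamma}$ by exactness of the sequence of Proposition~\ref{Prop:Galois cohomology for semilinear auto}; hence the class of $x$ lies in the image of $(\Aut G_0)^{\Gamma}$, the reverse inclusion being clear. The root datum case is identical, with the $\tilde{\Gamma}$ of Lemma~\ref{Lem:semilinear Galois cohomology for root datum} in place of $\Gamma$ and $(\Ad G_0)(k_s)^{\Gamma}$ in place of $(\Aut G_0)^{\Gamma}$.

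The crux, and the only place where a genuine computation is needed, is that $\partial$ is constant on cosets of $\Gamma$ (resp.\ of $\tilde{\Gamma}$), so that it descends to the quotient and the symbol $\partial$ in the new sequences is meaningful. Granting this, exactness at the quotiented third term is automatic: the preimage of the base class under the descended $\partial$ equals $\partial^{-1}(\ast)/\Gamma=\operatorname{im}(p)/\Gamma=\operatorname{im}(\overline{p})$, using exactness at $\Aut (k_s\geq k)$ in Proposition~\ref{Prop:Galois cohomology for semilinear auto} and $\Gamma\subseteq\operatorname{im}(p)$. To prove that $\partial$ descends, fix $\gamma_0\in\Gamma$, write an arbitrary element of $\Aut (k_s\geq k)$ as $\beta^{-1}$, note $\beta^{-1}\gamma_0=(\gamma_0^{-1}\beta)^{-1}$ and $\Id_{\gamma_0^{-1}\beta}=\Id_{\beta}\Id_{\gamma_0^{-1}}$, and expand $\partial(\beta^{-1}\gamma_0)_{\gamma}$ using Lemma~\ref{Lem:computing with the semilinear action}(2). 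Feeding in that $\partial(\gamma_0)$ is the coboundary of $c_{\gamma_0^{-1}}$ — the content of the computation in the proof of Lemma~\ref{Lem:triviality of Gamma under cocoundary}(1) — together with the standard fact that inner automorphisms of $\Gamma$ act trivially on $H^1(\Gamma,\Aut G_0)$, a direct manipulation shows $\partial(\beta^{-1}\gamma_0)$ is cohomologous to $\partial(\beta^{-1})$. The root datum statement follows in the same way, with Lemma~\ref{Lem:triviality of Gamma under cocoundary}(2) supplying the relevant coboundary and $H^1(\Gamma,(\Ad G_0)(k_s))$ in place of $H^1(\Gamma,\Aut G_0)$. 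I expect this last cocycle bookkeeping to be the main obstacle; everything else is formal.
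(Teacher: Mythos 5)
Your proposal is correct and follows essentially the same route as the paper: pass to quotients in the two sequences of Proposition~\ref{Prop:Galois cohomology for semilinear auto}, using Lemma~\ref{Lem:triviality of Gamma under cocoundary} to handle the coboundary term. The paper's own proof is just two lines (``modding out by $\tilde{\Gamma}$ does not modify exactness on the left, and the rest follows from Lemma~\ref{Lem:triviality of Gamma under cocoundary}''), so your extra verifications are fillings-in rather than a new argument; in particular your sketched descent of $\partial$ to cosets does close up as you predict, since expanding with Lemma~\ref{Lem:computing with the semilinear action}~(2) gives $\partial(x\tilde{\gamma}_0)_\gamma = w^{-1}\,x''_{\gamma}\,(\gamma . w)$, where $x''$ is the transport of $\partial(x)$ along conjugation by $\gamma_0$ (trivial on $H^1$) and $w$ is exactly the element ($c_{\gamma_0^{-1}}$, respectively $c_{\gamma_0^{-1}}\tilde{c}_{\gamma_0}$) whose coboundary is computed in Lemma~\ref{Lem:triviality of Gamma under cocoundary}.
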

\begin{proof}
Note that in both cases, moding out by $ \tilde{\Gamma} $ does not modify exactness on the left. Hence the results follows directly from Lemma~\ref{Lem:triviality of Gamma under cocoundary}.
\end{proof}

\begin{proof}[Proof of Theorem~\ref{Thm: MainThm3}]
Note that by Lemma~\ref{Lem:Identification of Aut(G Spec k)}, $ \Aut (G_0\to \Spec k_s\geq k)^{\Gamma}/\tilde{\Gamma} $ is naturally isomorphic to $ \Aut (G\to \Spec k) $. Similarly, by Lemma~\ref{Lem:semilinear Galois cohomology for root datum}, $ \Aut (R_{k_s}\to \Spec k_{s}\geq k)^{\Gamma}/\tilde{\Gamma}\cong \Aut (\mathcal{R}(G)\to \Spec k) $. Also note that the restriction of the semilinear $ \Gamma $-action on $ \Aut (G_0\to \Spec k_s\geq k)^{\Gamma}/\tilde{\Gamma} $ to $ \Aut G_0 $ (respectively $ (\Ad G_0)(k_s) $) is the natural Galois action on $ \Aut G_{k_s} $ (respectively $ (\Ad G)(k_s) $). In particular, the $ \Gamma $ invariant elements are the elements of $ \Aut G $ (respectively $ (\Ad G)(k) $). Finally, noting that $ \Aut (k) \cong \Aut (k_s\geq k)/\Gamma $ and that all those identifications are natural enough, we get the result.
\end{proof}

We now describe how the coboundary map of the exact sequence $ 1\to \Aut G\to \Aut (G\to \Spec k)\to \Aut (k)\to H^1(k,\Aut G_{k_s}) $ can be used to compute $ \Aut_G(k) $. To illustrate this, we set the following notations for the rest of the section:
\begin{definition}\label{Def:setting notation for explicit computation with division algebras}
\begin{enumerate}
\item $ D $ denotes a central division algebra of degree $ 3 $ over $ k $ (hence by a theorem of Wedderburn, $ D $ is cyclic). We fix a maximal Galois subfield $ l $ of $ D $ so that $ \Gal (l/k) $ is cyclic of order $ 3 $ (which exists because $ D $ is cyclic). We choose a generator of $ \Gal (l/k) $ that we denote $ \gamma $. Choosing an element $ u\in D $ normalising $ l $ and such that its action by conjugation on $ l $ generates $ \Gal (l/k) $, we set $ a = u^3\in k $. We set $ G := \textbf{SL}_1(D) $ to be the corresponding algebraic $ k $-group.
\item Set $ G_0 := \SL_3 $ (that we consider over $ k_s $, as in the beginning of this section). Recall that $ \Ad \SL_3 = \PGL_3 $. We denote elements of $ \PGL_3(k_s) $ as $ \begin{bsmallmatrix}
g_{11} & g_{12} & g_{13} \\
g_{21} & g_{22} & g_{23} \\
g_{31} & g_{32} & g_{33}
\end{bsmallmatrix} $, which is to be read as ``the equivalence class corresponding to the matrix $ \begin{psmallmatrix}
g_{11} & g_{12} & g_{13} \\
g_{21} & g_{22} & g_{23} \\
g_{31} & g_{32} & g_{33}
\end{psmallmatrix}\in GL_3(k_s) $''.

\item We choose the usual pinning of $ \SL_3 $ where the pair $ (T,B) $ consists of diagonal matrices and of upper triangular matrices, and where we choose some generators of the corresponding ``basic root groups''. Let $ R $ be the corresponding based root datum. Note that $ \Aut R $ is of order $ 2 $, and that if our choice of generators for the ``basic root groups'' is sensible enough, the splitting of $ \Aut \SL_3\to \Aut R $ is given by the automorphism $ \SL_3\to \SL_3\colon g\mapsto \,^{\text{at}}g^{-1} $, where $ ^{\text{at}}g $ denotes the anti-transposed of $ g $, i.e.\ ``the transposed of $ g $ along the anti-diagonal''. More formally, for $ i,j\in  \lbrace 1, 2, 3\rbrace $, $ (^{\text{at}}g)_{ij} = g_{4-j;4-i} $. Note that taking anti-transpose commutes with taking inverse, so that there is no ambiguity in the notation $ ^{\text{at}}g^{-1} $.

\item Consider the homomorphism $ f\colon \Gal (l/k)\to PGL_3(l)\colon \gamma \mapsto \begin{bsmallmatrix}
0 & 0 & a \\
1 & 0 & 0 \\
0 & 1 & 0
\end{bsmallmatrix} $ (where $ a\in k $ and $ \gamma \in \Gal (l/k) $ have been defined in the first item of these definitions). We choose the cocycle $ c\colon \Gamma \to \Aut G_0 $ defining $ G = \textbf{SL}_1(D) $ over $ k $ to be the composition $ \Gamma \to \Gal (l/k)\xrightarrow{f}\PGL_3(l)\to \Aut G_0 $.
\end{enumerate}
\end{definition}

Having set those notations, we are ready to start computing. The following lemmas are two special cases of \cite{Ha07} that we recover without using any theory of division algebras.
\begin{lemma}\label{Lem:Hanke result part 1}
Keep the notations of Definition~\ref{Def:setting notation for explicit computation with division algebras} and let $ \beta \in \Aut (k_s\geq k) $ be such that $ \beta (l) = l $. Let $ \alpha $ be the restriction of $ \beta $ to $ k $. Then $ \alpha \in \Aut_G(k) $ if and only if there exists $ \lambda \in l^* $ such that either $ \frac{\alpha(a)}{a} = \,^{\gamma^2}\lambda \,^{\gamma}\lambda \lambda $ or $ \alpha (a)a = \,^{\gamma^2}\lambda \,^{\gamma}\lambda \lambda $.
\end{lemma}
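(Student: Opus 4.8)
The plan is to use the Galois-cohomological exact sequence $1 \to \Aut G \to \Aut(G \to \Spec k) \to \Aut(k) \xrightarrow{\partial} H^1(k, \Aut G_{k_s})$ from Theorem~\ref{Thm: MainThm3}, applied to $G = \textbf{SL}_1(D)$. By Lemma~\ref{Lem:Identification of Aut(G Spec k)} we identify $\Aut(G \to \Spec k)$ with $\Aut(G_0 \to \Spec k_s \geq k)^\Gamma / \tilde\Gamma$, so that $\alpha \in \Aut_G(k)$ if and only if there is a $\Gamma$-invariant element $b\Id_\beta \in \Aut(G_0 \to \Spec k_s \geq k)$ whose underlying field automorphism restricts to $\alpha$ on $k$. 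Here $b \in \Aut G_0 = \Aut \SL_3 = \PGL_3(k_s) \rtimes \Aut R$, with the nontrivial element of $\Aut R$ realised by $g \mapsto \,^{\mathrm{at}}g^{-1}$ as in Definition~\ref{Def:setting notation for explicit computation with division algebras}. The invariance condition, by Lemma~\ref{Lem:Condition for a semilinear auto. to descend} (equivalently Definition~\ref{Def:semilinear Galois action}), is $c_{\beta^{-1}\gamma\beta}\,^{\beta^{-1}\gamma\beta}b\,^{\beta^{-1}}c_\gamma^{-1} = b$ for all $\gamma \in \Gamma$.

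First I would reduce to the case where $\beta$ normalises $l$, which is given in the hypothesis, and observe that then $\beta^{-1}\gamma\beta$ stays in the subgroup of $\Gamma$ acting through $\Gal(l/k)$, so the cocycle condition only needs to be checked for $\gamma$ a lift of the generator of $\Gal(l/k)$ (the cocycle $c$ factors through $\Gal(l/k)$, hence so does the whole equation). Since $\Gal(l/k)$ is cyclic of order $3$ and $\Aut R$ has order $2$, conjugating $b$ by the order-$3$ cocycle values is only compatible with the $\Aut R$-component of $b$ if one tracks how $\beta$ acts on $\Gal(l/k)$: conjugation by $\beta$ either fixes the generator $\gamma$ or inverts it, and this dichotomy is precisely what produces the two alternatives $\frac{\alpha(a)}{a} = \,^{\gamma^2}\lambda\,^\gamma\lambda\lambda$ versus $\alpha(a)a = \,^{\gamma^2}\lambda\,^\gamma\lambda\lambda$ in the statement. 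So the second step is to split into these two cases according to whether $\beta$ commutes with $\gamma$ or conjugates it to $\gamma^{-1} = \gamma^2$.

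Next, in each case I would write $b = m$ or $b = m \cdot (\,^{\mathrm{at}}(-)^{-1})$ with $m \in \PGL_3$, and extract from the cocycle equation $c_{\beta^{-1}\gamma\beta}\,^{\beta^{-1}\gamma\beta}b\,^{\beta^{-1}}c_\gamma^{-1} = b$ an equation in $\PGL_3(l)$. Using the explicit matrix $c_\gamma = \begin{bsmallmatrix} 0&0&a\\ 1&0&0\\ 0&1&0 \end{bsmallmatrix}$ and the fact that $\beta$ acts on entries by $\alpha$ (it sends $a$ to $\alpha(a)$) and by $\gamma$-conjugation on $l$, one gets that $m$ must intertwine $c_\gamma$ with its $\beta$-twist; writing out what matrices do this, the diagonal (or anti-diagonal) entries of a solution $m$ must be of the form $(\,^{\gamma^2}\lambda,\,^\gamma\lambda,\lambda)$ for some $\lambda \in l^*$, and the "determinant-type" compatibility forces $\alpha(a)/a$ (resp.\ $\alpha(a)a$ in the inverting case, because anti-transpose-inverse flips $a \mapsto a^{-1}$ in $c_\gamma$) to equal the norm-like product $\,^{\gamma^2}\lambda\,^\gamma\lambda\lambda$. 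Conversely, given such a $\lambda$, one builds $b$ explicitly and checks the cocycle equation holds, producing the desired semilinear automorphism.

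The main obstacle I anticipate is bookkeeping in $\PGL_3$ rather than $\GL_3$: the matrix identities only need to hold projectively, so "the diagonal entries are $(\,^{\gamma^2}\lambda,\,^\gamma\lambda,\lambda)$" is really a statement up to a common scalar, and one must be careful that the scalar ambiguity is exactly absorbed by the freedom in choosing $\lambda$. A related subtlety is correctly handling the $\Aut R$-component: conjugation by $\,^{\mathrm{at}}(-)^{-1}$ turns the permutation-type matrix $c_\gamma$ into a matrix involving $a^{-1}$ and reverses the cyclic order, and it is this interaction — combined with whether $\beta$ centralises or inverts $\gamma$ — that must be traced carefully to see that exactly the two stated alternatives arise and no others. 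Verifying that the explicitly constructed $b$ in the converse direction is genuinely $\Gamma$-invariant (not merely invariant under the chosen lift of the generator) is routine once one notes the equation for $\gamma$ implies it for all of $\Gamma$ by the cocycle identity of Lemma~\ref{Lem:computing with the semilinear action}.
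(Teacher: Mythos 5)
Your machinery is the same as the paper's: the paper also invokes the coboundary map of Proposition~\ref{Prop:Galois cohomology for semilinear auto} (equivalently the descent condition of Lemma~\ref{Lem:Condition for a semilinear auto. to descend}), reduces the cocycle condition to the single generator $\gamma$ of $\Gal(l/k)$ with $g\in\PGL_3(l)$, splits into the cases $\beta^{-1}\gamma\beta=\gamma$ and $\beta^{-1}\gamma\beta=\gamma^{-1}$, and then does the explicit matrix computation with $c_\gamma$.

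There is, however, a genuine flaw in how you index the two alternatives of the statement. You assert that the dichotomy ``$\frac{\alpha(a)}{a}$ is a norm-type product'' versus ``$\alpha(a)a$ is a norm-type product'' is \emph{produced by} whether $\beta$ centralises $\gamma$ or inverts it. That is not the correct mechanism: in the paper both alternatives occur \emph{within each} of the two cases, and the case $\beta^{-1}\gamma\beta=\gamma^{-1}$ turns out to be immaterial (it reduces to the same computation with $^{\gamma}g_{ij}$ replaced by $^{\gamma^{-1}}g_{ij}$, leaving the conclusion unchanged). The dichotomy in the statement is instead indexed by the $\Aut R$-component $e$ of the descending automorphism $b=ge$: with $e$ trivial one gets $ec_\gamma^{-1}e^{-1}=c_\gamma^{-1}$ and hence the condition on $\frac{\alpha(a)}{a}$, while with $e$ the outer automorphism $g\mapsto\,^{\text{at}}g^{-1}$ one gets $ec_\gamma^{-1}e^{-1}=c_\gamma$ and hence the condition on $\alpha(a)a$ (this reflects $^{\alpha}D\cong D$ versus $^{\alpha}D\cong D^{\text{opp}}$). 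Taken literally, your organisation would, for a $\beta$ commuting with $\gamma$, only produce the first alternative and hence prove a false criterion: if $\alpha(a)a$ is such a product but $\frac{\alpha(a)}{a}$ is not, then $\alpha$ still lies in $\Aut_G(k)$, via an automorphism with nontrivial outer part. Your later parenthetical (``$\alpha(a)a$ in the inverting case, because anti-transpose-inverse flips $a\mapsto a^{-1}$ in $c_\gamma$'') in fact identifies the right mechanism but contradicts your earlier attribution; you need to decouple the two dichotomies and run both choices of $e$ in both $\beta$-cases, which is exactly what the paper does.
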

\begin{proof}
Note that $ \alpha \in \Aut_G(k) $ if and only if $ \alpha^{-1}\in \Aut_G(k) $. Using the coboundary map of Proposition~\ref{Prop:Galois cohomology for semilinear auto}, for all $ \sigma \in \Gamma $ we have $ \partial (\beta^{-1})_{\sigma} = \Id_{\beta}^{-1}\sigma .\Id_{\beta} = \Id_{\beta}^{-1} c_{\beta^{-1}\sigma \beta}\,^{\beta^{-1}}c_{\sigma}^{-1} \Id_{\beta} = \,^{\beta}c_{\beta^{-1}\sigma \beta}c_{\sigma}^{-1} $. Hence by Proposition~\ref{Prop:Galois cohomology for semilinear auto}, $ \alpha \in \Aut_G(k) $ if and only if the cocycle $ \sigma \mapsto \,^{\beta}c_{\beta^{-1}\sigma \beta}c_{\sigma}^{-1} $ is trivial in $ H^1(k,\Aut SL_3) $, i.e.\ if and only if there exists $ g \in \PGL_3(k_s) $ and $ e\in \Aut R\leq \Aut \SL_3 $ such that $  (ge)^{-1}\,^{\beta}c_{\beta^{-1}\sigma \beta}c_{\sigma}^{-1} \sigma .(ge) = 1 $. But since $ \gamma $ generates $ \Gal (l/k) $ and since $ c_{\sigma}=1 $ for all $ \sigma $ acting trivially on $ l $, this is equivalent to the existence of $ g \in \PGL_3(l) $ and $ e\in \Aut R\leq \Aut \SL_3 $ such that $  (ge)^{-1}\,^{\beta}c_{\beta^{-1}\gamma \beta}c_{\gamma}^{-1} \gamma .(ge) = 1 $. Recalling that $ \gamma .(ge) = c_{\gamma }\,^{\gamma}(ge)c_{\gamma }^{-1} = c_{\gamma }\,^{\gamma}g\,ec_{\gamma }^{-1} $, this is equivalent to $ \,^{\beta}c_{\beta^{-1}\gamma \beta}\,^{\gamma}g\,ec_{\gamma }^{-1}e^{-1} = g $. Also note that $ ec_{\gamma }^{-1}e^{-1} = \begin{cases}
c_{\gamma} \text{ if } e \text{ is non-trivial}\\
c_{\gamma }^{-1} \text{ otherwise}\end{cases} $.

First assume that $ \beta^{-1}\gamma \beta = \gamma $. Then $ \partial (\beta^{-1}) $ is trivial if and only if there exists $ g\in \PGL_3(l) $ such that either $ \,^{\beta}c_{\gamma}\,^{\gamma}gc_{\gamma}^{-1}  = g $ or $ \,^{\beta}c_{\gamma}\,^{\gamma}gc_{\gamma}  = g $. Letting $ g = \begin{bsmallmatrix}
g_{11} & g_{12} & g_{13} \\
g_{21} & g_{22} & g_{23} \\
g_{31} & g_{32} & g_{33}
\end{bsmallmatrix} \in \PGL_3(l) $, we have $ \,^{\beta}c_{\gamma}\,^{\gamma}gc_{\gamma}^{-1} = \begin{bsmallmatrix}
\,^{\gamma}g_{33}\frac{\beta (a)}{a}  & \,^{\gamma}g_{31}\beta (a) & \,^{\gamma}g_{32}\beta (a) \\
\,^{\gamma}g_{13}a^{-1} & \,^{\gamma}g_{11} & \,^{\gamma}g_{12} \\
\,^{\gamma}g_{23}a^{-1} & \,^{\gamma}g_{21} & \,^{\gamma}g_{22}
\end{bsmallmatrix} $. Since $ g $ is invertible, one of $ g_{13} $, $ g_{23} $ and $ g_{33} $ is non-zero. Let us for example assume that $ g_{33}\neq 0 $. Now $ \,^{\beta}c_{\gamma}\,^{\gamma}gc_{\gamma}^{-1}  = g $ if and only if there exists $ \lambda \in l^* $ such that $ \,^{\gamma}g_{33}\frac{\beta (a)}{a} = g_{11}\lambda $, $ \,^{\gamma}g_{11} = g_{22}\lambda $ and $ \,^{\gamma}g_{22} = g_{33}\lambda $. Hence $ \dfrac{^{\gamma}(\,^{\gamma}g_{33}\frac{\beta (a)}{a})}{^{\gamma}\lambda} = g_{22}\lambda $, and furthermore $ \dfrac{^{\gamma^2}(\,^{\gamma}g_{33}\frac{\beta (a)}{a})}{^{\gamma^2}\lambda\,^{\gamma}\lambda} = g_{33}\lambda $. Since the computation is similar if $ g_{13}\neq 0 $ or $ g_{23}\neq 0 $ instead, we conclude that if such a $ g $ exists, then there exists $ \lambda \in l $ such that $ \frac{\beta (a)}{a} = \,^{\gamma^2}\lambda\,^{\gamma}\lambda \lambda $ (because $ \gamma $ acts trivially on $ a\in k $ and $ \gamma^3 $ acts trivially on $ g_{33}\in l $). Conversely, if there exists $ \lambda \in l^* $ such that $ \frac{\beta (a)}{a} = \,^{\gamma^2}\lambda\,^{\gamma}\lambda \lambda $, then $ g = \begin{bsmallmatrix}
\,^{\gamma^2}\lambda \,^{\gamma}\lambda &0 & 0 \\
0 & \,^{\gamma^2}\lambda & 0 \\
0 &0 & 1
\end{bsmallmatrix} $ is such that $ \,^{\beta}c_{\gamma}\,^{\gamma}gc_{\gamma}^{-1}  = g $. A similar computation shows that there exists $ g\in \PGL_3(l) $ such that $ \,^{\beta}c_{\gamma}\,^{\gamma}gc_{\gamma}  = g $ if and only if there exists $ \lambda \in l^* $ such that $ \beta (a)a = \,^{\gamma^2}\lambda\,^{\gamma}\lambda \lambda $.

Now assuming that $ \beta^{-1}\gamma \beta = \gamma^{-1} $, $ \partial (\beta^{-1}) $ is trivial if and only if there exists $ g\in \PGL_3(l) $ such that either $ \,^{\beta}c_{\gamma^{-1}}\,^{\gamma}gc_{\gamma}^{-1}  = g $ or $ \,^{\beta}c_{\gamma^{-1}}\,^{\gamma}gc_{\gamma}  = g $. Or equivalently if and only if there exists $ g\in \PGL_3(l) $ such that either $ \,^{\beta}c_{\gamma}\,^{\gamma^{-1}}gc_{\gamma}  = g $ or $ \,^{\beta}c_{\gamma}\,^{\gamma^{-1}}gc_{\gamma}^{-1}  = g $. Hence we get the same setting as for the case $ \beta^{-1}\gamma \beta = \gamma $ up to replacing $ ^{\gamma}g_{ij} $ with $ \,^{\gamma^{-1}}g_{ij} $, which leaves the conclusion unchanged.
\end{proof}

\begin{lemma}\label{Lem:Hanke result part 2}
Keep the notations of Definition~\ref{Def:setting notation for explicit computation with division algebras} and let $ \beta \in \Aut (k_s\geq k) $ be such that $ \beta (l) = l' \neq l $. Let $ \delta = \beta \gamma \beta^{-1} $ $ ( $a generator of $ \Gal (l'/k))$, let $ K $ be the compositum $ ll'\le k_s $ and let $ \Gal (K/k) = \Gal (l/k)\times \Gal (l'/k) $ be the corresponding decomposition of $ \Gal (K/k) $. Let $ \alpha $ be the restriction of $ \beta $ to $ k $. Then $ \alpha \in \Aut_G(k) $ if and only if there exist $ \lambda, \mu \in K^* $ such that
\begin{enumerate}
\item either $ \,^{\gamma^2}\lambda\,^{\gamma}\lambda \lambda = \frac{1}{a} $, $ \,^{\delta^2}\mu\,^{\delta}\mu \mu = \beta (a) $ and $ \frac{^{\delta}\lambda}{\lambda} = \frac{^{\gamma}\mu}{\mu} $,
\item or $ \,^{\gamma^2}\lambda\,^{\gamma}\lambda \lambda = a $, $ \,^{\delta^2}\mu\,^{\delta}\mu \mu = \beta (a) $ and $ \frac{^{\delta}\lambda}{\lambda} = \frac{^{\gamma}\mu}{\mu} $.
\end{enumerate}

\end{lemma}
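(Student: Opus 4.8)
The plan is to follow the template of the proof of Lemma~\ref{Lem:Hanke result part 1}: rephrase membership in $\Aut_G(k)$ as the vanishing of an explicit class in Galois cohomology via the coboundary map of Proposition~\ref{Prop:Galois cohomology for semilinear auto}, and then solve the resulting system of matrix equations, the novelty being that two distinct cyclic subfields are now in play.

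First, since $\alpha\in\Aut_G(k)$ if and only if $\alpha^{-1}\in\Aut_G(k)$, Proposition~\ref{Prop:Galois cohomology for semilinear auto} gives that $\alpha\in\Aut_G(k)$ if and only if the cocycle $\sigma\mapsto\partial(\beta^{-1})_\sigma=\,^\beta c_{\beta^{-1}\sigma\beta}\,c_\sigma^{-1}$ is trivial in $H^1(k,\Aut\SL_3)$. The point specific to this lemma is that $c_\sigma$ depends only on $\sigma|_l$ whereas $\,^\beta c_{\beta^{-1}\sigma\beta}$ depends only on $(\beta^{-1}\sigma\beta)|_l$, which through the identification $\beta\colon l\xrightarrow{\sim}l'$ is governed by $\sigma|_{l'}$; hence $\partial(\beta^{-1})$ is inflated from a cocycle $\bar z$ on $\Gal(K/k)=\Gal(l/k)\times\Gal(l'/k)$, and (as in Lemma~\ref{Lem:Hanke result part 1}) it is trivial if and only if $\bar z$ is a coboundary over $K$. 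So I would reduce to finding $g\in\PGL_3(K)$ and $e\in\Aut R$ with $(ge)^{-1}\bar z_\sigma\,\sigma.(ge)=1$ for $\sigma$ ranging over the two generators $\gamma,\delta$ of $\Gal(K/k)$, after computing $\bar z_\gamma=c_\gamma^{-1}$ and $\bar z_\delta=\,^\beta c_\gamma=\begin{bsmallmatrix}0&0&\beta(a)\\1&0&0\\0&1&0\end{bsmallmatrix}$; here $\beta(a)=\alpha(a)\in k$, so $\bar z_\delta\in\PGL_3(k)$.

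Next I would unwind the two coboundary equations exactly as in Lemma~\ref{Lem:Hanke result part 1}, using the explicit twisted Galois action and the fact that $ec_\gamma^{-1}e^{-1}$ equals $c_\gamma$ when $e$ is the nontrivial element of $\Aut R$ and $c_\gamma^{-1}$ otherwise. The equation coming from $\gamma$ becomes $\,^\gamma g=gc_\gamma$ or $\,^\gamma g=gc_\gamma^{-1}$ according to $e$, while the equation coming from $\delta$ becomes $\,^\delta g=(\,^\beta c_\gamma)^{-1}g$ regardless of $e$. Reading the $\gamma$-equation on the columns of $g$ shows the columns are cyclically related by a scalar $\lambda\in K^*$, and closing the cycle (using that $\gamma$ has order $3$ on $K$) yields $\,^{\gamma^2}\lambda\,^\gamma\lambda\lambda=a$ or $=1/a$ according to the choice of $e$; dually, reading the $\delta$-equation on the rows of $g$ produces a scalar $\mu\in K^*$ with $\,^{\delta^2}\mu\,^\delta\mu\mu=\beta(a)$. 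Since $\Gal(K/k)$ is abelian the column- and row-relations must hold on one and the same matrix $g$ (whose entries are all nonzero, by the $\gamma$-relation together with invertibility), and comparing the two paths from $g_{11}$ to $g_{22}$ forces exactly $\frac{^\delta\lambda}{\lambda}=\frac{^\gamma\mu}{\mu}$. Conversely, given $\lambda,\mu\in K^*$ satisfying the stated conditions one constructs $g$ explicitly by normalising $g_{11}=1$ and propagating via the column and row relations, checks that the three conditions make this well defined and that $g$ is invertible, and concludes that $(g,e)$ trivialises $\bar z$; the two cases of the statement correspond to the two elements $e\in\Aut R$.

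The main obstacle is this last bookkeeping step: one must analyse a single matrix $g\in\PGL_3(K)$ subject to two commuting twisted Galois actions, and show in both directions that the solutions correspond precisely to compatible pairs $(\lambda,\mu)$ — in particular keeping track of the $e$-conjugation so that it toggles the condition between $a$ and $1/a$ while leaving the $\mu$-condition intact, and verifying that the normalisation and path-independence choices do not alter any of the three equations.
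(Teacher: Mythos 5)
Your proposal is correct and follows essentially the same route as the paper's proof: reduce via the coboundary of Proposition~\ref{Prop:Galois cohomology for semilinear auto} to the triviality of the cocycle over $K$, impose the resulting equations for the two generators $\gamma$ and $\delta$ (the $\gamma$-equation twisting columns, with $e$ toggling $c_\gamma^{\pm 1}$, the $\delta$-equation twisting rows with $^{\beta}c_{\gamma}$), extract $\lambda$ and $\mu$ with the two norm-type conditions and the compatibility $\frac{^{\delta}\lambda}{\lambda}=\frac{^{\gamma}\mu}{\mu}$ from the $(2,2)$ entry, and construct $g$ explicitly for the converse. The only cosmetic difference is that you justify the normalisation $g_{11}=1$ by noting all entries of $g$ are nonzero, where the paper instead runs the symmetric case analysis on which entry of the first column is nonzero.
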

\begin{proof}
Arguing as in the beginning of the proof of Lemma~\ref{Lem:Hanke result part 1}, $ \alpha \in \Aut_G(k) $ if and only if there exists $ g \in \PGL_3(K) $ and $ e\in \Aut R\leq \Aut \SL_3 $ such that 
\begin{equation}\label{Eqn 1}
^{\beta}c_{\beta^{-1}\sigma \beta} \,^{\sigma}g ec_{\sigma}^{-1}e^{-1} = g \text{ for all } \sigma \in \Gal (K/k) 
\end{equation}

We first do the case $ e=1 $. Taking $ \sigma = \gamma $ in Equation~\ref{Eqn 1}, we get $ ^{\gamma}gc_{\gamma}^{-1} = g $. On the other hand, taking $ \sigma  = \delta $ in Equation~\ref{Eqn 1}, we get $ ^{\beta}c_{\gamma}\,^{\delta}g = g $. Hence Equation~\ref{Eqn 1} implies that 
\begin{equation*}
\begin{bsmallmatrix}
\frac{^{\gamma}g_{13}}{a} & ^{\gamma}g_{11} & ^{\gamma}g_{12} \\
\frac{^{\gamma}g_{23}}{a} & ^{\gamma}g_{21} & ^{\gamma}g_{22} \\
\frac{^{\gamma}g_{33}}{a} & ^{\gamma}g_{31} & ^{\gamma}g_{32}
\end{bsmallmatrix} = \begin{bsmallmatrix}
g_{11} & g_{12} & g_{13} \\
g_{21} & g_{22} & g_{23} \\
g_{31} & g_{32} & g_{33}
\end{bsmallmatrix} = \begin{bsmallmatrix}
^{\delta}g_{31}\beta (a) & ^{\delta}g_{32}\beta (a) & ^{\delta}g_{33}\beta (a) \\
^{\delta}g_{11} & ^{\delta}g_{12} & ^{\delta}g_{13} \\
^{\delta}g_{21}& ^{\delta}g_{22} & ^{\delta}g_{23}
\end{bsmallmatrix}
\end{equation*}

To fix ideas, assume $ g_{11}\neq 0 $ (the computation is similar if $ g_{21}\neq 0 $ or $ g_{31}\neq 0 $ instead). So we can further assume that $ g_{11} = 1 $. Hence there exists $ \lambda , \mu \in K^* $ such that
\begin{center}
 \begin{tikzpicture}[->]
  \node (1) at (-0.3,0) {$ \lambda^{-1}\begin{pmatrix}
\frac{^{\gamma}(^{\gamma}\lambda^{-1}\lambda^{-1})}{a} & 1 & ^{\gamma}\lambda^{-1} \\
\frac{^{\gamma}g_{23}}{a} & ^{\gamma}g_{21} & ^{\gamma}g_{22} \\
\frac{^{\gamma}g_{33}}{a} & ^{\gamma}g_{31} & ^{\gamma}g_{32}
\end{pmatrix} $};
  \node (2) at (7.5,0) {$ \mu^{-1}\begin{pmatrix}
^{\delta}(^{\delta}\mu^{-1}\mu^{-1})\beta (a) & ^{\delta}g_{32}\beta (a) & ^{\delta}g_{33}\beta (a) \\
1 & ^{\delta}g_{12} & ^{\delta}g_{13} \\
^{\delta}\mu^{-1}& ^{\delta}g_{22} & ^{\delta}g_{23}
\end{pmatrix} $} ;
  \node (3) at (3.5,-2) {$ \begin{pmatrix}
1 & \lambda^{-1} & ^{\gamma}\lambda^{-1}\lambda^{-1} \\
\mu^{-1} & g_{22} & g_{23} \\
^{\delta}\mu^{-1}\mu^{-1} & g_{32} & g_{33}
\end{pmatrix} $};
  \node (4) at (1.7,-1.1) [rotate = -45] {$ = $};
  \node (5) at (5.3,-1.1) [rotate = 45] {$ = $};
  
  \end{tikzpicture}
  \end{center}

By looking at the $ 11 $ coefficient, this already implies that $ ^{\gamma^2}\lambda\,^{\gamma}\lambda \lambda = a^{-1} $ and $ ^{\delta^2}\mu\,^{\delta}\mu \mu = \beta (a) $. Finally, by looking at the central coefficient, we see that $ \lambda^{-1}\,^{\gamma}\mu^{-1} = \mu^{-1}\,^{\delta}\lambda^{-1} $. Conversely, if there exists $ \lambda ,\mu \in K^* $ such that $ \,^{\gamma^2}\lambda\,^{\gamma}\lambda \lambda = \frac{1}{a} $, $ \,^{\delta^2}\mu\,^{\delta}\mu \mu = \beta (a) $ and $ \frac{^{\delta}\lambda}{\lambda} = \frac{^{\gamma}\mu}{\mu} $, one can check that $ g = \begin{bmatrix}
1 & \lambda^{-1} & ^{\gamma}\lambda^{-1}\lambda^{-1} \\
\mu^{-1} & \lambda^{-1}\,^{\gamma}\mu^{-1} & \lambda^{-1}\,^{\gamma}(\lambda^{-1}\,^{\gamma}\mu^{-1}) \\
^{\delta}\mu^{-1}\mu^{-1} & \lambda^{-1}\,^{\gamma}(^{\delta}\mu^{-1}\mu^{-1}) & \lambda^{-1}\,^{\gamma} (\lambda^{-1}\,^{\gamma}(^{\delta}\mu^{-1}\mu^{-1}))
\end{bmatrix} $ satisfies Equation~\ref{Eqn 1}.

In the case where $ e $ is the non-trivial automorphism of $ R $, one uses the fact $ ec_{\gamma}e^{-1} = c_{\gamma}^{-1} $ and then imitates the above computation to get the second condition on $ \lambda ,\mu $ (to carry out this computation most easily, assume that $ g_{11} = 1 $ and for the last condition, consider the $ 23 $ coefficient).
\end{proof}
\begin{remark}\label{Rem:applicability of these computations}
The kind of computations we perform in Lemma~\ref{Lem:Hanke result part 1} can easily be adapted for any cyclic division algebra, and the computations in Lemma~\ref{Lem:Hanke result part 2} can easily be adapted to any cyclic division algebras of prime degrees. When the degree is not prime, $ l \cap \beta (l) $ might be a non-trivial extension of $ k $ (for $ l $ a maximal cyclic subfield of $ D $), and we did not try to overcome this complication using our methods. Note that in \cite{Ha07}*{Section~3}, T.\ Hanke deals with this extra difficulty very efficiently.
\end{remark}
\begin{remark}
In light of the results in \cite{Ha07}, we proved that $ \alpha \in \Aut_G(k) $ if and only if $ ^{\alpha}D\cong D $ or $ ^{\alpha}D\cong D^{\text{opp}}  $ (as $ k $-algebras). This is consistent with the fact that $ ^{\alpha }\textbf{SL}_1(D) \cong \textbf{SL}_1(\,^{\alpha}D) $, and that for two finite dimensional central division algebras $ D_1,D_2 $ over $ k $, $ \textbf{SL}_1(D_1)\cong \textbf{SL}_1(D_2) $ (as algebraic $ k $-groups) if and only if $ D_1\cong D_2 $ or $ D_1\cong D_2^{\text{opp}} $ (as $ k $-algebras).
\end{remark}

\section{Semilinear automorphisms of based root datum}\label{Sec:computing the group of semilinear automorphism of based root datum}
We aim to give an explicit description of the short exact sequence $ 1\to \Aut \mathcal{R}(G) \to \Aut (\mathcal{R}(G) \to \Spec k)\to \Aut_{\mathcal{R}(G)}(k)\to 1 $. We base this computation on Lemma~\ref{Lem:semilinear Galois cohomology for root datum}.

Recall that for $ R_{k_s} $ a split $ k $-scheme of based root datum of type $ R $, the $ \Gal (k_s/k) $-action on $ \Aut R_{k_s} $ is trivial, so that $ H^1(k_s/k, \Aut R_{k_s}) $ is isomorphic to the set of continuous homomorphisms $ \Hom (\Gal (k_s/k),\Aut R) $ up to conjugation.

\begin{definition}
Let $ \mathcal{R} $ be a $ k $-scheme of based root datum of type $ R $. 
\begin{enumerate}
\item Fix an isomorphism $ \mathcal{R}_{k_s}\cong R_{k_s} $ and let $ \tilde{c}\colon \Gal (k_s/k)\to \Aut R $ be the corresponding cocycle. Let $ N\unlhd \Gal (k_s/k) $ be the kernel of the homomorphism $ \tilde{c} $, and let $ l $ be the Galois extension of $ k $ fixed by $ N $. We call $ l $ \textbf{the classifying field of} $ \mathcal{R} $. Once a separable closure of $ k $ has been fixed, the classifying field of $ \mathcal{R} $ is uniquely determined by $ \mathcal{R} $.

\item We say that $ \mathcal{R} $ (or $ R $) is \textbf{semisimple} (respectively \textbf{simply connected}, respectively \textbf{adjoint}, respectively \textbf{simple}) if the split connected reductive group of type $ R $ is semisimple (respectively simply connected, respectively adjoint, respectively simple).
\end{enumerate}
\end{definition}
\begin{remark}
We use the following terminology: a connected reductive $ k $-group is simple if it is non-abelian and has no non-trivial connected closed normal subgroup (some author prefer to call such groups quasi-simple).
\end{remark}

\begin{lemma}\label{Lem:classification of Dynkin diagrams}
Let $R $ be a simple reduced based root datum and let $ k_s $ be a separable closure of $ k $. The map which associates to a $ k_s/k $-form of $ R_{k_s} $ its classifying field is a bijection between $ k $-schemes of based root datum of type $ R $ (up to $ k $-isomorphism) and subfields $ l\leq k_s $ such that $ l $ is Galois over $ k $ and $ \Gal (l/k) $ is isomorphic to a subgroup of $ \Aut R $.
\end{lemma}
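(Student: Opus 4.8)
The plan is to translate the statement into Galois cohomology with trivial coefficients, so that the map becomes $[\tilde c]\mapsto k_s^{\ker\tilde c}$, and then reduce bijectivity to an elementary fact about the finite group $\Aut R$. Set $\Gamma=\Gal(k_s/k)$ and $E=\Aut R$ with its trivial $\Gamma$-action. By Remark~\ref{Rem:Root datum and Galois descent}, $k$-schemes of based root datum of type $R$ are classified up to $k$-isomorphism by $H^1(k,E)$, i.e.\ by continuous homomorphisms $\tilde c\colon\Gamma\to E$ modulo $E$-conjugacy, and the classifying field of the form attached to the class of $\tilde c$ is $l=k_s^{\ker\tilde c}$. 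First I would check that this is well defined on classes: changing the isomorphism $\mathcal R_{k_s}\cong R_{k_s}$ replaces $\tilde c$ by a coboundary, which (the action being trivial) has the form $b^{-1}\tilde c(\cdot)b$ for some $b\in E$ and hence the same kernel.

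Surjectivity is then formal. For continuous $\tilde c$ the kernel is open and normal, so $l=k_s^{\ker\tilde c}$ is finite Galois over $k$ with $\Gal(l/k)\cong\Gamma/\ker\tilde c$ isomorphic to the image $\tilde c(\Gamma)\le E$, which puts the map into the asserted target. Conversely, given $l\le k_s$ with $l/k$ finite Galois and an abstract embedding $j\colon\Gal(l/k)\hookrightarrow E$, the composite $\tilde c\colon\Gamma\twoheadrightarrow\Gal(l/k)\xrightarrow{j}E$ is continuous with $\ker\tilde c=\Gal(k_s/l)$, so the class of $\tilde c$ has classifying field $l$.

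Injectivity is the main point, and this is where simplicity of $R$ enters. Suppose $\tilde c_1,\tilde c_2$ have the same classifying field $l$, so $\ker\tilde c_1=\ker\tilde c_2=\Gal(k_s/l)$ and each factors as $\Gamma\twoheadrightarrow G:=\Gal(l/k)\xrightarrow{\bar c_i}E$ with $\bar c_i$ injective; it suffices to show $\bar c_1$ and $\bar c_2$ are $E$-conjugate. For a general finite group $E$ this can fail, but for simple $R$ the group $E=\Aut R$ embeds, via the induced permutation of simple roots, into the automorphism group of the Dynkin diagram of $R$ (injectively, since $\Phi$ spans $M\otimes\mathbf{Q}$ for simple $R$), which is trivial, $\Z/2\Z$, or $S_3$; moreover $S_3$ occurs only for type $D_4$, where inspecting the isogeny classes gives $\Aut R\in\{\Z/2\Z,S_3\}$. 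Hence $E\in\{1,\Z/2\Z,S_3\}$.

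It then remains to verify, for $E$ each of these three groups, that any two injective homomorphisms of a fixed finite group $G$ into $E$ are $E$-conjugate. For $E\in\{1,\Z/2\Z\}$ there is at most one such homomorphism, so $\bar c_1=\bar c_2$. For $E=S_3$ I would use that (i) any two subgroups of $S_3$ of a given isomorphism type are conjugate and (ii) for every subgroup $H\le S_3$ the normalizer $N_{S_3}(H)$ surjects by conjugation onto $\Aut H$ (immediate for $|H|\le 2$; a transposition inverts $A_3$; and $N_{S_3}(S_3)=S_3$ realizes $\Inn S_3=\Aut S_3$): by (i) one may conjugate $\bar c_1$ so that $\bar c_1$ and $\bar c_2$ have the same image $H'$, and then $\bar c_2\circ\bar c_1^{-1}\in\Aut H'$ is conjugation by some $n\in N_{S_3}(H')$, whence $\bar c_2=n\,\bar c_1(\cdot)\,n^{-1}$. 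This proves injectivity. I expect the only ingredient beyond routine bookkeeping to be the input $\Aut R\in\{1,\Z/2\Z,S_3\}$ for simple $R$, which one can quote from the classification of root data (note that $\Z/3\Z$ genuinely cannot occur, as otherwise the map would fail to be injective over fields $l$ with $\Gal(l/k)$ cyclic of order $3$).
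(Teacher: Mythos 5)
Your proof is correct and takes essentially the same route as the paper: identify $H^1(k,\Aut R)$ with continuous homomorphisms $\Gal(k_s/k)\to\Aut R$ up to conjugacy (the Galois action on $\Aut R$ being trivial), and use simplicity of $R$ to force $\Aut R\in\{1,\Z /2\Z ,S_3\}$ so that homomorphisms with the same kernel are conjugate. You are in fact more explicit than the paper on the two points its short proof glosses over — that injectivity requires conjugacy of homomorphisms rather than merely of their images (your normalizer-of-subgroups-of-$S_3$ argument) and that $\Aut R\cong \Z /3\Z$ must be excluded for the intermediate $D_4$ root data — and you handle both correctly.
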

\begin{proof}
Let $ D $ be the Dynkin diagram associated to $ R $. Since $ R $ is semisimple and reduced, $ \Aut R\leq \Aut D $. Furthermore, since $ R $ is simple $ \Aut D $ is either trivial, $ \Z/2\Z $ or $ S_3 $. Hence, if two subgroups of $ \Aut R $ are isomorphic, they are actually conjugate. The result follows from the fact that the Galois action on $ \Aut R $ is trivial, and hence $ H^1(k_s/k,\Aut R_{k_s}) $ is isomorphic to the set of continuous homomorphisms $ \Hom (\Gal (k_s/k),\Aut R) $ modulo conjugation.
\end{proof}
\begin{remark}
In the notations of the proof of Lemma~\ref{Lem:classification of Dynkin diagrams}, one might wonder when the inclusion $ \Aut R\leq \Aut D $ is an equality. This is always the case, except possibly when $ R $ is not simply connected or adjoint and is of type $ D_{2n} $. See \cite{Con14}*{Proposition~1.5.1} for a precise statement.
\end{remark}

\begin{definition}
Let $ \mathcal{R} $ be a simple $ k $-scheme of based root datum, and let $ k_s $ be a separable closure of $ k $. We define the \textbf{Tits index of} $ \mathcal{R} $ to be $ ^{g}X_{n,l} $ where 
\begin{enumerate}
\item $ l\leq k_s $ is the classifying field of $ \mathcal{R} $ (hence $ l $ is a finite Galois extension of $ k $).
\item $ X_n $ is the label of the Dynkin diagram associated to $ R $.
\item $ g $ is the order of the Galois group $ \Gal (l/k) $.
\end{enumerate}
\end{definition}

\begin{lemma}\label{Lem:descriptionofabstractautomoprhismsofDyn}
Let $ \mathcal{R} $ be a simple $ k $-scheme of based root datum of type $ R $ with index $ ^gX_{n,l} $.
\begin{enumerate}
\item $ g \in  \lbrace 1,2,3,6\rbrace $.
\item If $ g=1 $, $ \Aut (\mathcal{R} \to \Spec k)\cong \Aut R\times \Aut (k) $, and this isomorphism restricts to $ \Aut \mathcal{R} \cong \Aut R $.
\item If $g=2$ or $g=3$, $ \Aut (\mathcal{R} \to \Spec k)\cong  \Aut (l\geq k) $, and this isomorphism restricts to $ \Aut \mathcal{R}\cong \Gal (l/k) $.
\item If $g=6$, $ \Aut (\mathcal{R} \to \Spec k)\cong \Aut (l_3\geq k) $, where $ l_3 $ is any non-normal cubic subextension of $ l/k $. Furthermore, $ \Aut \mathcal{R} $ is trivial.
\end{enumerate}
\end{lemma}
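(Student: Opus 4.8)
The plan is to compute both ends of the exact sequence $1\to\Aut\mathcal R\to\Aut(\mathcal R\to\Spec k)\to\Aut_{\mathcal R}(k)\to 1$ by hand, using the cocycle model obtained by running the arguments of Lemma~\ref{Lem:Identification of Aut(G Spec k)} and Lemma~\ref{Lem:semilinear Galois cohomology for root datum} in the category of $k$-schemes of based root datum. Fix a separable closure $k_s$, set $\Gamma=\Gal(k_s/k)$ and $E=\Aut R$, fix an isomorphism $\mathcal R_{k_s}\cong R_{k_s}$ with cocycle $\tilde c\colon\Gamma\to E$; since the $\Gamma$-action on $E$ is trivial, $\tilde c$ is a homomorphism, its kernel is $\Gal(k_s/l)$ for the classifying field $l$, and $H:=\tilde c(\Gamma)\cong\Gal(l/k)$ is a subgroup of $E$ of order $g$. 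By the root-datum analogue of the cited lemmas, $\Aut(\mathcal R\to\Spec k)\cong\Aut(R_{k_s}\to\Spec k_s\geq k)^{\Gamma}/\tilde\Gamma$ for the semilinear action $\gamma.(b\Id_\beta)=\tilde c_{\beta^{-1}\gamma\beta}\,b\,\tilde c_\gamma^{-1}\Id_\beta$ with $\tilde\Gamma=\{\tilde c_\gamma\Id_\gamma^{-1}\colon\gamma\in\Gamma\}$.

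Part (1) is immediate: as $R$ is simple and reduced, $E$ embeds in the automorphism group of its Dynkin diagram, which is trivial, $\Z/2\Z$, or $S_3$ (the last only in type $D_4$); by Lemma~\ref{Lem:classification of Dynkin diagrams}, $g=|\Gal(l/k)|$ is the order of a subgroup of $E$, hence of $S_3$, so $g\in\{1,2,3,6\}$, with $g\in\{3,6\}$ forcing $E=S_3$ and type $D_4$. For the kernel: by Galois descent (Remark~\ref{Rem:Root datum and Galois descent}), $\Aut\mathcal R$ is the fixed subgroup of $E$ under $\gamma\cdot b=\tilde c_\gamma b\tilde c_\gamma^{-1}$, i.e.\ $\Aut\mathcal R=C_E(H)$; a one-line case check then gives $C_E(H)=E=\Aut R$ for $g=1$, $C_E(H)=H\cong\Gal(l/k)$ for $g\in\{2,3\}$ (the centraliser of a cyclic subgroup of $E\in\{\Z/2\Z,S_3\}$ of order $2$ or $3$ is that subgroup), and $C_{S_3}(S_3)=Z(S_3)=1$ for $g=6$. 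This is exactly the ``restricts to'' information and the triviality of $\Aut\mathcal R$ in the last case.

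It remains to identify the middle group. First one shows that if $b\Id_\beta$ is $\Gamma$-invariant then $\tilde c_{\beta^{-1}\gamma\beta}=b\tilde c_\gamma b^{-1}$ for all $\gamma$; comparing the kernels of the two resulting homomorphisms $\Gamma\to E$ forces $\beta(l)=l$, so $\beta$ restricts to $\sigma:=\beta|_l\in\Aut(l\geq k)$, and the invariance condition becomes: $b$ lies in the normaliser $N_E(H)$ and acts on $H$ by conjugation through $\mathrm{conj}_{\sigma^{-1}}$ (transported via $\Gal(l/k)\cong H$). If $g=1$ the action is trivial, $\Aut(R_{k_s}\to\Spec k_s\geq k)^{\Gamma}=E\times\Aut(k_s\geq k)$ and $\tilde\Gamma=\{1\}\times\Gamma$, so the quotient is $E\times(\Aut(k_s\geq k)/\Gamma)\cong\Aut R\times\Aut(k)$, proving (2). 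If $g\in\{2,3\}$, then $N_E(H)$ is the holomorph of $H$, i.e.\ $1\to H\to N_E(H)\to\Aut(H)\to 1$ is split exact with $C_E(H)=H$; using that $\{\beta\colon\beta(k)=k,\ \beta(l)=l\}\to\Aut(l\geq k)$ is onto with kernel $\Gal(k_s/l)$, that $\Gal(l/k)$ is abelian, and that $N_E(H)/C_E(H)\cong\Aut(\Gal(l/k))$ is the conjugation quotient matching $\Aut(l\geq k)\to\Aut(\Gal(l/k))$, one parametrises the invariants and checks that ``(class of $b$ in $H$)$\cdot\sigma$'' descends to an isomorphism $\Aut(\mathcal R\to\Spec k)\xrightarrow{\ \sim\ }\Aut(l\geq k)$ carrying $\Aut\mathcal R$ onto $\Gal(l/k)$, giving (3). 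If $g=6$, then $H=E=S_3$ is self-normalising and $N_E(H)\to\Aut(H)=\Inn(S_3)$ is an \emph{isomorphism}, so $b$ is uniquely determined by $\beta$; hence the invariant group maps bijectively onto $P:=\{\beta\colon\beta(k)=k,\ \beta(l)=l\}$, and (because conjugation $\Aut(l\geq k)\to\Aut(\Gal(l/k))$ is a homomorphism) this is a group isomorphism up to the opposite law, carrying $\tilde\Gamma$ onto $\Gamma$; thus $\Aut(\mathcal R\to\Spec k)\cong P/\Gamma\cong\Aut(l\geq k)/\Gal(l/k)$. Since $\Gal(l/k)=S_3$ is complete, the conjugation map exhibits $\Aut(l\geq k)=\Gal(l/k)\times C_{\Aut(l\geq k)}(\Gal(l/k))$, so $\Aut(l\geq k)/\Gal(l/k)\cong C_{\Aut(l\geq k)}(\Gal(l/k))$; the same computation applied to the normaliser of $\Gal(l/l_3)$ (a transposition subgroup) gives $\Aut(l_3\geq k)\cong C_{\Aut(l\geq k)}(\Gal(l/k))$ for any non-normal cubic subextension $l_3$, which proves (4).

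The routine-looking part (the cocycle model, the kernel computation, part (1)) is short; the real work, and the main obstacle, is the bookkeeping in the cases $g\in\{2,3\}$ and $g=6$: producing the correct (non-canonical) isomorphism with $\Aut(l\geq k)$ resp.\ $\Aut(l_3\geq k)$, keeping straight the anti-homomorphism/opposite conventions in the identification $\Aut(R_{k_s}\to\Spec k_s)\cong E\rtimes\Aut(k_s)$, and checking that the isomorphism is compatible with the projection to $\Aut(k)$ and with the subgroup $\Aut\mathcal R$ computed above. The relevant group theory of $S_3$ and its holomorph and the Galois theory of $l/k$ must be dovetailed carefully, but each individual verification is elementary.
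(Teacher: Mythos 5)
Your overall route is the same as the paper's: you invoke the semilinear descent model of Lemma~\ref{Lem:semilinear Galois cohomology for root datum}, identify $\Aut(\mathcal R\to\Spec k)$ with $(E\times\Aut(k_s\geq k))^{\Gamma}/\tilde\Gamma$ for $E=\Aut R$ and $H=\tilde c(\Gamma)\cong\Gal(l/k)$, and your part (1) and kernel computation ($\Aut\mathcal R=C_E(H)$, which gives $\Aut R$, $\Gal(l/k)$, resp.\ $1$) are correct. Your treatment of $g=6$ via completeness of $S_3$ — the decomposition $\Aut(l\geq k)=\Gal(l/k)\times C_{\Aut(l\geq k)}(\Gal(l/k))$ and the identification $\Aut(l_3\geq k)\cong C_{\Aut(l\geq k)}(\Gal(l/k))$ — is a legitimate and rather clean variant of the paper's argument, which instead proves directly that every $\beta\in\Aut(l_3\geq k)$ has a unique extension to $\Aut(l\geq k)$ centralising $\Gal(l/k)$.

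There is, however, one genuine gap, located in the case $g=3$: you assert without justification that $N_E(H)$ is the holomorph of $H$, i.e.\ that $N_E(H)\to\Aut(H)\cong\Z/2\Z$ is onto. This is precisely what surjectivity onto $\Aut(l\geq k)$ requires: if $\sigma\in\Aut(l\geq k)$ inverts the order-$3$ group $\Gal(l/k)$, an invariant lift $b\Id_\beta$ needs an element $b\in\Aut R$ of order $2$ inverting $H$ by conjugation. A priori $\Aut R$ is only a subgroup of $\Aut D_4=S_3$ containing $H\cong\Z/3\Z$, so it could conceivably be cyclic of order $3$, in which case no such $b$ exists and the conclusion of part (3) would be false as stated. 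One must therefore show that a Tits index $^3D_{4,l}$ forces $R$ to be simply connected or adjoint, hence $\Aut R=S_3$; the paper isolates exactly this point as Lemma~\ref{Lem:elements of order 2 in Aut R} (the intermediate quotient $\SO_8$ admits no outer automorphism of order $3$). Beyond that, the steps you defer with ``one parametrises the invariants and checks'' — well-definedness modulo $\tilde\Gamma$, the homomorphism property under the opposite-law conventions, injectivity and surjectivity — constitute essentially the whole body of the paper's proof; they are elementary, as you say, but a complete write-up must carry them out.
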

\begin{proof}
\begin{enumerate}
\item Let $ D $ be the Dynkin diagram associated to $ R $. Since $ R $ is semisimple and reduced, $ \Aut R\leq \Aut D $. Furthermore, since $ R $ is simple $ \Aut D $ is either trivial, $ \Z/2\Z $ or $ S_3 $. It follows that $ g\in \lbrace 1,2,3,6\rbrace $.

\item The case $ g=1 $ means that $ \mathcal{R} $ is a split $ k $-scheme of based root datum. Hence, $ \Aut \mathcal{R}\cong \Aut R $ (because the functor of constant objects is fully faithfull). Furthermore the short exact sequence $ 1\to \Aut \mathcal{R}\to \Aut (\mathcal{R} \to \Spec k)\to \Aut (k)\to 1 $ splits. Also note that $ \Aut (k) $ acts trivially on $ \Aut R $, so that the result follows.

\item Recall that by Lemma~\ref{Lem:semilinear Galois cohomology for root datum}, $ \Aut (\mathcal{R} \to \Spec k)\cong (\Aut R\times \Aut (k_s\geq k))^{\Gamma}/\tilde{\Gamma} $, where $ \Aut R\times \Aut (k_s\geq k) $ is endowed with a semilinear $ \Gamma $-action arising from a choice of cocycle $ \tilde{c}\colon \Gamma \to \Aut R $ defining $ \mathcal{R} $. For $ \beta \in \Aut (l\geq k) $, let $ \tilde{\beta} $ denote an extension of $ \beta $ to an element of $ \Aut (k_s\geq k) $. Let also $ s\in \Aut R $ be an element of order $ 2 $ (note that the only case where the existence of such an element is not clear is in type $ ^3D_4 $, in which case it follows from Lemma~\ref{Lem:elements of order 2 in Aut R}). We define a map
\begin{align*}
\Phi \colon \Aut (l\geq k)&\to (\Aut R \times \Aut (k_s\geq k))^{\Gamma}/\tilde{\Gamma}\\
\beta &\mapsto \begin{cases} \Id_{\tilde{\beta}}^{-1} \text{ if } \beta\gamma \beta^{-1} = \gamma \text{ for all } \gamma \in \Gal (l/k).\\
s\Id_{\tilde{\beta}}^{-1} \text{ if } \beta \gamma \beta^{-1} \neq \gamma \text{ for } \gamma \text{ a generator of } \Gal (l/k).
\end{cases}
\end{align*}

We first check that $ \Phi $ is well-defined. If $ \tilde{\beta} $ and $ \tilde{\beta}' $ are two extensions of $ \beta $, we have to check that $ \Id_{ \tilde{\beta}'}\Id_{\tilde{\beta}}^{-1} $ belongs to $ \tilde{\Gamma} $. But $ (\tilde{\beta}')^{-1}\tilde{\beta} $ acts trivially on $ l $, hence the image of $ (\tilde{\beta}')^{-1}\tilde{\beta} $ under $ \Gamma \to (\Aut R \times \Aut (k_s\geq k))^{\Gamma}\colon \delta \mapsto \tilde{c}_{\delta}\Id_{\delta}^{-1} $ is indeed equal to $ \Id_{(\tilde{\beta}')^{-1}\tilde{\beta}}^{-1} = \Id_{ \tilde{\beta}'}\Id_{\tilde{\beta}}^{-1} $.

We now check that the image of $ \Phi $ is $ \Gamma $-invariant. Let $ \delta \mapsto \bar{\delta} $ denotes the projection $ \Aut (k_s/k)\to \Aut (l/k) $. When $ \beta \gamma \beta^{-1} = \gamma $ for all $ \gamma \in \Gal (l/k) $, $ \delta .\Id_{\tilde{\beta}}^{-1} = \tilde{c}_{\tilde{\beta} \delta \tilde{\beta}^{-1}}\tilde{c}_{\delta}^{-1}\Id_{\tilde{\beta}}^{-1} = \tilde{c}_{\beta \bar{\delta} \beta^{-1}}\tilde{c}_{\bar{\delta}}^{-1}\Id_{\tilde{\beta}}^{-1} = \tilde{c}_{\bar{\delta}}\tilde{c}_{\bar{\delta}}^{-1}\Id_{\tilde{\beta}}^{-1} = \Id_{\tilde{\beta}}^{-1}$ for all $ \delta \in \Gal (k_s/k) $. On the other hand, when $ \beta \gamma \beta^{-1} \neq \gamma $ for $ \gamma $ a generator of $ \Gal (l/k) $, we have $$ \delta .(s\Id_{\tilde{\beta}}^{-1}) = \tilde{c}_{\tilde{\beta} \delta \tilde{\beta}^{-1}}s\tilde{c}_{\delta}^{-1}\Id_{\tilde{\beta}}^{-1} = \tilde{c}_{\beta \bar{\delta} \beta^{-1}}s\tilde{c}_{\bar{\delta}}^{-1}\Id_{\tilde{\beta}}^{-1} = s\tilde{c}_{\bar{\delta}}\tilde{c}_{\bar{\delta}}^{-1}\Id_{\tilde{\beta}}^{-1} = s\Id_{\tilde{\beta}}^{-1}$$ for all $ \delta \in \Gal (k_s/k) $.

It is readily checked that $ \Phi $ is a homomorphism, so it remains to check that $ \Phi $ is bijective. If $ \Phi (\beta) $ is trivial, then $ \Id_{\tilde{\beta}}^{-1} $ or $ s \Id_{\tilde{\beta}}^{-1} $ belongs to $ \tilde{\Gamma} $, i.e.\ there exists $ \delta \in \Gamma $ such that either $  \Id_{\tilde{\beta}}^{-1} $ or $ s \Id_{\tilde{\beta}}^{-1}$ is equal to $ \tilde{c}_{\delta}\Id_{\delta}^{-1} $. This implies that $ \tilde{c}_{\delta} $ is trivial, so that $ \delta $ acts trivially on $ l $, and hence $ \beta $ is trivial. 

Finally, we check that $ \Phi $ is surjective. Let $ b\Id_{\tilde{\beta}} \in (\Aut R \times \Aut (k_s\geq k))^{\Gamma} $. We claim that $ \tilde{\beta} $ preserves $ l $. Indeed, for all $ \delta \in \Gamma $, $ \delta . (b\Id_{\tilde{\beta}}) = \tilde{c}_{\tilde{\beta}^{-1}\delta \tilde{\beta}}b\tilde{c}_{\delta}^{-1}\Id_{\tilde{\beta}} $. Hence $ b\Id_{\tilde{\beta}} $ is $ \Gamma $-invariant if and only if $ b\tilde{c}_{\delta} = \tilde{c}_{\tilde{\beta}^{-1} \delta \tilde{\beta}}b $ for all $ \delta \in \Gamma $. But if $ \tilde{\beta} $ does not preserve $ l $, there exists $ \delta \in \Gamma $ such that $ \tilde{c}_{\delta} = 1 \neq \tilde{c}_{\tilde{\beta}^{-1} \delta \tilde{\beta}}  $, a contradiction. Hence the claim is proved. To conclude, note that if $ b\Id_{\tilde{\beta}} $ is $ \Gamma $-invariant, $ \tilde{\beta} $ preserves $ l $ and hence up to an element in the image of $ \Phi $, we can assume that $ \tilde{\beta} $ acts trivially on $ l $. Hence, since $ b\Id_{\tilde{\beta}} $ is $ \Gamma $-invariant, either $ b $ is trivial, or $ b $ commutes with the image of $ \tilde{c} $, and hence belongs to the image of $ \tilde{c} $. In the first case, $ b\Id_{\tilde{\beta}} = \Id_{\tilde{\beta}} $ is in $ \tilde{\Gamma} $. In the second case, $ b\Id_{\tilde{\beta}} = \tilde{c}_{\gamma}\Id_{\tilde{\gamma}}^{-1}\Id_{\tilde{\gamma}}\Id_{\tilde{\beta}} $ for some $ \gamma \in \Gal (l/k) $. But this is in the image of $\Phi$ because $ \tilde{c}_{\tilde{\gamma}}\Id_{\tilde{\gamma}}^{-1} $ and $ \Id_{\tilde{\beta}} $ are in $ \tilde{\Gamma} $, whilst $ \Id_{\tilde{\gamma}} = \Phi (\gamma^{-1}) $.

For the last statement, note that under the isomorphism $ \Aut (l\geq k)\cong \Aut (\mathcal{R}\to \Spec k) $, the algebraic automorphisms are the one acting trivially on $ k $, i.e.\ we have $ \Aut \mathcal{R}\cong \Aut (l/k) $.
 
\item We begin by proving the following claim.
\begin{claim}\label{Claim:1}
Any automorphism $ \beta \in \Aut (l_3\geq k) $ has a unique extension $ \beta_0 \in \Aut (l\geq k) $ such that for all $ \gamma \in \Gal (l/k) $, $ \beta_0^{-1}\gamma \beta_0 = \gamma $.
\end{claim}

\begin{claimproof}
Let $ \beta \in \Aut (l_3\geq k) $ and let $ \tilde{\beta}\in \Aut (k_s/k) $ be an extension to $ k_s $. Since $ \beta $ preserves $ l_3 $ and since $ l $ is the normal closure of $ l_3 $, $ \tilde{\beta} $ preserves $ l $. Let $ l_3' $ and $ l_3'' $ be the two other degree $ 3 $ extension of $ k $ contained in $ l $. Either $ \tilde{\beta}(l_3') = l_3' $, or $ \tilde{\beta}(l_3') = l_3'' $. In the latter case, replace $ \tilde{\beta} $ by $ \tilde{\beta}\gamma_0 $, where $ \gamma_0 \in \Gal(l/k) $ acts trivially on $ l_3 $ and exchanges $ l_3' $ and $ l_3'' $. Hence we can assume that $ \tilde{\beta} $ preserves $ l_3 $, $ l_3' $ and $ l_3'' $. But now the restriction of $ \tilde{\beta} $ to $ l $ has the desired property. For uniqueness, note that if $ \beta_0' $ is another such extension, then $ \beta_0'\beta_0^{-1} $ is an element of $ \Gal (l/k) $ preserving $ l_3 $, $ l_3' $ and $ l_3'' $, hence $ \beta_0'\beta_0^{-1}  =1 $.
\end{claimproof}

\medskip

For $ \beta \in \Aut (l_3\geq k) $, we denote by $ \beta_0 $ the unique extension of $ \beta $ to an element of $ \Aut (l\geq k) $ provided by Claim~\ref{Claim:1}, and by $ \tilde{\beta_0} $ an extension of $ \beta_0 $ to $ \Aut (k_s/k)  $. Now the proof follows the same line as the previous proof of the previous item, and we discuss it more briefly. We define a map
\begin{align*}
\Phi \colon \Aut (l_3\geq k)&\to (\Aut R \times \Aut (k_s\geq k))^{\Gamma}/\tilde{\Gamma}\\
\beta &\mapsto \Id_{\tilde{\beta}_0}^{-1}
\end{align*}

The proof that $ \Phi (\beta) $ does not depend on a lift of $ \beta_0 $ and that $ \Id_{\tilde{\beta}_0}^{-1} $ is $ \Gamma $-invariant follows the same line as in the previous item. Furthermore, $ \Phi $ is clearly a homomorphism.

Assume now that $ \Phi (\beta) $ is trivial. Hence there exists $ \delta \in \Gamma $ such that $ \Id_{\tilde{\beta}_0}^{-1} = \tilde{c}_{\delta}\Id_{\delta}^{-1} $. Hence $ \tilde{c}_{\delta} $ is trivial, which implies that $ \delta $ acts trivially on $ l $, so that $ \beta $ was trivial. Hence $ \Phi $ is injective. Let us now prove surjectivity.  Let $ b\Id_{\tilde{\beta}} \in (\Aut R \times \Aut (k_s\geq k))^{\Gamma} $. Since $ \tilde{c}\colon \Gal (k_s/k)\to \Aut R $ is surjective and since we are working modulo $ \tilde{\Gamma} $, we can assume that $ b=1 $. We claim that $ \tilde{\beta} $ preserves $ l $ and that $ \tilde{\beta}^{-1}\gamma \tilde{\beta} = \gamma $ for all $ \gamma \in \Aut (l/k) $. Indeed, for all $ \delta \in \Gamma $, $ \delta . \Id_{\tilde{\beta}} = \tilde{c}_{\tilde{\beta}^{-1}\delta \tilde{\beta}}\tilde{c}_{\delta}^{-1}\Id_{\tilde{\beta}} $. Hence $ \Id_{\tilde{\beta}} $ is $ \Gamma $-invariant if and only if $ \tilde{c}_{\delta} = \tilde{c}_{\tilde{\beta}^{-1} \delta \tilde{\beta}} $ for all $ \delta \in \Gamma $. But if $ \tilde{\beta} $ does not preserve $ l $, there exists $ \delta \in \Gamma $ such that $ \tilde{c}_{\delta} = 1 \neq \tilde{c}_{\tilde{\beta}^{-1} \delta \tilde{\beta}}  $, a contradiction. The fact that $ \tilde{\beta}^{-1}\gamma \tilde{\beta} = \gamma $ for all $ \gamma \in \Aut (l/k) $ also follows directly, and the claim is proved.

To conclude, note that the claim implies that $ \tilde{\beta} $ preserves $ l_3 $, and hence up to an element in the image of $ \Phi $, we can assume that $ \tilde{\beta} $ acts trivially on $ l $, so that $ \Id_{\tilde{\beta}} $ is trivial modulo $ \tilde{\Gamma} $, as wanted. \qedhere
\end{enumerate}
\end{proof}

In the proof of Lemma~\ref{Lem:descriptionofabstractautomoprhismsofDyn}, we needed the following lemma.
\begin{lemma}\label{Lem:elements of order 2 in Aut R}
Let $ \mathcal{R} $ be a simple based root datum of type $ R $ with Tits index $ ^3D_{4,l} $. Then $ \mathcal{R} $ is simply connected or adjoint, and hence $ \Aut R $ contains an element of order $ 2 $.
\end{lemma}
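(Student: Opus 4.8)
The plan is to reduce the statement to an elementary computation inside $S_3$. First, unwinding the definition of the Tits index: saying that $\mathcal{R}$ has index $^3D_{4,l}$ means that the cocycle $\tilde c\colon \Gal(k_s/k)\to \Aut R$ defining $\mathcal{R}$ has kernel $\Gal(k_s/l)$ with $[l:k]=3$, so its image is a cyclic group of order $3$ sitting inside $\Aut R$. Hence it suffices to prove: \emph{if $R$ is a reduced based root datum of type $D_4$ and $\Aut R$ contains an element of order $3$, then $R$ is adjoint or simply connected.} The final assertion of the lemma is then automatic, since for $R$ of type $D_4$ adjoint or simply connected one has $\Aut R = \Aut D = S_3$ (where $D$ is the Dynkin diagram of type $D_4$), and $S_3$ contains transpositions.

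Next I would set up the lattice picture for type $D_4$. Let $Q\subseteq P$ denote the root and weight lattices, and recall — this is precisely the content of the remark following Lemma~\ref{Lem:classification of Dynkin diagrams} — that $\Aut R$ is the subgroup of $\Aut D = S_3$ consisting of those diagram automorphisms that preserve the character lattice $M$, where $Q\subseteq M\subseteq P$ is the lattice attached to $R$ (the inclusion $\Aut R\subseteq\Aut D$ always holds, and equality can fail only because a diagram automorphism need not preserve an intermediate $M$). The sublattices $M$ with $Q\subseteq M\subseteq P$ are in bijection with subgroups of $P/Q\cong(\Z/2\Z)^2$: the two extreme ones are $M=Q$ (adjoint) and $M=P$ (simply connected), and the three intermediate ones — the lattice of $\SO_8$ and the two half-spin lattices — correspond to the three subgroups of order $2$ in $P/Q$.

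Finally I would compute the $S_3$-action. The triality action of $S_3$ on $P/Q\cong(\Z/2\Z)^2$ is exactly the tautological action of $\GL_2(\mathbf{F}_2)\cong S_3$, so it permutes the three order-$2$ subgroups of $P/Q$ as the full symmetric group on three points. Consequently the subgroup of order $3$ in $S_3$ fixes none of these subgroups, hence fixes none of the three intermediate lattices, and the stabiliser of each intermediate lattice has order $2$. Therefore, if $\Aut R$ contains an element of order $3$, the lattice $M$ cannot be one of the three intermediate lattices, so $M\in\{Q,P\}$, i.e.\ $R$ is adjoint or simply connected, and in that case $\Aut R=S_3$ contains an element of order $2$. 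I do not anticipate a genuine obstacle: the only delicate point is the identification of $\Aut R$ with the stabiliser of $M$ in $\Aut D$ for the non-simply-connected, non-adjoint forms — exactly the subtlety flagged after Lemma~\ref{Lem:classification of Dynkin diagrams} — but this, together with the explicit description of triality on $P/Q$, is a standard fact about $D_4$, and once both are in place the lemma reduces to the two-line group-theoretic computation in $S_3$ above.
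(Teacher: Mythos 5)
Your argument is correct. It has the same overall skeleton as the paper's proof -- rule out the three intermediate (neither adjoint nor simply connected) root data of type $D_4$ by showing they admit no outer automorphism of order $3$, then conclude via $\Aut R=\Aut D_4=S_3$ for the two extreme cases -- but it justifies the key exclusion differently. The paper simply quotes the group-level fact that the split $\SO_8$ (to which all three intermediate quotients of $\Spin_8$ are isomorphic) has no outer automorphism of order $3$, whereas you prove the equivalent lattice-level statement directly: intermediate lattices $Q\subsetneq M\subsetneq P$ correspond to the three order-$2$ subgroups of $P/Q\cong(\Z /2\Z)^2$, and since triality acts on $P/Q$ through the full $\GL_2(\mathbf{F}_2)\cong S_3$, each such subgroup has stabiliser of order $2$, so no order-$3$ diagram automorphism preserves $M$. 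This makes the crucial step self-contained (the only external inputs being the identification of $\Aut R$ with the stabiliser of $M$ in $\Aut D$, the subtlety already flagged after Lemma~\ref{Lem:classification of Dynkin diagrams}, and the standard description of triality on $P/Q$), at the price of a somewhat longer computation; the paper's version is shorter but leans on a quoted property of $\SO_8$. Both correctly use that the index $^3D_{4,l}$ forces an element of order $3$ in $\Aut R$, and both finish identically with $\Aut R=S_3\ni$ an element of order~$2$.
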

\begin{proof}
If $ R $ is neither simply connected nor adjoint, the corresponding split connected reductive group is the split $ \SO_8 $ (there are actually three proper subgroups in the center of the split $ \Spin_8 $, but the corresponding intermediate quotients are all isomorphic). But the split $ \SO_8 $ does not have an outer automorphism of order $ 3 $, contradicting the fact that the Tits index of $ \mathcal{R} $ is $ ^3D_{4,l} $. The last part of the lemma follows from the fact that if $ R $ is simply connected or adjoint, $ \Aut R = \Aut D_4 $ (see \cite{Con14}*{Proposition~1.5.1}) and the fact that $ \Aut D_4 = S_3 $.
\end{proof}

\begin{corollary}\label{Cor:field auto preserving k}
Let $ \mathcal{R} $ be a simple $ k $-scheme of based root datum with classifying field $ l $. If $ \Aut (l/k)\ncong S_3 $, then  $ \Aut_{\mathcal{R}}(k) \cong \lbrace \alpha \in \Aut (k)~\vert~ \text{there exists } \tilde{\alpha }\in\Aut (l) \text{ extending } \alpha \rbrace $. While if $ \Aut (l/k)\cong S_3 $, then $ \Aut_{\mathcal{R}}(k) \cong \lbrace \alpha \in \Aut (k)~\vert~ \text{there exists } \tilde{\alpha }\in\Aut (l_3) \text{ extending } \alpha \rbrace $, where $ l_3 $ is a chosen non-normal cubic subextension of $ l/k $. 
\end{corollary}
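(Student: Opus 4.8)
The plan is to deduce the statement directly from Lemma~\ref{Lem:descriptionofabstractautomoprhismsofDyn}. By definition $\Aut_{\mathcal{R}}(k)$ is the image of the projection $\Aut(\mathcal{R}\to\Spec k)\to\Aut(k)$, so by exactness of $1\to\Aut\mathcal{R}\to\Aut(\mathcal{R}\to\Spec k)\to\Aut_{\mathcal{R}}(k)\to 1$ it is the quotient $\Aut(\mathcal{R}\to\Spec k)/\Aut\mathcal{R}$. Write $^{g}X_{n,l}$ for the Tits index of $\mathcal{R}$. Since $l/k$ is Galois we have $g=|\Aut(l/k)|$, and since $R$ is simple and reduced the classifying field construction embeds $\Aut(l/k)\le\Aut R\le\Aut D$ with $\Aut D$ trivial, of order $2$, or isomorphic to $S_3$ (as recalled in the proofs of Lemma~\ref{Lem:classification of Dynkin diagrams} and Lemma~\ref{Lem:descriptionofabstractautomoprhismsofDyn}(1)). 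Hence $\Aut(l/k)\cong S_3$ if and only if $g=6$, while $\Aut(l/k)\ncong S_3$ if and only if $g\in\{1,2,3\}$; so the dichotomy in the statement matches the one in Lemma~\ref{Lem:descriptionofabstractautomoprhismsofDyn}, and it remains to treat each case.

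Suppose first $\Aut(l/k)\ncong S_3$. If $g=1$ then $l=k$, and by Lemma~\ref{Lem:descriptionofabstractautomoprhismsofDyn}(2) one gets $\Aut_{\mathcal{R}}(k)\cong(\Aut R\times\Aut(k))/(\Aut R\times\{1\})\cong\Aut(k)$, which is exactly the set in the statement since every automorphism of $k=l$ trivially extends to $l$. If $g\in\{2,3\}$, Lemma~\ref{Lem:descriptionofabstractautomoprhismsofDyn}(3) furnishes an isomorphism $\Aut(\mathcal{R}\to\Spec k)\cong\Aut(l\geq k)$ carrying $\Aut\mathcal{R}$ onto $\Aut(l/k)$; the latter is precisely the kernel of the restriction homomorphism $\Aut(l\geq k)\to\Aut(k)$, so $\Aut_{\mathcal{R}}(k)$ gets identified with the image of this restriction, namely $\{\alpha\in\Aut(k)\mid\exists\,\tilde\alpha\in\Aut(l)\text{ with }\tilde\alpha|_k=\alpha\}$. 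For this last identification one checks that the isomorphism of Lemma~\ref{Lem:descriptionofabstractautomoprhismsofDyn}(3) is compatible with the projections to $\Aut(k)$, which is clear from the explicit formula $\Phi(\beta)=\Id_{\tilde\beta}^{-1}$ or $s\,\Id_{\tilde\beta}^{-1}$ (with $s\in\Aut R$ acting trivially on the base field) in its proof: the underlying field automorphism of $\Id_{\tilde\beta}^{-1}$ restricts to $\beta|_k$ up to inversion, and inversion does not change the image of a subgroup of $\Aut(k)$.

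Now suppose $\Aut(l/k)\cong S_3$, i.e. $g=6$. By Lemma~\ref{Lem:descriptionofabstractautomoprhismsofDyn}(4) there is an isomorphism $\Aut(\mathcal{R}\to\Spec k)\cong\Aut(l_3\geq k)$ for any non-normal cubic subextension $l_3$ of $l/k$, and $\Aut\mathcal{R}$ is trivial. Hence $\Aut_{\mathcal{R}}(k)$ is the image in $\Aut(k)$ of the restriction map $\Aut(l_3\geq k)\to\Aut(k)$, whose kernel is $\Aut(l_3/k)$. But $l_3/k$ is a separable extension of degree $3$ which is not normal, so the minimal polynomial of a primitive element of $l_3/k$ has exactly one root in $l_3$ (two roots in $l_3$ would force the third, contradicting non-normality), whence $\Aut(l_3/k)$ is trivial. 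Therefore the restriction map is injective and $\Aut_{\mathcal{R}}(k)\cong\Aut(l_3\geq k)$ is identified with $\{\alpha\in\Aut(k)\mid\exists\,\tilde\alpha\in\Aut(l_3)\text{ with }\tilde\alpha|_k=\alpha\}$, as claimed.

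The only step needing genuine care — and the only place where one must return to the proof of Lemma~\ref{Lem:descriptionofabstractautomoprhismsofDyn} rather than just its statement — is the compatibility of the isomorphisms constructed there with the projections onto $\Aut(k)$, modulo the $f_\alpha\mapsto\alpha^{-1}$ convention; everything else is routine bookkeeping with the displayed short exact sequences.
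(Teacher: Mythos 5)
Your proof is correct and follows essentially the same route as the paper, which likewise deduces the corollary from the surjectivity of $\Aut(\mathcal{R}\to\Spec k)\to\Aut_{\mathcal{R}}(k)$ together with the description of $\Aut(\mathcal{R}\to\Spec k)$ (and of $\Aut\mathcal{R}$ inside it) given in Lemma~\ref{Lem:descriptionofabstractautomoprhismsofDyn}; you merely spell out the case-by-case bookkeeping, including the harmless $f_\alpha\mapsto\alpha^{-1}$ inversion, which the paper leaves implicit.
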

\begin{proof}
This follows from the surjectivity of $ \Aut (\mathcal{R} \to \Spec k)\to \Aut_{\mathcal{R}}(k) $ and from the description of $ \Aut (\mathcal{R} \to \Spec k) $ contained in Lemma~\ref{Lem:descriptionofabstractautomoprhismsofDyn}.
\end{proof}

In view of Corollary~\ref{Cor:field auto preserving k}, it is useful to introduce the following notation.
\begin{definition}
Let $ l\geq k $ be a field extension of $ k $. We denote by $ \Aut_l (k) $ the group of automorphisms of $ k $ which extend to an automorphism of $ l $, i.e.\ $ \Aut_l (k) = \lbrace \alpha \in \Aut (k)~\vert~ \text{there exists } \tilde{\alpha}\in \Aut (l) \text{ extending } \alpha \rbrace $.
\end{definition}

Using the identifications we made in Lemma~\ref{Lem:descriptionofabstractautomoprhismsofDyn} and Corollary~\ref{Cor:field auto preserving k}, we can rewrite in a very explicit form the short exact sequence $ 1\to \Aut \mathcal{R}(G) \to \Aut (\mathcal{R}(G) \to \Spec k)\to \Aut_{\mathcal{R}(G)}(k)\to 1 $.

\begin{proposition}\label{Prop:epplicit SES for Dyn}
Let $ \mathcal{R} $ be a simple $ k $-scheme of based root datum of type $ R $ with Tits index $ ^gX_{n,l} $.
\begin{enumerate}
\item If $ g=1 $, the short exact sequence $ 1\to \Aut \mathcal{R} \to \Aut (\mathcal{R} \to \Spec k)\to \Aut_{\mathcal{R}}(k)\to 1 $ is isomorphic to the short exact sequence $ 1\to \Aut R\to \Aut R\times \Aut (k)\to \Aut (k)\to 1 $. In particular, it always splits.

\item If $ g=2 $ or $ g=3 $, the short exact sequence $ 1\to \Aut \mathcal{R} \to \Aut (\mathcal{R} \to \Spec k)\to \Aut_{\mathcal{R}}(k)\to 1 $ is isomorphic to $ 1\to \Gal (l/k)\to \Aut (l\geq k)\to \Aut_l (k)\to 1 $.

\item If $ g = 6 $, let $ l_3 $ be a $ ( $non normal$ ) $ cubic subextension of $ l/k $. The short exact sequence $ 1\to \Aut \mathcal{R} \to \Aut (\mathcal{R} \to \Spec k)\to \Aut_{\mathcal{R}}(k)\to 1 $ is isomorphic to $ 1\to 1\to \Aut (l_3\geq k)\to \Aut_{l_3} (k)\to 1 $. In particular, it always splits.
\end{enumerate}
\end{proposition}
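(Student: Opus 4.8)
The plan is to build each of the three short exact sequences by combining Lemma~\ref{Lem:descriptionofabstractautomoprhismsofDyn}, which computes the two left-hand terms $\Aut\mathcal{R}$ and $\Aut(\mathcal{R}\to\Spec k)$, with Corollary~\ref{Cor:field auto preserving k}, which computes the right-hand term $\Aut_{\mathcal{R}}(k)$; the only thing left is to check that the group isomorphisms produced by those two statements assemble into a commutative ladder with the given sequences, and then to read off the splitting claims in the cases $g=1$ and $g=6$.

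For $g=1$ the classifying field is $l=k$, so Corollary~\ref{Cor:field auto preserving k} gives $\Aut_{\mathcal{R}}(k)=\Aut_k(k)=\Aut(k)$, while Lemma~\ref{Lem:descriptionofabstractautomoprhismsofDyn}(2) identifies $\Aut(\mathcal{R}\to\Spec k)$ with $\Aut R\times\Aut(k)$ and $\Aut\mathcal{R}$ with $\Aut R$. Unwinding this identification — $\mathcal{R}$ is split, the section $\Aut(k)\to\Aut(\mathcal{R}\to\Spec k)$ is $\beta^{-1}\mapsto\Id_\beta$, and $\Aut(k)$ acts trivially on $\Aut R$ — one sees that $f_{\alpha}\mapsto\alpha^{-1}$ becomes the second projection, so the sequence is isomorphic to $1\to\Aut R\to\Aut R\times\Aut(k)\to\Aut(k)\to1$, which splits via $\alpha\mapsto(\id,\alpha)$. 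For $g\in\{2,3\}$, Lemma~\ref{Lem:descriptionofabstractautomoprhismsofDyn}(3) provides an isomorphism $\Phi\colon\Aut(l\geq k)\to\Aut(\mathcal{R}\to\Spec k)$ carrying $\Gal(l/k)$ onto $\Aut\mathcal{R}$, while Corollary~\ref{Cor:field auto preserving k} (applicable because $\Gal(l/k)$ is cyclic, hence not $S_3$) identifies $\Aut_{\mathcal{R}}(k)$ with $\Aut_l(k)$; it remains to verify that $\Phi$ intertwines the restriction map $\Aut(l\geq k)\to\Aut_l(k)$ with the projection $\Aut(\mathcal{R}\to\Spec k)\to\Aut_{\mathcal{R}}(k)$, which then yields the isomorphism of sequences with $1\to\Gal(l/k)\to\Aut(l\geq k)\to\Aut_l(k)\to1$. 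For $g=6$ one has $\Gal(l/k)\cong S_3$; Lemma~\ref{Lem:descriptionofabstractautomoprhismsofDyn}(4) gives $\Aut(\mathcal{R}\to\Spec k)\cong\Aut(l_3\geq k)$ with $\Aut\mathcal{R}$ trivial, and Corollary~\ref{Cor:field auto preserving k} gives $\Aut_{\mathcal{R}}(k)\cong\Aut_{l_3}(k)$. Since $\Aut\mathcal{R}=1$ corresponds under this identification to $\Aut(l_3/k)$, which is trivial because $l_3/k$ is a non-normal cubic inside an $S_3$-extension, the restriction map $\Aut(l_3\geq k)\to\Aut(k)$ is injective with image $\Aut_{l_3}(k)$, giving the isomorphism of sequences with $1\to1\to\Aut(l_3\geq k)\to\Aut_{l_3}(k)\to1$, which visibly splits.

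The only real work — everything else being a quotation of the two cited results — is the commutativity of these ladders, i.e.\ checking that the chosen isomorphisms respect the projection to the underlying automorphism of $k$. For this one recalls from the proof of Lemma~\ref{Lem:descriptionofabstractautomoprhismsofDyn} that $\Phi$ sends $\beta$ to $\Id_{\tilde\beta}^{-1}$, or to $s\,\Id_{\tilde\beta}^{-1}$ with $s\in\Aut R$, where $\tilde\beta\in\Aut(k_s\geq k)$ extends $\beta$; since $\Id_{\tilde\beta}$ is a semilinear automorphism over $\tilde\beta$ and $s$ is an algebraic automorphism of $R$ (hence sits over $\id$), the element $\Phi(\beta)$ sits over $\tilde\beta^{-1}$, so under $f_{\alpha}\mapsto\alpha^{-1}$ it maps to $\tilde\beta$, whose restriction to $k$ is exactly the image of $\beta$ under the restriction map. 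The two inversions cancel, so no sign bookkeeping survives and the ladders commute. I expect this tracking of the direction of maps to be the main (modest) obstacle; with it in hand, the splitting assertions for $g=1$ and $g=6$ are immediate, a direct-product extension being split and an extension with trivial kernel being (trivially) split.
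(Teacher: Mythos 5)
Your proposal is correct and follows essentially the same route as the paper: the proposition is deduced by combining Lemma~\ref{Lem:descriptionofabstractautomoprhismsofDyn} with Corollary~\ref{Cor:field auto preserving k}, checking that the identifications are compatible with restriction to $k$, and noting for $g=6$ that $\Aut(l_3/k)$ is trivial so $\Aut(l_3\geq k)\cong\Aut_{l_3}(k)$. Your extra bookkeeping of the inversion in $f_{\alpha}\mapsto\alpha^{-1}$ against $\Phi(\beta)=\Id_{\tilde\beta}^{-1}$ is just a more explicit version of the paper's remark that the map $\Aut(l\geq k)\to\Aut_l(k)$ is restriction to $k$.
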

\begin{proof}
This is a direct consequence of Lemma~\ref{Lem:descriptionofabstractautomoprhismsofDyn} and Corollary~\ref{Cor:field auto preserving k}. Note that in each case, the map $ \Aut (l\geq k)\to \Aut_l (k) $ is given by restriction to $ k $. Also note that when $ g=6 $, and since $ l_3 $ is a non normal cubic extension of $ k $, the group $ \Aut (l_3/k) $ is trivial, and $ \Aut (l_3\geq k)\cong \Aut_{l_3}(k) $.
\end{proof}

We end this discussion with examples where the short exact sequence $ 1\to \Aut \mathcal{R} \to \Aut (\mathcal{R} \to \Spec k)\to \Aut_{\mathcal{R}}(k)\to 1 $ does not split.
\begin{definition}\label{Def:strongly rigid field}
The field $ k $ is called \textbf{rigid} if for any finite Galois extension $ k' $ of $ k $ such that $ k' $ is not algebraically closed, every automorphism of $ k' $ fixes $ k $ pointwise.
\end{definition}
\begin{definition}
A \textbf{prime field} is either the field of rational numbers of a finite field of order $ p $ for some prime $ p $.
\end{definition}

Examples of rigid fields include prime fields and $ \mathbf{Q}_p $ (the field of $ p $-adic numbers) for any prime $ p $. Let us give a reference for this latter assertion.
\begin{lemma}\label{Lem:Qp is rigid}
Let $ p $ be a prime number and let $ \mathbf{Q}_p $ be the field of $ p $-adic numbers. Let $ \mathbf{Q}_p\leq k' $ be a finite Galois extension. Then every automorphism of $ k' $ fixes pointwise $ \mathbf{Q}_p $.
\end{lemma}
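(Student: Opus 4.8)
The plan is to reduce the statement to the classical fact that $\mathbf{Q}_p$ has no nontrivial automorphisms, and to a rigidity property of $p$-adic fields coming from valuation theory. First I would recall that $\mathbf{Q}_p$ itself admits only the identity automorphism: any field automorphism of $\mathbf{Q}_p$ fixes $\mathbf{Q}$, and since the $p$-adic valuation can be characterized algebraically (an element $x\in\mathbf{Q}_p^\times$ has $v_p(x)\geq 0$ if and only if $1+px^2$ — or some similar expression — is a square, using Hensel's lemma; more robustly, $\mathbf{Z}_p$ is the integral closure situation and $p$ is characterized up to units as the unique prime whose powers tend to $0$), an automorphism must be continuous, hence is the identity on the dense subfield $\mathbf{Q}$ and therefore on all of $\mathbf{Q}_p$.

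Next I would handle a finite Galois extension $k'/\mathbf{Q}_p$. The key point is that $k'$ is a local field with a unique valuation $w$ extending $v_p$, and this valuation is preserved by every automorphism of $k'$ (not merely those fixing $\mathbf{Q}_p$), because the valuation ring $\mathcal{O}_{k'}$ is intrinsically characterized: for instance as the set of elements $x$ such that the sequence $x, x^2, x^3,\dots$ is bounded, or via the fact that $\mathcal{O}_{k'}$ is the unique maximal compact subring. Hence any $\sigma\in\Aut(k')$ is an isometry for the $p$-adic topology, so $\sigma$ is continuous. In particular $\sigma$ maps the prime subfield $\mathbf{Q}$ to itself identically, and by continuity $\sigma$ restricts to the identity on the closure of $\mathbf{Q}$ inside $k'$, which is exactly $\mathbf{Q}_p$. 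This yields the claim.

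For a clean writeup I would instead simply cite the literature: the assertion that a local field of characteristic zero is rigid over its prime $p$-adic subfield is standard, and I expect the paper to refer to a source such as \cite{FSS98} or a standard text on local fields; the proof above is just the sketch of why that reference applies. The only mildly delicate step is the algebraic (or topological-but-intrinsic) characterization of the valuation ring ensuring that \emph{all} automorphisms — not only $\mathbf{Q}_p$-linear ones — are continuous; once continuity is in hand, density of $\mathbf{Q}$ finishes everything. So the main obstacle, such as it is, is making sure the continuity of arbitrary field automorphisms is justified rather than assumed, and after that the result is immediate.

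\begin{proof}
This is a standard fact; see for instance \cite{FSS98} or any reference on local fields. For the convenience of the reader, we recall the argument. The field $ k' $ is a finite extension of $ \mathbf{Q}_p $, hence a local field of characteristic $ 0 $, and its ring of integers $ \mathcal{O}_{k'} $ admits the following intrinsic description: $ x\in \mathcal{O}_{k'} $ if and only if the sequence $ (x^n)_{n\geq 1} $ is bounded for the $ p $-adic topology on $ k' $ (equivalently, $ \mathcal{O}_{k'} $ is the unique maximal compact subring of $ k' $). In particular, every automorphism $ \sigma $ of $ k' $ preserves $ \mathcal{O}_{k'} $ and its maximal ideal, hence is an isometry for the $ p $-adic absolute value; in particular $ \sigma $ is continuous. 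Now $ \sigma $ fixes the prime field $ \mathbf{Q} $ pointwise, and $ \mathbf{Q}_p $ is the closure of $ \mathbf{Q} $ inside $ k' $, so by continuity $ \sigma $ fixes $ \mathbf{Q}_p $ pointwise.
\end{proof}
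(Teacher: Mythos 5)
Your overall strategy is the same as the paper's: show that every abstract field automorphism of $k'$ is continuous, then conclude by density of $\mathbf{Q}$ in $\mathbf{Q}_p$. The problem is the step you yourself flagged as delicate, and your writeup does not actually close it. You justify that any $\sigma\in\Aut(k')$ preserves $\mathcal{O}_{k'}$ by describing $\mathcal{O}_{k'}$ as ``the set of $x$ with bounded power sequence'' or ``the unique maximal compact subring''. Both of these characterizations are \emph{topological}, and a priori a discontinuous automorphism has no reason to preserve topologically defined subsets: $\sigma$ maps a bounded set to a bounded set, or a compact subring to a compact subring, only if $\sigma$ already interacts well with the topology, which is exactly what you are trying to prove. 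So the ``in particular, every automorphism preserves $\mathcal{O}_{k'}$'' step is circular, and the proof as written has a genuine gap precisely at the continuity claim.

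There are two standard ways to repair it. The paper's route is to invoke the theorem of Schmidt cited as \cite{Sch33}: on a complete field that is not algebraically closed, all complete norms are equivalent; since transporting the given norm along $\sigma$ yields another complete norm, $\sigma$ must be a homeomorphism, and then density of $\mathbf{Q}$ finishes the argument exactly as you say. Alternatively, you could make your ``intrinsic characterization'' genuinely \emph{algebraic} rather than topological, e.g.\ recover $1+\mathfrak{m}$ (hence $\mathcal{O}_{k'}$) from the subgroups of $n$-th powers of $k'^{\times}$ for $n$ prime to $p$, or use a first-order formula in the language of fields defining the valuation ring; such characterizations are automorphism-invariant by construction. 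Your opening remark about $\mathbf{Q}_p$ itself (via ``$1+px^2$ is a square'') is in this correct algebraic spirit, but the argument you actually give for the finite extension $k'$ does not carry it out, so as it stands the proof does not go through.
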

\begin{proof}
The field $ k' $ is complete and non algebraically closed. Hence by \cite{Sch33}, all complete norms on $ k' $ are equivalent. Hence an automorphism of $ k' $ has to preserve the norm, which is to say that it has to be continuous. But since any automorphism acts trivially on $ \mathbf{Q} $, by continuity it also has to act trivially on $ \mathbf{Q}_p $.
\end{proof}

\begin{corollary}\label{Cor:very explicit non-splitting}
Assume that $ k $ is a finite $ ( $respectively possibly infinite$ ) $ Galois extension of a rigid $ ( $respectively prime$ ) $ field $ k_0 $. Let $ G $ be a connected reductive $ k $-group which is quasi-split and absolutely simple. Assume that $ \mathcal{R}(G) $ has Tits index $ ^{g}X_{n,l} $, with $ g = 2 $ or $ g = 3 $. Further assume that $ l $ is a Galois extension of $ k_0 $. Then $ \Aut_G(k) = \Aut (k) $ and the short exact sequence $ 1\to \Aut G\to \Aut (G \to \Spec k)\to \Aut_G(k)\to 1 $ splits if and only if $ 1\to \Gal (l/k)\to \Gal (l/k_0)\to \Gal (k/k_0)\to 1 $ splits. 
\end{corollary}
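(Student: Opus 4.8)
The plan is to reduce the statement, via the bowtie theorem (Theorem~\ref{Thm: MainThm2}) and the explicit description of the short exact sequence attached to a scheme of based root datum (Proposition~\ref{Prop:epplicit SES for Dyn}), to a purely field-theoretic statement about the tower $k_0\le k\le l$, where $l$ is the classifying field appearing in the Tits index $^{g}X_{n,l}$.

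First I would record the structural inputs. Since $G$ is absolutely simple, $\mathcal{R}(G)$ is a simple $k$-scheme of based root datum; since $G$ is quasi-split, Theorem~\ref{Thm: MainThm1} gives $\Aut_G(k)=\Aut_{\mathcal{R}(G)}(k)$, and the bowtie theorem says that $1\to \Aut G\to \Aut(G\to \Spec k)\to \Aut_G(k)\to 1$ splits if and only if $1\to \Aut\mathcal{R}(G)\to \Aut(\mathcal{R}(G)\to \Spec k)\to \Aut_{\mathcal{R}(G)}(k)\to 1$ does. As $g\in\{2,3\}$, Proposition~\ref{Prop:epplicit SES for Dyn}(2) identifies this last sequence, as a short exact sequence, with $1\to \Gal(l/k)\to \Aut(l\ge k)\to \Aut_l(k)\to 1$, the right-hand map being restriction to $k$.

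The heart of the argument is to show that, under the hypotheses on $k_0$, this sequence is canonically isomorphic to $1\to \Gal(l/k)\to \Gal(l/k_0)\to \Gal(k/k_0)\to 1$. I would first establish $\Aut(l)=\Gal(l/k_0)$ and $\Aut(k)=\Gal(k/k_0)$. In the prime-field case this is immediate, since every field automorphism fixes the prime subfield $k_0$ pointwise and $l/k_0$, $k/k_0$ are assumed Galois. In the rigid case $l/k_0$ and $k/k_0$ are finite Galois, and one checks that $k$, hence $l$, is not algebraically closed, so that Definition~\ref{Def:strongly rigid field} applies and yields the two equalities. Next, since $k/k_0$ is normal, $\Gal(l/k)$ is normal in $\Gal(l/k_0)$, so every element of $\Gal(l/k_0)=\Aut(l)$ stabilises $k=l^{\Gal(l/k)}$; hence $\Aut(l\ge k)=\Aut(l)=\Gal(l/k_0)$, the restriction map to $\Aut(k)=\Gal(k/k_0)$ is the usual surjection with kernel $\Gal(l/k)$, and in particular $\Aut_l(k)=\Aut(k)$. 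Substituting these identifications into the sequence of Proposition~\ref{Prop:epplicit SES for Dyn}(2), and noting that its projections to $\Aut(k)$ are compatible with the restriction maps, yields the claimed isomorphism of short exact sequences.

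It then remains only to assemble the conclusions: from $\Aut_G(k)=\Aut_{\mathcal{R}(G)}(k)=\Aut_l(k)=\Aut(k)$ we obtain the first assertion, and combining the bowtie equivalence with the isomorphism of short exact sequences just established, the sequence $1\to \Aut G\to \Aut(G\to \Spec k)\to \Aut_G(k)\to 1$ splits if and only if $1\to \Gal(l/k)\to \Gal(l/k_0)\to \Gal(k/k_0)\to 1$ splits. The one genuinely delicate point is ruling out that $k$ or $l$ is algebraically closed in the rigid case so that Definition~\ref{Def:strongly rigid field} can be invoked; here $k$ is not even separably closed, since $l\supsetneq k$ because $g\ge 2$, and $l$ algebraically closed with $[l:k]\le 3$ would force $[l:k]=2$ and $k$ real closed by Artin--Schreier, which cannot occur for a finite Galois extension of a rigid field. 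Everything else is a chain of results already established above together with elementary Galois theory.
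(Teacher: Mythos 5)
Your overall route is the one the paper takes: reduce, via Theorem~\ref{Thm: MainThm1}, the bowtie theorem (Theorem~\ref{Thm: MainThm2}) and Proposition~\ref{Prop:epplicit SES for Dyn}, to the sequence $1\to\Gal(l/k)\to\Aut(l\geq k)\to\Aut_l(k)\to 1$, and then identify it with $1\to\Gal(l/k)\to\Gal(l/k_0)\to\Gal(k/k_0)\to 1$ using rigidity (respectively primality) of $k_0$ together with normality of $k/k_0$. However, one step of your identification is wrong. You insist on the stronger equality $\Aut(l)=\Gal(l/k_0)$, which forces you to exclude the possibility that $l$ is algebraically closed in the rigid case, and your exclusion argument fails: it is not true that a real closed $k$ cannot be a finite Galois extension of a rigid field. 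A real closed field with trivial automorphism group --- for instance $\mathbf{R}$, or the field of real algebraic numbers --- is rigid in the sense of Definition~\ref{Def:strongly rigid field}, since its only finite Galois extensions are itself and its algebraic closure, the latter being excluded by the definition. Taking $k=k_0=\mathbf{R}$, $l=\mathbf{C}$ and $G$ quasi-split of type ${}^2A_{n-1}$ relative to $\mathbf{C}/\mathbf{R}$ satisfies every hypothesis of the corollary, yet $\Aut(\mathbf{C})$ is enormous and certainly not equal to $\Gal(\mathbf{C}/\mathbf{R})$; so in this admissible case both of your intermediate claims $\Aut(l)=\Gal(l/k_0)$ and $\Aut(l\geq k)=\Aut(l)$ are false, even though the conclusion of the corollary still holds there.

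The repair is exactly the paper's argument, which never applies rigidity to $l$ and only needs $\Aut(l\geq k)=\Gal(l/k_0)$: any $\beta\in\Aut(l\geq k)$ restricts to an automorphism of $k$, and rigidity applied to the extension $k/k_0$ (legitimate, since $k$ is not separably closed because $g\geq 2$, hence not algebraically closed) shows that $\beta|_k$, and therefore $\beta$, fixes $k_0$ pointwise; as $l/k_0$ is Galois this puts $\beta$ in $\Gal(l/k_0)$, while the reverse inclusion $\Gal(l/k_0)\subseteq\Aut(l\geq k)$ follows from normality of $k/k_0$. Surjectivity of the restriction $\Gal(l/k_0)\to\Gal(k/k_0)$ (again because $l/k_0$ is Galois) then gives $\Aut_l(k)=\Gal(k/k_0)=\Aut(k)$, and the rest of your assembly goes through unchanged; nothing about $l$ being non-algebraically-closed is ever needed. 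Apart from this one step, your proof coincides with the paper's.
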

\begin{proof}
In view of Theorem~\ref{Thm: MainThm2} and Proposition~\ref{Prop:epplicit SES for Dyn}, the short exact sequence $ 1\! \to \! \Aut G\to \Aut (G \to \Spec k)\to \Aut_G(k)\to 1 $ splits if and only if the short exact sequence $ 1\to \Gal (l/k)\to \Aut (l\geq k)\to \Aut_l (k)\to 1 $ splits. Since $ k_0 $ is rigid (or even prime if $ k $ is an infinite Galois extension) and $ k $ is a normal extension, $ \Aut (l\geq k) = \Gal (l/k_0) $. Furthermore,  $ \Aut (k) = \Gal (k/k_0) $, and since $ l/k_0 $ is Galois, every element of $ \Gal (k/k_0) $ extends to $ \Gal (l/k_0) $. Hence $ \Aut_l (k) = \Gal (k/k_0) $, as wanted.
\end{proof}

\begin{remark}\label{Rem:an explicit description of quasi-split PGU}
Corollary~\ref{Cor:very explicit non-splitting} directly implies the corollary stated at the beginning of the 
introduction of this paper. Indeed, $ \mathbf{Q} $ is a prime field and $ \mathbf{Q}_p $ is rigid by Lemma~\ref{Lem:Qp is rigid}. Furthermore, $ \Aut_{\text{abstract}}(G(k)) = \Aut (G\to \Spec k) $ by the Borel--Tits theorem that we stated at the very beginning of the introduction. For the ease of non-expert readers, let us also give an explicit realisation of the quasi-split, absolutely simple, adjoint algebraic $ k $-group of type $ ^2A_{n-1} $ with corresponding quadratic separable extension $ l $: denote the Galois conjugation on $ l $ by $ x\mapsto \bar{x} $, and for $ g\in \PGL_n(l) $, set $ (^{\text{at}}\bar{g})_{ij} = \bar{g}_{n+1-j;n+1-i} $ (i.e.\ the anti-transposed conjugated matrix). We define $ \PGU_n(k) = \lbrace g\in \PGL_n(l)~\vert~^{\text{at}}\bar{g}g = 1\rbrace $. This is easily interpreted as the $ k $-rational points of an algebraic $ k $-group, and one readily sees that this algebraic $ k $-group is the quasi-split, absolutely simple, adjoint algebraic $ k $-group of type $ ^2A_{n-1} $ with corresponding quadratic separable extension $ l $ (because the corresponding cocycle is $ g\mapsto \,^{\text{at}}g^{-1} $, which is an outer automorphism of $ \PGL_n $ preserving its Borel subgroup consisting of upper triangular matrices).
\end{remark}

\section{The \texorpdfstring{$\textbf{SL}_n(D)$}{SLn(D)} case over a local field}\label{Sec:SL_n(D)}
\subsection[Outer automorphisms of CSA over local fields]{Outer automorphisms of finite dimensional central simple algebras over local fields}
We now explore the same question for algebraic groups of the form $ \SL_n(D) $. First, we need to be a bit more precise and make a distinction between the algebraic $ k $-group and its group of $ k $-rational points.
\begin{definition}\label{Def:algebraic SL_n(D)}
Let $ A $ be a finite dimensional central simple $ k $-algebra. Following the notation of \cite{KMRT98}, we denote the corresponding algebraic $ k $-group of ``reduced norm $ 1 $ elements'' by $ \textbf{SL}_1(A) $. The $ k $-rational points of $ \textbf{SL}_1(A) $ are the elements of $ A $ of reduced norm $ 1 $, and we denote this group by $ \SL_1(A) $. When $ A = M_n(A') $ for some finite dimensional central simple $ k $-algebra $ A' $, we also denote $ \textbf{SL}_1(A) $ (respectively $ \SL_1(A) $) by $ \textbf{SL}_n(A') $ (respectively $ \SL_n(A') $). 
\end{definition}
\begin{remark}
Note that for $ A $ a finite dimensional central simple $ k $-algebra and $ \alpha \in \Aut (k) $, $ ^{\alpha}\textbf{SL}_n(A) $ is naturally isomorphic (as an algebraic $ k $-group) to $\textbf{SL}_n(\,^{\alpha}A)  $. Hence by (a slightly enhanced version of) \cite{KMRT98}*{Remark~26.11}, $ \alpha \in \Aut_{\textbf{SL}_n(A)}(k) $ if and only if $ A\cong \,^{\alpha}A $ or $ A^{opp}\cong \,^{\alpha}A $ (as $ k $-algebras).
\end{remark}

We will restrict ourselves to working over a local field. For us, a \textbf{local field} is a non-archimedean non-discrete topological field which is locally compact (or equivalently, a field isomorphic to $ \mathbf{F}_{p^n}(\!(T)\!) $ or a finite extension of $ \mathbf{Q}_p $ for some prime number $ p $). For the rest of the paper, the letter $ K $ exclusively stands for a local field. Let us begin by recalling the classification of central simple algebras over local fields.

\begin{definition}\label{Def:the cyclic algebra A_(l/k, sigma , a)}
Let $ k $ be a field and let $ l/k $ be a finite cyclic extension of degree $d$. Let $ \sigma \in \Gal (l/k) $ be a generator of the cyclic group $\Gal (l/k)$, let $a\in k$ and let $ u $ be an abstract symbol. The cyclic algebra $ A(l/k,\sigma ,a, u) $ is defined as follows: as a $ k $-vector space, $ A(l/k,\sigma ,a, u)\cong \bigoplus \limits_{i=0}^{d-1}u^il $, and the multiplication is defined by using the relations $ u^{d} = a $ and $ u^{-1}xu=\sigma (x) $ for all $ x\in l $. We also denote it $ A(l/k,\sigma ,a) $.
\end{definition}
We recall that the algebra $ A(l/k,\sigma ,a, u) $ of Definition~\ref{Def:the cyclic algebra A_(l/k, sigma , a)} is always central simple over $k$, and that it is isomorphic to the $ k $-algebra $ M_n(k) $ if and only if $a$ is the norm of an element in $l$.

\begin{definition}\label{Def:the CSA A_(d,r)}
Let $ K $ be a local field and let $ d,r \in \mathbf{N} $ with $ d\geq 1 $. Let $ K_d $ be the unramified extension of $ K $ of degree $ d $, let $ \sigma \in \Gal (K_d/K) $ be the Frobenius automorphism (i.e.\ the automorphism inducing the Frobenius automorphism on $ \Gal (\overline{K_d}/\overline{K}) $), and let $ \pi $ be a uniformiser of $K$. We define $ A(d,r) $ to be the cyclic algebra $ A(K_d/K,\sigma ,\pi^{r}) $.
\end{definition}

Note that up to isomorphism, $ A(d,r) $ does not depend on the choice of $ \pi $. In fact, given two uniformisers $ \pi $ and $ \tilde{\pi} $, an explicit isomorphism $ (K_d/K,\sigma ,\pi^{r})\cong (K_d/K,\sigma ,\tilde{\pi}^{r}) $ having the same form as the one appearing in Lemma~\ref{Lem:automorphisms of A(d,r)} can be given.
\begin{lemma}\label{Lem:automorphisms of A(d,r)}
Let $ K $ be a local field. Let $ A = A(d,r) $ and $ K_d, \sigma , \pi $ be as in Definition~\ref{Def:the CSA A_(d,r)}. Let $ \alpha $ be an automorphism of $ K_d $ such that $ \alpha (K) = K $, and assume that there exists an element $ x $ in $ K_d $ such that $ N_{K_d/K}(x) = \frac{\alpha (\pi^{r})}{\pi^{r}} $. Then the map $ \phi (\alpha ,x) \colon A\to A\colon \sum \limits_{i=0}^{d-1}u^ia_i\mapsto \sum \limits_{i=0}^{d-1}(ux)^i\alpha (a_i) $ is a ring automorphism of $ A $.
\end{lemma}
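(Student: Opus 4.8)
The plan is to verify directly that $\phi(\alpha,x)$ respects the defining relations of the cyclic algebra $A = A(K_d/K, \sigma, \pi^r)$, namely that it carries the element $u$ to an element $v := ux$ satisfying the analogous relations $v^d = \pi^r$ (so that $v$ is a legitimate substitute for $u$) and $v^{-1}yv = \sigma(y)$ for $y$ in the copy of $K_d$ sitting inside $A$ — but now with the field $K_d$ acted on by $\alpha$. Concretely, I would first set $v = ux$ and compute $v^d$. Using the relation $u^{-1}yu = \sigma(y)$ repeatedly, one gets $v^d = (ux)^d = u^d \cdot (\sigma^{d-1}(x)\sigma^{d-2}(x)\cdots\sigma(x)x) = \pi^r \cdot N_{K_d/K}(x)$, since $\sigma$ generates $\Gal(K_d/K)$ and the product of all Galois conjugates of $x$ is precisely the norm $N_{K_d/K}(x)$. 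By hypothesis $N_{K_d/K}(x) = \alpha(\pi^r)/\pi^r$, so $v^d = \alpha(\pi^r)$. This is exactly what is needed, because the target copy of $K_d$ is now equipped with the automorphism $\alpha$, so the ``scalar'' $\pi^r \in K$ should be replaced by $\alpha(\pi^r)$.

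Next I would check the conjugation relation: for $y \in K_d$, we have $v^{-1}\alpha(y)v = x^{-1}u^{-1}\alpha(y)ux = x^{-1}\sigma(\alpha(y))x$. Since $\sigma$ and $\alpha$ both lie in $\Aut(K_d)$ and commute — here one uses that $\alpha$ normalizes $K$ and that the unramified extension $K_d/K$ is canonically determined, so $\alpha$ preserves $K_d$ and commutes with the Frobenius $\sigma$ (this is the one spot where the local field hypothesis and the specific choice of $\sigma$ as Frobenius genuinely enter) — we get $x^{-1}\sigma(\alpha(y))x = x^{-1}\alpha(\sigma(y))x = \alpha(\sigma(y))$, because $\alpha(\sigma(y)) \in K_d$ is central over... no: rather, because $x \in K_d$ is commutative with $\alpha(\sigma(y)) \in K_d$. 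So $v^{-1}\alpha(y)v = \alpha(\sigma(y))$, which is the correct relation for the cyclic algebra $A(K_d/K, \sigma, \alpha(\pi^r), v)$ with $K_d$ carrying the transported structure. One then invokes the universal property (or simply the explicit basis description) of cyclic algebras: since $A = \bigoplus_{i=0}^{d-1} u^i K_d$ and the elements $v^i \alpha(a_i)$ for $a_i \in K_d$ also form a $K$-basis (as $v = ux$ with $x$ a unit, the $v^i$ span the same filtration pieces), the map $\phi(\alpha,x)$ is a well-defined bijective additive map, and the relation checks above show it is multiplicative. It obviously sends $1$ to $1$, hence is a ring automorphism. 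Since $A(d,r) \cong A(K_d/K,\sigma,\alpha(\pi^r))$ as well (because $\alpha(\pi^r)$ differs from $\pi^r$ by a norm), the codomain is indeed $A$ again.

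I expect the main obstacle — really the only subtle point — to be the bookkeeping around the fact that $\alpha$ is a \emph{semilinear} datum: $\phi(\alpha,x)$ is not a $K$-algebra automorphism but a ring automorphism sitting over $\alpha|_K \in \Aut(K)$, and one must be careful that all the identities above are interpreted in the ring $A$ without presuming $K$-linearity. In particular, when writing $v^d = \pi^r N_{K_d/K}(x)$ one is multiplying elements of the subfield $K_d \subseteq A$, which is fine, but the conclusion $v^d = \alpha(\pi^r)$ must be read as: under $\phi(\alpha,x)$, the image of the scalar $u^d = \pi^r$ is $v^d = \alpha(\pi^r)$, consistent with $\phi(\alpha,x)$ restricting to $\alpha$ on $K_d$. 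A clean way to organize this is to note that $\phi(\alpha,x)$ factors as the composite of the $K$-algebra isomorphism $A(K_d/K,\sigma,\pi^r,u) \xrightarrow{\sim} A(K_d/K,\sigma,\pi^r, ux)$ coming from Lemma~\ref{Lem:the cyclic algebra A_(l/k, sigma , a)}-type change of uniformizing element (valid precisely because $N_{K_d/K}(x) = \alpha(\pi^r)/\pi^r$ makes $\pi^r$ and $\alpha(\pi^r)$ differ by a norm), followed by the semilinear map induced by $\alpha$ on $K_d$ and identity on the symbol. Verifying that this composite has the stated closed form $\sum u^i a_i \mapsto \sum (ux)^i \alpha(a_i)$ is then a direct expansion, and everything else is routine.
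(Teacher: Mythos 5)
Your proposal is correct and takes essentially the same route as the paper: both come down to checking that $u\mapsto ux$, $a\mapsto\alpha(a)$ is compatible with the cyclic-algebra presentation, and your computation $(ux)^d=u^d N_{K_d/K}(x)=\alpha(\pi^r)$ is exactly the one the paper performs in the twisted polynomial ring $K_d[u;\sigma]$ before passing to the quotient by $(u^d-\pi^{r})$. The only step you compress is the commutation $\alpha\sigma=\sigma\alpha$, which you rightly flag as the crux; the paper proves it by observing that $\sigma^{-1}\alpha\sigma\alpha^{-1}\in\Gal(K_d/K)$ induces a commutator, hence the identity, on the residue field (automorphisms of local fields being continuous), and that $\Gal(K_d/K)\to\Gal(\overline{K_d}/\overline{K})$ is injective since $K_d/K$ is unramified --- a short argument you should spell out rather than appeal to ``canonicity of the Frobenius''.
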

\begin{proof}
We view $ A $ as a quotient of the twisted polynomial ring $ K_d[u; \sigma ] $ (see \cite{Jac96}*{Section~1.1} for the definition of a twisted polynomial ring) modulo the relation $ u^d = \pi^{r} $. Given an automorphism $ \alpha $ in $ \Aut (K_d) $, we can define a map $ f_{\alpha }\colon K_d[u;\sigma ]\to K_d[u;\sigma ]\colon \begin{cases}
u\mapsto ux\\
a\mapsto \alpha (a) \text{ for all } a \in  K_d
\end{cases} $. By \cite{Jac96}*{Proposition~4.6.20}, $ f_{\alpha} $ is a ring automorphism as soon as $ \alpha \sigma = \sigma \alpha  $. Recall that by assumption, $ \alpha (K) = K $. Hence $ \sigma^{-1}\alpha\sigma\alpha^{-1} $ belongs to $ \Gal (K_d/K) $, and its induced automorphism on the residue field $ \overline{K_d} $ is a commutator in $ \Aut (\overline{K_d}) $, thus trivial (note that since every automorphism of a local field is continuous, it always induces an automorphism of the residue field). We conclude that $ \sigma^{-1}\alpha\sigma\alpha^{-1} $ itself was trivial by \cite{S79}*{Chapter III, \S 5, Theorem 3}. Hence, $ f_{\alpha} $ is indeed a ring automorphism.

Furthermore, if it passes to the quotient, $ f_{\alpha} $ induces the automorphism $ \phi (\alpha ,x) $. Hence it suffices to check that $ f_{\alpha} $ preserves the relation. But we have $f_{\alpha}(u^d-\pi^{r}) = (ux)^d-\alpha (\pi^r) = u^dN_{K_d/K}(x)-\alpha (\pi^r) = (u^d-\pi^r)\frac{\alpha (\pi^r)}{\pi^r} $, as wanted.
\end{proof}

For $ \alpha $ an automorphism of a (non-necessarily commutative) ring $ R $, we denote by $ \tilde{\alpha} $ the corresponding automorphism of $ M_n(R) $ (the algebra of $ n\times n $ matrices with coefficient in $ R $) obtained by applying $ \alpha $ coefficient by coefficient. Also, for $ A $ a finite dimensional central simple algebra over a field $ k $, we denote by $ \Nrd \colon A\to k $ its reduced norm.

\begin{lemma}\label{Lem:Automatic preservation of the reducednorm}
Let $k$ be a field and let $A$ be a central simple $k$-algebra. For every ring automorphism $\alpha$ of $A$ and $x\in A$,
\[
\Nrd(\alpha(x))=\alpha(\Nrd(x)).
\]
\end{lemma}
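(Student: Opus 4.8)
The plan is to reduce the statement to an elementary fact about reduced norms by passing to a splitting field. Recall that for a central simple $k$-algebra $A$ of degree $n$, there is a finite Galois splitting field $L/k$ and an isomorphism $\psi\colon A\otimes_k L \xrightarrow{\sim} M_n(L)$ of $L$-algebras, and the reduced norm is characterised by $\Nrd(a) = \det(\psi(a\otimes 1))$, the latter lying in $k$ and being independent of the choice of $\psi$ and of $L$. So first I would fix such an $L$ and such a $\psi$.

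Next I would handle the automorphism $\alpha$. Since $\alpha\colon A\to A$ is a ring automorphism, its restriction to the center is an automorphism of $k$; call it $\alpha_0$. Choose an automorphism $\tilde{\alpha}_0$ of $L$ extending $\alpha_0$ (possible after enlarging $L$ if necessary, but $L/k$ Galois and $\alpha_0\in\Aut(k)$ may not extend — so more carefully, replace $L$ by a finite Galois extension of $k$ that is $\alpha_0$-stable, or argue as follows: the element $\Nrd(\alpha(x))$ can be computed in \emph{any} splitting field, so I take $L$ large enough that $\tilde{\alpha}_0$ extends). Then $\alpha\otimes\tilde{\alpha}_0\colon A\otimes_k L\to A\otimes_k L$ is a ring automorphism over $\tilde{\alpha}_0$, so transporting it through $\psi$ gives a ring automorphism $\Phi$ of $M_n(L)$ over $\tilde{\alpha}_0$. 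The key structural input is that any such semilinear ring automorphism of $M_n(L)$ is of the form $M\mapsto g\,\tilde{\alpha}_0(M)\,g^{-1}$ for some $g\in\GL_n(L)$ (this follows from Skolem--Noether applied to the $L$-algebra automorphism $\Phi\circ\tilde{\alpha}_0^{-1}$, where $\tilde{\alpha}_0$ here denotes the entrywise action). Then for $x\in A$ I compute
\[
\Nrd(\alpha(x)) = \det\bigl(\psi(\alpha(x)\otimes 1)\bigr) = \det\bigl(g\,\tilde{\alpha}_0(\psi(x\otimes 1))\,g^{-1}\bigr) = \det\bigl(\tilde{\alpha}_0(\psi(x\otimes 1))\bigr) = \tilde{\alpha}_0\bigl(\det\psi(x\otimes 1)\bigr) = \tilde{\alpha}_0(\Nrd(x)),
\]
and since $\Nrd(x)\in k$ this equals $\alpha_0(\Nrd(x)) = \alpha(\Nrd(x))$, using that $\alpha$ restricted to $k$ is $\alpha_0$.

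The main obstacle is the bookkeeping around the base field automorphism: making sure the splitting field $L$ can be chosen $\alpha_0$-stable (or arguing that it does not matter), and tracking that $\psi(\alpha(x)\otimes 1)$ really differs from $\tilde{\alpha}_0(\psi(x\otimes 1))$ only by a conjugation — i.e. the correct application of Skolem--Noether in the semilinear setting. Once that is set up, the determinant computation is immediate since $\det$ commutes with conjugation and with entrywise application of a field automorphism. Everything else is routine, and the invariance of $\Nrd$ under choice of splitting data is a standard fact I would simply cite.
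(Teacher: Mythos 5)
Your argument is correct and is essentially the paper's own proof: split the algebra, extend the automorphism of the center to the splitting field, use Skolem--Noether to write the resulting semilinear automorphism of the matrix algebra as an inner automorphism composed with the entrywise field action, and conclude because $\det$ commutes with conjugation and with entrywise application of a field automorphism. The only difference is that the paper works over the separable closure $k_s$ rather than a finite Galois splitting field $L$, which makes the extension of $\alpha\rvert_k$ automatic and eliminates the bookkeeping you rightly flag about choosing an $\alpha\rvert_k$-stable finite splitting field (your finite-$L$ workaround is not guaranteed to succeed, but passing to $k_s$ settles it, and the reduced norm may be computed over $k_s$ just as well).
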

\begin{proof}
Let $k_s$ be a separable closure of $k$. Every automorphism $\alpha$
of $A$ preserves the center $k$; the restriction $\alpha\rvert_k$
extends to an automorphism $\beta$ of $k_s$, and we may consider the
tensor product
\[
\alpha\otimes\beta\colon A\otimes_kk_s\to A\otimes_kk_s.
\]
Since $k_s$ splits $A$, we may also consider an isomorphism of
$k_s$-algebras $f\colon A\otimes_kk_s\to M_d(k_s)$. The ring
automorphism $f\circ(\alpha\otimes\beta)\circ f^{-1}$ of $M_d(k_s)$
restricts to $\beta$ on the center $k_s$, hence
$f\circ(\alpha\otimes\beta)\circ f^{-1}\circ \tilde\beta^{-1}$ is the
identity on $k_s$. Since every $k_s$-automorphism of $M_d(k_s)$ is
inner, we may find $g\in \GL_d(k_s)$ such that
\[
 f\circ(\alpha\otimes\beta)\circ f^{-1}\circ\tilde\beta^{-1} = \intaut (g).
\]
The following diagram then commutes:
\[
\begin{tikzcd}
A\otimes_kk_s \ar[r, "f"] \ar[d,swap, "\alpha\otimes\beta"] & M_d(k_s)
\ar[r,"\det"] \ar[d,swap, "\intaut (g)\circ\tilde\beta"]
  &
  k_s^\times\ar[d,"\beta"]
  \\
A\otimes_kk_s \ar[r,swap, "f"]&M_d(k_s)\ar[r,swap, "\det"] & k_s^\times
\end{tikzcd}
\]
Since $\Nrd=\det\circ f$, the lemma follows.
\end{proof}

We set some notations that we use for the rest of the paper.
\begin{definition}\label{Def:notation for automorphism of SLn(D)}
Let $ K $ be a local field. Let $ A(d,r) $ and $ K_d, \sigma , \pi $ be as in Definition~\ref{Def:the CSA A_(d,r)}. Let $ \alpha $ be an automorphism of $ K_d $ such that $ \alpha (K) = K $, and assume that there exists an element $ x $ in $ K_d $ such that $ N_{K_d/K}(x) = \frac{\alpha (\pi^{r})}{\pi^{r}} $. The map $ \tilde{\phi}(\alpha ,x)\colon M_n(A)\to M_n(A) $ corresponding to the automorphism $ \phi (\alpha ,x)\colon A\to A $ from Lemma~\ref{Lem:automorphisms of A(d,r)} preserves elements of reduced norm $ 1 $ by Lemma~\ref{Lem:Automatic preservation of the reducednorm}. We again denote its restriction to $ \SL_n(A) $ by $ \tilde{\phi}(\alpha ,x) $.
\end{definition}
\begin{remark}\label{Rem:abstract automorphisms are algebraic}
In the notations of Definition~\ref{Def:notation for automorphism of SLn(D)}, $ \tilde{\phi}(\alpha ,x) $ is an isomorphism of $ k $-algebras $ M_n(A)\cong \,^{\alpha^{-1}}M_n(A) $. Hence, by \cite{KMRT98}*{Theorem~26.9}, $ \tilde{\phi}(\alpha ,x) $ corresponds to a unique $ k $-isomorphism of algebraic groups $\textbf{SL}_n(A)\cong \,^{\alpha^{-1}}\textbf{SL}_n(A) $. More concretely, this can also be seen by using a representation of $ A $ in $ M_{d^2}(K) $ (where $ d $ is the degree of $ A $).
\end{remark}

The following observation explains in part why the local field case is so much simpler than say the global field case (see also the end of Remark~\ref{Rem:litterature on outer automorphism of D}).
\begin{lemma}\label{Lem:extending to unramfiied extensions}
Let $ K $ be a local field and let $ K_d $ be a finite dimensional unramified extension of $ K $. Any automorphism of $ K $ extends to an automorphism of $ K_d $.
\end{lemma}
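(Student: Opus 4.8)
The plan is to reduce everything to the statement that $K_d$ is obtained from $K$ by adjoining a root of unity of order prime to the residue characteristic, together with the fact that all such roots of unity already lie in $K_d$.

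First I would set up notation: let $p$ be the residue characteristic of $K$ and $q$ the cardinality of its residue field, so that the residue field of $K_d$ is $\mathbf{F}_{q^d}$ and $\gcd(q^d-1,p)=1$. Since $\mathbf{F}_{q^d}^{\times}$ is cyclic of order $q^d-1$, Hensel's lemma applied to $X^{q^d-1}-1$ (which is separable modulo the maximal ideal, because $p\nmid q^d-1$) shows two things at once: the polynomial $X^{q^d-1}-1$ splits completely over $K_d$ via Teichmüller lifts, and $K_d$ contains a root of unity $\zeta$ whose reduction generates $\mathbf{F}_{q^d}^{\times}$. A degree count in the residue-field extension — using that $K_d/K$ is unramified, so the degree of $K(\zeta)$ over $K$ equals the degree of its residue field over $\mathbf{F}_q$, which is at least $[\mathbf{F}_q(\bar\zeta):\mathbf{F}_q]=d$ — then forces $K_d=K(\zeta)$.

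With this in hand, let $m(X)\in K[X]$ be the minimal polynomial of $\zeta$ over $K$; since $\zeta$ is a root of $X^{q^d-1}-1$, a polynomial over the prime field of $K$, we get $m\mid X^{q^d-1}-1$ in $K[X]$. Applying $\alpha$ coefficientwise and using that $\alpha$ fixes the prime field pointwise, the twisted polynomial $m^{\alpha}$ still divides $X^{q^d-1}-1$, hence still splits completely over $K_d$; pick a root $\zeta'\in K_d$. The map $K[X]\to K_d$ sending $X\mapsto\zeta'$ and acting by $\alpha$ on coefficients kills $m$ (it sends $m$ to $m^{\alpha}(\zeta')=0$), so it descends to an injective field homomorphism $K_d=K(\zeta)\to K_d$ restricting to $\alpha$ on $K$; since $m^{\alpha}$ is irreducible over $K$ (being the image of the irreducible $m$ under a field automorphism), its image $K(\zeta')$ has degree $d=[K_d:K]$ over $K$ and is therefore all of $K_d$. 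This produces the required automorphism of $K_d$ extending $\alpha$, with no continuity hypothesis on $\alpha$ needed. The only real content, and the step I expect to be the crux, is the identification $K_d=K(\zeta)$ together with the splitting of $X^{q^d-1}-1$ over $K_d$; this is exactly where the structure theory of unramified extensions of local fields enters, and everything else is elementary field theory. (Alternatively, one can extend $\alpha$ to an automorphism $\beta$ of a separable closure $K_s$, observe that $\beta$ permutes the cyclic group $\mu_{q^d-1}(K_s)$ and hence sends $\zeta$ to $\zeta^a$ with $\gcd(a,q^d-1)=1$, and conclude $\beta(K_d)=K(\zeta^a)=K(\zeta)=K_d$.)
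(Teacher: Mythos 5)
Your proof is correct, but it follows a genuinely different route from the paper. The paper's argument extends $\alpha$ to the separable closure and then observes that, writing $K_d\cong K[X]/(f)$, the image of $K_d$ is $K[X]/(\,^{\alpha}f)$; it then invokes the continuity of every automorphism of a local field (so that $^{\alpha}f$ again has coefficients of valuation $0$ and hence defines an unramified extension) together with the uniqueness of the unramified extension of degree $d$ inside the separable closure to conclude that $K_d$ is preserved. You instead identify $K_d$ intrinsically as the cyclotomic extension $K(\zeta)$ with $\zeta$ a primitive $(q^d-1)$-th root of unity, obtained from Hensel's lemma applied to $X^{q^d-1}-1$, and then extend $\alpha$ by pure field theory: the minimal polynomial $m$ of $\zeta$ divides $X^{q^d-1}-1$, whose coefficients lie in the prime field and are therefore fixed by $\alpha$, so $m^{\alpha}$ again splits in $K_d$ and sending $\zeta$ to a root of $m^{\alpha}$ (with $\alpha$ on coefficients) yields the desired automorphism; the degree count via irreducibility of $m^{\alpha}$ correctly shows surjectivity. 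What your approach buys is that no appeal to continuity of automorphisms of local fields (nor to extending $\alpha$ to $k_s$) is needed, which is a genuine simplification of the input; what the paper's approach buys is brevity, since continuity and the uniqueness of unramified extensions are standard facts the paper uses elsewhere anyway. Your parenthetical alternative at the end (extend to $K_s$ and note that $\beta$ permutes $\mu_{q^d-1}$, hence fixes $K(\zeta)=K_d$ setwise) is essentially a cyclotomic variant of the paper's argument and is also valid.
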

\begin{proof}
Let $ \alpha \in \Aut (K) $. There exists an extension $ \beta $ of $ \alpha $ to the separable closure of $ K $. Note that if $ K_d \cong K[X]/(f) $, then $ \beta (K_d)\cong K[X]/(\,^{\alpha}f) $, where $ ^{\alpha}f  $ is the polynomial obtained from $ f $ by applying $ \alpha $ to its coefficients. But $ \alpha $ is continuous, and an extension is unramified if and only if it is isomorphic to $ K[X]/(g) $ for some polynomial $ g $ whose coefficients are all of valuation $ 0 $. Hence by uniqueness of unramified extensions of a given degree, $ \beta $ preserves $ K_d $.
\end{proof}

\begin{corollary}\label{Cor:etxending auto to D}
Let $ A $ be a finite dimensional central simple algebra over a local field $ K $. Every automorphism of $ K $ extends to an automorphism of $ A $. Hence, $ \Aut_{\textbf{SL}_n(A)}(K)= \Aut (K) $.
\end{corollary}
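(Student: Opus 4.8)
The plan is to reduce the statement to the cyclic case and then invoke the explicit automorphisms constructed above. First I would recall the classification of central simple algebras over a local field $K$: every such $A$ is a matrix algebra over a division algebra, and every division algebra over $K$ is cyclic of the form $A(d,r)$ for suitable $d$ and $r$ (with $A(d,r)$ as in Definition~\ref{Def:the CSA A_(d,r)}). Writing $A \cong M_n(A(d,r))$, it suffices to extend an automorphism of $K$ to a ring automorphism of $A(d,r)$, since applying it coefficientwise then extends it to $M_n(A(d,r)) = A$.

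So let $\alpha \in \Aut(K)$. I would invoke Lemma~\ref{Lem:extending to unramfiied extensions} to extend $\alpha$ to an automorphism — still called $\alpha$ — of the unramified extension $K_d/K$; note $\alpha(K) = K$ by construction. The remaining issue is that Lemma~\ref{Lem:automorphisms of A(d,r)} requires an element $x \in K_d$ with $N_{K_d/K}(x) = \alpha(\pi^r)/\pi^r$. Since $\alpha$ is continuous (every automorphism of a local field is continuous, being norm-preserving as in the proof of Lemma~\ref{Lem:Qp is rigid}), $\alpha(\pi)$ is again a uniformiser of $K$, so $\alpha(\pi^r)/\pi^r$ is a unit of $K$ of valuation $0$. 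Now the key point is that for the unramified extension $K_d/K$, the norm map $N_{K_d/K}$ is surjective onto the units $\mathcal{O}_K^\times$ (this is standard local class field theory: the norm group of the unramified extension of degree $d$ is $\pi^{d\mathbf{Z}} \cdot \mathcal{O}_K^\times$). Hence the required $x$ exists, and Lemma~\ref{Lem:automorphisms of A(d,r)} produces a ring automorphism $\phi(\alpha,x)$ of $A(d,r)$ lifting $\alpha$.

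Having extended $\alpha$ to $A(d,r)$, I apply it coefficientwise to obtain a ring automorphism of $A = M_n(A(d,r))$, which by Lemma~\ref{Lem:Automatic preservation of the reducednorm} preserves reduced norm $1$ elements and hence, via Remark~\ref{Rem:abstract automorphisms are algebraic}, yields a semilinear automorphism of the algebraic group $\textbf{SL}_1(A) = \textbf{SL}_n(A(d,r))$ over $\alpha$. This shows $\alpha \in \Aut_{\textbf{SL}_n(A)}(K)$, and since $\alpha$ was arbitrary we get $\Aut_{\textbf{SL}_n(A)}(K) = \Aut(K)$ (the reverse inclusion being trivial).

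I expect the main obstacle to be purely bookkeeping: matching up the possibly non-trivial action of $\alpha$ on $K_d$ (it need not commute with $\sigma$ a priori, only up to an element of $\Gal(K_d/K)$ whose residual action is trivial — this is exactly what the proof of Lemma~\ref{Lem:automorphisms of A(d,r)} handles) with the norm surjectivity input, and making sure the chosen extension of $\alpha$ to $K_d$ is the one for which the hypotheses of Lemma~\ref{Lem:automorphisms of A(d,r)} are literally satisfied. No genuinely hard mathematics is involved; the surjectivity of the norm for unramified extensions and the continuity of automorphisms of local fields are the only external facts needed, and both are classical.
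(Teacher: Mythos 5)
Your proof is correct and takes essentially the same route as the paper: extend $\alpha$ to the unramified field $K_d$ via Lemma~\ref{Lem:extending to unramfiied extensions}, use valuation-preservation of field automorphisms together with surjectivity of $N_{K_d/K}$ on $\mathcal{O}_K^{\times}$ to satisfy the hypothesis of Lemma~\ref{Lem:automorphisms of A(d,r)}, and then pass to $\textbf{SL}_n(A)$ via Lemma~\ref{Lem:Automatic preservation of the reducednorm} and Remark~\ref{Rem:abstract automorphisms are algebraic}. The only cosmetic difference is your Wedderburn reduction to a division algebra, which the paper avoids because Theorem~\ref{Thm:classificatio nof CSA over local fields} already presents every central simple algebra over $K$ as a cyclic algebra $A(d,r)$, so Lemma~\ref{Lem:automorphisms of A(d,r)} applies to $A$ directly.
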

\begin{proof}
By Theorem~\ref{Thm:classificatio nof CSA over local fields}, the central simple algebra $ A $ is an algebra of the form $ A(d,r) $, i.e.\ a cyclic algebra of the form $ (K_d/K, \sigma ,\pi^{r}) $ with $ K_d, \sigma , \pi $ as in Definition~\ref{Def:the CSA A_(d,r)}.

Let $ \alpha \in \Aut (K) $. By Lemma~\ref{Lem:extending to unramfiied extensions}, there exists $ \beta \in \Aut (K_d) $ extending $ \alpha $. Also, by \cite{S79}*{Chapter V,\S 2, Corollary}, $ N_{K_d/K} $ is surjective on $ \mathcal{O}_{K}^{\times} $. Furthermore, any automorphism of a local field preserves the valuation. Hence there exists $ x\in K_d $ such that $ N_{K_d/K}(x) = \frac{\alpha (\pi^{r})}{\pi^{r}} $. Then the automorphism $ \phi (\beta ,x) $ defined in Lemma~\ref{Lem:automorphisms of A(d,r)} is an extension of $ \alpha $ to $ A $. Finally, $ \tilde{\phi}(\beta ,x) $ from Definition~\ref{Def:notation for automorphism of SLn(D)} is defined over $ \alpha^{-1} $, so that the last claim follows from Remark~\ref{Rem:abstract automorphisms are algebraic}.
\end{proof}

\begin{remark}\label{Rem:litterature on outer automorphism of D}
If $ \alpha \in \Aut (K) $ is of finite order, the result in Corollary~\ref{Cor:etxending auto to D} asserting that $ \alpha $ extends to an automorphism of $ A $ is an old result. Indeed, using Lemma~\ref{Lem:Base change of CSA}, it is a direct consequence of \cite{EML48}*{Corollary~7.3} (see also \cite{Han07}*{Theorem~5.6}) and the fact that $ A = M_n(D) $ for some division algebra $ D $. This  already settles the question in characteristic $ 0 $. In positive characteristic, Lemma~\ref{Cor:etxending auto to D} can be seen as a direct corollary of the results in \cite{Ha07}. Note that the fact that any extension of $ \alpha \in \Aut (K) $ to the separable closure of $ K $ preserves $ K_d $ simplifies matters (compare with Lemma~\ref{Lem:Hanke result part 2} when the extension $ \beta $ does not preserve the chosen maximal subfield $ l $).
\end{remark}

\subsection[Condition for the exact sequence not to split]{Sufficient condition for the exact sequence not to split}
We turn to the splitting question for the exact sequence $ 1\to \Aut G\to \Aut (G\to \Spec k)\to \Aut_G(k)\to 1 $, still assuming $ G = \textbf{SL}_n(A) $ over a local field. Let us introduce another notation for a subgroup of the group of semilinear automorphisms, which allow us to introduce a ``ground field''.
\begin{definition}
Let $ G $ be a $ k $-group scheme. Let $ k' $ be a subfield of $ k $. We denote by $ \Aut (G\to \Spec k/k') $ the subgroup of $ \Aut (G\to \Spec k) $ consisting of semilinear automorphisms over an automorphism $ \alpha $ belonging to $ \Aut (k/k') $. Furthermore, we denote by $ \Aut_G(k/k') $ the image of $ \Aut (G\to \Spec k/k') $ under the map $ \Aut (G\to \Spec k)\to \Aut_G(k) $.
\end{definition}

\begin{theorem}\label{Thm:local non-splitting for SLn(D)}
Let $ D $ be a central division algebra of degree $ d $ over a local field $ K $ and let $ G = \textbf{SL}_n(D) $. Let $ K' $ be a subfield of $ K $ such that $ K/K' $ is a finite Galois extension. Then the short exact sequence $ 1\to \Aut G\to \Aut (G\to \Spec K/K')\to \Aut_G(K/K')\to 1 $ splits if and only if $ \gcd (nd,[K:\nolinebreak K']) $ divides $ n $.
\end{theorem}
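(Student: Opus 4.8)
The plan is to relate the splitting of the sequence $1\to \Aut G\to \Aut(G\to\Spec K/K')\to \Aut_G(K/K')\to 1$ to a numerical condition coming from the classification of central simple algebras over local fields (Theorem~\ref{Thm:classificatio nof CSA over local fields}), using the fact (Corollary~\ref{Cor:etxending auto to D}) that every automorphism of $K$, and hence every element of $\Gal(K/K')$, extends to an automorphism of $D$. Concretely, write $G=\textbf{SL}_n(D)$ with $D=A(d_0,r)$ for suitable $d_0,r$; then $\Aut G$ decomposes as $(\Ad G)(K)\rtimes\Out G$, and since $D$ is a division algebra one has $\Out G=\Z/2\Z$ (coming from $x\mapsto {}^{\text{at}}x^{-1}$ on the split form of type $A_{nd-1}$) unless $nd\le 2$. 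A splitting of the sequence over $K'$ amounts to producing, for each $\gamma\in\Gal(K/K')$, a semilinear automorphism $f_\gamma$ lifting $\gamma$ such that $\gamma\mapsto f_\gamma$ is a homomorphism. Using Remark~\ref{Rem:abstract automorphisms are algebraic} and Lemma~\ref{Lem:automorphisms of A(d,r)}, the natural candidates are the maps $\tilde\phi(\beta,x)$ for $\beta\in\Aut(K_{d_0}\geq K')$ extending $\gamma$ and $x\in K_{d_0}^\times$ with $N_{K_{d_0}/K}(x)=\alpha(\pi^r)/\pi^r$; the obstruction to assembling these into a homomorphism will be measured by a Brauer-class computation.

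\medskip
Here is the sequence of steps I would carry out. \emph{Step 1:} Reduce to understanding lifts of the Galois group $\Gal(K/K')$ into $\Aut(G\to\Spec K/K')$ modulo $(\Ad G)(K)$, i.e.\ into the group of ``semilinear outer automorphisms''; by the adjoint--outer decomposition the splitting of the full sequence is equivalent to the existence of a section of $1\to\Out G\to (\Aut(G\to\Spec K/K')/(\Ad G)(K))\to\Aut_G(K/K')\to 1$ \emph{together with} the vanishing of a secondary $(\Ad G)(K)$-cohomology obstruction. \emph{Step 2:} Identify $\Aut(G\to\Spec K/K')/(\Ad G)(K)$ explicitly: since ${}^\alpha\textbf{SL}_n(D)\cong\textbf{SL}_n({}^\alpha D)$ and, over the local field $K$, ${}^\alpha D\cong D$ or $D^{\mathrm{opp}}$ always holds, this quotient is an extension of $\Gal(K/K')$ by $\Z/2\Z$, and the extension class is governed by how $\alpha$ acts on the invariant $\mathrm{inv}(D)\in\mathbf{Q}/\Z$ — all $\alpha$ fix $\mathrm{inv}(D)$ up to sign (sign $+$ iff $\alpha$ fixes the root datum, $-$ iff it swaps it via $D\leftrightarrow D^{\mathrm{opp}}$), so this extension of $\Gal(K/K')$ actually splits compatibly. \emph{Step 3:} Compute the residual $(\Ad G)(K)=\PGL_{nd}$-type obstruction. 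This is where the number $\gcd(nd,[K:K'])$ enters: lifting $\gamma\in\Gal(K/K')$ of order $e=[K:K']$ to an \emph{order-$e$} semilinear automorphism of $\textbf{SL}_n(D)$ requires solving, in $\PGL_{nd}(\bar K)$, a periodicity equation whose solvability is obstructed by a class in $H^2(\Gal(K/K'),\mu_{nd})$, equivalently by whether the relevant cyclic algebra datum has period dividing $n$ after the degree-$e$ twist; this obstruction vanishes precisely when $\gcd(nd,e)\mid n$. \emph{Step 4:} For necessity, exhibit that when $\gcd(nd,[K:K'])\nmid n$, any attempted section forces a nontrivial element of $H^2(\Gal(K/K'),\mathbf G_m)$, contradicting the existence of a homomorphic lift — this is essentially the content already isolated in Corollary~\ref{Cor:non-splitting condition for SLn(D)}. \emph{Step 5:} For sufficiency when $\gcd(nd,[K:K'])\mid n$, write down the section explicitly using $\tilde\phi(\beta,x)$: choose $\beta\in\Gal(K_{d_0}/K')$ lifting a generator of the cyclic part, adjust $x$ using surjectivity of the norm on units (Corollary~\ref{Cor:etxending auto to D}), and verify the cocycle/periodicity relation holds because $nd/\gcd(nd,[K:K'])$ divides $n$; then glue over the components of $\Gal(K/K')$.

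\medskip
The main obstacle will be \emph{Step 3}, the precise identification of the $(\Ad G)(K)$-valued obstruction and the verification that its vanishing is exactly the divisibility condition $\gcd(nd,[K:K'])\mid n$. The subtlety is that the semilinear $\Gamma$-action on $\Aut(G_0\to\Spec k_s\geq k)$ does not preserve the group structure (as flagged after Definition~\ref{Def:semilinear Galois action}), so one cannot simply invoke ordinary nonabelian $H^2$; instead one must track the explicit cocycle $c$ for $D$ (a cyclic algebra, so $c$ is a monomial-matrix cocycle as in Definition~\ref{Def:setting notation for explicit computation with division algebras}) and compute directly, as in Lemma~\ref{Lem:Hanke result part 1}, how a periodicity condition on the lift translates into a norm condition in the unramified tower $K_{d_0\cdot[K:K']}/K$. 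A secondary difficulty, addressed in Step~5, is that in positive characteristic one genuinely cannot use Galois descent from the fixed field of $\Aut(K)$ (which is just $\mathbf F_p$), so the section must be produced by an explicit formula rather than abstractly; the formula $\tilde\phi(\beta,x)$ of Lemma~\ref{Lem:automorphisms of A(d,r)} is tailor-made for this, and the remaining work is the bookkeeping to check that the chosen $x$'s satisfy the iterated norm relation forced by the order of $\beta$, which is exactly where $\gcd(nd,[K:K'])\mid n$ is used.
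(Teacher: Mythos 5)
Your plan has two genuine gaps. First, the necessity direction is circular as stated: in Step 4 you invoke Corollary~\ref{Cor:non-splitting condition for SLn(D)}, but in the paper that corollary is \emph{deduced from} the present theorem, so it cannot be used as an input. That leaves the necessity resting entirely on Step 3, which is precisely the step you do not carry out: you assert that the relevant obstruction lives in $H^2(\Gal(K/K'),\mu_{nd})$ and "vanishes precisely when $\gcd(nd,[K:K'])\mid n$", but no computation is given, and you yourself flag that the semilinear action does not preserve the group structure, so ordinary (even nonabelian) $H^2$ formalism does not apply directly. As it stands the proposal is a programme whose central identification is missing. (A side issue: the claim $\Out G\cong\Z/2\Z$ "unless $nd\le 2$" is wrong for $d\ge 3$, since the transpose-inverse class is $K$-rational only when $D\cong D^{\mathrm{opp}}$, i.e.\ $d\le 2$; this affects your Step 2 bookkeeping, though it is not the main problem.)

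Second, you miss the simplification that makes the theorem elementary and that the paper exploits. Since every element of $\Gal(K/K')$ lies in $\Aut_G(K)$ (Corollary~\ref{Cor:etxending auto to D}), a splitting of $1\to\Aut G\to\Aut(G\to\Spec K/K')\to\Aut_G(K/K')\to 1$ is exactly a homomorphic lift of $\Gal(K/K')$, hence by Galois descent (Theorem~\ref{Thm:Galois descent}, with the converse supplied by Lemma~\ref{Lem:field of definition are in AutG(k)}) the sequence splits if and only if $G$ is defined over $K'$. Your worry that "in positive characteristic one cannot use Galois descent" is misplaced here: it applies to splitting over all of $\Aut(K)$ (Theorem~\ref{Thm:Main Thm 2.2}), not to the finite Galois subextension $K/K'$ of this statement. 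Once the question is "does $G$ descend to $K'$?", the paper settles it by classification rather than by an obstruction computation: the Tits indices of type-$A$ groups over local fields force any $K'$-form to be $\textbf{SL}_{n'}(D')$ with $D'$ a division algebra over $K'$; the base change formula of Proposition~\ref{Prop:base change of SL_n(A)} then shows such a form base changes to $\textbf{SL}_{an'}(A(\tfrac{d'}{a},\tfrac{[K:K']}{a}r'))$ with $a=\gcd(d',[K:K'])$, and matching this with $\textbf{SL}_n(D)$ yields $\gcd(nd,[K:K'])\mid n$; conversely, when $a=\gcd(nd,[K:K'])$ divides $n$ one writes down the form $\textbf{SL}_{n/a}(A(ad,r'))$ with $\tfrac{[K:K']}{a}r'\equiv r\pmod{d}$ directly, using $\inv$ and the appendix computations. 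Your explicit-section route via $\tilde\phi(\beta,x)$ could in principle be pushed through, but it is exactly the heavy machinery the paper reserves for the non-Galois situation, and your sketch of it (choosing $\beta$ "lifting a generator of the cyclic part" of a Galois group that need not be cyclic, and gluing "over the components") is too vague to certify.
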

\begin{proof}
By Corollary~\ref{Cor:etxending auto to D}, $ Aut_G(K)=Aut(K) $. Hence, since $ \Gal (K/K') $ is contained in $ Aut_G(K) $, the short exact sequence splits if and only if $ G $ is defined over $ K' $ (see Theorem~\ref{Thm:Galois descent}). Let $ H $ be this hypothetical form of $ G $ over $ K' $.

The case $ d=1 $ being obviously true, let us assume that $ d\geq 2 $. Now, by the classification of simple groups over local fields (see \cite{Tits77}*{Section~4.2 and 4.3}), the Tits index of $ H $ is of the form $  ^{1}A^{(d')} $ or $ ^{2}A^{(1)} $, since these are the only groups of type $ A $ over local fields. Note that a distinguished orbit has to remain distinguished after scalar extension, because a non-trivial root remains non-trivial after scalar extension. Hence $ H $ cannot be of type $ ^{2}A^{(1)} $, because groups of type $ ^{2}A^{(1)} $ have extremal roots that are distinguished, whereas $ G $ has undistinguished extremal roots when $ d\geq 2 $. But the only groups of type $  ^{1}A^{(d')} $ are groups of the form $ \textbf{SL}_{n'}(D') $ where $ n'\geq 1 $ and $ D' $ is a division algebra over $ K' $. So we conclude that $ H $ is of this form.

We use the notation $ \inv $ for the map classifying division algebras over local fields (see Theorem~\ref{Thm:classificatio nof CSA over local fields} for a precise definition of $ \inv $). Let $ d' $ be the degree of $ D' $ over $ K' $, and let $ r' $ be such that $ [\frac{r'}{d'}] = \inv ([D']) $ in $ \mathbf{Q}/\mathbf{Z} $. Also, let $ a = \gcd (d',[K:K']) $. The base change of $ \textbf{SL}_{n'}(D') $ from $ K' $ to $ K $ is the algebraic group $ \textbf{SL}_{an'}(A(\frac{d'}{a},\frac{[K:K']}{a}r')) $ by Proposition~\ref{Prop:base change of SL_n(A)}. Since $ H $ is isomorphic to $ G $ over $ K $, $ an' = n $ and $ ad = d' $. Hence, $ a = \gcd (ad,[K:K']) $, which implies that $ \gcd (adn',[K:K']) $ divides $ an' $. Now, the equation $ an' = n $ already proves that if $ H $ exists, then $ \gcd (nd,[K:K']) $ divides $ n $.

Conversely, let $ a = \gcd (nd,[K:K']) $, and assume that $ a $ divides $ n $. We then set $ n' = \frac{n}{a} $, $ d' = ad $ and $ r' $ such that $ \frac{[K:K']}{a}r' - r \in d\mathbf{Z} $ (such an $ r' $ exists because $ \frac{[K:K']}{a} $ is prime to $ d $). With those parameters, the algebraic group $ \textbf{SL}_{n'}(A(d',r')) $ is a form of $ G $ over $ K' $, as wanted.
\end{proof}
\begin{remark}
The condition that $ \gcd (nd,[K:\nolinebreak K']) $ divides $ n $ is equivalent to require that for all primes $ p $ dividing $ d $, the $ p $-adic valuation of $ [K:K'] $ is less than or equal to the $ p $-adic valuation of $ n $.
\end{remark}

\begin{corollary}\label{Cor:non-splitting condition for SLn(D)}
Let $ D $ be a central division algebra of degree $ d $ over a local field $ K $ and let $ G = \textbf{SL}_n(D) $. The short exact sequence $ 1\to \Aut G\to \Aut (G\to \Spec K)\to \Aut_G(K)\to 1 $ does not split if there exists a subfield $ K'\leq K $ such that $ K/K' $ is finite Galois and $ \gcd (nd,[K:K']) $ does not divide $ n $.
\end{corollary}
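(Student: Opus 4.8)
The plan is to derive this corollary from Theorem~\ref{Thm:local non-splitting for SLn(D)} by restricting a hypothetical global splitting to a relative one. Suppose for contradiction that the sequence $1\to \Aut G\to \Aut (G\to \Spec K)\to \Aut_G(K)\to 1$ splits, and fix a section $s\colon \Aut_G(K)\to \Aut (G\to \Spec K)$. By Corollary~\ref{Cor:etxending auto to D} we have $\Aut_G(K)=\Aut (K)$, so for any subfield $K'\leq K$ with $K/K'$ finite Galois the subgroup $\Gal (K/K')=\Aut (K/K')$ sits inside $\Aut_G(K)$, and we may consider the restriction of $s$ to it.

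The first step is to check that this restriction is a section of the relative sequence $1\to \Aut G\to \Aut (G\to \Spec K/K')\to \Aut_G(K/K')\to 1$. For $\alpha \in \Gal (K/K')$, the semilinear automorphism $s(\alpha )$ lies over $\alpha^{-1}\in \Gal (K/K')$, which fixes $K'$ pointwise, so $s(\alpha )$ belongs to $\Aut (G\to \Spec K/K')$ by definition; and composing with the projection to $\Aut_G(K)$ still returns $\alpha $. It remains to identify the target group $\Aut_G(K/K')$: since by Corollary~\ref{Cor:etxending auto to D} every automorphism of $K$ extends to $G$, in particular every element of $\Gal (K/K')$ extends to an element of $\Aut (G\to \Spec K/K')$, whence $\Aut_G(K/K')=\Aut (K/K')=\Gal (K/K')$ (the latter equality because $K/K'$ is finite Galois). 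Therefore $s|_{\Gal (K/K')}$ is a genuine section of the relative sequence.

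Finally, one invokes Theorem~\ref{Thm:local non-splitting for SLn(D)}, according to which the relative sequence splits if and only if $\gcd (nd,[K:K'])$ divides $n$. Choosing $K'$ to be a subfield with $K/K'$ finite Galois and $\gcd (nd,[K:K'])$ not dividing $n$ — which exists by hypothesis — produces the desired contradiction, so the original sequence does not split.

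There is no serious obstacle here: the argument is essentially a restriction-of-sections manoeuvre, and Theorem~\ref{Thm:local non-splitting for SLn(D)} carries all the weight. The only point demanding a little care is the bookkeeping with the conventions — namely verifying that $s(\alpha )$ really lands in $\Aut (G\to \Spec K/K')$ (which is where the sign/inverse convention $f_\varphi \mapsto \varphi^{-1}$ enters) and that $\Aut_G(K/K')$ is all of $\Gal (K/K')$ rather than a proper subgroup, which is exactly where $\Aut_G(K)=\Aut (K)$ is used.
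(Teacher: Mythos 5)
Your proposal is correct and is essentially the paper's own argument: the paper deduces the corollary in one line from Theorem~\ref{Thm:local non-splitting for SLn(D)}, the implicit content being precisely your restriction-of-sections step (a section of the global sequence restricts, via $\Aut_G(K)=\Aut (K)$ and $\Aut_G(K/K')=\Gal (K/K')$, to a section of the relative sequence over $K'$). Your write-up just makes that bookkeeping explicit.
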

\begin{proof}
$ 1\to \Aut G\to \Aut (G\to \Spec K/K')\to \Aut_G(K/K')\to 1 $ does not split by Theorem~\ref{Thm:local non-splitting for SLn(D)}, hence neither does $ 1\to \Aut G\to \Aut (G\to \Spec K)\to \Aut_G(K)\to 1 $.
\end{proof}

\subsection{Sufficient condition for the exact sequence to split}
In characteristic $ 0 $, it is actually straightforward to prove the converse of Corollary~\ref{Cor:non-splitting condition for SLn(D)}.
\begin{theorem}\label{Thm:splitting in char. 0 for SL_n(D)}
Let $ D $ be a central division algebra of degree $ d $ over a local field $ K $ of characteristic $ 0 $ and let $ G = \textbf{SL}_n(D) $. The short exact sequence $ 1\to \Aut G\to \Aut (G\to \Spec K)\to \Aut_G(K)\to 1 $ does not split only if there exists a subfield $ K'\leq K $ such that $ K/K' $ is finite Galois and $ \gcd (nd,[K:K']) $ does not divide $ n $.
\end{theorem}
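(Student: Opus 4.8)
The plan is to use Galois descent in its contrapositive form. Suppose that $\gcd(nd,[K:K'])$ divides $n$ for every subfield $K'\leq K$ with $K/K'$ finite Galois; I want to show the short exact sequence $1\to \Aut G\to \Aut(G\to \Spec K)\to \Aut_G(K)\to 1$ splits. The crucial structural fact in characteristic $0$ is that a local field $K$ is a finite extension of $\mathbf{Q}_p$, and $\mathbf{Q}_p$ is rigid by Lemma~\ref{Lem:Qp is rigid}; so $\Aut(K)$ is finite and equals $\Gal(K/K_0)$ where $K_0$ is the fixed field of $\Aut(K)$. Combined with Corollary~\ref{Cor:etxending auto to D}, which gives $\Aut_G(K)=\Aut(K)$, the problem becomes: show that $G$ is defined over $K_0$. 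Indeed, by Theorem~\ref{Thm:Galois descent}, if $G$ descends to some form $H$ over $K_0$, then $\Aut(K/K_0)=\Gal(K/K_0)$ embeds into $\Aut(G\to \Spec K)$ as a section of the projection to $\Aut_G(K)=\Gal(K/K_0)$ via Lemma~\ref{Lem:field of definition are in AutG(k)}, which is exactly a splitting.

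So the heart of the matter is to construct a $K_0$-form $H$ of $G=\textbf{SL}_n(D)$. Here I would reuse the construction already appearing in the proof of Theorem~\ref{Thm:local non-splitting for SLn(D)}: writing $D\cong A(d,r)$ by the classification of central simple algebras over local fields, and setting $a=\gcd(nd,[K:K_0])$, $n'=n/a$ (which is an integer by hypothesis), $d'=ad$, and choosing $r'$ with $\tfrac{[K:K_0]}{a}r'-r\in d\mathbf{Z}$ (possible since $\tfrac{[K:K_0]}{a}$ is prime to $d$), the group $H=\textbf{SL}_{n'}(A(d',r'))$ over $K_0$ base-changes to $G$ over $K$ by Proposition~\ref{Prop:base change of SL_n(A)}. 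This exhibits $G$ as defined over $K_0$, and we are done by the previous paragraph.

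The one subtlety to check carefully is that applying the splitting criterion to the \emph{single} field $K_0$ suffices, i.e.\ that $K/K_0$ really is finite Galois and that $K_0$ is genuinely the fixed field of all of $\Aut(K)$ (so that $\Aut(K)=\Gal(K/K_0)$, not merely a subgroup). Finiteness and Galois-ness follow because $K$ is a finite Galois extension of $\mathbf{Q}_p$ (any finite extension of $\mathbf{Q}_p$ has finite automorphism group, and the compositum with its conjugates is still a local field, hence finite over $\mathbf{Q}_p$), and rigidity of $\mathbf{Q}_p$ forces every automorphism of $K$ to fix $\mathbf{Q}_p$, hence $K_0$ is a finite extension of $\mathbf{Q}_p$ with $K/K_0$ Galois and $\Gal(K/K_0)=\Aut(K)$. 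The main obstacle is thus not any deep computation but rather making sure the bookkeeping around ``$K_0 = $ fixed field of $\Aut(K)$'' is airtight, so that the hypothesis ``$\gcd(nd,[K:K'])\mid n$ for all finite Galois $K/K'$'' can be applied with $K'=K_0$ and so that the resulting $K_0$-form really does give a \emph{full} section over $\Aut_G(K)=\Aut(K)$ rather than over a proper subgroup.
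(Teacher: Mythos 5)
Your proposal is correct and follows essentially the same route as the paper: use $\Aut_G(K)=\Aut(K)$ and rigidity of $\mathbf{Q}_p$ to see $\Aut(K)$ is finite with $K/K^{\Aut(K)}$ Galois, build the $K^{\Aut(K)}$-form $\textbf{SL}_{n/a}(A(ad,r'))$ exactly as in Theorem~\ref{Thm:local non-splitting for SLn(D)}, and conclude via Lemma~\ref{Lem:field of definition are in AutG(k)}. Only your parenthetical assertion that $K$ is \emph{Galois} over $\mathbf{Q}_p$ is inaccurate (it need not be), but it is also unnecessary: finiteness of $\Aut(K)$ plus Artin's theorem already give that $K/K_0$ is Galois with group $\Aut(K)$, which is all you use.
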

\begin{proof}
By Corollary~\ref{Cor:etxending auto to D}, $ Aut_G(K)=Aut(K) $. Since $ K $ is of characteristic $ 0 $, it is a finite extension of $ \mathbf{Q}_p $ for some prime $ p $. But every automorphism of $ K $ acts trivially on $ \mathbf{Q}_p $ by Lemma~\ref{Lem:Qp is rigid}. Hence, by Galois theory, $ \Aut (K) $ is a finite group. Furthermore, letting $ K^{\Aut (K)} $ be the subfield of $ K $ fixed by $ \Aut (K) $, the extension $ K/K^{\Aut (K)} $ is Galois with Galois group $ \Aut (K) $.

Let $ a = \gcd (nd,[K:K^{\Aut (K)}]) $. Assuming that there does not exist a subfield $ K'\leq K $ such that $ K/K' $ is finite Galois and such that $ \gcd (nd,[K:K']) $ does not divide $ n $, we have in particular that $ a $ divides $ n $. Also, let $ r\in \mathbf{N} $ be such that $ [\frac{r}{d}] = \inv ([D]) $. Since $ \frac{[K:K^{\Aut (K)}]}{a} $ is prime to $ d $, there exists $ r'\in \mathbf{N} $ such that $ \frac{[K:K^{\Aut (K)}]}{a}r' - r \in d\mathbf{Z} $. Hence, by Proposition~\ref{Prop:base change of SL_n(A)}, the algebraic group $ \SL_{\frac{n}{a}}(A(ad,r')) $ is a form of $ G $ over $ K^{\Aut (K)} $, because $ \gcd (ad,[K:K^{\Aut (K)}]) = a $. But in view of Lemma~\ref{Lem:field of definition are in AutG(k)}, this implies that the homomorphism $ \Aut (G\to \Spec K)\to \Aut (K) = \Gal (K/K^{\Aut( K)}) $ has a section, as wanted.
\end{proof}
\begin{remark}\label{Rem:Vivid example of splitting for divison algebras}
Putting Corollary~\ref{Cor:non-splitting condition for SLn(D)} and Theorem~\ref{Thm:splitting in char. 0 for SL_n(D)} together already proves Theorem~\ref{Thm:Main Thm 2.2} in characteristic $ 0 $. In particular, the sequence always splits for $ K = \mathbf{Q}_p $ (this actually directly follows from the rigidity of $ \mathbf{Q}_p $, which was used in the proof of Theorem~\ref{Thm:splitting in char. 0 for SL_n(D)}). For a more interesting example, if $ K $ is a Galois extension of $ \mathbf{Q}_p $ of degree $ p^i $ for some prime $ p $ and some $ i\in \N $, then Theorem~\ref{Thm:Main Thm 2.2} asserts that the following are equivalent:
\begin{enumerate}
\item The sequence $ 1\to \Aut \textbf{SL}_n(D)\to \Aut (\textbf{SL}_n(D)\to \Spec K)\to \Aut_{\textbf{SL}_n(D)}(K)\to 1 $ splits.
\item If $ n $ is not divisible by $ p^i $, the degree of $ D $ is not divisible by $ p $.
\end{enumerate}
\end{remark}

We now aim to prove an analogue of Theorem~\ref{Thm:splitting in char. 0 for SL_n(D)} but in positive characteristic. When $ K $ is of positive characteristic, the fixed field $ K^{\Aut (K)} $ is finite and $ K/K^{\Aut( K)} $ is not Galois. Thus we cannot use the same method than in characteristic $ 0 $.

Instead, the strategy goes as follows: we decompose $ \Aut( K) $ in various pieces, we give a section of $ \Aut (\textbf{SL}_n(D)\to \Spec K)\to \Aut (K) $ separately for each pieces and then we check that everything can be glued. Let us begin by decomposing $ \Aut (K) $.

\begin{lemma}
Let $ K = \mathbf{F}_{p^i}(\!(T)\!) $. Since $ \mathbf{F}_{p^i} $ is the algebraic closure in $ K $ of the prime field of $ K $, $ \mathbf{F}_{p^i} $ is preserved by any automorphism of $ K $. Let $ N(K) = \lbrace \alpha \in \Aut (K)~\vert~\alpha $ acts trivially on $ \mathbf{F}_{p^i} \rbrace $. We have $ \Aut (K) \cong N(K)\rtimes \Gal (K/\mathbf{F}_{p}(\!(T)\!)) $.
\end{lemma}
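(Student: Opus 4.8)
The plan is to exhibit the claimed decomposition as coming from a single restriction homomorphism that splits. Since $\mathbf{F}_{p^i}$ is the algebraic closure of the prime field $\mathbf{F}_p$ inside $K$ (equivalently, the maximal finite subfield of $K$, equivalently the residue field of the $T$-adic valuation), it is stable under every $\alpha\in\Aut(K)$, and restriction defines a group homomorphism $\rho\colon\Aut(K)\to\Aut(\mathbf{F}_{p^i})$. In characteristic $p$ every field automorphism fixes $\mathbf{F}_p$, so $\Aut(\mathbf{F}_{p^i})=\Gal(\mathbf{F}_{p^i}/\mathbf{F}_p)$, cyclic of order $i$. By definition $\ker\rho=N(K)$, so $N(K)$ is normal in $\Aut(K)$; it then suffices to show that $\rho$ is surjective and that $\Gal(K/\mathbf{F}_p(\!(T)\!))$, regarded as a subgroup of $\Aut(K)$, is a complement to $N(K)$.

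The key input is that $K=\mathbf{F}_{p^i}(\!(T)\!)$ is the unramified extension of degree $i$ of $k_0:=\mathbf{F}_p(\!(T)\!)$. Writing $\mathbf{F}_{p^i}=\mathbf{F}_p(\theta)$ for a primitive element $\theta$ with separable minimal polynomial $f$ over $\mathbf{F}_p$, one checks $K=k_0(\theta)$ as a compositum (any Laurent series with coefficients in $\mathbf{F}_{p^i}$ is a $k_0$-linear combination of powers of $\theta$), so $[K:k_0]\le\deg f=i$; conversely the $T$-adic residue field of $K$ is $\mathbf{F}_{p^i}$ while that of $k_0$ is $\mathbf{F}_p$, and the residue degree is at most the field degree, so $[K:k_0]=i$. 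Since $f$ splits into distinct linear factors over $\mathbf{F}_{p^i}\subseteq K$, the extension $K/k_0$ is the splitting field of a separable polynomial, hence Galois of degree $i$. Restriction to $\mathbf{F}_{p^i}$ gives a homomorphism $\Gal(K/k_0)\to\Gal(\mathbf{F}_{p^i}/\mathbf{F}_p)$ which is injective (an element acting trivially on $\mathbf{F}_{p^i}$ fixes both $k_0$ and $\mathbf{F}_{p^i}$, hence their compositum $K$), and therefore an isomorphism for order reasons. This reduction map is exactly the composite $\Gal(K/k_0)\hookrightarrow\Aut(K)\xrightarrow{\rho}\Gal(\mathbf{F}_{p^i}/\mathbf{F}_p)$.

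With this in hand the conclusion is formal: $\rho$ is surjective and $\rho$ restricted to $\Gal(K/k_0)$ is an isomorphism onto $\Gal(\mathbf{F}_{p^i}/\mathbf{F}_p)$, so its inverse is a section of $\rho$; moreover $\Gal(K/k_0)\cap N(K)$ is the kernel of that isomorphism, hence trivial. Thus the exact sequence $1\to N(K)\to\Aut(K)\xrightarrow{\rho}\Gal(\mathbf{F}_{p^i}/\mathbf{F}_p)\to 1$ splits with complement $\Gal(K/k_0)$, i.e.\ $\Aut(K)\cong N(K)\rtimes\Gal(K/\mathbf{F}_p(\!(T)\!))$. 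The only step needing genuine argument is the identification of $K/\mathbf{F}_p(\!(T)\!)$ as the degree-$i$ unramified extension with Galois group $\Gal(\mathbf{F}_{p^i}/\mathbf{F}_p)$; this is standard local field theory, and I would spell it out using the $T$-adic valuation and the description $K=\mathbf{F}_p(\!(T)\!)(\theta)$ rather than cite it as a black box. Everything else is bookkeeping with the displayed exact sequence.
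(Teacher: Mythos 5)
Your proof is correct and takes essentially the same route as the paper: both split the restriction sequence $1\to N(K)\to \Aut (K)\to \Gal (\mathbf{F}_{p^i}/\mathbf{F}_p)\to 1$ by showing that $\Gal (K/\mathbf{F}_{p}(\!(T)\!))$ maps isomorphically onto $\Gal (\mathbf{F}_{p^i}/\mathbf{F}_p)$ under restriction. The only difference is that the paper quotes this isomorphism from Serre's \emph{Local Fields} (unramified-extension theory), whereas you verify it directly via the primitive-element and splitting-field argument, which is a perfectly adequate self-contained substitute.
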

\begin{proof}
We want to show that the short exact sequence $ 1\to N(K)\to \Aut (K)\xrightarrow{f} \Gal (\mathbf{F}_{p^i}/\mathbf{F}_p)\to 1 $ splits. But by \cite{S79}*{Chapter III, \S 5, Theorem 3}, $ f $ maps $ \Gal (K/\mathbf{F}_{p}(\!(T)\!)) $ isomorphically onto $ \Gal (\mathbf{F}_{p^i}/\mathbf{F}_p) $, hence the result.
\end{proof}

We furthermore decompose the group $ N(K) $. Since automorphisms of $ K $ are continuous, an element $ \alpha $ of $ N(K) $ is therefore determined by its action on $ T $, and we have $ \alpha (T) = \sum \limits_{j=1}^{\infty}a_jT^j $, where $ a_1 \in \mathbf{F}_{p^i}^{\times} $ and $ a_j \in \mathbf{F}_{p^i} $ for all $ j\geq 2 $.
\begin{definition}\label{Def:J(K) and C(K)}
Let $ J(K) = \lbrace \alpha \in N(K)~\vert~\alpha (T) = T + \sum \limits_{j=2}^{\infty}a_jT^j,~a_j \in \mathbf{F}_{p^i}\rbrace $ and let $ C_{p^i-1} = \lbrace \alpha \in N(K)~\vert~\alpha (T) = aT,~a \in \mathbf{F}_{p^i}^{\times}\rbrace $. With those notations, the group $ N(K) $ is isomorphic to $ J(K)\rtimes \mathbf{F}_{p^i}^{\times} $. For $ x\in \mathbf{F}_{p^i}^{\times} $, we denote by $ \ev (xT) $ the corresponding element of $ \Aut (K) $.
\end{definition}

In summary, we have decomposed $ \Aut (K) $ as the group $ (J(K)\rtimes \mathbf{F}_{p^i}^{\times})\rtimes \Gal (K/\mathbf{F}_{p}(\!(T)\!)) $. We go on by giving a section to $ \Aut (\textbf{SL}_n(D)\to \Spec K)\to \Aut (K) $ for each component of $ \Aut (K) $, one at a time. In doing so, we will at the same time take care that the given section glues well with the other sections (though each are studied separately). Hence, a given formula for a section on one component of $ \Aut (K) $ will at times be slightly more complicated than a formula one would naturally consider if one was not aiming for a global section. In each case, we write a remark to explain how the given formula could be simplified if not aiming for a global splitting.

We need to set a few notations.
\begin{definition}\label{Def:notation for splitting explicitly}
\begin{enumerate}
\item Let $ \mathbf{F}_p^{\Alg} $ be an algebraic closure of $ \mathbf{F}_p $. We denote by $ F $ the Frobenius automorphism of $ \mathbf{F}_p^{\Alg}(\!(T)\!) $ (i.e.\ the automorphism of $ \mathbf{F}_p^{\Alg}(\!(T)\!)  $ fixing $ \mathbf{F}_p(\!(T)\!)  $ and inducing the Frobenius automorphism on $ \mathbf{F}_p^{\Alg} $). For any finite extension extension $ L $ of $ \mathbf{F}_p $, we also denote by $ F $ the restriction of $ F $ to $ L $ and to $ L (\!(T)\!) $.

\item We fix  $ p $ a prime number, $i,n,d,r\in \mathbf{N}_{>0} $ such that $ \gcd (d,r) = 1 $ and two symbols $u,T $. We set $ K = \mathbf{F}_{p^i}(\!(T)\!) $, $ E = \mathbf{F}_{p^{id}}(\!(T)\!) $ and $ D = (E/K,F^i ,T^r, u) $ (a cyclic division algebra of degree $ d $ over $ K $ with symbol $ u $ as in Definition~\ref{Def:the CSA A_(d,r)}). Furthermore, we let $ G $ be the algebraic $ K $-group $ \textbf{SL}_n(D) $.

\item For $ \alpha \in N(K) $ we define its extension $ \alpha_{E} $ to $ \Aut (E) $ as follows: $ \alpha_{E} $ acts trivially on the residue field, while $ \alpha_E(T) = \alpha (T) $. We thus get an injective homomorphism $ N(K)\to N(E)\colon \alpha \mapsto \alpha_{E} $. Abusing notations, we again denote $ \alpha_{E} $ by $ \alpha $.

\item \label{Def:decomposing C(pi-1)} We fix $ a, b\in \mathbf{N} $ such that $ ab = p^i-1 $, $ \gcd (d^{p^i-1},p^i-1) = \gcd (d^{b},b) = b $ and $ \gcd (d,a) = 1 $.

\item \label{Def:decomposing C(i)} We fix $ a', b'\in \mathbf{N} $ such that $ a'b' = i $, $ \gcd (d^{i},i) = \gcd (d^{i},b') = b' $ and $ \gcd (d,a') = 1 $.

\item We choose a generator $ \zeta $ of the multiplicative group $ \mathbf{F}_{p^{i}}^{\times} $.

\item For $ g\in \PGL_n(D) $, we denote by $ \intaut (g) $ the automorphism by conjugation of $ g $ on $ G $, i.e.\ $ \intaut (g)\colon G\to G\colon h\mapsto ghg^{-1} $.
\end{enumerate}
\end{definition}
\begin{remark}\label{Rem:on the notation for explicit splitting}
The natural numbers $ a,b,a',b' $ are uniquely determined by their definition. Note that we have in particular $ \gcd(a,b) = 1 = \gcd (a', b') $. Also note that by Definition~\ref{Def:the CSA A_(d,r)}, $ u^{-1}xu = F^i(x) $ for all $ x\in E $.
\end{remark}

We will further make use of the following notation: if $ l,m\in \mathbf{N} $ and $ A_1,\dots , A_l $ are $ m\times m $ matrices, we denote $ \diag (A_1,\dots , A_l) $ the corresponding block diagonal $ lm\times lm $ matrix. Furthermore, the $ m\times m $ identity matrix is denoted $ \Id_m $. We will denote the cyclic group of order $ m $ by $ C_m $. 

\begin{proposition}\label{Prop:constructing fJK}
Keep the notations of Definition~\ref{Def:notation for splitting explicitly}. Assume that $ \gcd (p,d) = 1 $ and that $ bb' $ divides $ n $. For $ \alpha \in J(K) $, there exists a unique $ x_{\alpha} \in 1+T\mathbf{F}_{p^i}[\![T]\!] $ such that $ x_{\alpha}^{db'} = \frac{\alpha (T^{r})}{T^{r}} $. Let $ M = \diag (\Id_b, x_{\alpha}\Id_b, x_{\alpha}^2\Id_b,\dots ,x_{\alpha}^{b'-1}\Id_b) $ $ ( $so that $ M $ is a $ bb'\times bb' $ matrix which is block diagonal$ ) $ and let $ X_{\alpha} = \diag (M,\dots ,M) $ where we have $ \frac{n}{bb'} $ terms $ ( $so that $ X_{\alpha} $ is a $ n\times n $ matrix which is block diagonal with coefficients in $ 1+T\mathbf{F}_{p^i}[\![T]\!]) $. Recalling the notation introduced in Remark~\ref{Rem:abstract automorphisms are algebraic}, the map
\begin{align*}
f_{J(K)}\colon J(K)&\to \Aut (G\to \Spec K)\\
\alpha &\mapsto \intaut (X_{\alpha})\tilde{\phi}(\alpha ,x_{\alpha}^{b'})
\end{align*}
is a homomorphism whose composition with the map $ \Aut (\! G\to \Spec K\! )\! \to \Aut_G(K) $ is the identity on $ J(K) $.
\end{proposition}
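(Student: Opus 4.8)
## Proof Plan

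The plan is to verify three things: (1) that the element $x_\alpha$ exists and is unique, (2) that $f_{J(K)}$ lands in $\Aut(G\to\Spec K)$ and is a group homomorphism, and (3) that it sections the projection to $\Aut_G(K)$ — the last being essentially immediate from the construction of $\tilde\phi(\alpha,\cdot)$.

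For existence and uniqueness of $x_\alpha$: since $\alpha\in J(K)$ we have $\alpha(T)=T+\sum_{j\geq 2}a_jT^j$, so $\frac{\alpha(T^r)}{T^r}=\frac{\alpha(T)^r}{T^r}\in 1+T\mathbf{F}_{p^i}[\![T]\!]$. The hypothesis $\gcd(p,d)=1$ guarantees that raising to the power $db'$ is a bijection on $1+T\mathbf{F}_{p^i}[\![T]\!]$ — one solves $x^{db'}=v$ for $v\in 1+T\mathbf{F}_{p^i}[\![T]\!]$ coefficient by coefficient, the recursion being solvable because $db'$ is invertible modulo $p$ (here one also uses that $b'\mid i$, hence $b'$ involves only primes dividing $i$; but actually it suffices that $\gcd(db',p)=1$, which follows from $\gcd(d,p)=1$ and $b'\mid i$ together with $p\nmid i$ — or more simply, $b'$ is a product of primes each dividing $d^i$, and $\gcd(d,p)=1$). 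I would state this as a short lemma about Hensel-type extraction of roots in $1+T\mathbf{F}_{p^i}[\![T]\!]$.

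The substantive part is checking $f_{J(K)}$ is a well-defined homomorphism into $\Aut(G\to\Spec K)$. First, $\tilde\phi(\alpha,x_\alpha^{b'})$ is defined provided $N_{E/K}(x_\alpha^{b'})=\frac{\alpha(T^r)}{T^r}$; but $x_\alpha\in K$, so $N_{E/K}(x_\alpha^{b'})=x_\alpha^{db'}=\frac{\alpha(T^r)}{T^r}$ by the defining property of $x_\alpha$, so Lemma~\ref{Lem:automorphisms of A(d,r)} applies and $\tilde\phi(\alpha,x_\alpha^{b'})\in\Aut(G\to\Spec K)$ over $\alpha^{-1}$ by Remark~\ref{Rem:abstract automorphisms are algebraic}. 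Composing with $\intaut(X_\alpha)$ (an element of $\Aut G$, once one checks $X_\alpha$ normalizes $\SL_n(D)$ — it does, being a diagonal matrix with entries in $K$, hence central in each block) stays in $\Aut(G\to\Spec K)$ over the same $\alpha^{-1}$, so composition with $\Aut(G\to\Spec K)\to\Aut_G(K)$ sends $\alpha\mapsto\alpha$, i.e. it is the identity on $J(K)$, giving (3) for free. The homomorphism property is the computation I expect to be the main obstacle: given $\alpha,\beta\in J(K)$ one must show $f_{J(K)}(\alpha)f_{J(K)}(\beta)=f_{J(K)}(\alpha\beta)$. Using the composition law in $\Aut(\textbf{SL}_n(D)\to\Spec K)$ and the relation $\tilde\phi(\alpha,x)\tilde\phi(\beta,y)=\tilde\phi(\alpha\beta,\,y\cdot\beta^{-1}(x))$ for the $\phi$-part (which one reads off from the formula $\sum u^i a_i\mapsto\sum(ux)^i\alpha(a_i)$ in Lemma~\ref{Lem:automorphisms of A(d,r)}, noting $(uy)(ux)=u^2\,F^i(x)y$ type bookkeeping and that $x_\alpha\in K$ is $F^i$-fixed), one reduces to checking the cocycle-type identity $x_{\alpha\beta}=x_\alpha\cdot\beta^{-1}(x_\beta)$ (which follows from uniqueness, since both sides raised to $db'$ give $\frac{(\alpha\beta)(T^r)}{T^r}=\frac{\alpha(T^r)}{T^r}\cdot\alpha\!\left(\frac{\beta(T^r)}{T^r}\right)$ — wait, one needs $\beta^{-1}$ versus $\alpha$ here, so I would be careful with the direction: writing out $\tilde\phi(\alpha,x_\alpha^{b'})\circ\intaut(X_\beta)=\intaut(\tilde\phi(\alpha,\cdot)(X_\beta))\circ\tilde\phi(\alpha,x_\alpha^{b'})$ and that $\tilde\phi(\alpha,\cdot)$ applied to a scalar matrix with entry $t\in K$ gives the scalar matrix with entry $\alpha(t)$), and the conjugation parts combine via $X_\alpha\cdot(\text{image of }X_\beta\text{ under }\tilde\phi(\alpha,\cdot))=X_{\alpha\beta}$, again by uniqueness of root extraction. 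The block-diagonal shape of $X_\alpha$ (built from powers $x_\alpha^0,\dots,x_\alpha^{b'-1}$) is designed precisely so that this glues with the sections on the $\mathbf{F}_{p^i}^\times$ and Galois components constructed later; for the purposes of this proposition alone only the multiplicativity $X_{\alpha\beta}=X_\alpha\cdot{}^{\alpha}X_\beta$ is needed, and that reduces to $x_{\alpha\beta}^k=x_\alpha^k\cdot\alpha(x_\beta^k)$ for each $k$, i.e. to $x_{\alpha\beta}=x_\alpha\cdot\alpha(x_\beta)$, which holds by uniqueness because $(x_\alpha\alpha(x_\beta))^{db'}=\frac{\alpha(T^r)}{T^r}\cdot\alpha\!\left(\frac{\beta(T^r)}{T^r}\right)=\frac{\alpha(\beta(T^r))}{T^r}=\frac{(\alpha\beta)(T^r)}{T^r}$.)

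I expect no genuine conceptual difficulty — the hypotheses $\gcd(p,d)=1$ and $bb'\mid n$ are used exactly where indicated ($\gcd(p,d)=1$ for root extraction, $bb'\mid n$ so that $X_\alpha$ is a well-defined $n\times n$ block matrix with $\frac{n}{bb'}$ copies of $M$) — but the bookkeeping in the homomorphism check, keeping straight which of $\alpha,\alpha^{-1}$ acts where and that $x_\alpha\in K\subset E$ is fixed by $F^i$, is where care is required; I would organize it by first recording the composition formula for products of the form $\intaut(g)\tilde\phi(\alpha,x)$ in $\Aut(G\to\Spec K)$ as a preliminary identity, then applying it mechanically.
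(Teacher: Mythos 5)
Your proof is correct and takes essentially the same route as the paper's: existence and uniqueness of $x_\alpha$ by Hensel-type root extraction using $\gcd(p,db')=1$ (the valid reason being, as you eventually note, that every prime dividing $b'$ divides $d$ — the aside invoking $p\nmid i$ is not a hypothesis and should be dropped), the cocycle identity $x_{\alpha\beta}=x_\alpha\cdot\alpha(x_\beta)$ deduced from uniqueness, and the mechanical composition computation, with the section property immediate since $\tilde\phi(\alpha,x_\alpha^{b'})$ lies over $\alpha^{-1}$. The only other slip is the intermediate formula $\tilde\phi(\alpha,x)\tilde\phi(\beta,y)=\tilde\phi(\alpha\beta,\,y\cdot\beta^{-1}(x))$, which should read $\tilde\phi(\alpha\beta,\,x\cdot\alpha(y))$; since your final reduction only uses $x_{\alpha\beta}=x_\alpha\cdot\alpha(x_\beta)$ and hence $X_{\alpha\beta}=X_\alpha\cdot\alpha(X_\beta)$, the argument goes through exactly as in the paper.
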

\begin{proof}
Note that $ \gcd(p,d) = 1 $ and $ \gcd (d^{b'},b') = b' $ implies $ \gcd (p,b') = 1 $, so that $ \gcd (p,db')  =1 $. Hence, for $ \alpha \in J(K) $ the existence and uniqueness of $ x_{\alpha} $ in $ 1+T\mathbf{F}_{p^i}[\![T]\!] $ such that $ x_{\alpha}^{db'} = \frac{\alpha (T^{r})}{T^{r}} $ follows directly from Hensel's lemma. We claim that for $ \alpha ,\beta \in J(K) $, $ x_{\beta \circ \alpha} = x_{\beta}.\beta (x_{\alpha}) $. By uniqueness, this equation holds if and only if $ \frac{(\beta \circ \alpha )(T^{r})}{T^{r}}=[x_{\beta}.\beta (x_{\alpha})]^{db'} $. But the right hand side is equal to $ \frac{\beta (T^{r})}{T^r}.\beta (\frac{\alpha (T^{r})}{T^r})  $, which is indeed equal to $ \frac{(\beta \circ \alpha )(T^{r})}{T^{r}} $. Checking that $ f_{J(K)} $ is a homomorphism is now straightforward: 
\begin{align*}
\intaut (X_{\beta})\tilde{\phi}(\beta ,x_{\beta}^{b'}) \circ \intaut (X_{\alpha})\tilde{\phi}(\alpha ,x_{\alpha}^{b'})&=\intaut (X_{\beta}\beta (X_{\alpha}))\tilde{\phi}(\beta \circ \alpha , x_{\beta}^{b'}.\beta (x_{\alpha}^{b'}))\\
&=\intaut (X_{\beta \circ \alpha})\tilde{\phi}(\beta \circ \alpha , x_{\beta \circ \alpha}^{b'}). \qedhere 
\end{align*}
\end{proof}

Note that if we were not aiming to define a global section of $ \Aut (G\to \Spec K)\to \Aut_G(K) $, we could just as well get rid of the factor $ \intaut(X_{\alpha}) $ and hence we would not need the assumption that $ bb' $ divides $ n $. In light of this, the next proposition really is a converse to Proposition~\ref{Prop:constructing fJK}.

\begin{proposition}
Keep the notations of Definition~\ref{Def:notation for splitting explicitly}. If $ \gcd (p,d) \neq 1 $, there does not exist a homomorphism $ J(K)\to \Aut (G\to \Spec K) $ whose composition with $ \Aut (G\to \Spec K)\to \Aut_G(K) $ is the identity on $ J(K) $.
\end{proposition}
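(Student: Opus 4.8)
The plan is to argue by contradiction. A section of $\Aut(G\to\Spec K)\to\Aut_G(K)$ over all of $J(K)$ cannot exist because $J(K)$ — being the Nottingham group over $\mathbf{F}_{p^i}$ — contains cyclic subgroups of arbitrarily large $p$-power order; restricting a hypothetical section to such a subgroup produces, via Galois descent (Theorem~\ref{Thm:Galois descent}), a form of $G$ over a subfield $K'$ with $[K:K']$ a large power of $p$, and this is ruled out by Theorem~\ref{Thm:local non-splitting for SLn(D)}.

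\textbf{Step 1 (large cyclic subgroups of $J(K)$).} First I would record that for every $m\ge 1$ there is $\sigma\in J(K)$ of order $p^m$. The quickest self-contained route: by Artin--Schreier--Witt theory (or local class field theory) the local field $\mathbf{F}_{p^i}(\!(s)\!)$ admits a totally ramified cyclic extension $L$ of degree $p^m$. Being totally ramified over a field with residue field $\mathbf{F}_{p^i}$, we have $L\cong\mathbf{F}_{p^i}(\!(t)\!)$ with $t$ a uniformiser, and a generator $\sigma_0$ of $\Gal(L/\mathbf{F}_{p^i}(\!(s)\!))$ acts trivially on the residue field of $L$; moreover the leading-coefficient homomorphism $\Gal(L/\mathbf{F}_{p^i}(\!(s)\!))\to\mathbf{F}_{p^i}^{\times}$, $\sigma_0\mapsto \overline{\sigma_0(t)/t}$, has image of $p$-power order inside a group of order $p^i-1$ prime to $p$, hence is trivial. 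So $\sigma_0(t)\equiv t\pmod{t^2}$, i.e.\ under $L=\mathbf{F}_{p^i}(\!(t)\!)$ we have $\sigma_0\in J(L)$; transporting along the identification $L\cong K$ yields $\sigma\in J(K)$ of order $p^m$. (Alternatively, one may simply cite the corresponding structural fact about the Nottingham group.)

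\textbf{Step 2 (descent).} Suppose for a contradiction that $f\colon J(K)\to\Aut(G\to\Spec K)$ is a homomorphism whose composition with $\Aut(G\to\Spec K)\to\Aut_G(K)$ is the identity on $J(K)$. Let $v$ be the $p$-adic valuation of $n$, set $m:=v+1$, and take $\sigma\in J(K)$ of order $p^m$ as in Step~1. Put $K':=K^{\langle\sigma\rangle}$, so $K/K'$ is finite Galois with group $\langle\sigma\rangle\cong C_{p^m}$ and $[K:K']=p^m$. By Corollary~\ref{Cor:etxending auto to D}, $\Aut_G(K)=\Aut(K)\supseteq\langle\sigma\rangle=\Gal(K/K')$, so $f|_{\langle\sigma\rangle}$ is a homomorphism $\Gal(K/K')\to\Aut(G\to\Spec K)$ whose composition with $\Aut(G\to\Spec K)\to\Aut_G(K)$ is the natural inclusion, i.e.\ the identity on $\Gal(K/K')$. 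By Theorem~\ref{Thm:Galois descent}, $G$ is defined over $K'$.

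\textbf{Step 3 (numerical contradiction).} Since $G$ is defined over $K'$, Lemma~\ref{Lem:field of definition are in AutG(k)} shows that the short exact sequence $1\to\Aut G\to\Aut(G\to\Spec K/K')\to\Aut_G(K/K')\to 1$ splits, so Theorem~\ref{Thm:local non-splitting for SLn(D)} gives $\gcd(nd,[K:K'])\mid n$, i.e.\ $\gcd(nd,p^m)\mid n$. But $p\mid d$ by hypothesis, so the $p$-adic valuation of $nd$ is $\ge v+1=m$, whence the $p$-adic valuation of $\gcd(nd,p^m)$ equals $m>v$, contradicting $\gcd(nd,p^m)\mid n$. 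This contradiction shows that no such homomorphism $f$ exists, proving the proposition.

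The only input beyond formal bookkeeping with the earlier results is Step~1 — the existence of elements of arbitrarily large $p$-power order in $J(K)$ — and that is the point to be careful about; one must decide between including the short class-field-theoretic construction above or citing the Nottingham-group literature. Everything else is a direct combination of Galois descent (Theorem~\ref{Thm:Galois descent}, Lemma~\ref{Lem:field of definition are in AutG(k)}) with the already-established Theorem~\ref{Thm:local non-splitting for SLn(D)}.
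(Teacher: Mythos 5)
Your argument is correct, and its skeleton is the same as the paper's: both reduce the statement, via Theorem~\ref{Thm:local non-splitting for SLn(D)}, to exhibiting a subfield $K'\leq K$ with $K/K'$ finite Galois, $\Gal(K/K')\leq J(K)$ and $\gcd(nd,[K:K'])$ not dividing $n$ (your detour through Theorem~\ref{Thm:Galois descent} and Lemma~\ref{Lem:field of definition are in AutG(k)} simply re-derives the equivalence ``splits over $K/K'$ iff $G$ is defined over $K'$'' that is the content of the proof of Theorem~\ref{Thm:local non-splitting for SLn(D)}; restricting the hypothetical section of $\Aut(G\to\Spec K)\to\Aut_G(K)$ to $\Gal(K/K')\leq J(K)$ would do the same job directly). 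The genuine difference is how the $p$-subgroup of $J(K)$ is produced. The paper invokes the embedding theorem \cite{C97}*{Theorem 3} to place an abstract group of order $p^{n}$ inside the Nottingham group $J(\mathbf{F}_{p}(\!(T)\!))\leq J(K)$ and then takes $K'=K^{H}$, whereas you construct, by local class field theory (or Artin--Schreier--Witt), a cyclic totally ramified extension of degree $p^{v_p(n)+1}$, identify it with $K$, and check that a generator of its Galois group lies in $J(K)$ because it fixes the constants (they sit in the base field) and its leading coefficient is a $p$-power-order element of $\mathbf{F}_{p^i}^{\times}$, hence trivial. Your route is more self-contained, needs only a cyclic group of the minimal relevant order, and is exactly in the spirit of the paper's own Proposition~\ref{Prop:existence of Galois subfield of some degree}, whose proof uses the same device of building a totally ramified extension and transporting along an isomorphism with $K$; the paper's proof is shorter on the page but leans on the deeper Nottingham-group embedding result. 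If you write up Step 3 as stated, add the one-line remark that $\Aut_G(K/K')=\Gal(K/K')$ (via Corollary~\ref{Cor:etxending auto to D}), so that the homomorphism furnished by Lemma~\ref{Lem:field of definition are in AutG(k)} is indeed a section of the displayed short exact sequence.
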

\begin{proof}
By Theorem~\ref{Thm:local non-splitting for SLn(D)}, it suffices to prove that there exists $ K'\leq K $ such that $ K/K' $ is finite Galois with $ \Gal (K/K')\leq J(K) $ and such that $ \gcd (nd,[K:K']) $ does not divide $ n $. Let $ H $ be a group of order $ p^n $.  By \cite{C97}*{Theorem 3}, there exists an injective homomorphism $ H\hookrightarrow J(\mathbf{F}_{p}(\!(T)\!)) $. Also note that $ J(\mathbf{F}_{p}(\!(T)\!)) $ can be seen as a subgroup of $ J(K) $ in a natural way, so that $ J(K) $ has a subgroup of order $ p^n $, that we again denote by $ H $. Now, let $ K' = K^H = \lbrace x\in K~\vert~\alpha (x) = x \text { for all } \alpha \in H\rbrace $. Hence, $ K/K' $ is a Galois extension with $ \Gal (K/K') = H\leq J(K) $ and $ \gcd (nd,[K:K']) =\gcd (nd,p^n) $ does not divide $ n $ because $ \gcd (p,d) \neq 1 $, as wanted.
\end{proof}

We now construct a section of $ \Aut (G\to \Spec K)\to \Aut (K) $ for $ \mathbf{F}_{p^i}^{\times} $. In fact, using the same line of argument as for Theorem~\ref{Thm:local non-splitting for SLn(D)}, we know that a section for $ \mathbf{F}_{p^i}^{\times} $ exists if and only if $ \gcd (nd,p^{i}-1) $ divides $ n $ (where $ d $ and $ n $ appear in the form of $ G = \SL_n(D) $, $ d $ denoting as usual the degree of $ D $). But we need to have an explicit formula, since we want to ensure that it glues well with the map $ f_{J(K)} $ constructed in Proposition~\ref{Prop:constructing fJK}. We found those explicit formulas by working out by hand some low degree examples for which we could follow explicitly what the theory was predicting, and then by generalising our findings to any degree. Again, we give a section which is going to be slightly more complicated than necessary, because we aim to define a global section in the end.

We first need a $ (db') $-th root of $ \zeta^{br} $.
\begin{lemma}\label{Lem:db' root of zeta}
Keep the notations of Definition~\ref{Def:notation for splitting explicitly} and let $ C_a $ be the group generated by $ \zeta^b $ in $ \mathbf{F}_{p^i}^{\times} $. There exists a unique $ z\in C_a $ such that $ z^{db'} = \zeta^{br} $.
\end{lemma}
\begin{proof}
Note that $ \gcd(d,a) = 1 $ and $ \gcd (d^{b'},b') = b' $ implies $ \gcd (b',a) = 1 $, so that $ \gcd (db',a)  =1 $. Hence the result follows from the fact that $ db' $ is invertible in the cyclic group of order $ a $.
\end{proof}

\begin{proposition}\label{Prop:constructing fCl(d)}
Keep the notations of Definition~\ref{Def:notation for splitting explicitly} and of Lemma~\ref{Lem:db' root of zeta}, so that $ z^{db'} = \zeta^{br} $. Assume that $ bb' $ divides $ n $. Let $ M = \diag (\Id_b, z\Id_b, z^2\Id_b,\dots , z^{b'-1}\Id_b) $ $ ( $so that $ M $ is a $ bb'\times bb' $ matrix which is block diagonal$ ) $ and let $ Z = \diag (M,\dots ,M) $ where we have $ \frac{n}{bb'} $ terms $ ( $so that $ Z $ is a $ n\times n $ matrix which is block diagonal with coefficients in $ \mathbf{F}_{p^i} )$. Recalling the notation introduced in Remark~\ref{Rem:abstract automorphisms are algebraic}, the map
\begin{align*}
f_{C_a}\colon C_a&\to \Aut (G\to \Spec K)\\
\ev (\zeta^{bj}T) &\mapsto \intaut (Z^j)\tilde{\phi}(\ev (\zeta^{bj}T) ,z^{b'j})
\end{align*}
is a homomorphism whose composition with the map $ \Aut (\! G\to \Spec K\! )\! \to \Aut_G(K) $ is the identity on $ C_a $.
\end{proposition}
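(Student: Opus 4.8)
The plan is to mirror the proof of Proposition~\ref{Prop:constructing fJK} almost verbatim, since the two statements have the same shape: the only differences are that we replace $x_\alpha$ by the fixed scalar $z$ (a root of $\zeta^{br}$ rather than of $\alpha(T^r)/T^r$), we replace $X_\alpha$ by the fixed matrix $Z$, and we replace the varying automorphism $\alpha\in J(K)$ by the automorphism $\ev(\zeta^{bj}T)$ depending on $j\in\Z/a\Z$. So the main work is to check that the assignment $\ev(\zeta^{bj}T)\mapsto \intaut(Z^j)\tilde\phi(\ev(\zeta^{bj}T),z^{b'j})$ is (i) well defined, (ii) a homomorphism, and (iii) a section of the projection to $\Aut_G(K)$.

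First I would address well-definedness, which is actually the point that differs slightly from Proposition~\ref{Prop:constructing fJK}: $C_a$ is a cyclic group of order $a$, generated by $\ev(\zeta^b T)$, so the formula is only legitimate if the image of $\ev(\zeta^b T)$ has order dividing $a$, i.e.\ if $\bigl(\intaut(Z)\tilde\phi(\ev(\zeta^b T),z^{b'})\bigr)^a$ is the identity in $\Aut(G\to\Spec K)$. Here I need to know that $\ev(\zeta^b T)^a = \ev(\zeta^{ab}T) = \ev(\zeta^{p^i-1}T) = \ev(T) = \id$, that $Z^a$ commutes with everything appropriately (it is central: $Z^a=\diag(\dots)$ with entries $z^{ab'}$, and since $\gcd(db',a)=1$ and $z\in C_a$, one computes $z^{ab'}$ explicitly — in fact it suffices that $Z^a$ is a scalar matrix, hence trivial in $\PGL_n$), and that $(z^{b'})^{\text{(appropriate power)}}$ matches so that the $\tilde\phi$-part also closes up; this is the Lemma~\ref{Lem:db' root of zeta} computation combined with $\ev(\zeta^bT)^a=\id$. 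Alternatively — and this is cleaner — I would follow the Proposition~\ref{Prop:constructing fJK} strategy and prove the homomorphism property directly on the generator level without worrying about presentations: show that for $j,j'\in\Z$, the composite $\intaut(Z^{j'})\tilde\phi(\ev(\zeta^{bj'}T),z^{b'j'})\circ\intaut(Z^{j})\tilde\phi(\ev(\zeta^{bj}T),z^{b'j})$ equals $\intaut(Z^{j+j'})\tilde\phi(\ev(\zeta^{b(j+j')}T),z^{b'(j+j')})$, using the cocycle-type identities for $\intaut$ and $\tilde\phi$ under composition (exactly as in the displayed computation of Proposition~\ref{Prop:constructing fJK}), together with the facts that $Z$ has entries in $\mathbf F_{p^i}$ fixed by the relevant automorphisms and that $\ev(\zeta^{bj}T)$ maps $z^{b'j'}$ to itself (again because $z\in\mathbf F_{p^i}$ is fixed by every element of $N(K)$); then well-definedness on $C_a$ is automatic because sending $j$ to $j+a$ changes nothing, $\ev(\zeta^{b(j+a)}T)=\ev(\zeta^{bj}T)$ and $Z^{j+a}Z^{-j}$ and $z^{b'(j+a)}z^{-b'j}$ are trivial modulo the relations.

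Then the section property is immediate: the projection $\Aut(G\to\Spec K)\to\Aut_G(K)$ sends $\intaut(Z^j)$ to the identity of $\Aut(K)$ (it is an inner, hence $K$-linear, automorphism) and sends $\tilde\phi(\ev(\zeta^{bj}T),z^{b'j})$ to $\ev(\zeta^{bj}T)^{-1}$ (or $\ev(\zeta^{bj}T)$ depending on the sign convention in Remark~\ref{Rem:abstract automorphisms are algebraic} — I would match whatever convention made $f_{J(K)}$ a genuine section in Proposition~\ref{Prop:constructing fJK}), so the composite $C_a\to C_a$ is the identity.

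The main obstacle, such as it is, is purely bookkeeping: verifying that $Z$ behaves correctly under the twisted automorphisms — specifically that $\ev(\zeta^{bj'}T)$ applied coefficientwise to $Z$ gives $Z$ itself (true since $Z$ has entries in $\mathbf F_{p^i}$ and $N(K)$ acts trivially on $\mathbf F_{p^i}$), and that the block-diagonal structure of $Z$ interacts correctly with $\tilde\phi$, i.e.\ that $\intaut(Z)$ and $\tilde\phi(\ev(\zeta^bT),z^{b'})$ satisfy the same composition rule as in Proposition~\ref{Prop:constructing fJK}. I would write the proof by citing Proposition~\ref{Prop:constructing fJK}'s computation and indicating the two substitutions ($x_\alpha\rightsquigarrow z$, $X_\alpha\rightsquigarrow Z$, $\alpha\rightsquigarrow\ev(\zeta^{bj}T)$), noting that the key identity $x_{\beta\circ\alpha}=x_\beta\cdot\beta(x_\alpha)$ is here replaced by the trivial identity $z^{j+j'}=z^j\cdot z^{j'}$ (with $\ev(\zeta^{bj'}T)$ acting trivially on $z$), and that Lemma~\ref{Lem:db' root of zeta} guarantees $z\in C_a$ is well defined with $z^{db'}=\zeta^{br}$ so that the reduced-norm condition needed to invoke Lemma~\ref{Lem:automorphisms of A(d,r)} (namely $N_{E/K}$ of the relevant element equals $\ev(\zeta^{bj}T)(T^r)/T^r=\zeta^{brj}$) is satisfied — here one uses that $N_{E/K}$ restricted to $\mathbf F_{p^{id}}^\times\to\mathbf F_{p^i}^\times$ is surjective and in fact $N_{E/K}(z^{b'j}\cdot(\text{suitable } E\text{-element}))$ works out, exactly as in Corollary~\ref{Cor:etxending auto to D}.
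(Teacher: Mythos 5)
Your proposal is correct and follows essentially the same route as the paper: check the norm condition $N_{E/K}(z^{b'j})=z^{db'j}=\zeta^{brj}=\ev(\zeta^{bj}T)(T^r)/T^r$ (possible because $z\in\mathbf{F}_{p^i}\subseteq K$), run the same composition computation as for $f_{J(K)}$ using that $\ev(\zeta^{bj}T)$ fixes $z$ and $Z$, and then settle well-definedness on the cyclic group. The only place to tighten is that last point: the paper closes it by observing $z^a=1$ (immediate, since $z\in C_a$ and $z^a$ is the unique $(db')$-th root in $C_a$ of $\zeta^{abr}=1$), which makes $Z^a=\Id_n$, $z^{ab'}=1$ and hence the $a$-th power of the generator's image literally the identity; your remark that ``$Z^a$ is a scalar matrix, hence trivial in $\PGL_n$'' would not by itself suffice, because the factor $\tilde{\phi}(\id,z^{ab'})$ must also be shown trivial, and your ``trivial modulo the relations'' in the alternative route is exactly the unproved (but easy) identity $z^a=1$.
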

\begin{proof}
Note that $ \frac{\ev(\zeta^{bj}T)(T^{r})}{T^{r}}=\zeta^{brj}=(z^{b'j})^d=N_{E/K}(z^{b'j}) $, so that we can indeed use Definition~\ref{Def:notation for automorphism of SLn(D)}. With these definitions, for all $ j,j' \in \mathbf{N} $, we have
\begin{align*}
\intaut (Z^j)\tilde{\phi}(\ev (\zeta^{bj}T) ,z^{b'j}) \circ \intaut (Z^{j'})\tilde{\phi}(\ev (\zeta^{bj'}T) ,z^{b'j'})&=\intaut (Z^jZ^{j'})\tilde{\phi}(\ev (\zeta^{bj}T) \circ \ev (\zeta^{bj'}T) , z^{b'j}.z^{b'j'})\\
&=\intaut (Z^{j+j'})\tilde{\phi}(\ev (\zeta^{b(j+j')}T) ,z^{b'(j+j')}). 
\end{align*}
Hence the fact that $ f_{C_a} $ is well-defined follows from $ z^a = 1 $ (which holds because $ z^a $ is the unique $ (db') $-th root in $ C_a $ of $ \zeta^{abr} = 1 $).
\end{proof}
\begin{remark}
In the proof of Proposition~\ref{Prop:constructing fCl(d)}, we needed to show that $ f_{C_a}(\ev (\zeta^{b}T))^a $ is a trivial element of $ \Aut (G\to \Spec K) $. We proved it by showing that this algebraic automorphism of $ \textbf{SL}_n(D) $ induces a trivial automorphism of $ SL_n(D) $, hence is itself trivial by the density of rational points for $ G $. This will be used repeatedly to show that $ K $-automorphisms of $ G $ are trivial. In using that argument, it is also important to notice that a semilinear automorphism of the form $ \intaut (g)\tilde{\phi}(\alpha ,x) $ is algebraic if and only if $ \alpha $ acts trivially on $ K $.
\end{remark}
\begin{remark}
Note that in Proposition~\ref{Prop:constructing fCl(d)}, the factor $ \intaut (Z^j) $ is unnecessary if one is just interested in a section defined on $ C_a $ alone. Hence, a section of $ \Aut (G\to \Spec K)\to \Aut_G(K) $ only defined on $ C_a $ always exists (i.e.\ one does not need to assume that $ bb' $ divides $ n $). 
\end{remark}

\begin{proposition}\label{Prop:constructing fCk(d)}
Keep the notations of Definition~\ref{Def:notation for splitting explicitly} and assume that $ b $ divides $ n $. Let $ C_b $ be the group generated by $ \zeta^a $ in $ \mathbf{F}_{p^i}^{\times} $. There exists an element $ y\in \mathbf{F}_{p^{idb}} $ such that $ \frac{F^{id}(y)}{y} = \zeta^{ar} $. Choosing a $ \mathbf{F}_{p^{id}} $-basis of $ \mathbf{F}_{p^{idb}} $, we obtain an embedding $ \varphi \colon \mathbf{F}_{p^{idb}}\to M_b(\mathbf{F}_{p^{id}}) $. Let $ g = \varphi (y^{-1}) $ and let $ Y = \diag (g,\dots ,g) $ where we have $ \frac{n}{b} $ terms $ ( $so that $ Y $ is a $ n\times n $ matrix which is block diagonal with coefficients in $ \mathbf{F}_{p^{id}} )$. Recalling the notation introduced in Remark~\ref{Rem:abstract automorphisms are algebraic}, the map
\begin{align*}
f_{C_b}\colon C_b&\to \Aut (G\to \Spec K)\\
\ev (\zeta^{aj}T) &\mapsto \intaut (Y^j)\tilde{\phi}(\ev (\zeta^{aj}T) ,(\frac{F^i(y)}{y})^j)
\end{align*}
is a homomorphism whose composition with the map $ \Aut (\! G\to \Spec K\! )\! \to \Aut_G(K) $ is the identity on $ C_b $.
\end{proposition}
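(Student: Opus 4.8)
The plan is to follow the template of Propositions~\ref{Prop:constructing fJK} and~\ref{Prop:constructing fCl(d)}. Write $ s := \frac{F^i(y)}{y} $ and, for $ j\in\mathbf{Z} $, set $ \hat{f}(j) := \intaut(Y^j)\,\tilde{\phi}\bigl(\ev(\zeta^{aj}T),s^j\bigr) $. First I would check that all the objects in the statement make sense; then that $ \hat{f}\colon\mathbf{Z}\to\Aut(G\to\Spec K) $ is a homomorphism; then that $ \hat{f}(b)=\id_G $, so that $ \hat{f} $ factors through $ C_b\cong\mathbf{Z}/b\mathbf{Z} $ and produces $ f_{C_b} $; and finally I would note that, since $ \intaut(Y^j) $ is defined over $ \id_K $ while $ \tilde{\phi}(\ev(\zeta^{aj}T),\cdot) $ is a semilinear automorphism over $ \ev(\zeta^{aj}T)^{-1} $ (see Remark~\ref{Rem:abstract automorphisms are algebraic} and the proof of Corollary~\ref{Cor:etxending auto to D}), each $ \hat{f}(j) $ lies over $ \ev(\zeta^{aj}T)^{-1} $, whence the composition with $ \Aut(G\to\Spec K)\to\Aut_G(K) $ sends $ \ev(\zeta^{aj}T) $ to itself, i.e.\ is the identity on $ C_b $.

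For the existence of $ y $: since $ \zeta^{ar}\in\mathbf{F}_{p^i}\subseteq\mathbf{F}_{p^{id}} $ is fixed by $ F^{id} $ and $ N_{\mathbf{F}_{p^{idb}}/\mathbf{F}_{p^{id}}}(\zeta^{ar})=(\zeta^{ar})^b=(\zeta^{p^i-1})^r=1 $ (using $ ab=p^i-1 $), Hilbert~90 for the cyclic extension $ \mathbf{F}_{p^{idb}}/\mathbf{F}_{p^{id}} $ yields $ y $ with $ F^{id}(y)/y=\zeta^{ar} $. A short computation using $ F^iF^{id}=F^{id}F^i $ and $ F^i(\zeta^{ar})=\zeta^{ar} $ gives $ F^{id}(s)=s $, so $ s\in\mathbf{F}_{p^{id}} $ and it is legitimate to feed $ s^j $ into $ \tilde{\phi} $; moreover the telescoping product $ N_{E/K}(s)=\prod_{k=0}^{d-1}F^{ik}(F^i(y)/y)=F^{id}(y)/y=\zeta^{ar} $, together with $ \tfrac{\ev(\zeta^{aj}T)(T^r)}{T^r}=\zeta^{ajr}=(\zeta^{ar})^j=N_{E/K}(s^j) $, verifies the hypothesis of Lemma~\ref{Lem:automorphisms of A(d,r)} and Definition~\ref{Def:notation for automorphism of SLn(D)}, so $ \tilde{\phi}(\ev(\zeta^{aj}T),s^j) $ is defined. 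The hypothesis $ b\mid n $ is used only so that $ Y=\diag(g,\dots,g) $ (with $ \tfrac nb $ blocks) is an $ n\times n $ matrix.

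For the homomorphism property I would copy the computation from the proof of Proposition~\ref{Prop:constructing fCl(d)}: composing $ \intaut(g_1)\tilde{\phi}(\alpha_1,x_1) $ with $ \intaut(g_2)\tilde{\phi}(\alpha_2,x_2) $ produces $ \intaut\bigl(g_1\cdot\tilde{\phi}(\alpha_1,x_1)(g_2)\bigr)\,\tilde{\phi}\bigl(\alpha_1\alpha_2,x_1\cdot\alpha_1(x_2)\bigr) $. Taking $ \alpha_1=\ev(\zeta^{aj}T) $: its extension to $ E $ acts trivially on the residue field $ \mathbf{F}_{p^{id}} $, hence fixes both the matrix $ Y^{j'} $ and the scalar $ s^{j'} $; since moreover $ \ev(\zeta^{aj}T)\ev(\zeta^{aj'}T)=\ev(\zeta^{a(j+j')}T) $, the product collapses to $ \intaut(Y^{j+j'})\tilde{\phi}(\ev(\zeta^{a(j+j')}T),s^{j+j'})=\hat{f}(j+j') $.

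The one genuinely new point, and the main obstacle, is the identity $ \hat{f}(b)=\id_G $. Here $ \ev(\zeta^{ab}T)=\ev(\zeta^{p^i-1}T)=\id_K $, so $ \hat{f}(b)=\intaut(Y^b)\tilde{\phi}(\id,s^b) $; unlike in Proposition~\ref{Prop:constructing fCl(d)}, the matrix $ Y^b $ is \emph{not} the identity. Indeed $ \zeta^{abr}=1 $ forces $ y^b\in\mathbf{F}_{p^{id}} $, so $ g^b=\varphi(y^{-1})^b=\varphi(y^{-b})=y^{-b}\Id_b $ (as $ \varphi $ is an $ \mathbf{F}_{p^{id}} $-algebra embedding) and $ Y^b=y^{-b}\Id_n $. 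To cancel this against $ \tilde{\phi}(\id,s^b) $: since $ s^b\in\mathbf{F}_{p^{id}} $ with $ N_{E/K}(s^b)=1 $, Hilbert~90 provides $ w\in\mathbf{F}_{p^{id}}^\times $ with $ F^i(w)/w=s^b $, and the standard computation in $ D=\bigoplus_{k=0}^{d-1}u^kE $ (using $ F^{ik}(w)=\bigl(\prod_{l=0}^{k-1}F^{il}(s^b)\bigr)w $) identifies $ \phi(\id,s^b) $ with conjugation by $ w $ in $ D $, so $ \tilde{\phi}(\id,s^b)=\intaut(w\Id_n) $ and $ \hat{f}(b)=\intaut\bigl((y^{-b}w)\Id_n\bigr) $. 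It then remains to check $ y^{-b}w\in\mathbf{F}_{p^i} $: from $ F^i(w)=s^bw $ and $ F^i(y^b)=s^by^b $ (the latter because $ F^i $ is a ring map, so $ s^b=F^i(y)^b/y^b=F^i(y^b)/y^b $) one gets $ F^i(y^{-b}w)=y^{-b}w $, hence $ y^{-b}w\in\mathbf{F}_{p^i}\subseteq K^\times=Z(D)^\times $; thus $ (y^{-b}w)\Id_n $ is central in $ \GL_n(D) $ and $ \hat{f}(b)=\id_G $ (equivalently, as in the remark after Proposition~\ref{Prop:constructing fCl(d)}, this $ K $-automorphism is trivial because it acts trivially on the Zariski-dense subgroup $ \SL_n(D) $, $ (y^{-b}w)\Id_n $ being central in $ M_n(D) $). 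With this in place $ \hat{f} $ descends to the asserted homomorphism $ f_{C_b} $, and the proof is complete.
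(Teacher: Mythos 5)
Your proof is correct and follows essentially the same route as the paper: Hilbert~90 for $\mathbf{F}_{p^{idb}}/\mathbf{F}_{p^{id}}$ gives $y$, the identity $N_{E/K}((\frac{F^i(y)}{y})^j)=\zeta^{ajr}$ legitimises the use of $\tilde{\phi}$, and the homomorphism property is the same computation using that the extension of $\ev(\zeta^{aj}T)$ to $E$ fixes $\mathbf{F}_{p^{id}}$. The only divergence is the order-$b$ relation: where the paper checks directly that $\intaut (Y^b)\tilde{\phi}(1,(\frac{F^i(y)}{y})^b)$ fixes $u\Id_n$ and acts trivially on $\SL_n(E)$, you insert a second Hilbert~90 to produce $w$ with $\frac{F^i(w)}{w}=(\frac{F^i(y)}{y})^b$ and identify $\tilde{\phi}(1,(\frac{F^i(y)}{y})^b)=\intaut (w\Id_n)$ — correct but slightly roundabout, since one may simply take $w=y^b$ (as $y^b\in \mathbf{F}_{p^{id}}$ and $\frac{F^i(y^b)}{y^b}=(\frac{F^i(y)}{y})^b$), which makes $Y^b\, w\Id_n$ the identity and collapses your argument to the paper's computation.
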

\begin{proof}
For the existence of $ y\in \mathbf{F}_{p^{idb}} $ such that $ \frac{F^{id}(y)}{y} = \zeta^{ar} $, note that $ N_{\mathbf{F}_{p^{idb}}/\mathbf{F}_{p^{id}}}(\zeta^{ar}) = \zeta^{abr} = 1 $. Also note that the extension $ \mathbf{F}_{p^{idb}}/\mathbf{F}_{p^{id}} $ is Galois cyclic, and that $ F^{id} $ generates its Galois group. Hence, by Hilbert's Theorem~90, there indeed exists $ y\in \mathbf{F}_{p^{idb}} $ such that $ \frac{F^{id}(y)}{y} = \zeta^{ar} $. For the rest of the proof, we choose such an $ y $.

From $ \frac{F^{id}(y)}{y} = \zeta^{ar} $, it readily follows that $ F^i(y)y^{-1} $ and $ y^b $ belong to $ \mathbf{F}_{p^{id}} $, since they are both invariant under $ F^{id} $. Note that $ \frac{\ev(\zeta^{aj}T)(T^{r})}{T^{r}}=\zeta^{ajr}=(\frac{F^{id}(y)}{y})^j=N_{E/K}((\frac{F^i(y)}{y})^j) $, so that we can indeed use Definition~\ref{Def:notation for automorphism of SLn(D)}.

It remains to check that $ f_{C_b} $ is well-defined and is a homomorphism. Note that for all $ j,j' \in \mathbf{N} $, we have
\begin{align*}
&\intaut (Y^j)\tilde{\phi}(\ev (\zeta^{aj}T) ,(\frac{F^i(y)}{y})^j) \circ \intaut (Y^{j'})\tilde{\phi}(\ev (\zeta^{aj'}T) ,(\frac{F^i(y)}{y})^{j'})\\
&=\intaut (Y^jY^{j'})\tilde{\phi}(\ev (\zeta^{aj}T) \circ \ev (\zeta^{aj'}T) ,(\frac{F^i(y)}{y})^{j+j'})\\
&=\intaut (Y^{j+j'})\tilde{\phi}(\ev (\zeta^{a(j+j')}T) ,(\frac{F^i(y)}{y})^{j+j'}). 
\end{align*}
Hence, it suffices to check that  $ \intaut (Y^b)\tilde{\phi}(\ev (\zeta^{ab}T) ,(F^i(y)y^{-1})^b) $ is the identity on $ \SL_n(D) $. But since $ y^b\in \mathbf{F}_{p^{id}} $, $ g^b $ and $ Y^b $ are diagonal and by definition of $ Y $, $ y^{-b}Y^{-b} = 1 $. Furthermore, recall that $ u^{-1}Y^bu = F^i(Y^b) $ (see Remark~\ref{Rem:on the notation for explicit splitting}). Hence $ \intaut (Y^b)\tilde{\phi}(1,(F^i(y)y^{-1})^b) (u.\Id_n) = Y^b (u(F^i(y)y^{-1})^b \Id_n) Y^{-b} = u F^i(Y^b)F^i(y^b)y^{-b}Y^{-b} = u.\Id_n $. Since $ \intaut (Y^b)\tilde{\phi}(1,(F^i(y)y^{-1})^b) $ also acts trivially on $ \SL_n(E)\leq \SL_n(D) $, this concludes the proof.
\end{proof}
\begin{remark}
When trying to find a section of $ \Aut (G\to \Spec K)\to \Aut_G(K) $ only defined on $ C_b $, this is the only formula we could come up with. Otherwise stated, the complicatedness of the formula defining $ f_{C_b} $ does not come from the need to adjust it to other partial sections of $ \Aut (G\to \Spec K)\to \Aut_G(K) $.
\end{remark}

Though not needed, we check that the automorphism $ \intaut (Y^j)\tilde{\phi}(\ev (\zeta^{aj}T) ,(F^i(y)y^{-1})^j) $ appearing in Proposition~\ref{Prop:constructing fCk(d)} does not depend on the choice of $ y $.
\begin{lemma}\label{Lem:the splitting on Cb does not depend on the choice of y}
Keep the notations of Proposition~\ref{Prop:constructing fCk(d)}. Let $ y'\in \mathbf{F}_{p^{idb}} $ such that $ \frac{F^{id}(y')}{y'} = \zeta^{ar} $. Let $ g' = \varphi ({y'}^{-1}) $ and let $ Y' = \diag (g',\dots ,g') $ where we have $ \frac{n}{b} $ terms. Then $ f_{C_b}(\ev (\zeta^{aj}T)) = \intaut ({Y'}^j)\tilde{\phi}(\ev (\zeta^{aj}T) ,(\frac{F^i(y')}{y'})^j) $.
\end{lemma}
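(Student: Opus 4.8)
The plan is to show that $y$ and $y'$ differ by an element of the smaller field $\mathbf{F}_{p^{id}}^\times$, and then to track how this scalar propagates through the definitions of $Y$, $Y'$ and the factor $(F^i(y)/y)^j$, so that the ambiguity cancels out inside $\intaut(\,\cdot\,)\tilde{\phi}(\,\cdot\,,\cdot)$. First I would observe that since $\frac{F^{id}(y)}{y} = \zeta^{ar} = \frac{F^{id}(y')}{y'}$, the quotient $w := y'/y$ satisfies $F^{id}(w) = w$, i.e.\ $w\in \mathbf{F}_{p^{id}}^\times$. Because the embedding $\varphi\colon \mathbf{F}_{p^{idb}}\to M_b(\mathbf{F}_{p^{id}})$ is a ring homomorphism extending the identity on $\mathbf{F}_{p^{id}}$, we get $g' = \varphi({y'}^{-1}) = \varphi(w^{-1})\varphi(y^{-1}) = w^{-1} g$ (here $\varphi(w^{-1}) = w^{-1}\Id_b$ since $w^{-1}\in \mathbf{F}_{p^{id}}$). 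Hence $Y' = w^{-1} Y$, a central scalar multiple of $Y$, so $\intaut({Y'}^j) = \intaut(w^{-j})\intaut(Y^j) = \intaut(Y^j)$, since scalars act trivially by conjugation in $\PGL_n$.

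Next I would compare the two reduced-norm-twisting data: $\frac{F^i(y')}{y'} = \frac{F^i(wy)}{wy} = \frac{F^i(w)}{w}\cdot\frac{F^i(y)}{y}$. Thus the two semilinear automorphisms $\intaut(Y^j)\tilde{\phi}(\ev(\zeta^{aj}T),(\frac{F^i(y)}{y})^j)$ and $\intaut(Y^j)\tilde{\phi}(\ev(\zeta^{aj}T),(\frac{F^i(y')}{y'})^j)$ have the same $\intaut$-part and underlying field automorphism but possibly different $x$-parameters in Lemma~\ref{Lem:automorphisms of A(d,r)}; they differ by $(\frac{F^i(w)}{w})^j$. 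So it suffices to check that changing $x$ by an element of the form $\frac{F^i(w)}{w}$ with $w\in \mathbf{F}_{p^{id}}^\times\subset E$ does not change the resulting automorphism $\tilde{\phi}$ of $\textbf{SL}_n(D)$. This is where I would use the structure of cyclic algebras: in $D = (E/K, F^i, T^r, u)$ one has $u^{-1}vu = F^i(v)$ for $v\in E$, so $\phi(\alpha, x\cdot\frac{F^i(w)}{w})$ and $\intaut(w)\circ\phi(\alpha, x)$ agree on $D$ (a direct check on the generators $u$ and on $E$, using that $\alpha$ commutes with $F^i$ on $E$ and acts trivially on $\mathbf{F}_{p^{id}}$). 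Passing to $M_n(D)$ via $\tilde{\cdot}$ and to $\textbf{SL}_n(D)$, this means $\tilde{\phi}(\ev(\zeta^{aj}T),(\frac{F^i(y')}{y'})^j) = \intaut(w^j\Id_n)\circ \tilde{\phi}(\ev(\zeta^{aj}T),(\frac{F^i(y)}{y})^j)$, and again $w^j\Id_n$ is a central scalar, hence trivial in $\PGL_n(D)$.

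Combining the two observations, the extra inner-conjugation factors ($w^{-j}$ from the matrix side and $w^j$ from the cyclic-algebra side) are both scalar, hence invisible in $\intaut$, and the two expressions coincide; this gives $f_{C_b}(\ev(\zeta^{aj}T)) = \intaut({Y'}^j)\tilde{\phi}(\ev(\zeta^{aj}T),(\frac{F^i(y')}{y'})^j)$ as claimed. The main obstacle I anticipate is the bookkeeping in the cyclic-algebra step: one must be careful that $\intaut(w)$ genuinely absorbs the discrepancy $\frac{F^i(w)}{w}$ in the twisted-polynomial description of $D$, i.e.\ that the identity $\phi(\alpha, x\frac{F^i(w)}{w}) = \intaut(w)\,\phi(\alpha,x)\,\intaut(w)^{-1}$ holds on the nose (checking it on $u$ uses $u\cdot u^{-1}wu = u\,F^i(w)$ and the relation $u^d = T^r\in K$, which $\alpha$ multiplies by the same factor on both sides). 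Everything else is routine, as it amounts to the remark, already used repeatedly in the paper, that scalar matrices act trivially under $\intaut$ and that a $K$-automorphism of $G$ is determined by its effect on $\SL_n(D)$.
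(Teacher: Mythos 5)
Your bookkeeping of the ambiguity is the right one, and it is essentially the paper's: setting $w=y'/y$, you correctly get $w\in\mathbf{F}_{p^{id}}^{\times}$, $Y'=w^{-1}Y$, and the identity $\phi(\alpha,x\,\tfrac{F^i(w)}{w})=\intaut(w)\circ\phi(\alpha,x)$ on $D$. But there is a genuine flaw in the way you dispose of the two resulting factors: twice you claim that $\intaut(w^{\mp j}\Id_n)$ is trivial ``since scalars act trivially by conjugation in $\PGL_n$''. That is false here. The center of $D$ is $K=\mathbf{F}_{p^i}(\!(T)\!)$, and $w$ lies in $\mathbf{F}_{p^{id}}$, not in $\mathbf{F}_{p^i}$ in general; conjugation by $w\Id_n$ fixes $\SL_n(E)$ but sends $u\Id_n$ to $u\tfrac{F^i(w)}{w}\Id_n\neq u\Id_n$ whenever $F^i(w)\neq w$, so $\intaut(w\Id_n)$ is a non-trivial automorphism of $\textbf{SL}_n(D)$. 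Neither of your two ``scalar'' factors is individually invisible, so as written both steps fail.

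The statement is nonetheless saved by the fact that the two illegitimate factors are inverse to each other and cancel: since $w\Id_n$ commutes with $Y$ (all entries lie in the commutative field $\mathbf{F}_{p^{id}}$), one has $\intaut({Y'}^j)\,\intaut(w^{j}\Id_n)=\intaut(w^{-j}Y^jw^{j}\Id_n)=\intaut(Y^j)$, and combining this with your (correct) composition identity gives the lemma. This cancellation is exactly what the paper checks: it writes the discrepancy between the two semilinear automorphisms as $\intaut(x^j\Id_n)\tilde{\phi}(1,(\tfrac{F^i(x)}{x})^j)$ with $x=y/y'\in\mathbf{F}_{p^{id}}$ and verifies directly that this fixes $u\Id_n$ (and $\SL_n(E)$), i.e.\ it proves the product is trivial rather than discarding each factor separately. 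One further inaccuracy: in your last paragraph you restate the key identity in the conjugated form $\phi(\alpha,x\tfrac{F^i(w)}{w})=\intaut(w)\phi(\alpha,x)\intaut(w)^{-1}$; since $\phi(\alpha,x)(w)=w$, the right-hand side equals $\phi(\alpha,x)$ itself, so this version is false --- the composition form $\intaut(w)\circ\phi(\alpha,x)$ from your second paragraph is the one that holds and should be used.
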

\begin{proof}
Those two elements of $ \Aut (G\to \Spec K) $ differ by $ \intaut (Y^j{Y'}^{-j})\tilde{\phi}(1,(\frac{F^i(y)y'}{F^i(y')y'})^j) $. Let $ x = y{y'}^{-1} $. Note that $ x $ belongs to $ \mathbf{F}_{p^{id}} $ because $ x $ is invariant under $ F^{id} $, and hence $ Y^j{Y'}^{-j} = \diag (g{g'}^{-1}, \dots ,g{g'}^{-1})^j $ is actually the diagonal matrix $ x^j.\Id_n $.

Hence $ \intaut (Y^j{Y'}^{-j})\tilde{\phi}(1,(\frac{F^i(y)y'}{F^i(y')y'})^j) = \intaut (x^j.\Id_n)\tilde{\phi}(1,(\frac{F^i(x)}{x})^j) $. But this automorphism is trivial, since $ \intaut (x^j.\Id_n)\tilde{\phi}(1,(\frac{F^i(x)}{x})^j) (u.\Id_n) = x^j.u(\frac{F^i(x)}{x})^j.x^{-j}\Id_n = u.\Id_n $.
\end{proof}

As before, we can also prove a converse to Proposition~\ref{Prop:constructing fCk(d)}.
\begin{proposition}
Keep the notations of Proposition~\ref{Prop:constructing fCk(d)}. If $ b $ does not divide $ n $, there does not exist a homomorphism $ C_b\to \Aut (G\to \Spec K) $ whose composition with $ \Aut (G\to \Spec K)\to \Aut_G(K) $ is the identity on $ C_b $.
\end{proposition}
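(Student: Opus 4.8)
The plan is to reduce the statement to Theorem~\ref{Thm:local non-splitting for SLn(D)}, exactly as in the converse to Proposition~\ref{Prop:constructing fJK}. First I would produce a subfield $ K'\le K $ with $ K/K' $ finite Galois and $ \Gal (K/K') = C_b $, where $ C_b $ denotes the copy of $ C_b $ sitting inside $ \Aut (K) $. The obvious candidate is $ K' = K^{C_b} $: since $ C_b $ is generated by $ \ev (\zeta^a T)\colon T\mapsto \zeta^a T $, which fixes $ \mathbf{F}_{p^i} $, a one-line computation on Laurent series gives $ K^{C_b} = \mathbf{F}_{p^i}(\!(T^b)\!) $, so that $ K/K' $ is finite Galois of degree $ b $ with Galois group $ C_b $, and $ \Aut_G(K/K') = \Gal (K/K') = C_b $ by Corollary~\ref{Cor:etxending auto to D}.

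Next I would invoke Theorem~\ref{Thm:local non-splitting for SLn(D)}: the sequence $ 1\to \Aut G\to \Aut (G\to \Spec K/K')\to \Aut_G(K/K')\to 1 $ splits if and only if $ \gcd (nd,b) $ divides $ n $. So the crux is the elementary arithmetic claim that, under the standing hypothesis $ \gcd (d^b,b) = b $, one has $ \gcd (nd,b)\mid n $ if and only if $ b\mid n $; the direction I need is that $ b\nmid n $ forces $ \gcd (nd,b)\nmid n $. To see this I would first note that $ b\mid d^b $ forces every prime divisor $ q $ of $ b $ to divide $ d $, hence $ v_q(d)\ge 1 $; then, picking $ q $ with $ v_q(b) > v_q(n) $, the identity $ v_q(\gcd (nd,b)) = \min(v_q(n)+v_q(d),\,v_q(b)) $ gives $ v_q(\gcd (nd,b)) > v_q(n) $ whichever term realises the minimum (using $ v_q(d)\ge 1 $ in the case $ v_q(b) > v_q(n)+v_q(d) $), so $ \gcd (nd,b) $ does not divide $ n $.

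Finally I would argue by contradiction. If $ s\colon C_b\to \Aut (G\to \Spec K) $ were a homomorphism whose composition with $ \Aut (G\to \Spec K)\to \Aut_G(K) $ is the inclusion of $ C_b $, then every element in the image of $ s $ lies over an automorphism of $ K $ belonging to $ C_b = \Gal (K/K') $, hence fixing $ K' $ pointwise, so $ s $ takes values in $ \Aut (G\to \Spec K/K') $ and is thus a section of $ \Aut (G\to \Spec K/K')\to \Aut_G(K/K') = C_b $. Since $ b\nmid n $, this contradicts Theorem~\ref{Thm:local non-splitting for SLn(D)} together with the arithmetic step above. The only genuinely delicate point is that number-theoretic step, and in particular checking that it uses the hypothesis $ \gcd (d^b,b)=b $ in exactly the way required; the rest is a transcription of the argument already carried out for $ J(K) $ and for $ C_a $.
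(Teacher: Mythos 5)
Your proposal is correct and follows essentially the same route as the paper: both take $K' = \mathbf{F}_{p^i}(\!(T^b)\!)$ (you obtain it as $K^{C_b}$, the paper names it directly), identify $\Gal(K/K') = C_b$, and conclude via Theorem~\ref{Thm:local non-splitting for SLn(D)}. The only difference is that you spell out the $q$-adic valuation argument showing $\gcd(nd,b)\nmid n$ when $b\nmid n$ and $b=\gcd(d^b,b)$, which the paper asserts in one line; your verification of that step is accurate.
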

\begin{proof}
By Theorem~\ref{Thm:local non-splitting for SLn(D)}, it suffices to prove that there exists $ K'\leq K $ such that $ K/K' $ is finite Galois with $ \Gal (K/K')\leq C_b $ and such that $ \gcd (nd,[K:K']) $ does not divide $ n $. Recall (see Definition~\ref{Def:notation for splitting explicitly}) that $ a $ is prime to $ b $ with $ ab = p^i-1 $ so that $ \zeta^a $ is a $ b $-th primitive root of unity of $ K $. Hence, $ K' = \mathbf{F}_{p^i}(\!(T^b)\!) $ is such that $ K/K' $ is Galois of degree $ b $, and $ \Gal (K/K') $ is generated by the automorphism of $ K $ sending $ T $ to $ \zeta^a T $, so that $ \Gal (K/K') = C_b $. Finally, $ \gcd (nd,[K:K']) = \gcd (nd,b) $ does not divide $ n $ because by definition $ b = \gcd (d^b,b) $.
\end{proof}

Finally, we construct a section to $ \Aut (G\to \Spec K)\to \Aut (K) $ for $ \Gal (\mathbf{F}_{p^i}(\!(T)\!)/\mathbf{F}_{p}(\!(T)\!)) $.

\begin{proposition}\label{Prop:constructing fCa'}
Keep the notations of Definition~\ref{Def:notation for splitting explicitly}. Let $ C_{a'} $ be the group generated by $ F^{b'} $ in $ \Gal (K/\mathbf{F}_{p}(\!(T)\!)) $. Let $ c\in \mathbf{N} $ be such that $ ca'+1\in d\mathbf{Z} $ $ ( $which exists because $ \gcd (a',d) = 1) $. Recalling the notation introduced in Remark~\ref{Rem:abstract automorphisms are algebraic}, the map
\begin{align*}
f_{C_{a'}}\colon C_{a'}&\to \Aut (G\to \Spec K)\\
F^{b'j} &\mapsto \tilde{\phi}(F^{j(ci+b')},1)
\end{align*}
is a homomorphism. Furthermore, its composition with the map $ \Aut (\! G\! \to \Spec K)\to \Aut_G(K) $ is the identity on $ C_{a'} $.
\end{proposition}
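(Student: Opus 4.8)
The plan is to verify in turn the three assertions about $f_{C_{a'}}$: that it is well defined, that it is a group homomorphism, and that its composition with the map $\Aut(G\to\Spec K)\to\Aut_G(K)$ is the identity on $C_{a'}$. Throughout I use that $F$ has order $id$ on $E=\mathbf{F}_{p^{id}}(\!(T)\!)$ and order $i$ on $K=\mathbf{F}_{p^i}(\!(T)\!)$, that $F$ fixes $T$, and that $C_{a'}=\langle F^{b'}\rangle$ is cyclic of order $a'$ (since $F^{b'a'}=F^i$ acts trivially on $K$).

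First I would check that each $\tilde\phi(F^{j(ci+b')},1)$ is a legitimate element of $\Aut(G\to\Spec K)$. The automorphism $F^{j(ci+b')}$ of $E$ preserves $K$, and the norm condition required in Lemma \ref{Lem:automorphisms of A(d,r)} and Definition \ref{Def:notation for automorphism of SLn(D)} reads $N_{E/K}(1)=\frac{F^{j(ci+b')}(T^r)}{T^r}$, whose two sides are both $1$ because $F$ fixes $T$; so $\phi(F^{j(ci+b')},1)$ is a ring automorphism of $D$ and, via Remark \ref{Rem:abstract automorphisms are algebraic}, $\tilde\phi(F^{j(ci+b')},1)$ is a semilinear automorphism of $G$. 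For the assignment to descend from $\mathbf{Z}$ to $C_{a'}$ one needs $\tilde\phi(F^{a'(ci+b')},1)=\id_G$, and this is the single place where the choice of $c$ enters: $a'(ci+b')=i(ca'+1)\in id\mathbf{Z}$, so $F^{a'(ci+b')}$ is the identity on $E$, and then $\tilde\phi(F^{a'(ci+b')},1)=\tilde\phi(\id_E,1)=\id_G$ directly from the formula defining $\phi$. Hence $f_{C_{a'}}$ is well defined.

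Next I would establish the homomorphism property by invoking the composition rule for the maps $\tilde\phi$ already exploited in the proofs of Propositions \ref{Prop:constructing fJK}--\ref{Prop:constructing fCk(d)}: when all $x$-parameters are $1$ (so that in particular there are no conjugation factors to carry around), $\tilde\phi(\gamma_1,1)\circ\tilde\phi(\gamma_2,1)=\tilde\phi(\gamma_1\gamma_2,1)$. Taking $\gamma_1=F^{j(ci+b')}$ and $\gamma_2=F^{j'(ci+b')}$ gives $f_{C_{a'}}(F^{b'j})\circ f_{C_{a'}}(F^{b'j'})=\tilde\phi(F^{(j+j')(ci+b')},1)=f_{C_{a'}}(F^{b'(j+j')})$, as needed.

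Finally, for the last assertion I would use that a map of the form $\tilde\phi(\beta,x)$ projects to $\beta|_K$ under $\Aut(G\to\Spec K)\to\Aut_G(K)$ --- the same bookkeeping already made in Corollary \ref{Cor:etxending auto to D}, keeping in mind the convention $f_\varphi\mapsto\varphi^{-1}$. Thus $f_{C_{a'}}(F^{b'j})$ projects to $F^{j(ci+b')}|_K$, and since $F^i$ acts trivially on $K$ this equals $(F|_K)^{jb'}$, which is precisely the element $F^{b'j}$ of $C_{a'}\leq\Aut(K)=\Aut_G(K)$ (the last equality by Corollary \ref{Cor:etxending auto to D}). So the composition is the identity on $C_{a'}$. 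The argument is essentially routine; the only delicate points are the arithmetic choice of $c$ forcing $F^{a'(ci+b')}$ to act trivially on $E$, and keeping track of the $f_\varphi\mapsto\varphi^{-1}$ convention in the last step. Note that, in contrast with Propositions \ref{Prop:constructing fJK}, \ref{Prop:constructing fCl(d)} and \ref{Prop:constructing fCk(d)}, no conjugation factor $\intaut(\cdot)$ and no divisibility hypothesis on $n$ is needed here, which is why the proof is shorter.
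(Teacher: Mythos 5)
Your proof is correct and follows essentially the same route as the paper: the key points in both are that $F^{j(ci+b')}$ restricts to $F^{jb'}$ on $K$ (since $F^i$ is trivial on $K$) and that $a'(ci+b')=i(ca'+1)\in id\mathbf{Z}$ forces $F^{a'(ci+b')}$ to act trivially on $E$, giving well-definedness. The extra details you supply (the trivially satisfied norm condition $N_{E/K}(1)=F^{j(ci+b')}(T^r)/T^r$ because $F$ fixes $T$, the composition rule for $\tilde\phi(\cdot,1)$, and the bookkeeping of the $f_\varphi\mapsto\varphi^{-1}$ convention) are left implicit in the paper but are all accurate.
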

\begin{proof}
Since $ F^i $ acts trivially on $ K $, $ F^{j(ci+b')} $ is indeed an extension of $ F^{jb'} $ (seen as restricted to $ K $) to $ E $. Furthermore, $ F^{a'(ci+b')} $ acts trivially on $ E = \mathbf{F}_{p^{id}}(\!(T)\!) $, because $ a'(ci+b') = i(ca'+1)\in id\Z $ by definition of $ c $. Hence, $ f_{C_{a'}} $ is indeed a well-defined homomorphism.
\end{proof}

We need one more bit of notation before defining the last portion of the section $ \Aut (G\to \Spec K)\to \Aut_G(K) $  on $ \Gal (K/\mathbf{F}_p(\!(T)\!)) $.
\begin{definition}\label{Def:the matrix w}
With the notations of Definition~\ref{Def:notation for splitting explicitly}, let $ 0_b $ be the zero $ b\times b $ matrix, and let $ w $ be the following $ bb'\times bb' $ matrix:
$$ w = \begin{pmatrix}
0_b  & 0_b  & 0_b  &\dots & 0_b  & u\Id_b\\
\Id_b & 0_b  & 0_b  & \dots & 0_b  & 0_b \\
0_b  & \Id_b & 0_b &\dots & 0_b & 0_b \\
0_b &  0_b &\Id_b&\dots & 0_b & 0_b \\
\vdots &\vdots & \vdots & \ddots & \vdots &\vdots \\
0_b  & 0_b  & 0_b  &\dots &\Id_b& 0_b \\
\end{pmatrix} $$
\end{definition}
\begin{proposition}\label{Prop:constructing fCb'}
Keep the notations of Definition~\ref{Def:notation for splitting explicitly}. Assume that $ bb' $ divides $ n $. Let $ C_{b'} $ be the group generated by $ F^{a'} $ in $ \Gal (K/\mathbf{F}_{p}(\!(T)\!)) $. With the notations of Definition~\ref{Def:the matrix w}, let $ W = \diag (w,\dots ,w) $ where we have $ \frac{n}{bb'} $ terms $ ( $so that $ W $ is a $ n\times n $ matrix which is block diagonal with coefficients in the set $ \lbrace 0,1,u\rbrace )$. Recalling the notation introduced in Remark~\ref{Rem:abstract automorphisms are algebraic}, the map
\begin{align*}
f_{C_{b'}}\colon C_{b'}&\to \Aut (G\to \Spec K)\\
F^{a'j} &\mapsto \intaut (W^j)\tilde{\phi}(F^{a'j},1)
\end{align*}
is a homomorphism. Furthermore, its composition with the map $ \Aut (\! G\! \to \Spec K)\to \Aut_G(K) $ is the identity on $ C_{b'} $.
\end{proposition}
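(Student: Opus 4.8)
The plan is to realise $f_{C_{b'}}$ as the factorisation through $C_{b'}$ of the map $\Psi\colon\Z\to\Aut(G\to\Spec K)$, $j\mapsto\intaut(W^{j})\tilde{\phi}(F^{a'j},1)$, to check that $\Psi$ is a homomorphism with $\Psi(b')=\id$ (so that it factors through $\Z/b'\Z\cong C_{b'}$, the isomorphism being $j\mapsto F^{a'j}$, legitimate because $F^{a'b'}=F^{i}$ is trivial on $K$), and finally to read off the section property from Remark~\ref{Rem:abstract automorphisms are algebraic}. Two routine preliminaries: since $F^{a'j}$ fixes $T$ we have $F^{a'j}(T^{r})/T^{r}=1=N_{E/K}(1)$, so Definition~\ref{Def:notation for automorphism of SLn(D)} applies with $x=1$ and produces a semilinear automorphism $\tilde{\phi}(F^{a'j},1)$ of $G$ lying over $(F^{a'j})^{-1}$ by Remark~\ref{Rem:abstract automorphisms are algebraic}; and $W=\diag(w,\dots,w)$ with $n/(bb')$ blocks makes sense precisely because $bb'\mid n$, with $W$ invertible in $M_n(D)$ since $u\in D^{\times}$, so $\intaut(W^{j})\in\Aut G$.

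For the homomorphism property I would use three elementary compatibilities: $\tilde{\phi}(F^{a'j},1)\tilde{\phi}(F^{a'j'},1)=\tilde{\phi}(F^{a'(j+j')},1)$ (the powers of $F$ commute and the composition law is visible from the formula of Lemma~\ref{Lem:automorphisms of A(d,r)}); $\intaut(g)\intaut(h)=\intaut(gh)$; and, because $\tilde{\phi}(F^{a'j},1)$ is induced by a ring automorphism of $M_n(D)$, the conjugation rule $\tilde{\phi}(F^{a'j},1)\,\intaut(g)\,\tilde{\phi}(F^{a'j},1)^{-1}=\intaut(\tilde{\phi}(F^{a'j},1)(g))$. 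Since $\phi(F^{a'j},1)$ fixes $0$, $1$ and $u$ in $D$, the entrywise map $\tilde{\phi}(F^{a'j},1)$ fixes $W$ (all of whose entries lie in $\{0,1,u\}$), hence fixes $W^{j'}$ for every $j'$; combining the three compatibilities then gives $\Psi(j)\Psi(j')=\intaut(W^{j}\,\tilde{\phi}(F^{a'j},1)(W^{j'}))\,\tilde{\phi}(F^{a'(j+j')},1)=\intaut(W^{j+j'})\tilde{\phi}(F^{a'(j+j')},1)=\Psi(j+j')$.

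The heart of the matter is $\Psi(b')=\id$, which is exactly where the corner entry $u\Id_b$ of $w$ does its job. Reading $w$ as the block shift $(v^{(1)},\dots,v^{(b')})\mapsto(uv^{(b')},v^{(1)},\dots,v^{(b'-1)})$ of $D^{bb'}$, iterating $b'$ times sends each block once around the wrap-around and so left-multiplies it by a single factor $u$; thus $w^{b'}=u\,\Id_{bb'}$ and $W^{b'}=u\,\Id_n$. On the other hand, since $u^{-1}xu=F^{i}(x)$ for $x\in E$ (Remark~\ref{Rem:on the notation for explicit splitting}), the formula of Lemma~\ref{Lem:automorphisms of A(d,r)} yields $\phi(F^{i},1)(y)=u^{-1}yu$ for all $y\in D$, i.e.\ $\tilde{\phi}(F^{a'b'},1)=\tilde{\phi}(F^{i},1)=\intaut(u^{-1}\Id_n)$ as automorphisms of $G$ (both sides are $K$-algebraic and agree on rational points). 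Hence $\Psi(b')=\intaut(W^{b'})\tilde{\phi}(F^{i},1)=\intaut(u\Id_n)\intaut(u^{-1}\Id_n)=\intaut(\Id_n)=\id$, and $\Psi$ descends to $f_{C_{b'}}$ on $C_{b'}$.

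Finally, composing $f_{C_{b'}}$ with $\Aut(G\to\Spec K)\to\Aut_G(K)$ sends the class of $j$ to the field automorphism underlying $\intaut(W^{j})\tilde{\phi}(F^{a'j},1)$; the inner factor is $K$-algebraic, and $\tilde{\phi}(F^{a'j},1)$ lies over $(F^{a'j})^{-1}$, so the image in $\Aut_G(K)=\Aut(K)$ (Corollary~\ref{Cor:etxending auto to D}) is $F^{a'j}$, i.e.\ the element of $C_{b'}$ one started with. I expect the only non-mechanical points to be the two identities $w^{b'}=u\,\Id_{bb'}$ and $\tilde{\phi}(F^{i},1)=\intaut(u^{-1}\Id_n)$: together they say that the twist produced by $F^{i}$ acting nontrivially on $E$ but trivially on $K$ is precisely the inner automorphism that $W^{b'}$ cancels; everything else is bookkeeping inside the semidirect product $\Aut(G_0\to\Spec k_s)\cong\Aut G_0\rtimes\Aut(k_s)$.
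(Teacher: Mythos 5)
Your proposal is correct and follows essentially the same route as the paper: the paper's proof also reduces everything to the well-definedness check, computing $W^{b'}=u\,\Id_n$ and using $uxu^{-1}=F^{-i}(x)$ on $E$ to see that $\intaut(u\Id_n)\tilde{\phi}(F^{i},1)$ is trivial on $\SL_n(D)$, which is exactly your identity $\tilde{\phi}(F^{i},1)=\intaut(u^{-1}\Id_n)$ in slightly different packaging. Your explicit verifications of the homomorphism property (via $\tilde{\phi}(F^{a'},1)$ fixing $W$) and of the section property are steps the paper leaves implicit, and they check out.
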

\begin{proof}
The only assertion that requires a justification is that the map is well-defined, i.e.\ we have to check that $ (\intaut (W)\tilde{\phi}(F^{a'},1))^{b'} $ is the identity on $ \SL_n(D) $. By definition, $ W^{b'} $ is the scalar matrix $ u.\Id_n $. Hence $ (\intaut (W)\tilde{\phi}(F^{a'},1))^{b'} = \intaut(W^{b'})\tilde{\phi}(F^{a'b'},1) = \intaut(u.\Id_n)\tilde{\phi}(F^{i},1) $, which indeed acts as the identity on $ \SL_n(D) $ because for all $ x\in E $, $ uxu^{-1} = F^{-i}(x) $ (see Remark~\ref{Rem:on the notation for explicit splitting}).
\end{proof}
\begin{remark}
Note that if one is just interested in a section defined on $ C_{b'} $ alone, one can take $ b=1 $ in Proposition~\ref{Prop:constructing fCb'}. Hence, a section of $ \Aut (G\to \Spec K)\to \Aut_G(K) $ only defined on $ C_{b'} $ exists if and only if $ b' $ divides $ n $ (i.e.\ the stronger assumption that $ bb' $ divides $ n $ is there to ensure that we can glue $ f_{C_{b'}} $ with $ f_{C_b} $).
\end{remark}

As before, we have a converse to Proposition~\ref{Prop:constructing fCb'}.
\begin{proposition}
Keep the notations of Proposition~\ref{Prop:constructing fCb'}. If $ b' $ does not divide $ n $, there does not exist a homomorphism $ C_{b'}\to \Aut (G\to \Spec K) $ whose composition with $ \Aut (G\to \Spec K)\to \Aut_G(K) $ is the identity on $ C_{b'} $.
\end{proposition}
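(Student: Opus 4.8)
The plan is to mimic the converse arguments already used for $J(K)$, $C_b$, and $C_{b'}$ in this subsection, reducing everything to Theorem~\ref{Thm:local non-splitting for SLn(D)}. Suppose for contradiction that such a homomorphism $C_{b'}\to \Aut (G\to \Spec K)$ exists splitting the projection over $C_{b'}$. Since $C_{b'}$ is generated by $F^{a'}$ acting on $K=\mathbf{F}_{p^i}(\!(T)\!)$, it is a finite cyclic group, and it is realised as $\Gal(K/K')$ for a suitable subfield $K'\leq K$ with $K/K'$ finite Galois. Concretely, $K' = K^{C_{b'}}$ is the fixed field of $C_{b'}$, which is $\mathbf{F}_{p^{a'}}(\!(T)\!)$ (since $F^{a'}$ fixes $\mathbf{F}_{p^{a'}}$ pointwise and acts trivially on $T$), so that $[K:K'] = b'$. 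Composing the given section with $\Aut(G\to\Spec K)\to\Aut_G(K)$ being the identity on $C_{b'}$, and noting $C_{b'}\leq \Aut_G(K)$ by Corollary~\ref{Cor:etxending auto to D}, we get a splitting of $1\to \Aut G\to \Aut(G\to \Spec K/K')\to \Aut_G(K/K')\to 1$ over $\Gal(K/K')$, hence by Theorem~\ref{Thm:local non-splitting for SLn(D)} we would need $\gcd(nd,[K:K'])$ to divide $n$, i.e.\ $\gcd(nd,b')$ divides $n$.

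Next I would derive a contradiction from the arithmetic. By Definition~\ref{Def:notation for splitting explicitly}~\eqref{Def:decomposing C(i)}, $b'$ was chosen so that $\gcd(d^{i},b') = b'$, which is to say $b' \mid d^{i}$; equivalently, every prime dividing $b'$ divides $d$. Therefore $\gcd(nd, b') = \gcd(nd, b')$ contains the full $p$-part of $b'$ for every prime $p \mid b'$, since each such $p$ divides $d$ and hence $nd$. More precisely, for a prime $p\mid b'$ we have $v_p(b')\leq v_p(d^i) $ but also $v_p(\gcd(nd,b')) = \min(v_p(nd), v_p(b')) = v_p(b')$ provided $v_p(b')\leq v_p(nd)$, which holds because $v_p(nd)\geq v_p(d)\geq 1$ and we can push further: actually the clean statement is that $\gcd(nd,b') = b'$ already, since $b'\mid d^i \mid (nd)^i$ forces every prime power $p^{v_p(b')}$ dividing $b'$ to divide $nd$ once we observe $v_p(b')\le i\cdot v_p(d)\le$ hmm — here I must be slightly careful, so the cleanest route is: $\gcd(nd,b')$ divides $n$ iff $b'\mid n$ (using $b'\mid d^i$, every prime factor of $b'$ divides $d$, so $\gcd(nd,b')$ and the condition that it divides $n$ is exactly the condition $b'\mid n$, arguing prime-by-prime as in the Remark after Theorem~\ref{Thm:local non-splitting for SLn(D)}). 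Thus the necessary condition $\gcd(nd,b')\mid n$ is equivalent to $b'\mid n$, contradicting the hypothesis that $b'$ does not divide $n$.

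The main obstacle here is purely the arithmetic bookkeeping: one must argue carefully, prime by prime, that the divisibility condition $\gcd(nd,b')\mid n$ coming from Theorem~\ref{Thm:local non-splitting for SLn(D)} is precisely $b'\mid n$, exploiting that $b' = \gcd(d^i,b')$ (hence $\mathrm{rad}(b')\mid \mathrm{rad}(d)$, so every prime dividing $b'$ divides $d$ and therefore $nd$). Given that, $\min(v_p(nd),v_p(b')) = v_p(b')$ for all $p\mid b'$, so $\gcd(nd,b') = b'$, and $b'\mid n$ is forced — contradiction. The rest is verbatim the same template as the converse propositions for $J(K)$, $C_b$, and $C_{b'}$ already in the text: exhibit the subfield $K'$ with $\Gal(K/K')\leq C_{b'}$, invoke Corollary~\ref{Cor:etxending auto to D} to know $C_{b'}\subseteq\Aut_G(K)$, and quote Theorem~\ref{Thm:local non-splitting for SLn(D)}.

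\begin{proof}
By Theorem~\ref{Thm:local non-splitting for SLn(D)}, it suffices to exhibit a subfield $ K'\leq K $ such that $ K/K' $ is finite Galois with $ \Gal (K/K')\leq C_{b'} $ and such that $ \gcd (nd,[K:K']) $ does not divide $ n $. Recall (see Definition~\ref{Def:notation for splitting explicitly}~\eqref{Def:decomposing C(i)}) that $ C_{b'} $ is generated by $ F^{a'} $, which acts trivially on $ T $ and fixes $ \mathbf{F}_{p^{a'}} $ pointwise. Hence $ K' = \mathbf{F}_{p^{a'}}(\!(T)\!) $ is such that $ K/K' $ is Galois of degree $ b' $ with $ \Gal (K/K') $ generated by $ F^{a'} $, so that $ \Gal (K/K') = C_{b'} $. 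It remains to check that $ \gcd (nd,[K:K']) = \gcd (nd,b') $ does not divide $ n $. Since $ \gcd (d^{i},b') = b' $, every prime dividing $ b' $ divides $ d $, and hence divides $ nd $; therefore $ \gcd (nd,b') = b' $, which does not divide $ n $ by assumption. This contradicts Theorem~\ref{Thm:local non-splitting for SLn(D)} (using that $ C_{b'}\leq \Aut_G(K) $ by Corollary~\ref{Cor:etxending auto to D}), so no such homomorphism exists.
\end{proof}
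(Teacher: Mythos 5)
Your overall strategy is exactly the paper's: reduce to Theorem~\ref{Thm:local non-splitting for SLn(D)} by exhibiting the subfield $K'=\mathbf{F}_{p^{a'}}(\!(T)\!)$, for which $K/K'$ is Galois of degree $b'$ with $\Gal(K/K')=C_{b'}$, and then check that $\gcd(nd,[K:K'])=\gcd(nd,b')$ does not divide $n$. The paper states this last verification without detail, and your conclusion is correct; however, the justification you give in the final proof contains a false intermediate claim. From ``every prime dividing $b'$ divides $nd$'' you conclude $\gcd(nd,b')=b'$, i.e.\ $b'\mid nd$, which is not true in general: take $d=2$, $i=8$ (so $b'=8$) and $n=1$; then $nd=2$ and $\gcd(nd,b')=2\neq b'$. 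Knowing the primes of $b'$ divide $nd$ controls the radicals, not the full prime powers.

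The statement you actually need is weaker and does hold: $\gcd(nd,b')$ divides $n$ if and only if $b'$ divides $n$. Indeed, for each prime $p\mid b'$ one has $p\mid d$ (since $b'=\gcd(d^i,b')$), so $v_p(nd)=v_p(n)+v_p(d)>v_p(n)$, and hence $\min(v_p(nd),v_p(b'))\le v_p(n)$ is equivalent to $v_p(b')\le v_p(n)$; running over all such $p$ gives the equivalence, and the hypothesis $b'\nmid n$ then yields $\gcd(nd,b')\nmid n$ as required. You in fact sketched precisely this prime-by-prime argument in your preliminary discussion (and it is the same reasoning the paper invokes, via the remark after Theorem~\ref{Thm:local non-splitting for SLn(D)}, in the analogous converse for $C_b$), so the fix is simply to replace the line ``therefore $\gcd(nd,b')=b'$'' by this equivalence; with that substitution your proof matches the paper's.
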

\begin{proof}
By Theorem~\ref{Thm:local non-splitting for SLn(D)}, it suffices to prove that there exists $ K'\leq K $ such that $ K/K' $ is finite Galois, $ \Gal (K/K')\leq C_{b'} $  and $ \gcd (nd,[K:K']) $ does not divide $ n $. But $ K' = \mathbf{F}_{p^{a'}}(\!(T)\!) $ is such a subfield.
\end{proof}

We can finally glue all the previous constructions to obtain a global splitting of the initial short exact sequence.
\begin{theorem}\label{Thm:splitting for SLn(D) in char. p}
Keep the notations of Definition~\ref{Def:notation for splitting explicitly}. Assume that for all subfields $ K'\leq K $ such that $ K/K' $ is finite Galois, $ \gcd (nd,[K:K']) $ divides $ n $. Then the short exact sequence $ 1\to \Aut G\to \Aut (G\to \Spec K)\to \Aut_G(K)\to 1 $ splits.
\end{theorem}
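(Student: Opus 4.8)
The plan is to build a homomorphic section of the projection $\Aut(G\to\Spec K)\to\Aut_G(K)=\Aut(K)$ (the last equality being Corollary~\ref{Cor:etxending auto to D}) by gluing together the explicit partial sections $f_{J(K)}$, $f_{C_a}$, $f_{C_b}$, $f_{C_{a'}}$, $f_{C_{b'}}$ constructed in Propositions~\ref{Prop:constructing fJK}, \ref{Prop:constructing fCl(d)}, \ref{Prop:constructing fCk(d)}, \ref{Prop:constructing fCa'}, \ref{Prop:constructing fCb'} along the decomposition $\Aut(K)\cong\big(J(K)\rtimes(C_a\times C_b)\big)\rtimes(C_{a'}\times C_{b'})$.

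The first step is to check that the standing hypotheses of those propositions --- namely $\gcd(p,d)=1$ and $bb'\mid n$ --- are forced by the divisibility assumption of the theorem, by feeding the assumption suitable finite Galois subextensions of $K$. If $p\mid d$, take a $p$-subgroup $H\leq J(\mathbf{F}_p(\!(T)\!))\leq J(K)$ of order $p^{n}$ (such an $H$ exists by \cite{C97}*{Theorem~3}) and apply the assumption to $K/K^{H}$: since $p\mid d$ one gets $p^{v_p(n)+1}\mid\gcd(nd,p^{n})$, which does not divide $n$ --- a contradiction, so $\gcd(p,d)=1$. For $bb'\mid n$ it suffices to check $v_q(bb')\leq v_q(n)$ for each prime $q\mid d$ (for any other $q$ one has $v_q(bb')=0$); note that $q\neq p$ now. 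Writing $s=v_q(i)$, $t=v_q(p^i-1)$ and $i=q^{s}i_0$, the automorphism $\rho\colon T\mapsto\omega T$ given by a primitive $q^{t}$-th root of unity $\omega\in\mathbf{F}_{p^i}^{\times}$ (which exists since $q^{t}\mid p^i-1$) is normalized by the Frobenius power $F^{i_0}\in\Gal(K/\mathbf{F}_p(\!(T)\!))$ (conjugation sends $\rho$ to $\rho^{p^{i_0}}$), which has order $q^{s}$, so $\langle\rho,F^{i_0}\rangle$ is a finite subgroup of $\Aut(K)$ of order $q^{t+s}=q^{v_q(bb')}$. Applying the assumption to $K/K^{\langle\rho,F^{i_0}\rangle}$, whose degree is $q^{t+s}$, and using $q\mid d$, forces $t+s\leq v_q(n)$. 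Hence $bb'\mid n$, and all five partial sections are available.

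For the gluing, I would use that each partial section has the shape $\alpha\mapsto\intaut(g)\,\tilde\phi(\beta,x)$ with $\beta\in\Aut(E)$ restricting to $\alpha$ on $K$, with $x\in E^{\times}$ satisfying $N_{E/K}(x)=\alpha(T^r)/T^r$, and with $g$ a block-diagonal matrix over $\mathbf{F}_{p^i}$ or $\mathbf{F}_{p^{id}}$ (or, in the case of $W$, with entries in $\{0,1,u\}$). The composition law $\tilde\phi(\beta_1,x_1)\,\tilde\phi(\beta_2,x_2)=\tilde\phi(\beta_1\beta_2,\,x_1\,\beta_1(x_2))$ and the conjugation law $\tilde\phi(\beta,x)\,\intaut(g)\,\tilde\phi(\beta,x)^{-1}=\intaut\big(\tilde\phi(\beta,x)(g)\big)$ reduce the verification that the combined map is a homomorphism to a finite list of identities: that $f_{C_a}$ and $f_{C_b}$ have commuting images (so they assemble to a section on $\mathbf{F}_{p^i}^{\times}$), that conjugation by $\mathbf{F}_{p^i}^{\times}$ matches its action $a_m\mapsto a_m\zeta^{k(m-1)}$ on $J(K)$, that $f_{C_{a'}}$ and $f_{C_{b'}}$ have commuting images, and that conjugation by $\Gal(K/\mathbf{F}_p(\!(T)\!))$ matches its action on $J(K)$ and on $\mathbf{F}_{p^i}^{\times}$. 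Each identity unwinds to a block-matrix computation in which a given $\tilde\phi(\beta,\cdot)$ either fixes the entries of the relevant matrix or transforms the auxiliary root (one of $x_\alpha$, $z$, $y$) exactly as required; the normalizations in Definition~\ref{Def:notation for splitting explicitly} and the extra $\intaut$-factors in the partial sections were chosen precisely so that these match (which is why several of the formulas are more complicated than strictly needed for a section on a single component). Once the list checks out, the combined map $\Aut(K)\to\Aut(G\to\Spec K)$ is a homomorphic section, which is the claim.

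I expect the gluing step to be the main obstacle: the partial sections are easy in isolation, but assembling them requires keeping careful track of all the conjugation relations of the iterated semidirect product $\big(J(K)\rtimes(C_a\times C_b)\big)\rtimes(C_{a'}\times C_{b'})$, and it is here that one genuinely uses $bb'\mid n$ --- it guarantees that the block-diagonal matrices all live in $M_n$ with compatible block structure --- together with the precise choices of the roots $x_\alpha$, $z$, $y$, of the embedding $\varphi$, and of the matrix $W$.
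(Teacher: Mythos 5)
Your plan follows the paper's proof in both structure and substance: derive $\gcd(p,d)=1$ and $bb'\mid n$ from the hypothesis, then glue the five partial sections along $\Aut(K)\cong\bigl(J(K)\rtimes(C_a\times C_b)\bigr)\rtimes(C_{a'}\times C_{b'})$. The one place where you genuinely deviate is the first step: the paper quotes Proposition~\ref{Prop:existence of Galois subfield of some degree} (whose relevant direction is proved via local class field theory together with the splitting field of $X^{p^i-1}-T$), whereas you produce the needed finite Galois subextensions directly as fixed fields of explicit finite subgroups of $\Aut(K)$: a $p$-subgroup of $J(K)$ coming from \cite{C97} when $p\mid d$, and, for each prime $q\mid d$, the subgroup $\langle\rho,F^{i_0}\rangle$ of order $q^{v_q(p^i-1)+v_q(i)}=q^{v_q(bb')}$. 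This is correct (Artin's lemma gives the Galois extensions, and your valuation bookkeeping with $v_q(d)\ge 1$ does force $\gcd(p,d)=1$ and $v_q(bb')\le v_q(n)$), and it is somewhat more elementary than the paper's route.

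The caveat concerns the gluing, which is where almost all of the paper's proof actually lives. You correctly identify the composition law $\tilde\phi(\beta_1,x_1)\tilde\phi(\beta_2,x_2)=\tilde\phi(\beta_1\beta_2,\,x_1\beta_1(x_2))$, the conjugation law for $\intaut$, and the finite list of compatibilities to check, but you then assert that each identity ``unwinds'' favourably because the normalizations were chosen to make it so. That is precisely the content that has to be proved: the paper carries out nine explicit verifications, and a few are not routine --- for instance, conjugating $f_{C_b}$ by $f_{C_{b'}}(F^{a'})$ requires a blockwise computation with $W$ that ends with the identity $u\,g\,\tfrac{y}{F^i(y)}\,u^{-1}=g$, and conjugating $f_{C_a}$ by $f_{C_{b'}}(F^{a'})$ produces $F^{a'}(Z)$ only up to the central scalar $z\Id_n$, so one must use that inner automorphisms are insensitive to the centre. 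Nothing in your outline would fail (the paper's computations confirm every identity on your list), but as written the proposal defers rather than performs the decisive step.
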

\begin{proof}
In view of Proposition~\ref{Prop:existence of Galois subfield of some degree}, the hypotheses imply that $ \gcd(d,p)=1 $ and $ \gcd (nd,i(p^i-1)) $ divides $ n $. Hence $ bb' $ divides $ n $ and we can apply Propositions~\ref{Prop:constructing fJK},~\ref{Prop:constructing fCl(d)},~\ref{Prop:constructing fCk(d)},~\ref{Prop:constructing fCa'} and~\ref{Prop:constructing fCb'}. For the rest of the proof, we strictly adhere to the notations that are introduced in the statements of those propositions.

Recall that $ \Aut_G(K) = \Aut (K) $ (Corollary~\ref{Cor:etxending auto to D}). Also recall that we decomposed $ \Aut (K) $ as $ (J(K)\rtimes (C_a\times C_b))\rtimes (C_{a'}\times C_{b'}) $, where
\begin{enumerate}[(i)]
\item $ C_a\leq \mathbf{F}_{p^i}^{\times} $ is generated by $ \zeta^b $.
\item $ C_b\leq \mathbf{F}_{p^i}^{\times} $ is generated by $ \zeta^a $.
\item $ C_{a'}\leq Gal (K/\mathbf{F}_p(\!(T)\!)) $ is generated by $ F^{b'} $ restricted to $ K $.
\item $ C_{b'}\leq Gal (K/\mathbf{F}_p(\!(T)\!)) $ is generated by $ F^{a'} $ restricted to $ K $.
\end{enumerate}

We define a map
\begin{align*}
f\colon &(J(K)\rtimes (C_a\times C_b))\rtimes (C_{a'}\times C_{b'})\to \Aut (G\to \Spec K)\\
&(g_1 ,g_2,g_3,g_4,g_5)\mapsto f_{J(K)}(g_1)f_{C_a}(g_2)f_{C_b}(g_3)f_{C_{a'}}(g_4)f_{C_{b'}}(g_5)
\end{align*}
We claim that $ f $ is a homomorphism. To prove this claim, it suffices to compute various commutators in $ \Aut (G\to \Spec K) $. To carry the computation, we pick $ j,j' \in \mathbf{N} $.
\begin{enumerate}
\item The images of $ f_{C_a} $ and $ f_{C_b} $ commute. Indeed, $ \intaut (Z^j)\tilde{\phi}(\ev (\zeta^{bj}T) ,z^{b'j})$ readily commutes with $ \intaut (Y^{j'})\tilde{\phi}(\ev (\zeta^{aj'}T) ,(\frac{F^i(y)}{y})^{j'}) $ (note that $ Y $ and $ Z $ are both $ b\times b $ block diagonal matrices, and that the blocks defining $ Z $ are scalars).

\item Let $ \alpha \in J(K) $. We compute $ f_{C_a}(\ev (\zeta^{bj}T))f_{J(K)}(\alpha)f_{C_a}(\ev (\zeta^{-bj}T)) $:
\begin{align*}
&\intaut (Z^j)\tilde{\phi}(\ev (\zeta^{bj}T),z^{b'j})\circ \intaut (X_{\alpha})\tilde{\phi}(\alpha ,x_{\alpha}^{b'})\circ \intaut (Z^{-j})\tilde{\phi}(\ev (\zeta^{-bj}T),z^{-b'j}) \\
&= \intaut (Z^j \ev (\zeta^{bj}T)(X_{\alpha}) \alpha (Z^{-j})) \tilde{\phi}(\ev (\zeta^{bj}T)\circ \alpha \circ \ev (\zeta^{-bj}T) ,z^{b'j}\ev (\zeta^{bj}T)(x_{\alpha}^{b'}) \alpha (z^{-b'j})) \\
&= \intaut (\ev (\zeta^{bj}T)(X_{\alpha})) \tilde{\phi}(\ev (\zeta^{bj}T)\circ \alpha \circ \ev (\zeta^{-bj}T) ,\ev (\zeta^{bj}T)(x_{\alpha})^{b'})
\end{align*}
where the last equality follows from the fact that $ Z $ and $ X_{\alpha} $ commutes because they are block diagonal matrices, together with the equality $ \alpha (Z^{-j}) = Z^{-j} $ which holds because $ Z $ has coefficients in $ \mathbf{F}_{p^i} $. 

But $ \ev (\zeta^{bj}T) (x_{\alpha}) = x_{\ev (\zeta^{bj}T)\circ \alpha \circ \ev (\zeta^{-bj}T)} $, because $ \ev (\zeta^{bj}T) (x_{\alpha}) $ belongs to $ 1+T\mathbf{F}_{p^i}[\![T]\!] $, and 
\begin{align*}
\ev (\zeta^{bj}T)(x_{\alpha})^{b'd} &= \ev (\zeta^{bj}T)(\frac{\alpha (T^r)}{T^r}) \\
&= \zeta^{-bj}.\dfrac{\ev (\zeta^{bj}T)\alpha (T^r)}{T^r} \\
&= \dfrac{(\ev (\zeta^{bj}T)\circ \alpha \circ \ev (\zeta^{-bj}T)) (T^r)}{T^r}
\end{align*}
Hence $ f_{C_a}(\ev (\zeta^{bj}T))f_{J(K)}(\alpha)f_{C_a}(\ev (\zeta^{-bj}T)) = f_{J(K)}(\ev (\zeta^{bj}T)\circ \alpha \circ \ev (\zeta^{-bj}T)) $.

\item The equality $ f_{C_b}(\ev (\zeta^{aj}T))f_{J(K)}(\alpha)f_{C_b}(\ev (\zeta^{-aj}T)) =f_{J(K)}(\ev (\zeta^{aj}T)\circ \alpha \circ \ev (\zeta^{-aj}T))  $ is proved by doing a similar computation than in the previous item.

\item The images of $ f_{C_{a'}} $ and $ f_{C_{b'}} $ commute. Indeed, $ \tilde{\phi}(F^{j(ci+b')},1) $ readily commutes with $ \intaut (W^{j'}) \tilde{\phi}(F^{a'j'},1) $ (recall that $ W $ has coefficients in $ \lbrace 0,1,u\rbrace $).

\item We check that $ f_{C_{a'}}(F^{b'j})f_{C_b}(\ev (\zeta^{aj'}T))f_{C_{a'}}(F^{-b'j}) = f_{C_b}(F^{b'j} \circ \ev (\zeta^{aj'}T)\circ F^{-b'j}) $. We have
\begin{align*}
&\tilde{\phi}(F^{j(ci+b')},1)\circ \intaut (Y^{j'})\tilde{\phi}(\ev (\zeta^{aj'}T) ,(\frac{F^i(y)}{y})^{j'})\circ \tilde{\phi}(F^{-j(ci+b')},1)\\
&=  \intaut (F^{j(ci+b')}(Y^{j'}))\tilde{\phi}(F^{j(ci+b')}\circ \ev (\zeta^{aj'}T)\circ F^{-j(ci+b')},F^{j(ci+b')}((\frac{F^i(y)}{y})^{j'}))
\end{align*}

Noting that $ F^{j(ci+b')}\circ \ev (\zeta^{aj'}T)\circ F^{-j(ci+b')} = \ev (F^{j(ci+b')}(\zeta^{aj'})T) $, the desired equality follows from the fact that the Frobenius automorphism on $ \mathbf{F}_{p^{id}} $ is just elevating to the power $ p $.

\item One readily check that $ f_{C_{a'}}(F^{b'j})f_{C_a}(\ev (\zeta^{bj'}T))f_{C_{a'}}(F^{-b'j}) = f_{C_a}(F^{b'j} \circ \ev (\zeta^{bj'}T)\circ F^{-b'j}) $.

\item We have $ f_{C_{a'}}(F^{b'j})f_{J(K)}(\alpha)f_{C_{a'}}(F^{-b'j}) = f_{J(K)}(F^{b'j} \circ \alpha\circ F^{-b'j}) $. Indeed, $ F^{j(ci+b')}(x_{\alpha}) $  belongs to $ 1+T\mathbf{F}_{p^i}[\![T]\!] $, and  $ F^{j(ci+b')}(x_{\alpha})^{b'd} \! =\!  F^{j(ci+b')}(\frac{\alpha (T^r)}{T^r}) = \dfrac{F^{j(ci+b')}\alpha F^{-j(ci+b')} (T^r)}{T^r} $ because $ F $ acts trivially on $ T $. Hence
\begin{align*}
&\tilde{\phi}(F^{j(ci+b')},1)\intaut (X_{\alpha})\tilde{\phi}(\alpha ,x_{\alpha}^{b'})\tilde{\phi}(F^{-j(ci+b')},1) \\
&=\intaut (F^{j(ci+b')}(X_{\alpha})) \tilde{\phi}(F^{j(ci+b')}\alpha F^{-j(ci+b')} ,F^{j(ci+b')}(x_{\alpha}^{b'}))\\
&= \intaut (X_{F^{j(ci+b')}\alpha F^{-j(ci+b')}})\tilde{\phi}(F^{j(ci+b')}\alpha F^{-j(ci+b')} ,x^{b'}_{F^{j(ci+b')}\alpha F^{-j(ci+b')}})
\end{align*}
as wanted.

\item We check that $ f_{C_{b'}}(F^{a'j})f_{C_b}(\ev (\zeta^{aj'}T))f_{C_{b'}}(F^{-a'j}) = f_{C_b}(F^{a'j} \circ \ev (\zeta^{aj'}T)\circ F^{-a'j}) $. It is obviously enough to check this when $ j=j'=1 $. Then
\begin{align*}
&\intaut (W)\tilde{\phi}(F^{a'},1)\circ \intaut (Y)\tilde{\phi}(\ev (\zeta^{a}T) ,\frac{F^i(y)}{y})\circ \intaut (W^{-1})\tilde{\phi}(F^{-a'},1) = \\
&\intaut (WF^{a'}(Y.\tilde{\phi}(\ev (\zeta^{a}T) ,\frac{F^i(y)}{y})(W^{-1})))\tilde{\phi}(F^{a'}\circ \ev (\zeta^{a}T)\circ F^{-a'},F^{a'}(\frac{F^i(y)}{y}))
\end{align*}

Again, since $ F^{a'}\circ \ev (\zeta^{a}T)\circ F^{-a'} = \ev (F^{a'}(\zeta^{a})T) $, it suffices to show that the matrix appearing in the argument of $ \intaut () $ is equal to $ F^{a'}(Y) $. Since $ W $ is made up of $ bb'\times bb' $ block matrices, whilst $ Y $ is made up of $ b\times b $ block matrices, we can work with one block at a time. Otherwise stated, we may assume that $ bb' = n $. Now $ \tilde{\phi}(\ev (\zeta^{a}T) ,\frac{F^i(y)}{y})(W^{-1}) $ is the matrix
\begin{equation*}
\begin{pmatrix}
0_b & \Id_b & 0_b  & 0_b  & \dots   & 0_b \\
0_b & 0_b  & \Id_b & 0_b &\dots  & 0_b \\
0_b & 0_b &  0_b &\Id_b&\dots  & 0_b \\
\vdots & \vdots &\vdots & \vdots & \ddots  &\vdots \\
0_b  & 0_b  & 0_b  & 0_b  &\dots &\Id_b \\
(u\frac{F^i(y)}{y})^{-1}.\Id_b & 0_b &  0_b & 0_b &\dots & 0_b
\end{pmatrix}
\end{equation*} so that the matrix appearing in the argument of $ \intaut () $ is the $ n\times n $ block diagonal matrix $ \diag (uF^{a'}(g(u\frac{F^i(y)}{y})^{-1}),F^{a'}(g),\dots ,F^{a'}(g)) $. But $ uF^{a'}(g(u\frac{F^i(y)}{y})^{-1}) = F^{a'}(ug\frac{y}{F^i(y)}u^{-1}) $. Recalling the embedding $ \varphi \colon \mathbf{F}_{p^{idb}}\to M_b(\mathbf{F}_{p^{id}}) $ of Proposition~\ref{Prop:constructing fCk(d)}, we have
\begin{align*}
ug\frac{y}{F^i(y)}u^{-1} &= u\varphi(y^{-1}) \varphi (y) F^i(\varphi (y)^{-1})u^{-1}\\
&= F^{-i}(F^i(g)) = g
\end{align*}
We conclude that the argument of $ \intaut () $ is indeed $ \diag (F^{a'}(g),F^{a'}(g),\dots ,F^{a'}(g)) = F^{a'}(Y) $, as wanted.

\item Let us check that $ f_{C_{b'}}(F^{a'j})f_{C_a}(\ev (\zeta^{bj'}T))f_{C_{b'}}(F^{-a'j}) = f_{C_a}(F^{a'j} \circ \ev (\zeta^{bj'}T)\circ F^{-a'j}) $. It is obviously enough to check this when $ j=j'=1 $. Then
\begin{align*}
&\intaut (W)\tilde{\phi}(F^{a'},1)\intaut (Z)\circ \tilde{\phi}(\ev (\zeta^{b}T) ,z^{b'})\intaut (W^{-1})\tilde{\phi}(F^{-a'},1) = \\
&\intaut (WF^{a'}(Z.\tilde{\phi}(\ev (\zeta^{b}T) ,z^{b'})(W^{-1})))\tilde{\phi}(F^{a'}\circ \ev (\zeta^{b}T)\circ F^{-a'},F^{a'}(z^{b'}))
\end{align*}

Again, since $ F^{a'}\circ \ev (\zeta^{b}T)\circ F^{-a'} = \ev (F^{a'}(\zeta^{b})T) $, it suffices to show that the matrix appearing in the argument of $ \intaut () $ is equal to $ F^{a'}(Z) $. As in the previous item, we can assume that $ bb'=n $, and doing the same kind of computation as in the previous item, we find that the matrix in the argument of $ \intaut () $ is $ F^{a'}(\diag (uz^{-1}u^{-1}\Id_b,\Id_b,z\Id_b,\dots ,z^{b'-2}\Id_b)) $. Since $ z\in \mathbf{F}_{p^{id}} $, $ uz^{-1}u^{-1} = z^{-1} $, and we conclude that multiplication by the scalar matrix $ z\Id_n $ (which belongs to the center of $ \GL_n(D) $ because $ z\in \mathbf{F}_{p^{i}} $), the argument appearing in $ \intaut () $ is equal to $ F^{a'}(Z) $, as wanted.

\item Checking that $ f_{C_{b'}}(F^{a'j})f_{J(K)}(\alpha)f_{C_{b'}}(F^{-a'j}) = f_{J(K)}(F^{a'j} \circ \alpha\circ F^{-a'j}) $ is a similar computation than in the previous item.
\end{enumerate}
We conclude that $ f $ is indeed a homomorphism. The fact that $ f $ is a splitting of the short exact sequence in the statement of the proposition follows from the fact that the restriction of $ f $ to each component is locally a section of $ \Aut (G\to \Spec K)\to \Aut_G(K) $.
\end{proof}

The first step in the proof of Theorem~\ref{Thm:splitting for SLn(D) in char. p} is to translate the existence of Galois subfields of some degree into some divisibility relations between $ p,d,i $ and $ p^i-1 $. We now prove the ad hoc proposition. We warn the reader that the notations of Definition~\ref{Def:notation for splitting explicitly} (and of the subsequent propositions) are not in use any more.
\begin{proposition}\label{Prop:existence of Galois subfield of some degree}
Let $ K = \mathbf{F}_{p^i}(\!(T)\!) $, let $ q $ be a prime number and let $ a\in \N $. There exists a subfield $ K' $ such that $ K/K' $ is finite Galois and $ q^a $ divides $[K:K'] $ if and only if $ q=p $ or $ q^a $ divides $ i(p^i-1) $.
\end{proposition}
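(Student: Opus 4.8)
The statement is about when $K = \mathbf{F}_{p^i}(\!(T)\!)$ admits a finite Galois subextension $K/K'$ with $q^a \mid [K:K']$. The plan is to analyze the structure of $\Aut(K)$ (already decomposed in the excerpt as $(J(K) \rtimes \mathbf{F}_{p^i}^\times) \rtimes \Gal(K/\mathbf{F}_p(\!(T)\!))$) together with the structure of its finite subgroups, since $\Gal(K/K')$ for $K/K'$ finite Galois is exactly a finite subgroup $H \le \Aut(K)$ with $K' = K^H$ and $[K:K'] = |H|$. So the problem reduces to: for which prime powers $q^a$ does $\Aut(K)$ contain a finite subgroup of order divisible by $q^a$?

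First I would handle the ``if'' direction, splitting into two cases. If $q = p$: I will invoke the cited result \cite{C97}*{Theorem~3} (already used in the excerpt) that every finite $p$-group embeds into $J(\mathbf{F}_p(\!(T)\!)) \le J(K)$; taking $H$ of order $p^a$ and $K' = K^H$ gives a Galois extension of degree $p^a$, and $J(K)$ consists of automorphisms of $p$-power order so all of this is internally consistent. If $q \ne p$ and $q^a \mid i(p^i-1)$: here I want to produce a cyclic subgroup of $\Aut(K)$ of order $q^a$ inside the ``prime-to-$p$'' part $\mathbf{F}_{p^i}^\times \rtimes \Gal(K/\mathbf{F}_p(\!(T)\!))$. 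Note $|\mathbf{F}_{p^i}^\times \rtimes \Gal(K/\mathbf{F}_p(\!(T)\!))| = (p^i-1)\cdot i = i(p^i-1)$. Since $\mathbf{F}_{p^i}^\times$ is cyclic of order $p^i-1$ (acting by $T \mapsto \zeta T$) and $\Gal(K/\mathbf{F}_p(\!(T)\!))$ is cyclic of order $i$ (generated by Frobenius $F$), write $q^a = q^{a_1} q^{a_2}$ with $q^{a_1} \| p^i - 1$ contributing and $q^{a_2}$ from $i$ (more precisely split according to $v_q(p^i-1)$ and $v_q(i)$), and build the subgroup generated by a suitable power of $\zeta$ and a suitable power of $F$; the semidirect product will contain an element, or at least a subgroup, of order divisible by $q^a$. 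One must be slightly careful that $F$ and $\mathbf{F}_{p^i}^\times$ do not commute, but since both factors are cyclic and one only needs a subgroup of the stated order (not necessarily cyclic), choosing $H = \langle \zeta^{(p^i-1)/q^{a_1}} \rangle \rtimes \langle F^{i/q^{a_2}}\rangle$ works directly.

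For the ``only if'' direction, suppose $K/K'$ is finite Galois with $H = \Gal(K/K')$ and $q \ne p$; I must show $q^a \mid i(p^i-1)$. The key structural fact is that $J(K)$ is a pro-$p$ group (its elements are automorphisms $T \mapsto T + \sum_{j\ge 2} a_j T^j$, which have $p$-power order), so $J(K) \cap H$ is a $p$-group, and since $q \ne p$ the image of $H$ in $\Aut(K)/J(K) \cong \mathbf{F}_{p^i}^\times \rtimes \Gal(K/\mathbf{F}_p(\!(T)\!))$ still has order divisible by $q^a$. But $\mathbf{F}_{p^i}^\times \rtimes \Gal(K/\mathbf{F}_p(\!(T)\!))$ has order exactly $i(p^i-1)$, so $q^a \mid i(p^i-1)$, as required.

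\textbf{Main obstacle.} The genuinely delicate point is the ``only if'' step, specifically the claim that $J(K)$ is a pro-$p$ group and that a finite subgroup of $\Aut(K)$ has trivial-enough intersection with it. One needs that any automorphism in $J(K)$ of finite order has order a power of $p$ — this follows because the quotients of $J(K)$ by the congruence filtration (by powers of $T$) are finite $p$-groups and $J(K)$ is the inverse limit, so any finite subgroup of $J(K)$ is a $p$-group. Once that is in place, the index computation $|\mathbf{F}_{p^i}^\times \rtimes \Gal(K/\mathbf{F}_p(\!(T)\!))| = i(p^i - 1)$ finishes it. I expect verifying the pro-$p$ structure of $J(K)$ carefully (or citing it cleanly, e.g.\ from the local field / Nottingham group literature) to be the part requiring the most attention, while the ``if'' direction is essentially a matter of exhibiting explicit subgroups.
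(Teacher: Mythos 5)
Your proof is correct, but it takes a genuinely different route from the paper's. You argue entirely inside $\Aut(K)$: for the ``only if'' direction you use that $J(K)$ is a normal pro-$p$ subgroup of $\Aut (K)$ with quotient $\mathbf{F}_{p^i}^{\times}\rtimes \Gal (\mathbf{F}_{p^i}/\mathbf{F}_p)$ of order $i(p^i-1)$, so that for $q\neq p$ the $q$-part of the finite group $\Gal (K/K')$ survives in a group of order $i(p^i-1)$; for the ``if'' direction you exhibit finite subgroups $H\leq \Aut (K)$ (a group of order $p^a$ inside $J(K)$ via \cite{C97}, and for $q\neq p$ a subgroup of $C_{p^i-1}\rtimes \Gal (K/\mathbf{F}_p(\!(T)\!))$) and take $K'=K^H$, using Artin's lemma. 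The paper instead argues field-theoretically: for ``only if'' it reduces to $[K:\tilde{K}]=q^a$, splits off the maximal unramified subextension (giving $q^{a_1}\mid i$), and uses that the remaining totally ramified part is tame when $q\neq p$, forcing a primitive $q^{a_2}$-th root of unity in $\mathbf{F}_{p^i}$, i.e.\ $q^{a_2}\mid p^i-1$; for ``if'' it builds $L_1$ (a totally ramified abelian extension of degree $p^a$ supplied by Lubin--Tate theory, $\Gal (K_{\pi}/K)\cong \mathbf{F}_{p^i}[\![T]\!]^{\times}$) and $L_2$ (the splitting field of $X^{p^i-1}-T$ over $\mathbf{F}_p(\!(T)\!)$) and pulls back along abstract isomorphisms $K\cong L_j$. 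In fact your prime-to-$p$ construction is the paper's $L_2$ seen from inside $K$: the fixed field of the full finite subgroup $C_{p^i-1}\rtimes \Gal (K/\mathbf{F}_p(\!(T)\!))$ is $\mathbf{F}_p(\!(T^{p^i-1})\!)$, of index $i(p^i-1)$; for the wild part you replace class field theory by Camina's embedding theorem, which the paper already invokes in the proposition on $J(K)$ when $\gcd (p,d)\neq 1$, so no new input is required. What your route buys is a uniform, purely group-theoretic argument that fits the decomposition of $\Aut (K)$ used throughout Section~\ref{Sec:SL_n(D)}; what it costs is two routine verifications that you should make explicit: that $J(K)$ is normal in all of $\Aut (K)$ (the map $\alpha \mapsto (a_1(\alpha ),\alpha \vert_{\mathbf{F}_{p^i}})$ recording the leading coefficient of $\alpha (T)$ and the induced residue automorphism is a surjective homomorphism onto $\mathbf{F}_{p^i}^{\times}\rtimes \Gal (\mathbf{F}_{p^i}/\mathbf{F}_p)$ with kernel $J(K)$, so the quotient really has order $i(p^i-1)$), and that every finite subgroup of $J(K)$ is a $p$-group (your congruence-filtration argument: $H\cap J_n$ is eventually trivial, so $H$ embeds in a finite $p$-group $J_1/J_n$). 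The paper's route, by contrast, produces the subfields by explicit field-theoretic constructions and avoids the Nottingham group for the wild part.
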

\begin{proof}
First assume that such a $ K' $ exists. Since $ K/K' $ is Galois and $ q^a$ divides $[K:K'] $, there exists $ \tilde{K} $ such that $ K/\tilde{K} $ is Galois and $ [K:\tilde{K}] = q^a $. Up to replacing $ K' $ by $ \tilde{K} $, we can thus assume that $ [K:K'] = q^a $. Let also $ K'_{ur} $ be the maximal unramified extension of $ K' $ inside $ K $.

Note that $ K' $ and $ K'_{ur} $ are local fields, so that in particular $ K'\cong \mathbf{F}_{p^j}(\!(T)\!) $ and $ K'_{ur}\cong \mathbf{F}_{p^i}(\!(T)\!) $. Since $ [K'_{ur}:K'] $ divides $ q^a $, there exists $ a_1 $ such that $ q^{a_1} = \frac{i}{j} $. Letting $ a_2 = a-a_1 $, we have that $ K/K'_{ur} $ is a totally ramified extension of degree $ q^{a_2} $.

If $ p=q $, the proposition is proved, hence there just remains to investigate the case $ p\neq q $.  In this case, $ K $ is a tamely totally ramified extension of $ K'_{ur}$. Thus, $ K $ is isomorphic to $ K'_{ur}[X]/(X^{q^{a_2}}-\pi) $ for some uniformiser $ \pi\in \mathbf{F}_{p^i}(\!(T)\!) $. But $ K $ is a Galois extension, and hence this implies that $ \mathbf{F}_{p^i}(\!(T)\!) $ has a primitive $ q^{a_2} $-th root of unity, so that $ q^{a_2} $ divides $ p^i-1 $, as wanted.

To prove the converse, we use a classical fact from local class field theory: there exists an extension $ K_{\pi} $ of $ K $ which is Galois and totally ramified, and such that $ \Gal (K_{\pi}/K)$ is isomorphic to the group of invertible elements $ \mathbf{F}_{p^i}[\![T]\!]^{\times} $ of $ \mathbf{F}_{p^i}[\![T]\!] $ (see for example \cite{Iwa86}*{Section~5.3}). Note that the degree of $ \mathbf{F}_{p^i}^{\times}+T^{a+1}\mathbf{F}_{p^i}[\![T]\!] $ in $ \mathbf{F}_{p^i}[\![T]\!]^{\times} $ is equal to $ p^a $. Let $ L_1 $ be the Galois extension of $ K $ corresponding to $ \mathbf{F}_{p^i}^{\times}+T^{a+1}\mathbf{F}_{p^i}[\![T]\!] $. Let also $ L_2 $ be the splitting field of $ X^{p^i-1}-T $ over $ \mathbf{F}_{p}(\!(T)\!) $. For $ j=1$ or $ 2 $, $ L_j $ is totally ramified of finite degree over $ K $, so that there exists an isomorphism $ \phi_j \colon K\to L_j $. Hence $ K_1 = \phi_1^{-1}(K) $ (respectively $ K_2 = \phi_2^{-1}(\mathbf{F}_{p}(\!(T)\!)) $) is such that $ K/K_1 $ (respectively $ K/K_2 $) is Galois, and $ [K:K_1] = p^a $ (respectively $ [K:K_2] = i(p^i-1) $), which concludes the proof.
\end{proof}

\appendix
\section{Base change of the algebraic group \texorpdfstring{$\textbf{SL}_n(D)$}{SLn(D)}}
We begin by recalling some classical facts about finite dimensional central simple algebras over local fields.
\begin{theorem}\label{Thm:classificatio nof CSA over local fields}
Let $ K $ be a local field. Every central simple algebra over $ K $ is isomorphic to an algebra of the form $ A(d,r) $ as in Definition~\ref{Def:the CSA A_(d,r)}. Furthermore, the map $ \inv \colon Br(K)\to \mathbf{Q}/\mathbf{Z}\colon [A(d,r)]\mapsto [\frac{r}{d}] $ is an isomorphism of groups.
\end{theorem}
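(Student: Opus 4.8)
The plan is to deduce the statement from the classical computation of the Brauer group of a local field, so the bulk of the argument consists of citing standard references (essentially \cite{S79}*{Chapters XII--XIII}; see also \cite{KMRT98}). First I recall that $Br(K)\cong H^2(\Gal(K_s/K),K_s^{\times})$, where $K_s$ is a separable closure, and that the maximal unramified extension $K_{ur}$ of $K$ has trivial Brauer group, $Br(K_{ur})=0$. Hence inflation gives $Br(K)\cong H^2(\Gal(K_{ur}/K),K_{ur}^{\times})$. Taking cohomology of the valuation sequence $1\to \mathcal{O}_{K_{ur}}^{\times}\to K_{ur}^{\times}\xrightarrow{v}\Z\to 0$ and using $H^q(\Gal(K_{ur}/K),\mathcal{O}_{K_{ur}}^{\times})=0$ for $q\ge 1$, one obtains $Br(K)\cong H^2(\widehat{\Z},\Z)\cong H^1(\widehat{\Z},\mathbf{Q}/\mathbf{Z})\cong \mathbf{Q}/\mathbf{Z}$, the last isomorphism being evaluation at the Frobenius. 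This constructs the homomorphism $\inv$ and shows it is bijective; what remains is to identify the classes $[A(d,r)]$ and to see that every central simple algebra has such a class.

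Next I would compute $\inv([A(d,r)])$. The cyclic algebra $A(d,r)=A(K_d/K,\sigma,\pi^{r})$ is split by the unramified extension $K_d\subseteq K_{ur}$, and in the crossed-product description its class in $H^2(\Gal(K_{ur}/K),K_{ur}^{\times})$ is the cup product $\chi_d\cup(\pi^{r})$, where $\chi_d\in H^1(\Gal(K_{ur}/K),\mathbf{Q}/\mathbf{Z})$ is the character cutting out $K_d$ and sending the Frobenius to $\tfrac{1}{d}$. Pushing this through the chain of isomorphisms of the first paragraph, and using $v(\pi^{r})=r$, yields $\inv([A(d,r)])=[\tfrac{r}{d}]$, which is exactly the normalisation in the statement; in particular $\inv$ is surjective, and the computation is consistent with the recollection that a cyclic algebra splits precisely when the chosen element is a norm ($A(d,r)$ is split over $K$ iff $d\mid r$ iff $\pi^{r}\in N_{K_d/K}(K_d^{\times})$).

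For the assertion that every central simple $K$-algebra $A$ is of the form $A(d,r)$, I would argue as follows. By Wedderburn, $A\cong M_m(D)$ for a division $K$-algebra $D$, unique up to isomorphism, with $[A]=[D]$ in $Br(K)$; write $\inv([D])=[\tfrac{s}{t}]$ with $0\le s<t$ and $\gcd(s,t)=1$. The algebra $A(t,s)$ has $K$-dimension $t^2$, so its index divides $t$; on the other hand $[A(t,s)]=[\tfrac{s}{t}]$ has order $t$ in $Br(K)\cong\mathbf{Q}/\mathbf{Z}$, and the period divides the index, so $A(t,s)$ has index exactly $t$ and is therefore a division algebra, whence $A(t,s)\cong D$ by uniqueness. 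The same argument shows that $A(mt,ms)$, of dimension $(mt)^2$ and class $[\tfrac{ms}{mt}]=[\tfrac{s}{t}]$, has index $t$, so $A(mt,ms)\cong M_{m}(D)\cong A$; thus $A\cong A(mt,ms)$. Finally, $\inv$ is a homomorphism (hence a group isomorphism) because the cup-product construction makes the invariant additive on $Br(K)$.

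The only genuine obstacle is the input from local class field theory, namely the vanishing $Br(K_{ur})=0$ and of $H^{q}(\Gal(K_{ur}/K),\mathcal{O}_{K_{ur}}^{\times})$ for $q\ge 1$; since the theorem is advertised as a recollection of classical facts, I would simply cite \cite{S79} (or \cite{KMRT98}) for these rather than reprove them. With that in hand, the only new bookkeeping is the normalisation $\inv([A(d,r)])=[r/d]$ and the identification $M_m(A(t,s))\cong A(mt,ms)$, both of which are elementary.
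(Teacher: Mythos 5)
Your proposal is correct. The difference from the paper is one of presentation rather than substance: the paper disposes of this theorem purely by citation (Morandi's notes for the fact that every central simple algebra over $K$ is of the form $A(d,r)$, and Pierce's book for the statement that $\inv$ is an isomorphism), whereas you reconstruct the classical proof yourself, citing local class field theory only for the two cohomological vanishing statements and then doing the bookkeeping: the chain $Br(K)\cong H^2(\Gal(K_{ur}/K),K_{ur}^{\times})\cong H^2(\widehat{\Z},\Z)\cong \mathbf{Q}/\mathbf{Z}$, the cup-product computation $\inv([A(d,r)])=[r/d]$, and the period--index argument showing $A(t,s)$ is division when $\gcd(s,t)=1$, so that $A\cong M_m(D)\cong A(mt,ms)$. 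That last argument is clean and correct (degree of $A(t,s)$ is $t$, period of $[s/t]$ is $t$, period divides index divides degree), and it effectively reproves, en route, the identification $A(d,r)\cong M_a(A(\frac{d}{a},\frac{r}{a}))$ with $a=\gcd(d,r)$ that the paper isolates separately as Corollary~\ref{Cor:CSA over local fields and Wedderburn}. What your route buys is a more self-contained and conceptually transparent treatment (and it makes the additivity of $\inv$, hence the homomorphism property, visible from the cup product); what it costs is length and a dependence on correctly tracking the normalisation of the invariant map, which the paper avoids entirely by deferring to the standard references.
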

\begin{proof}
See for example \cite{Mor97}*{Theorem~8} for the first assertion, while the second is precisely the content of \cite{Pie82}*{Chapter~17, \S 10, Theorem}.
\end{proof}

\begin{corollary}\label{Cor:CSA over local fields and Wedderburn}
Let $ K $ be a local field and let $ d,r \in \mathbf{N} $ with $ d\geq 1 $. Let $ a = \gcd (d,r) $. Then $ A(d,r) $ is a division algebra if and only if $ a=1 $, and $ A(d,r)\cong M_a(A(\frac{d}{a},\frac{r}{a})) $.
\end{corollary}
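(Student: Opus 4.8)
The plan is to exploit Theorem~\ref{Thm:classificatio nof CSA over local fields}, which simultaneously parametrises central simple $K$-algebras and computes their Brauer class via the invariant map $\inv \colon Br(K)\to \mathbf{Q}/\mathbf{Z}$. Recall, as noted after Definition~\ref{Def:the cyclic algebra A_(l/k, sigma , a)}, that $A(d,r) = A(K_d/K,\sigma,\pi^r)$ is central simple over $K$, and that as a $K$-vector space it has dimension $d\cdot [K_d:K] = d^2$; thus $A(d,r)$ has degree $d$, and by Theorem~\ref{Thm:classificatio nof CSA over local fields} we have $\inv([A(d,r)]) = [\frac{r}{d}]$ in $\mathbf{Q}/\mathbf{Z}$.

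The first step is the coprime case: if $\gcd(d,r) = 1$, then $A(d,r)$ is a division algebra. Write $A(d,r)\cong M_n(D)$ with $D$ a division $K$-algebra, necessarily of some degree $e$ with $d = ne$. Applying Theorem~\ref{Thm:classificatio nof CSA over local fields} to $D$ gives $D\cong A(e,s)$ for some $s\in \mathbf{N}$, and since $[A(d,r)] = [D]$ in $Br(K)$, injectivity of $\inv$ yields $[\frac{r}{d}] = [\frac{s}{e}]$. The order of $[\frac{r}{d}]$ in $\mathbf{Q}/\mathbf{Z}$ is exactly $d$ because $\gcd(d,r) = 1$, whereas the order of $[\frac{s}{e}]$ divides $e$; hence $d\leq e$. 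As $d = ne\geq e$, this forces $n = 1$, so $A(d,r) = D$ is a division algebra.

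The second step reduces the general case to this one. Put $a = \gcd(d,r)$ and write $d = a d_0$, $r = a r_0$ with $\gcd(d_0,r_0) = 1$, so that $A(d_0,r_0)$ is a division algebra of degree $d_0$ by the first step. Then $M_a(A(d_0,r_0))$ is a central simple $K$-algebra of degree $a d_0 = d$ and
\[
\inv\bigl([M_a(A(d_0,r_0))]\bigr) = \inv\bigl([A(d_0,r_0)]\bigr) = \Bigl[\frac{r_0}{d_0}\Bigr] = \Bigl[\frac{r}{d}\Bigr] = \inv\bigl([A(d,r)]\bigr).
\]
By injectivity of $\inv$, the algebras $A(d,r)$ and $M_a(A(d_0,r_0))$ lie in the same Brauer class; being central simple $K$-algebras of the same $K$-dimension $d^2$, their Wedderburn decompositions have isomorphic division parts, and equality of dimensions then forces equal matrix sizes, so $A(d,r)\cong M_a(A(d_0,r_0)) = M_a(A(\frac{d}{a},\frac{r}{a}))$. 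Finally, $A(d,r)$ is a division algebra precisely when $a = 1$: for $a = 1$ this is the first step, and for $a\geq 2$ the algebra $M_a(A(\frac{d}{a},\frac{r}{a}))$ manifestly contains zero divisors.

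I expect no serious obstacle in this argument; the only point deserving care is the last isomorphism step, namely that two central simple algebras of equal dimension lying in the same Brauer class are isomorphic. This is immediate from the uniqueness statement in Wedderburn's theorem: isomorphic division parts together with equal total dimension pin down the matrix size. Everything else comes down to the elementary computation of orders of elements $[\frac{m}{n}]$ of $\mathbf{Q}/\mathbf{Z}$ and the bijectivity of $\inv$ supplied by Theorem~\ref{Thm:classificatio nof CSA over local fields}.
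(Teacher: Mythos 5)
Your proof is correct and follows essentially the same route as the paper: both rest on Theorem~\ref{Thm:classificatio nof CSA over local fields} (the classification of central simple algebras over $K$ together with the invariant isomorphism) and Wedderburn's theorem. The only cosmetic difference is that you handle the coprime case via the order of $[\frac{r}{d}]$ in $\mathbf{Q}/\mathbf{Z}$ and then match $A(d,r)$ with $M_a(A(\frac{d}{a},\frac{r}{a}))$ by comparing invariant and dimension, whereas the paper invokes the minimal-degree characterisation of division algebras in a Brauer class and identifies the division part of $A(d,r)$ directly; the mathematical content is the same.
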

\begin{proof}
The central simple algebra $ A(d,r) $ is a division algebra if and only if all central simple algebras over $ K $ in the same Brauer class have a higher degree. In view of Theorem~\ref{Thm:classificatio nof CSA over local fields}, it readily implies that $ A(d,r) $ is a division algebra if and only if $ a=1 $. Furthermore, by Wedderburn's theorem, $ A(d,r) $ is isomorphic to $ M_n(D) $ for some division algebra $ D $ and some $ 1\leq n \in \mathbf{N} $, and by definition of the Brauer group, $ [D] = [A(d,r)] $. Hence, using the first part of the Theorem, $ D\cong A(\frac{d}{a},\frac{r}{a}) $. Now, comparing degrees readily imply that $ n=a $, and the result is proved.
\end{proof}

We now study the base change of the algebraic group $ SL_n(A) $.
\begin{lemma}\label{Lem:base change of SL_1}
Let $ A $ be a central simple algebra over a field $ k $, and let $ \textbf{SL}_1(A) $ be the corresponding algebraic $ k $-group (see Definition~\ref{Def:algebraic SL_n(D)}). For $ k' $ a field extension of $ k $, $ \textbf{SL}_1(A)_{k'} = \textbf{SL}_1(A\otimes_k k') $.
\end{lemma}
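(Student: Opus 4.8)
The plan is to reduce everything to the definition of base change for affine group schemes and the fact that $\textbf{SL}_1(A)$ is defined as a closed subscheme of the general linear group of $A$ cut out by the reduced norm. Recall from Definition~\ref{Def:algebraic SL_n(D)} that for a central simple $k$-algebra $A$, the functor of points of $\textbf{SL}_1(A)$ sends a $k$-algebra $R$ to the set of elements $a\in (A\otimes_k R)^\times$ with $\Nrd_{A\otimes_k R/R}(a)=1$, where $\Nrd$ is the reduced norm, which commutes with base change of the coefficient ring. First I would write down this functorial description explicitly, recalling that the reduced norm $\Nrd_{A/k}\colon A\to k$ is a polynomial map that, after any base change $k\to k'$, induces the reduced norm $\Nrd_{A\otimes_k k'/k'}$ of the central simple $k'$-algebra $A\otimes_k k'$ (this is the compatibility of reduced norms with scalar extension, a standard fact, e.g. from \cite{KMRT98}).

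The key computation is then purely formal: for a $k'$-algebra $R$, we have a canonical isomorphism of $R$-algebras $(A\otimes_k k')\otimes_{k'} R \cong A\otimes_k R$, and under this identification the reduced norm of $(A\otimes_k k')\otimes_{k'} R$ over $R$ agrees with the reduced norm of $A\otimes_k R$ over $R$. Hence
\[
\textbf{SL}_1(A)_{k'}(R) = \textbf{SL}_1(A)(R) = \{a\in (A\otimes_k R)^\times \mid \Nrd_{A\otimes_k R/R}(a)=1\}
\]
where in the first equality $R$ is regarded as a $k$-algebra via $k\to k'\to R$, and this set is visibly equal to $\textbf{SL}_1(A\otimes_k k')(R)$. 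Since this identification is natural in $R$, Yoneda gives the desired isomorphism of $k'$-group schemes $\textbf{SL}_1(A)_{k'}\cong \textbf{SL}_1(A\otimes_k k')$.

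I do not expect any real obstacle here; this lemma is essentially a bookkeeping statement recording that the construction $A\mapsto \textbf{SL}_1(A)$ is compatible with extension of the base field, and the only substantive input is the well-known compatibility of the reduced norm with scalar extension. The mild subtlety worth spelling out is the identification $(A\otimes_k k')\otimes_{k'}R\cong A\otimes_k R$ and the verification that the two reduced-norm polynomial maps match under it; this follows because both are characterized by the same property after a further faithfully flat base change splitting $A$ (namely, both become the determinant on the matrix algebra). Once that is noted, the proof is a one-line functor-of-points argument.
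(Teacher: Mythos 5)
Your proof is correct and follows essentially the same route as the paper: both arguments reduce the lemma to the compatibility of the reduced norm with scalar extension, verified by passing to a splitting field where both sides become the determinant on a matrix algebra. The paper carries this out concretely by noting (via Bourbaki) that $\Nrd$ is a polynomial map with coefficients in $k$ in a chosen $k$-basis of $A$, while you package the same fact in a functor-of-points/Yoneda formulation; this is a cosmetic rather than substantive difference.
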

\begin{proof}
Let $ \overline{k'} $ be an algebraic closure of $ k' $. Since $ \overline{k'} $ splits $ A $, the reduced norm is the map $ f\colon A\to A\otimes_k \overline{k'}\cong M_n(\overline{k'})\xrightarrow[]{\det} \overline{k'} $. Let $ \varphi $ denotes the isomorphism $ A\otimes_k \overline{k'}\cong M_n(\overline{k'}) $. If we take a $ k $-basis of $ A $ to get coordinates on $ A\otimes_k \overline{k'} $, the map $ \det \circ \varphi $ is actually a polynomial map on $ A\otimes_k \overline{k'} $ with coefficients in $ k $, by \cite{Bourb73}*{Chapitre~VIII, \S 12, Proposition~11}. 
Hence, $ f_{\overline{k'}} = \det \circ \varphi $. This implies that $ f_{k'}\colon A\otimes_k k' \to k' $ is just the composition $ A\otimes_k k' \to A\otimes_k \overline{k'}\cong M_n(\overline{k'})\xrightarrow[]{\det} \overline{k'} $, i.e.\ $ f_{k'} $ is the reduced norm map of the algebra $ A\otimes_k k' $, as wanted.
\end{proof}

Before giving the formula for the base change of $ \SL_n(A) $, we recall the effect of extending scalars for central simple algebras over local fields.
\begin{lemma}\label{Lem:Base change of CSA}
Let $ K $ be a local field and let $ A(d,r) $ be the central simple algebra over $ K $ defined in Definition~\ref{Def:the CSA A_(d,r)}. Let $ L $ be a finite extension of $ K $. Then $ A(d,r)\otimes_K L\cong A(d,r[L:K]) $.
\end{lemma}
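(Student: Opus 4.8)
The plan is to work in the Brauer group and use the classification of Theorem~\ref{Thm:classificatio nof CSA over local fields} both over $K$ and over $L$. First I would record the two invariants in play: by Theorem~\ref{Thm:classificatio nof CSA over local fields}, $\inv([A(d,r)])=[r/d]$ in $\mathbf{Q}/\mathbf{Z}$, and applying the same theorem over the local field $L$ gives $\inv([A(d,r[L:K])])=[r[L:K]/d]$, where $A(d,r[L:K])$ is now understood as the cyclic algebra built over $L$ (from the unramified degree $d$ extension of $L$, the Frobenius, and the $r[L:K]$-th power of a uniformiser of $L$). In particular this is a central simple $L$-algebra of dimension $d^2$, and $A(d,r)\otimes_K L$ is a central simple $L$-algebra of the same dimension $d^2$, since base change preserves dimension.

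The only non-formal input is the behaviour of the invariant under base change: for a finite extension $L/K$ of local fields, the restriction homomorphism $Br(K)\to Br(L)$, $[B]\mapsto[B\otimes_K L]$, corresponds under the invariant isomorphisms to multiplication by $[L:K]$ on $\mathbf{Q}/\mathbf{Z}$; this is standard local class field theory (see for instance \cite{S79} or \cite{Pie82}). Granting this, $\inv([A(d,r)\otimes_K L])=[L:K]\cdot[r/d]=[r[L:K]/d]=\inv([A(d,r[L:K])])$, so $A(d,r)\otimes_K L$ and $A(d,r[L:K])$ lie in the same Brauer class over $L$ because $\inv$ is injective. Writing each of these two $L$-algebras as a matrix algebra over the division algebra representing their common Brauer class, the equality of dimensions forces the matrix sizes to agree, hence $A(d,r)\otimes_K L\cong A(d,r[L:K])$ by Wedderburn's theorem; one may then, if desired, rewrite the right-hand side using Corollary~\ref{Cor:CSA over local fields and Wedderburn}.

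The main obstacle is purely a matter of which reference to invoke for the base-change formula for the invariant; everything else is bookkeeping. I expect no genuine difficulty, but I note that one could in principle avoid citing that formula by computing $A(d,r)\otimes_K L$ directly, splitting $L/K$ into its maximal unramified subextension (where one must track how the Frobenius acts on $K_d\otimes_K L$, which permutes the factors of a product of fields) and a totally ramified part (where $K_d\otimes_K L$ stays a field and the uniformiser acquires a factor of $e(L/K)=[L:K]$ in its valuation, the residual unit being a norm by \cite{S79}*{Chapter~V,\S~2, Corollary}). This direct route reproves exactly the same statement but is appreciably more laborious, so citing the standard invariant formula is the route I would take.
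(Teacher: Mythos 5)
Your proof is correct and follows essentially the same route as the paper: both identify the two algebras in $Br(L)$ via the fact that restriction $Br(K)\to Br(L)$ is multiplication by $[L:K]$ on invariants, and then conclude by Wedderburn since a central simple algebra is determined by its Brauer class and its degree. The extra explicit invariant computation and the sketched direct alternative are fine but not needed beyond what the paper already does.
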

\begin{proof}
By Wedderburn's theorem, a central simple algebra over a field is uniquely determined by its degree and its Brauer class. By \cite{Pie82}*{Chapter~17, Section~17.10, Proposition}, we have $ \inv([A(d,r)\otimes_K L]) = [L:K].\inv([A(d,r)] $. Hence $ A(d,r[L:K]) $ and $ A(d,r)\otimes_K L $ are in the same Brauer class. Since they have the same degree as well, this concludes the proof.
\end{proof}

\begin{proposition}\label{Prop:base change of SL_n(A)}
Let $ A(d',r') $ be a division algebra over a local field $ K' $ as in Definition~\ref{Def:the CSA A_(d,r)}. Let $ K/K' $ be a finite field extension and let $ a = \gcd (d',[K,K']) $. Then the base change of $ \textbf{SL}_{n'}(A(d',r'))$ to $ K $ is isomorphic to $ \textbf{SL}_{an'}(A(\frac{d'}{a},\frac{[K:K']}{a}r')) $.
\end{proposition}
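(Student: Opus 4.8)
The strategy is to combine three ingredients already available in the excerpt: Lemma~\ref{Lem:base change of SL_1} (base change commutes with forming $\textbf{SL}_1$ of a central simple algebra), Lemma~\ref{Lem:Base change of CSA} (extending scalars multiplies the invariant by the degree of the field extension), and Corollary~\ref{Cor:CSA over local fields and Wedderburn} (Wedderburn decomposition of $A(d,r)$ in terms of $\gcd(d,r)$). First I would observe that $\textbf{SL}_{n'}(A(d',r'))$ is by definition $\textbf{SL}_1(M_{n'}(A(d',r')))$, and that $M_{n'}(A(d',r'))\otimes_{K'}K \cong M_{n'}\big(A(d',r')\otimes_{K'}K\big)$. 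By Lemma~\ref{Lem:base change of SL_1}, the base change of $\textbf{SL}_{n'}(A(d',r'))$ to $K$ is therefore $\textbf{SL}_1\big(M_{n'}(A(d',r')\otimes_{K'}K)\big)$, so everything reduces to identifying the central simple $K$-algebra $A(d',r')\otimes_{K'}K$ up to Morita equivalence (i.e.\ up to passing to matrix algebras, which does not change the associated $\textbf{SL}_1$ group — or rather changes it only by the visible bookkeeping of the matrix size).

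Next I would apply Lemma~\ref{Lem:Base change of CSA} to get $A(d',r')\otimes_{K'}K \cong A(d',[K:K']r')$ as central simple $K$-algebras. Now Corollary~\ref{Cor:CSA over local fields and Wedderburn} applies with the pair $(d', [K:K']r')$: setting $a = \gcd(d',[K:K']r')$ we have $A(d',[K:K']r') \cong M_a\big(A(\tfrac{d'}{a},\tfrac{[K:K']r'}{a})\big)$. Since $A(d',r')$ is assumed to be a division algebra over $K'$, Corollary~\ref{Cor:CSA over local fields and Wedderburn} (applied to the pair $(d',r')$) forces $\gcd(d',r')=1$; hence $\gcd(d',[K:K']r') = \gcd(d',[K:K'])=a$, which is exactly the $a$ in the statement, and moreover $\tfrac{[K:K']r'}{a}$ is an integer equal to $\tfrac{[K:K']}{a}r'$. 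Plugging back in: $M_{n'}(A(d',r')\otimes_{K'}K) \cong M_{n'}\big(M_a(A(\tfrac{d'}{a},\tfrac{[K:K']}{a}r'))\big) \cong M_{an'}\big(A(\tfrac{d'}{a},\tfrac{[K:K']}{a}r')\big)$, so $\textbf{SL}_1$ of this algebra is by Definition~\ref{Def:algebraic SL_n(D)} precisely $\textbf{SL}_{an'}\big(A(\tfrac{d'}{a},\tfrac{[K:K']}{a}r')\big)$, as claimed.

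The only mild subtlety — and the step I would be most careful about — is the interplay of the various $\gcd$'s: one must check that the $a = \gcd(d',[K:K'])$ appearing in the statement coincides with $\gcd(d',[K:K']r')$, which is where coprimality of $d'$ and $r'$ (a consequence of $A(d',r')$ being a division algebra, via Corollary~\ref{Cor:CSA over local fields and Wedderburn}) is used, and likewise that $\tfrac{[K:K']r'}{\gcd(d',[K:K']r')}$ simplifies to $\tfrac{[K:K']}{a}r'$. Beyond that, the argument is a formal chain of identifications, with no real analytic or cohomological obstacle. I would also note in passing that the isomorphism class of $A(d,r)$ over a local field depends only on $d$ and $[r/d]\in\mathbf{Q}/\mathbf{Z}$ (Theorem~\ref{Thm:classificatio nof CSA over local fields}), so all the equalities above are to be read modulo the obvious reductions of the second parameter, which does not affect the stated form of the answer.
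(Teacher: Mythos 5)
Your proposal is correct and follows essentially the same route as the paper's proof: reduce via $\textbf{SL}_{n'}(A(d',r'))=\textbf{SL}_1(M_{n'}(A(d',r')))$ and Lemma~\ref{Lem:base change of SL_1}, identify $A(d',r')\otimes_{K'}K$ using Lemma~\ref{Lem:Base change of CSA} together with Corollary~\ref{Cor:CSA over local fields and Wedderburn}, and absorb the resulting matrix factor into the index $an'$. The only difference is that you spell out the $\gcd$ bookkeeping (that $\gcd(d',[K:K']r')=\gcd(d',[K:K'])$ because $\gcd(d',r')=1$ for a division algebra), which the paper's proof leaves implicit; this is a welcome clarification rather than a divergence.
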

\begin{proof}
The base change of $ \textbf{SL}_{n'}(A(d',r')) = \textbf{SL}_{1}(M_{n'}(A(d',r'))) $ to $ K $ is isomorphic to the algebraic group $ \textbf{SL}_{1}(M_{n'}(A(d',r'))\otimes_{K'} K) \cong \textbf{SL}_{n'}(A(d',r')\otimes_{K'} K)  $ by Lemma~\ref{Lem:base change of SL_1}. But by Corollary~\ref{Cor:CSA over local fields and Wedderburn} and Lemma~\ref{Lem:Base change of CSA}, $ A(d',r')\otimes_{K'} K\cong M_a(A(\frac{d'}{a},\frac{[K:K']}{a}r')) $. To conclude, note that for any central simple algebra $ A $, $ \textbf{SL}_{n'}(M_a(A))\cong \textbf{SL}_{an'}(A) $.
\end{proof}


\clearpage

\bibliographystyle{alpha}
\bibliography{biblioup}


\end{document}